\documentclass[11pt, reqno]{amsart}
\usepackage{amsfonts,amsmath,latexsym,amssymb,verbatim,amsbsy,amsthm,mathabx}
\usepackage{dsfont,bm}
\numberwithin{equation}{section}
\usepackage{enumitem}
\usepackage{booktabs}
\usepackage{array}
\usepackage{placeins}
\usepackage{float}
\usepackage{graphicx}
\usepackage{subcaption}
\usepackage{tikz}

\usepackage[margin=1in]{geometry}
\usepackage[dvipsnames]{xcolor}
\usepackage{aliascnt}

\usepackage[colorlinks=true, pdfstartview=FitV, linkcolor=RoyalBlue, citecolor=BrickRed, urlcolor=RoyalBlue]{hyperref}

\definecolor{labelkey}{rgb}{0,0,1}

\newtheorem{theorem}{Theorem}[section]

\newaliascnt{lemma}{theorem}
\newtheorem{lemma}[lemma]{Lemma}
\aliascntresetthe{lemma}

\newaliascnt{proposition}{theorem}
\newtheorem{proposition}[proposition]{Proposition}
\aliascntresetthe{proposition}

\newaliascnt{corollary}{theorem}
\newtheorem{corollary}[corollary]{Corollary}
\aliascntresetthe{corollary}

\theoremstyle{definition}
\newaliascnt{definition}{theorem}

\aliascntresetthe{definition}

\theoremstyle{remark}
\newaliascnt{remark}{theorem}
\newtheorem{remark}[remark]{Remark}
\aliascntresetthe{remark}

\theoremstyle{plain}
\newaliascnt{claim}{theorem}

\aliascntresetthe{claim}

\def \e {\varepsilon}

\def \cA {\mathcal{A}}
\def \cB {\mathcal{B}}

\def \cE {\mathcal{E}}
\def \cF {\mathcal{F}}

\def \cL {\mathcal{L}}

\def \cN {\mathcal{N}}

\def \cP {\mathcal{P}}

\def \cV {\mathcal{V}}
\def \cW {\mathcal{W}}

\newcommand{\N}{\ensuremath{\mathbb{N}}}

\newcommand{\R}{\ensuremath{\mathbb{R}}}

\newcommand{\E}{\ensuremath{\mathbb{E}}}
\renewcommand{\S}{\ensuremath{\mathbb{S}}}

\def \one {{\mathds{1}}}

\def \Cov {\operatorname{Cov}}
\def \rMF {\mathrm{MF}}
\def \tr {\operatorname{tr}}

\DeclareMathOperator{\Id}{Id}

\DeclareMathOperator{\Law}{Law}

\renewcommand{\geq}{\geqslant}
\renewcommand{\ge}{\geqslant}
\renewcommand{\leq}{\leqslant}
\renewcommand{\le}{\leqslant}

\def \gap {\operatorname{gap}}

\def \barX {\Bar{X}}

\def \dY  {\, \mathrm{d}Y}

\def \dt  {\, \mathrm{d}t}

\def \ds  {\, \mathrm{d}s}

\def \dB  {\, \mathrm{d} B}

\def \dd  {\mathrm{d}}

\def \P {\mathbb{P}}

\def \hX {\widehat{X}}

\newcommand{\push}{\mathbin{\#}}
\def \barB {\overline{B}}
\begin{document}

\title[Variance-Feedback Noise in Interacting Diffusions]{Variance-Feedback Noise in Interacting Diffusions: Shape Dynamics and Mean-Field Limits}

\author{Ben Hambly}
\address{Mathematical Institute, University of Oxford, Oxford, OX2 6GG, UK}
\email{ben.hambly@maths.ox.ac.uk}

\author{Vinh Nguyen}
\address{Department of Mathematics, University of California, Berkeley, CA 94720, USA}
\email{vnguyen26@berkeley.edu}

\subjclass[2020]{60H10, 60K35, 60H30, 60J60, 35Q84, 82C22}
\date{\today}
\keywords{interacting particle systems, state-dependent noise,  McKean--Vlasov equations, propagation of chaos, mean-field limits, shape dynamics}
\thanks{{\bf Acknowledgments:} V.N. was partially supported by an AMS--Simons travel grant.}

\begin{abstract}
We study a system of $N$ attractive interacting diffusions where the variance feeds back into the noise. That is the common and idiosyncratic Brownian forcing has amplitude $\kappa(S_t^N)^q$, with $S_t^N$ the empirical variance of the system, $\kappa>0$ and $q\geq 0$. With the interaction kernel given by $K(z)=|z|^p z$, for $p\geq 0$, feedback through the empirical variance gives the system critical homogeneity at $q=(p+2)/4$.  For this exponent an intrinsic time change separates the variance from the normalized centered configuration: the latter evolves autonomously on a unit sphere and, when it is not purely driven by common noise and the sphere has positive dimension,
is a reversible elliptic diffusion with an explicit invariant Gibbs measure. In this case, for $p>0$, the stationary averaged dissipation gives an exact finite-particle trichotomy between consensus, recurrent excursions through arbitrarily small and large variance scales, and finite-time explosion. At critical homogeneity we also derive an autonomous unit-variance mean-field equation and its constrained free-energy dissipation identity. Across the subcritical, critical, and supercritical regimes, we establish well-posedness and boundary classifications and prove conditional propagation of chaos on intervals where the limiting solution is unique and nonexplosive. We connect the sharp finite-particle coercivity constants to a mean-field variational problem.  Finite-size It\^o corrections enlarge the universal coercive consensus region. We solve the linear model, where $p=0$, completely, including invariant laws, explosion probabilities, noncommuting limits, and an $N^{-1/2}$ transition window.
\end{abstract}
\maketitle
\tableofcontents

\section{Introduction}\label{sec:intro}
For $N\ge2$, let $B^0,B^1,\ldots,B^N$ be independent Brownian motions in $\R^n$, and let $\rho\in[0,1]$.  We study the system of interacting diffusion processes
\begin{equation}\label{eq:particle}
 \dd X_t^i
 = \frac1N\sum_{k=1}^N K(X_t^k-X_t^i)\,\dd t
 +\psi(S_t^N)\Bigl(\rho\,\dd B_t^0+\sqrt{1-\rho^2}\,\dd B_t^i\Bigr),
 \qquad 1\le i\le N,
\end{equation}
where the empirical variance is defined by
\begin{equation*}
 S_t^N:=\frac1{2N^2}\sum_{i,k=1}^N|X_t^i-X_t^k|^2. 
\end{equation*}
The interaction kernel and the noise strength function are respectively given by 
\begin{equation*}
 K(z)=|z|^p z \quad \forall z\in\R^n, \quad p\ge0 \qquad \text{ and } \qquad \psi(s)=\kappa s^q\quad \forall s\ge0,\quad q\ge0,\quad \kappa>0.
\end{equation*}
For $q=0$, we use the convention that $s^0=1$ for all $s\geq 0$, so the noise amplitude is the constant $\kappa$.

A basic question concerning such a system is whether the particles reach relative consensus, that is $S_t^N\to 0$ as $t\to \infty$, or equivalently $\max_{i,j}|X_t^i-X_t^j|\to 0$ as $t\to \infty$ for fixed $N$. We use \textit{consensus} in this sense throughout the paper. The convergence of all particles to a common finite point additionally requires the convergence of the center, an important distinction under pure common noise.
The novel feature that distinguishes our model and gives it a rich structure is the feedback in its stochastic forcing. The noise coefficient is a function of a summary statistic of the total particle configuration
\begin{equation*}
 \kappa(S_t^N)^q\Bigl(\rho\,\dd B_t^0+\sqrt{1-\rho^2}\,\dd B_t^i\Bigr).
\end{equation*}
A simple interpretation is that the noise coefficient controls mobility or responsiveness.  For example, in an opinion-dynamics model, a tightly clustered population is relatively settled and experiences weak fluctuations, whereas a more dispersed population is more susceptible to common news and individual shocks.  Equivalently, in a crowding interpretation, particles that are tightly packed have less freedom to move, so diffuse more slowly, whereas a dispersed configuration permits larger fluctuations.

The noise contains both a common Brownian motion $B^0$, shared by all particles, and independent idiosyncratic Brownian motions $B^i$.  The parameter $\rho$ interpolates between these two sources of randomness, while their common amplitude is determined endogenously by the empirical variance of the whole configuration. Thus the noise is not prescribed independently of the system: dispersion feeds back into the strength of both the common and idiosyncratic fluctuations.  This feedback is degenerate at consensus when $q>0$, is non-Lipschitz there when $0<q<1/2$, and can become strong enough at large variance to compete with or dominate the attractive interaction.  These three features, common noise, idiosyncratic noise, and global variance-dependent intensity, drive the phenomena studied in the paper. We aim to classify the behavior of this particle system for the full range of values of the parameters $(p,q)$.

\subsection{The centered model, main results and approach}

The basic structural observation is that the common Brownian motion disappears after centering.  Writing
\[
\bar X_t^N=\frac1N\sum_{i=1}^N X_t^i,
\qquad
Y_t^i=X_t^i-\bar X_t^N,
\]
the variance satisfies the exact scalar stochastic differential equation
\begin{equation}\label{eq:intro-variance}
 \dd S_t^N
 =\left[-D_{p,N}(Y_t)+c_N(S_t^N)^{2q}\right]\dd t
 +a_N(S_t^N)^{q+1/2}\dd\beta_t,
\end{equation}
where
\[
 D_{p,N}(Y):=\frac1{N^2}\sum_{i,k=1}^N|Y^i-Y^k|^{p+2},\qquad
 c_N:=n\kappa^2(1-\rho^2)\Big(1-\frac1N\Big),\qquad
 a_N:=2\kappa\sqrt{\frac{1-\rho^2}{N}},
\]
and $\beta$ is a one-dimensional Brownian motion given explicitly, on the set $\{S_t^N>0\}$,
by
\begin{equation}\label{eq:intro-beta}
 \dd\beta_t
 =
 \frac{1}{\sqrt{N S_t^N}}
 \sum_{i=1}^N Y_t^i\cdot\dd B_t^i.
\end{equation}
Note that we can define $\beta$ on the whole lifetime by using the integrand in \eqref{eq:intro-beta} on the set $\{S_t^N>0\}$, setting it to zero on the set $\{S_t^N=0\}$, and adding $\one_{\{S_t^N=0\}}\dd W_t$ for an independent one-dimensional Brownian motion $W$. The resulting continuous local martingale has quadratic variation $t$ and preserves \eqref{eq:intro-variance}, including when the system leaves consensus after a visit.  When $\rho=1$, the martingale coefficient $a_N$ vanishes and the choice of $\beta$ is immaterial.

For $p>0$, \eqref{eq:intro-variance} is generally not closed because the normalized dissipation $D_{p,N}(Y)/(S^N)^{(p+2)/2}$ depends on the shape of the particle cloud. To make this terminology precise, we regard the centered configuration $Y=(Y^1,\ldots,Y^N)$ as a vector in the zero-sum subspace of $(\R^n)^N$.  Its Euclidean radius is
\[ R_t^N:=|Y_t|=\Bigl(\sum_{i=1}^N|Y_t^i|^2\Bigr)^{1/2}=\sqrt{N S_t^N},
\]
and, whenever $R_t^N>0$, its shape is the normalized centered configuration $U_t:=Y_t/R_t^N$, which lies on the unit sphere of that subspace. Thus $S_t^N=(R_t^N)^2/N$ records the scale, while $U_t$ records the configuration, modulo translation and dilation. The balance for critical homogeneity occurs at $q=(p+2)/4$.
At this exponent, radius and shape evolve on the same intrinsic time scale.  After the corresponding time change, the normalized centered configuration becomes an autonomous diffusion on the relative sphere. For $\rho<1$, the sphere has dimension $m_N-1=n(N-1)-1$, so the angular diffusion is elliptic and reversible with an explicit invariant Gibbs measure when $m_N\ge2$. The radius of the normalized sphere is always one. If $m_N=1$, it consists of two points and has no nontrivial angular diffusion.  The corresponding stationary average of the normalized dissipation determines the radial Lyapunov exponent.

This observation yields our first main result,
\autoref{thm:critical-shape-gibbs}: for $p>0$, $\rho<1$, and positive
initial variance at critical homogeneity, the sign of the averaged
radial drift determines asymptotic consensus, global recurrence across
variance scales, or almost-sure finite-time explosion.
The challenge is the boundary case of zero averaged drift. This requires more than ergodic
averaging, which gives only sublinear growth of the logarithmic radius.
We solve a centered Poisson equation for the angular generator and
write the logarithmic radius as a continuous martingale plus a bounded
corrector. Its quadratic variation is comparable to intrinsic time.
Brownian recurrence and occupation estimates then give both oscillation
of the logarithmic radius and divergence of the inverse clock.
Consequently, the original solution is global and
\[
 \liminf_{t\to\infty}S_t^N=0,\qquad
 \limsup_{t\to\infty}S_t^N=\infty.
\]
Above critical homogeneity, a reciprocal-radius transform and
one-dimensional boundary tests identify $m_N=2$ as the threshold for
accessibility of explosion and give two-sided incomplete-Gamma bounds
for its probability.

The same normalization connects the particle configuration geometry with the mean-field
equation. For a probability law $\nu$ on $\R^n$, define its
$p$-dissipation by
\[
 D_p(\nu):=\iint_{\R^n\times\R^n}|x-y|^{p+2}\,\nu(\dd x)\nu(\dd y).
\]
The normalized particle dissipation equals $D_p$ evaluated at the
centered unit-variance empirical shape law.
\autoref{prop:gibbs-empirical-bridge} shows that its minimum over
$N$-particle shapes converges to the infimum of $D_p$ over centered
unit-variance probability laws.
At critical homogeneity, \autoref{thm:critical-mf-shape} gives an
autonomous evolution for the normalized mean-field law $\tilde{\nu}_\tau$,
with
\[
 \frac{\dd}{\dd\tau}\log s_{t(\tau)}
 =c-D_p(\tilde{\nu}_\tau),\qquad c=n\kappa^2(1-\rho^2),
\]
where $t(\tau)$ is the inverse intrinsic clock.
For sufficiently regular solutions, the normalized flow satisfies the
constrained free-energy dissipation identity
\eqref{eq:mf-critical-free-energy-dissipation}.
These results reduce the nonlinear critical mean-field problem to the
long-time behavior of this shape flow. They establish convergence of
the sharp dissipation minima. Convergence of the Gibbs-averaged
thresholds and a complete classification of the normalized mean-field
dynamics remain open. The finite-size coercive comparison is given in
\autoref{cor:critical-finite-stabilization}.

The well-posedness results in \autoref{thm:particle-allq} and
\autoref{thm:MV-allq} provide the setting for the particle approximation,
including the degenerate boundary at consensus and the possibility of
explosion. The first difficulty is the regularity of the diffusion
coefficient as a function of the law. We use the centered $L^2$ radius,
which is $1$-Lipschitz in the 2-Wasserstein distance, $\cW_2$. In terms of this radius
$r=\sqrt{s}$, the amplitude is $\kappa r^{2q}$ and is locally Lipschitz
at zero for $q\ge1/2$. For $p>0$, $\rho<1$, and $0<q<1/2$, moment generation and interpolation
show that diffusion dominates dissipation at sufficiently small
positive variance. This yields a positive lower bound for solutions
started with positive variance and permits localization away from
consensus. The linear case follows from its closed variance equation.

The second difficulty is to prove propagation of chaos under the
natural $(p+2)$-moment assumption. We construct the interacting particles
together with copies of the McKean--Vlasov solution that are independent
conditionally on the common noise. For each index $i$, the particle
$X^{i,N}$ and its reference copy $\hX^i$ have the same initial value
and are driven by the same common Brownian motion $B^0$ and the same
idiosyncratic Brownian motion $B^i$. This synchronous coupling reduces
the comparison to estimating their averaged squared separation
$N^{-1}\sum_{i=1}^N|X_t^{i,N}-\hX_t^i|^2$ up to suitable stopping
times. For $p>0$, however, $|K(z)|^2=|z|^{2p+2}$, so a direct
second-moment estimate of the empirical force fluctuation would require
the stronger $(2p+2)$-moment assumption. We retain the non-positive interaction term supplied by monotonicity and
truncate only the empirical force fluctuation.
The bounded part admits the usual sampling estimate, while H\"older's
inequality and the dissipative moment bound control its tail.
Convergence of the independent reference radius then removes the
stopping: with high probability, exit of the interacting radius from
the localization window forces a non-vanishing coupling error. \autoref{thm:poc} consequently gives qualitative conditional propagation
of chaos on intervals where the limiting solution is unique and
nonexplosive, without assuming global existence of every finite system. Stronger moments yield $N^{-1}$ mean-square coupling estimates on
compact radius windows. For $q\ge1/2$, two additional moments give an
$O(N^{-1})$ exit probability before a fixed nonexplosive mean-field
horizon. These quantitative statements are given in
\autoref{thm:poc-quant} and
Propositions~\ref{prop:poc-local-rate}--\ref{prop:poc-exit-rate}.

Finally, convergence on fixed time intervals does not determine the
interaction between the large-population and long-time limits.
We resolve this question explicitly for $p=0$ in \autoref{sec:p0},
where the variance is a closed one-dimensional diffusion.
Scale and speed calculations give invariant laws and explosion
probabilities, and their Gamma representations describe the
large-$N$ transition. At $q=1/2$ and the mean-field critical noise strength, an $N^{-1}$
stabilizing correction produces a relaxation time of order $N$ and we show that 
the large-$N$ and long-time limits do not commute.
For $q>1/2$, the exact explosion probability has a Gaussian transition
window of width $N^{-1/2}$ in the initial variance around the
deterministic mean-field threshold.

The principal regimes for $\rho<1$ and positive initial variance are summarized in \autoref{tab:phase-summary}. \autoref{fig:homogeneity-diagram} separates the two homogeneity boundaries from the noise-strength and initial-data criteria within each regime.

\begin{table}[ht]
\centering
\footnotesize
\renewcommand{\arraystretch}{1.08}
\begin{tabular}{@{}p{0.18\textwidth}p{0.75\textwidth}@{}}
\toprule
Regime & Main behavior \\
\midrule
$0\le q<1/2$ & Particle solutions are nonexplosive. At consensus we use absorbing continuation for $0<q<1/2$, consensus is not absorbing for $q=0$. Positive-variance mean-field solutions are global and uniformly separated from zero. \\
$1/2\le q<(p+2)/4$ & Attraction controls large variance. Both the finite-particle and mean-field solutions are global. \\
$q=(p+2)/4$, $p>0$ & The normalized shape is autonomous.  The finite system has an exact consensus/scale-recurrence/explosion trichotomy. The mean-field problem reduces to an autonomous unit-variance shape flow. \\
$q=1/2$, $p=0$ &
The finite-particle variance is geometric Brownian motion.
Its logarithmic drift determines consensus, growth, or recurrence
across variance scales. \\
$q>(p+2)/4$ & For finite $N$, explosion is inaccessible when $n(N-1)\le2$ and has positive probability from every positive-variance state when $n(N-1)>2$.  \\
\bottomrule
\end{tabular}
\caption{Phase regimes for $\rho<1$ and positive initial variance.}
\label{tab:phase-summary}
\end{table}

\begin{figure}[ht]
\centering
\begin{minipage}[t]{0.49\textwidth}
\centering
\textbf{Finite $N$}\par\smallskip
\begin{tikzpicture}[x=0.92cm,y=2.25cm,draw=black,text=black,font=\scriptsize]
\fill[blue!5] (0,0) rectangle (4,0.5);
\fill[blue!12] (0,0.5)--(4,0.5)--(4,1.5)--cycle;
\fill[blue!3] (0,0.5)--(4,1.5)--(4,1.9)--(0,1.9)--cycle;
\draw[->] (0,0)--(4.3,0) node[right] {$p$};
\draw[->] (0,0)--(0,2.03) node[above] {$q$};
\draw[dashed] (0,0.5)--(4,0.5);
\draw[very thick] (0,0.5)--(4,1.5)
 node[pos=0.58,above,sloped] {$q=(p+2)/4$};
\draw (-0.06,0.5)--(0.06,0.5) node[left=3pt] {$1/2$};
\node[below left] at (0,0) {$0$};
\node[align=center,text width=3.3cm] at (2,0.23)
 {Nonexplosive\\For $q>0$, consensus\\may require selection};
\node[align=center] at (2.75,0.70) {Global};
\node[align=center,text width=3.3cm] at (2,1.64)
 {Explosion accessible\\exactly when $m_N>2$};
\fill (0,0.5) circle (1.3pt);
\end{tikzpicture}\par\smallskip
\begin{minipage}{0.94\linewidth}\footnotesize
On the critical line with $p>0$, the sign of $\Gamma_N$ gives consensus, scale recurrence, or explosion.
\end{minipage}
\end{minipage}\hfill
\begin{minipage}[t]{0.49\textwidth}
\centering
\textbf{Mean field}\par\smallskip
\begin{tikzpicture}[x=0.92cm,y=2.25cm,draw=black,text=black,font=\scriptsize]
\fill[blue!5] (0,0) rectangle (4,0.5);
\fill[blue!12] (0,0.5)--(4,0.5)--(4,1.5)--cycle;
\fill[blue!3] (0,0.5)--(4,1.5)--(4,1.9)--(0,1.9)--cycle;
\draw[->] (0,0)--(4.3,0) node[right] {$p$};
\draw[->] (0,0)--(0,2.03) node[above] {$q$};
\draw[dashed] (0,0.5)--(4,0.5);
\draw[very thick] (0,0.5)--(4,1.5)
 node[pos=0.58,above,sloped] {$q=(p+2)/4$};
\draw (-0.06,0.5)--(0.06,0.5) node[left=3pt] {$1/2$};
\node[below left] at (0,0) {$0$};
\node[align=center,text width=3.3cm] at (2,0.23)
 {Global, uniform\\separation from zero};
\node[align=center] at (2.75,0.70) {Global};
\node[align=center,text width=3.3cm] at (2,1.64)
 {Local theory,\\sufficient global barrier};
\fill (0,0.5) circle (1.3pt);
\end{tikzpicture}\par\smallskip
\begin{minipage}{0.94\linewidth}\footnotesize
On the critical line with $p>0$, shape dynamics and conditional energy dissipation are available, full long-time classification is open.
\end{minipage}
\end{minipage}
\caption{Schematic $(p,q)$ regimes for $\rho<1$ and positive initial variance. The solid line is critical homogeneity and the dashed line is the Lipschitz boundary $q=1/2$. At their intersection $p=0$, the complete linear theory applies. The diagram records the scope of the results. Further noise-strength and initial-data conditions determine behavior within the critical and supercritical regimes. At $q=0$ the noise is constant and consensus is not absorbing.}
\label{fig:homogeneity-diagram}
\end{figure}
\subsection{Literature overview}
The natural comparison is with first-order consensus and opinion dynamics.  Deterministic first-order models include the classical averaging mechanism of DeGroot and continuous opinion systems with state-dependent interactions, see \cite{DeGroot,NguyenShvydkoy2023}.  Stochastic first-order consensus models have been studied with additive communication noise, multiplicative measurement noise, and noise intensities depending on relative states. Representative works include \cite{LiWuZhang2014,ZongLiZhang2018}.  In noisy Hegselmann--Krause-type models, prescribed diffusion can produce order--disorder transitions and affect the stability of consensus, see \cite{ChazelleJiuLiWang2017,WangLiEChazelle2017}.  Particularly close to the present setting is the continuous-time model of Chen, Nikolaev, and Pr\"omel \cite{ChenNikolaevPromel2025}, which contains both idiosyncratic and environmental noise and develops the corresponding conditional McKean--Vlasov theory.  The distinction here is that the strength of both noise components is the nonlinear global statistic $\kappa(S_t^N)^q$.  It is therefore neither a fixed diffusion amplitude nor a local pairwise measurement intensity.  This global feedback couples the stochastic forcing directly to the collective radius and creates boundary behavior at consensus, finite-size stabilization, logarithmic-scale recurrence, and finite-time explosion.

Second-order alignment systems form a related but different model class.  In the classical Cucker--Smale framework one evolves positions and velocities simultaneously, with velocity alignment modulated by spatial separation.  Its deterministic, kinetic, and mean-field theory includes \cite{CS2007a,CS2007b,HT2008,MT2011,MT2014,Sbook}.  Stochastic second-order extensions have treated multiplicative perturbations, random communication, common and individual noises, and stochastic mean-field limits, see \cite{ChoiSalem2019,FriesenKutoviy2020,HaJungRockner2021,HuangLvWang2026,HuangZhang2020,Rosello2019}.  Recent work also develops nonlinear velocity alignment and quantitative propagation-of-chaos questions in this second-order setting \cite{CFP2025,NS2022,NguyenShvydkoyTan2026}.  These results provide important neighboring techniques and comparisons for stochastic alignment.

The mean-field description of \eqref{eq:particle} belongs to the McKean--Vlasov theory originating with McKean \cite{McKean1966}. Classical references for propagation-of-chaos and the mean-field include \cite{BCC2011,Golse2016chap,JabinMFreview,Meleard1996,Sznitman}.  Because the particle system contains a common Brownian motion, the limiting law is naturally conditional on the common noise and the limiting evolution is, before centering, a stochastic nonlinear Fokker--Planck equation. Conditional McKean--Vlasov equations and common-noise particle limits have been developed in \cite{CoghiFlandoli2016,CoghiGess2019,HamblyLedger2017,HamblySojmark2019, HammersleySiskaSzpruch2021,LackerShkolnikovZhang2023,Nikolaev2025}, see also \cite{HuangWang2019,HuangWang2022}.  Quantitative propagation of chaos for mean-field diffusions has recently been sharpened in several directions, including entropy and uniform-in-time estimates \cite{Lacker2023,LackerLeFlem2023}.  In the present model the common-noise structure simplifies after centering, but the variance-dependent diffusion remains singular or superlinear in precisely the regimes where standard Lipschitz mean-field theory is least informative.

There is also a useful contrast with phase transitions for weakly interacting diffusions and McKean--Vlasov equations.  In classical examples the transition is typically expressed through bifurcation of stationary states or minimizers of a free-energy functional, see \cite{CarrilloGvalaniPavliotisSchlichting2020,Dawson1983,DelgadinoGvalaniPavliotis2021,DesaiZwanzig1978,GomesKalliadasisPavliotisYatsyshin2019,Tugaut2014}.  Here the transition mechanism is dynamical and is generated by the variance feedback in the noise.  The relevant questions are whether consensus is accessible, whether the finite-size It\^o correction stabilizes the radius, whether the variance repeatedly visits arbitrarily small and large scales, and whether the particle cloud explodes in finite time.

Related forms of state-dependent noise occur in consensus-based
optimization and sampling. In consensus-based optimization, typically $p=0$ and the noise
amplitude depends on a particle's displacement from a weighted ensemble
mean \cite{PinnauTotzeckTseMartin2017,CarrilloChoiTotzeckTse2018}.
Consensus-based sampling instead uses the weighted ensemble covariance
in its diffusion coefficient \cite{CarrilloHoffmannStuartVaes2022},
and thus already incorporates a global dispersion statistic.
The present model combines the isotropic scalar amplitude
$\kappa(S_t^N)^q$ with a homogeneous pairwise attraction.
Varying $q$ allows us to classify how the competition between these
terms changes the accessibility of consensus and explosion.
Another related construction appears in volatility-stabilized market
models \cite{FernholzKaratzas2005,Pal2011,Shkolnikov2013}.
Their analysis relates independent squared Bessel processes through a
common time change involving total market capitalization, and identifies
the normalized market weights as a Wright--Fisher diffusion
\cite{Pal2011}.
Here normalization uses the centered Euclidean radius, and at critical
homogeneity the resulting shape diffusion evolves on the relative sphere
with a Gibbs law determined by the interaction potential.

\subsection{Outline}
The rest of the paper is organized as follows.  \autoref{sec:particle-identities} develops the center--shape decomposition and coercive identities.  \autoref{sec:allq-particle} and \autoref{sec:mf} treat the finite-particle and mean-field dynamics, respectively.  \autoref{sec:poc} proves propagation of chaos, \autoref{sec:long-time} studies long-time behavior and \autoref{sec:p0} gives the explicit linear theory.

\section{Finite-particle structure and dissipation}\label{sec:particle-identities}

Define
\begin{equation*}
 \barX_t^N:=\frac1N\sum_{i=1}^N X_t^i,
 \qquad Y_t^i:=X_t^i-\barX_t^N,
 \qquad \barB_t^N:=\frac1N\sum_{i=1}^N B_t^i.
\end{equation*}
Then, the relative dynamics $Y$ satisfy $\sum_{i=1}^N Y_t^i=0$, and the empirical variance is given by
\begin{equation*}
 S_t^N=\frac1N\sum_{i=1}^N|Y_t^i|^2.
\end{equation*}
We will write $S_t^N(u) = \frac1N\sum_{i=1}^N|u_t^i|^2$ for any vector $u\in (\R^n)^N $  with $\sum_i^N u^i=0$, so that $S_t^N=S_t^N(Y)$.
The dynamics of the system can be decomposed into that for the center of mass $\bar{X}$ and the relative dynamics for $Y$, capturing the evolution of the shape of the system about $\bar{X}$. 
This translation-invariant decomposition underlies the finite-particle analysis.

\begin{proposition}\label{prop:decomposition}  
Up to the lifetime of a solution of \eqref{eq:particle},
\begin{align}
 \dd\barX_t^N
 &=\psi(S_t^N)\left(\rho\,\dd B_t^0+\sqrt{1-\rho^2}\,\dd\barB_t^N\right),\label{eq:centerSDE}\\
 \dd Y_t^i
 &=\frac1N\sum_{k=1}^N K(Y_t^k-Y_t^i)\,\dd t
 +\sqrt{1-\rho^2}\,\psi(S_t^N)
 \left(\dd B_t^i-\dd\barB_t^N\right).
 \label{eq:relativeSDE}
\end{align}
Hence the relative dynamics, and therefore the variance equation, contain no $B^0$ term.
\end{proposition}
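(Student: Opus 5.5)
The plan is to average the particle equations \eqref{eq:particle} over $i$ and then subtract; everything rests on the antisymmetry of the kernel. Since $K(-z)=|{-z}|^p(-z)=-K(z)$, the double sum
\[
 \frac1{N^2}\sum_{i,k=1}^N K(X_t^k-X_t^i)
\]
is invariant under exchanging the indices $i\leftrightarrow k$. That exchange also flips its sign, so the sum vanishes. Summing \eqref{eq:particle} over $i$ and dividing by $N$ therefore eliminates the drift.

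In the noise term, $\psi(S_t^N)$ is a common (scalar, adapted) coefficient, so it factors out of the average. The common Brownian increment $\rho\,\dd B_t^0$ appears identically in every equation and survives the averaging unchanged. The idiosyncratic increments average to $\sqrt{1-\rho^2}\,\dd\barB_t^N$. Together these give \eqref{eq:centerSDE}, valid as an identity of stochastic integrals up to the lifetime (or any localizing stopping time before it), by linearity of the It\^o integral.

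Next, I would subtract \eqref{eq:centerSDE} from the $i$-th equation of \eqref{eq:particle}. The $\rho\,\dd B_t^0$ terms cancel exactly, since they carry the same coefficient $\psi(S_t^N)$ in both. The idiosyncratic part becomes $\sqrt{1-\rho^2}\,\psi(S_t^N)(\dd B_t^i-\dd\barB_t^N)$. For the drift, translation invariance gives $X_t^k-X_t^i=Y_t^k-Y_t^i$, which yields \eqref{eq:relativeSDE}.

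Finally, $S_t^N=\frac1N\sum_i|Y_t^i|^2$ is a function of $Y$ alone, so the coefficients of \eqref{eq:relativeSDE} depend only on $Y$ and on $B^1,\dots,B^N$. Hence the It\^o formula for $S_t^N$, applied to \eqref{eq:relativeSDE}, involves no $B^0$ term.

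There is no real obstacle here. The only points needing care are the antisymmetry cancellation, and the fact that all identities hold pathwise up to the lifetime via localization, so that integrability issues from the superlinear coefficients (when $q>1/2$ or $p>0$) do not arise.
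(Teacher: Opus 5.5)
Your proposal is correct and follows the paper's proof essentially step for step. You average over $i$, using the oddness of $K$ to kill the drift; you subtract to cancel the $B^0$ term; you use translation invariance $K(X^k-X^i)=K(Y^k-Y^i)$; and you read off that the variance equation contains no $B^0$ from \eqref{eq:relativeSDE}.
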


\begin{proof}
Summing \eqref{eq:particle} over $i$, the interaction term vanishes because
\[
 \sum_{i,k=1}^N K(X^k-X^i)=0
\]
by oddness of $K$.  This proves \eqref{eq:centerSDE}.  Subtracting \eqref{eq:centerSDE} from the equation for $X^i$ gives \eqref{eq:relativeSDE}.  Translation invariance of pairwise differences gives $K(X^k-X^i)=K(Y^k-Y^i)$.  The final assertion follows directly from \eqref{eq:relativeSDE}. 
\end{proof}
As $K(z)=|z|^pz$, the relative dynamics yield a scalar semimartingale for the empirical variance.  

\begin{proposition}\label{prop:variance-general}
The empirical variance satisfies
\begin{equation}\label{eq:SNpower}
 \dd S_t^N
 =\left[-D_{p,N}(Y_t)+c_N(S_t^N)^{2q}\right]\dd t+\dd M_t^N,
\end{equation}
where 
\begin{equation}\label{eq:MN}
 M_t^N :=\frac{2\kappa\sqrt{1-\rho^2}}N
 \sum_{i=1}^N\int_0^t (S_s^N)^q Y_s^i\cdot\dd B_s^i
\end{equation}
is a continuous local martingale with
\begin{equation}\label{eq:bracket-power}
 \dd\langle M^N\rangle_t
 =\frac{4\kappa^2(1-\rho^2)}N(S_t^N)^{2q+1}\,\dd t = a_N^2(S_t^N)^{2q+1}\,\dd t.
\end{equation}
 Moreover, for $\tau_R:=\inf\{t\ge0:S_t^N\ge R\}$, $M^N_{\cdot\wedge\tau_R}$ is square-integrable for every $R<\infty$. 
\end{proposition}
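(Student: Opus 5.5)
Apply It\^o's formula to $S_t^N=\frac1N\sum_i|Y_t^i|^2$, using the relative dynamics \eqref{eq:relativeSDE} from \autoref{prop:decomposition}. Then $\dd S_t^N=\frac2N\sum_i Y_t^i\cdot\dd Y_t^i+\frac1N\sum_i\dd\langle Y^i\rangle_t$, and there are three terms to identify: the interaction drift, the It\^o correction, and the martingale part.

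\emph{Interaction drift.} The drift contribution is $\frac{2}{N^2}\sum_{i,k}Y^i\cdot K(Y^k-Y^i)$. Symmetrize by swapping $i\leftrightarrow k$ and using oddness of $K$. This gives
\[
\frac{2}{N^2}\sum_{i,k}Y^i\cdot K(Y^k-Y^i)
=-\frac1{N^2}\sum_{i,k}(Y^k-Y^i)\cdot K(Y^k-Y^i)
=-\frac1{N^2}\sum_{i,k}|Y^k-Y^i|^{p+2}
=-D_{p,N}(Y).
\]

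\emph{It\^o correction.} The noise of $Y^i$ is $\sqrt{1-\rho^2}\,\psi(S)\,(\dd B^i-\dd\barB^N)$ in $\R^n$. Its quadratic variation per coordinate is $(1-\frac1N)\dt$, since $B^i-\barB^N=(1-\frac1N)B^i-\frac1N\sum_{k\ne i}B^k$ has variance $(1-\frac1N)^2+\frac{N-1}{N^2}=1-\frac1N$. So $\frac1N\sum_i\dd\langle Y^i\rangle=n(1-\rho^2)\kappa^2S^{2q}(1-\frac1N)\dt=c_NS^{2q}\dt$.

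\emph{Martingale part.} The martingale part is $\frac{2\kappa\sqrt{1-\rho^2}}{N}S^q\sum_iY^i\cdot(\dd B^i-\dd\barB^N)$. Since $\sum_iY^i=0$, the $\barB^N$ terms cancel: $\sum_iY^i\cdot\dd\barB^N=(\sum_iY^i)\cdot\dd\barB^N=0$. This yields exactly \eqref{eq:MN}. The $B^i$ are independent, so the bracket is $\frac{4\kappa^2(1-\rho^2)}{N^2}S^{2q}\sum_i|Y^i|^2\dt=\frac{4\kappa^2(1-\rho^2)}{N}S^{2q+1}\dt$, which is \eqref{eq:bracket-power}, and this equals $a_N^2S^{2q+1}$ by the definition of $a_N$. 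Continuity and the local martingale property follow because the integrands are continuous adapted processes up to the lifetime.

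\emph{Square-integrability.} On $[0,\tau_R]$ we have $S^N_s\le R$ by continuity. Hence $\langle M^N\rangle_{t\wedge\tau_R}\le a_N^2R^{2q+1}t$, and $\E\langle M^N\rangle_{t\wedge\tau_R}<\infty$ for each $t$. The BDG inequality (or the standard $L^2$ isometry) then shows that the stopped local martingale is a square-integrable martingale on each $[0,t]$.

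\emph{Main obstacle.} The only delicate point is the lifetime. If explosion is possible, $\tau_R$ precedes the explosion time, so all identities hold on $[0,\tau_R]$, and hence up to the lifetime. When $q=0$ we have $S^{2q}=1$, and the same computation applies.
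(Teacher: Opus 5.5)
Your proposal is correct and follows the paper's proof essentially step for step. You apply It\^o's formula to $N^{-1}\sum_i|Y^i|^2$, symmetrize the interaction term using oddness of $K$, use the covariance $(1-\frac1N)\Id_n$ of $B^i-\barB^N$ for the correction, cancel the $\barB^N$ contribution via $\sum_iY^i=0$, and bound $\E\langle M^N\rangle_{t\wedge\tau_R}\le a_N^2R^{2q+1}t$ to get square-integrability.
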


\begin{proof}
Apply It\^o's formula to $N^{-1}\sum_{i=1}^N|Y^i|^2$.  For the drift, symmetrization gives
\begin{align*}
 \frac2{N^2}\sum_{i,k=1}^NY^i\cdot K(Y^k-Y^i)
 &=\frac1{N^2}\sum_{i,k=1}^N(Y^i-Y^k)\cdot K(Y^k-Y^i)
 =-\frac1{N^2}\sum_{i,k=1}^N|Y^i-Y^k|^{p+2}.
\end{align*}
The covariance matrix of $ B^i-\barB^N$ is $(1-1/N)\Id_n$, so the normalized sum of the It\^o corrections is
\[
 n\kappa^2(1-\rho^2)\Bigl(1-\frac1N\Bigr)(S^N)^{2q}\,\dd t.
\]
The martingale term is \eqref{eq:MN}.  Since $\sum_{i=1}^NY^i=0$, all cross terms coming from the projection onto the zero-sum subspace cancel and
\begin{align*}
 \dd\langle M^N\rangle_t
 &=\frac{4\kappa^2(1-\rho^2)}{N^2}(S_t^N)^{2q}
 \sum_{i=1}^N|Y_t^i|^2\,\dd t
 =\frac{4\kappa^2(1-\rho^2)}N (S_t^N)^{2q+1}\,\dd t.
\end{align*}
As $\psi(s)=\kappa s^q$ is locally bounded, for every $t,R<\infty$, continuity of $S^N$ gives
\[
 \E\langle M^N\rangle_{t\wedge\tau_R}
 \le \frac{4\kappa^2(1-\rho^2)}N\,R\,t\,\sup_{0\le s\le R}s^{2q}<\infty.
\]
Thus $M^N_{\cdot\wedge\tau_R}$ is a square-integrable martingale.
\end{proof}

The next estimate quantifies the coercivity of the $p$-dissipation, including the finite-size and dimension corrections used later. We will write $D_{p,N}=D_{p,N}(Y)$ for convenience of notation. 

\begin{lemma}\label{lem:D-bounds}
Let $Y=(Y^1,\dots,Y^N)$ be centered, set $\bar{p}:=(p+2)/2$ and let $d_N:=\min\{n,N-1\}$. 
Then
\begin{equation}\label{eq:D-upper-lower}
 \cA_{p,n,N}(S^N)^{\bar{p}}\le D_{p,N}(Y)
 \le 2^{\bar{p}}N^{p/2}(S^N)^{\bar{p}}, 
\end{equation}
where
\begin{equation}\label{eq:AeffN}
 \cA_{p,n,N}:= 2^{\bar{p}}
 \begin{cases}
 \left(1+\frac{1}{N-1}\right)^{p/2} ,&0\le p<2,\\[1mm]
 \max\left\{\left(1+\frac{1}{N-1}\right)^{p/2},\,
 (1+d_N^{-1})^{{\bar{p}}/2}\right\},&p\ge2,
 \end{cases}
\end{equation}
The constant in the lower bound of \eqref{eq:D-upper-lower}
is sharp whenever $n\ge N-1$. Moreover, $D_{0,N}=2S^N$, and for $p=2$,
\begin{equation}\label{eq:D2N-exact-lower}
D_{2,N}\ge4\left(1+\frac1{d_N}\right)(S^N)^2.
\end{equation}
\end{lemma}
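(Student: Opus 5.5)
The plan is to work throughout with the pairwise squared distances $d_{ik}:=|Y^i-Y^k|^2$. The starting point is the centering identity
\[
 \frac1{N^2}\sum_{i,k=1}^N|Y^i-Y^k|^2=\frac2N\sum_{i=1}^N|Y^i|^2-\frac2{N^2}\Bigl|\sum_{i=1}^N Y^i\Bigr|^2=2S^N,
\]
which already gives $D_{0,N}=2S^N$. Every bound will then come from a convexity (Jensen) inequality applied to $D_{p,N}=N^{-2}\sum_{i,k} d_{ik}^{\bar p}$, together with a suitable moment identity.

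\textbf{Upper bound.} I would bound the pairwise distances crudely by $d_{ik}\le 2(|Y^i|^2+|Y^k|^2)\le 2NS^N$, since each $|Y^i|^2\le NS^N$. Then
\[
 D_{p,N}\le\Bigl(\max_{i,k}d_{ik}\Bigr)^{p/2}\frac1{N^2}\sum_{i,k}d_{ik}\le (2NS^N)^{p/2}\cdot 2S^N=2^{\bar p}N^{p/2}(S^N)^{\bar p}.
\]

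\textbf{First lower bound (all $p\ge0$).} The diagonal terms vanish, so I would average only over the $N(N-1)$ off-diagonal pairs. On these pairs the mean of $d_{ik}$ is $2S^N N/(N-1)$. Since $\bar p\ge1$, Jensen gives
\[
 D_{p,N}\ge\frac{N-1}N\Bigl(\frac{2NS^N}{N-1}\Bigr)^{\bar p}=2^{\bar p}\Bigl(1+\frac1{N-1}\Bigr)^{p/2}(S^N)^{\bar p}.
\]
Equality in Jensen holds exactly when all off-diagonal $d_{ik}$ are equal, i.e.\ when the points form a regular simplex. Such a configuration exists in $\R^n$ precisely when $n\ge N-1$, which gives the sharpness claim.

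\textbf{Second lower bound ($p\ge2$).} This is the main step. Since $\bar p/2\ge1$, Jensen applied to all $N^2$ pairs gives $D_{p,N}\ge\bigl(N^{-2}\sum_{i,k}d_{ik}^2\bigr)^{\bar p/2}$. So it suffices to prove the fourth-moment bound $N^{-2}\sum_{i,k}|Y^i-Y^k|^4\ge4(1+d_N^{-1})(S^N)^2$; taking $p=2$ in this bound gives \eqref{eq:D2N-exact-lower} directly. I would expand
\[
 |Y^i-Y^k|^4=\bigl(|Y^i|^2+|Y^k|^2-2Y^i\cdot Y^k\bigr)^2 .
\]
The cross terms of the form $|Y^i|^2\,Y^i\cdot Y^k$ vanish after summing over $k$, by centering. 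What remains is
\[
 \frac2N\sum_i|Y^i|^4+2(S^N)^2+4\|\Sigma\|_F^2,\qquad \Sigma:=\frac1N\sum_i Y^i\otimes Y^i .
\]
Cauchy--Schwarz gives $N^{-1}\sum_i|Y^i|^4\ge(S^N)^2$. For the last term, $\Sigma$ is positive semidefinite with $\tr\Sigma=S^N$. Its rank is at most $\min\{n,N-1\}=d_N$, because the $Y^i$ span at most an $(N-1)$-dimensional space by the zero-sum constraint. Hence $\|\Sigma\|_F^2\ge (S^N)^2/d_N$ by Cauchy--Schwarz on the eigenvalues.

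Summing the three pieces gives $\ge 4(1+1/d_N)(S^N)^2$, and raising to the power $\bar p/2$ yields the second entry of the maximum in \eqref{eq:AeffN}. Taking the better of the two lower bounds completes the proof. The delicate points are the cancellation of the cross terms and the rank count, which produces $d_N$ rather than $n$.
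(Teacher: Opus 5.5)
Your proposal is correct and follows the paper's proof essentially step for step: the crude upper bound via $|Y^i-Y^k|^2\le 2NS^N$, Jensen on the $N(N-1)$ off-diagonal pairs with the regular simplex for sharpness, and, for $p\ge2$, monotonicity of $L^a$ norms reducing to the fourth-moment expansion $2N^{-1}\sum_i|Y^i|^4+2(S^N)^2+4\tr(\Sigma^2)$ with $\operatorname{rank}\Sigma\le d_N$. The paper only phrases this last step with a random variable uniform on $\{Y^1,\dots,Y^N\}$. One small fix: $d_{ik}\le 2NS^N$ does not follow from $|Y^i|^2\le NS^N$ alone, which only gives $4NS^N$; use instead $|Y^i|^2+|Y^k|^2\le\sum_j|Y^j|^2=NS^N$ for $i\ne k$, together with $d_{ii}=0$.
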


\begin{proof}
The upper bound follows from $|Y^i-Y^k|^2\le2NS^N$: 
\[
 D_{p,N}\le(2NS^N)^{p/2}\frac1{N^2}\sum_{i,k=1}^N|Y^i-Y^k|^2
 =2^{\bar{p}} N^{p/2}(S^N)^{\bar{p}}.
\]
For the lower bound we note that diagonal pairs do not contribute.  Since
\[
 \frac1{N(N-1)}\sum_{i\ne k}|Y^i-Y^k|^2
 =\frac{2N}{N-1}S^N,
\]
applying Jensen's inequality on the $N(N-1)$ off-diagonal pairs, we have for all $p\geq 0$
\begin{align*}
 D_{p,N}
 &=\frac{N-1}{N}\frac1{N(N-1)}\sum_{i\ne k}|Y^i-Y^k|^{p+2}
 \ge\frac{N-1}{N}\Big(\frac{2N}{N-1}S^N\Big)^{\bar{p}}
 =2^{\bar{p}}\Big(1+\frac{1}{N-1}\Big)^{p/2}(S^N)^{\bar{p}}.
\end{align*}
For the dimension-sensitive estimate, let $X$ be uniformly distributed on the set $\{Y^1,\dots,Y^N\}$, write $\E$ for the corresponding expectation and let $\widetilde X$ be an independent copy. We can then write $\Sigma_N:=\E[X\otimes X]$, and note that
\[\tr\Sigma_N=S^N,
 \qquad \operatorname{rank}\Sigma_N\le d_N.
\]
A direct expansion gives
\begin{equation*}
 D_{2,N}=\E|X-\widetilde X|^4
 =2\E|X|^4+2(S^N)^2+4\tr(\Sigma_N^2).
\end{equation*}
Now $\E|X|^4\ge(S^N)^2$, while Cauchy--Schwarz on the nonzero eigenvalues of $\Sigma_N$ yields
$\tr(\Sigma_N^2)\ge(S^N)^2/d_N$. Thus, \eqref{eq:D2N-exact-lower} is derived.  If $p\ge2$, then ${\bar{p}}\ge2$ and the monotonicity of $L^a$ norms applied to $R=|X-\widetilde X|^2$ gives
\[
 D_{p,N}=\E R^{\bar{p}}\ge(\E R^2)^{{\bar{p}}/2}
 \ge\Big[4\Big(1+\frac1{d_N}\Big)(S^N)^2\Big]^{{\bar{p}}/2},
\]
establishing the general lower bound.  
If $n\ge N-1$, place the $N$ points at the vertices of a centered regular simplex.  All $N(N-1)$ off-diagonal squared distances are equal, so Jensen's inequality on the off-diagonal pairs is an equality.  This proves sharpness of the constant in the lower bound of \eqref{eq:D-upper-lower} in that range. 
The remaining statements follow from these identities. 
\end{proof}

We record the limiting explicit coercivity constant, with $ \bar{p}=\frac{p+2}{2}$
\begin{equation}\label{eq:Aeff-MV}
 \cA_{p,n}:=\lim_{N\to\infty} \cA_{p,n,N}=
 \begin{cases}
 2^{\bar{p}},&0\le p<2,\\[1mm]
 2^{\bar{p}}\Big(1+\dfrac1n\Big)^{{\bar{p}}/2},&p\ge2.
 \end{cases}
\end{equation}

\begin{remark}\label{rem:nonradial-extension}
The power-law interaction has several structural properties that enter at different later stages.  Monotonicity,
\[
 (x-y)\cdot\bigl(K(x)-K(y)\bigr)\ge0,
\]
gives the favorable sign in the synchronous coupling of \autoref{sec:poc}.  Coercivity of $z\cdot K(z)$ by multiples of $|z|^{p+2}$ turns the variance identity into the scalar upper and lower comparisons used in the phase diagram.  Exact homogeneity of degree $p+1$ makes the normalized dissipation scale-free and permits the critical intrinsic time change.

These observations extend beyond radial interactions.  For example, suppose $K=\nabla W$, where $W$ is even, $C^2$ away from the origin, positively homogeneous of degree $p+2$, and there are constants $0<c_{\pm}$ such that
\[
 c_-|z|^{p+2}\le z\cdot K(z)\le c_+|z|^{p+2}.
\]
Then the relative interaction remains the gradient of a homogeneous potential, the critical time change again produces an autonomous diffusion on the relative sphere, and its reversible density is the corresponding Gibbs weight.  What is lost is the explicit power-law formula for the shape functional and hence the computable constants in \autoref{lem:D-bounds}.  Without homogeneity the angular dynamics are no longer autonomous after a single radial time change. Without gradient structure there is generally no explicit reversible Gibbs measure. And without coercivity scalar variance comparison need not determine a phase boundary.  Thus radiality itself is not essential, while homogeneity, gradient structure, and quantitative coercivity play distinct roles in the sharp results below.
\end{remark}

For later use, it is convenient to separate the exact normalized shape extrema from the explicit bounds in \autoref{lem:D-bounds}.  Let
\begin{equation}\label{def:HN}
  H_N:=\Big\{y=(y^1,\ldots,y^N)\in(\R^n)^N:\ \sum_{i=1}^Ny^i=0\Big\}   
\end{equation}
and let $\S(H_N)$ be the unit sphere in $H_N$.  Since $S^N(u)=1/N$ for $u\in\S(H_N)$, define
\begin{equation}\label{eq:shape-extrema-global}
 \Theta_N(u):=\frac{D_{p,N}(u)}{(S^N(u))^{\bar{p}}}=N^{\bar{p}}D_{p,N}(u),
 \qquad
 \cA_{p,n,N}^{\sharp}:=\min_{\S(H_N)}\Theta_N,
 \qquad
 \cB_{p,n,N}^{\sharp}:=\max_{\S(H_N)}\Theta_N.
\end{equation}
Both extrema are attained.  With
\begin{equation}\label{eq:BpN}
 \mathcal{B}_{p,N}:=2^{\bar{p}}N^{p/2},
\end{equation}
\autoref{lem:D-bounds} gives the exact hierarchy
\begin{equation}\label{eq:shape-extrema-explicit-bracket}
 \cA_{p,n,N}\le\cA_{p,n,N}^{\sharp}
 \le\cB_{p,n,N}^{\sharp}\le \mathcal{B}_{p,N}.
\end{equation}
The inner two constants are the optimal uniform lower and upper bounds for the normalized dissipation. The outer two constants are explicit.

\section{Finite-particle system: well-posedness and explosion}\label{sec:allq-particle}

Two independent mechanisms enter the well-posedness analysis.  The first is \emph{escape to infinity}, controlled by the competition in \eqref{eq:SNpower} between the attractive term of order $(S^N)^{\bar{p}}$ and the It\^o correction of order $(S^N)^{2q}$.  The second is \emph{degeneracy at consensus}: when $0<q<1/2$, the map
\[
 Y\longmapsto (S^N(Y))^q
\]
is not Lipschitz but only H\"older continuous at $Y=0$.  The two cases are treated separately below.

Set $ m_N:=n(N-1)$, $c:=n\kappa^2(1-\rho^2)$ and recall $a_N:=2\kappa\sqrt{\frac{1-\rho^2}{N}}$.
Thus $m_N$ is the dimension of the zero-sum relative configuration space and noting $c_N=c(1-1/N)$,
\begin{equation}\label{eq:cn-an-relation}
 c_N=\frac{m_N}{4}a_N^2.
\end{equation}
Recall also $\bar p =(p+2)/2$, and the normalized shape extrema and the explicit upper bound from \eqref{eq:shape-extrema-global}--\eqref{eq:BpN}.  Then
$\cA_{p,n,N}(S^N)^{\bar{p}}\le D_{p,N}\le \cB_{p,N}(S^N)^{\bar{p}}$.

We now state our main result on the well-posedness of the finite particle system.

\begin{theorem}\label{thm:particle-allq}
Assume $K(z)=|z|^pz$ for $z\in\R^n$, $p\ge0$, $\psi(s)=\kappa s^q$ for $s\ge0$, $q\ge0$, and that the initial data is deterministic.

\begin{enumerate}[label=\textnormal{(\roman*)}]
\item\label{item:particle-common} \textnormal{(Pure common noise: $\rho=1$.)} For every $q\ge0$ and every initial configuration, \eqref{eq:particle} has a unique global strong solution.  The relative configuration is deterministic and independent of $q$.

\item\label{item:particle-constant} \textnormal{(Constant 
noise: $q=0$, $\rho<1$.)}  For every initial configuration, \eqref{eq:particle} has a unique global strong solution.

\item\label{item:particle-holder} \textnormal{(Sub-Lipschitz noise away from consensus: $0<q<1/2$, $\rho<1$, $S_0^N>0$.)}
If $m_N\ge2$, the strong solution is unique and global, and
\begin{equation}\label{eq:positive-variance-finiteN}
 \P\left(S_t^N>0\text{ for all }t\ge0\right)=1.
\end{equation}
If $m_N=1$ (equivalently, $n=1$ and $N=2$), consensus occurs in finite time almost surely.  For $q\ge1/4$, pathwise uniqueness holds and the solution is absorbed at consensus after this hitting time.  For $0<q<1/4$, uniqueness holds only up to the hitting time and there are infinitely many weak continuations afterwards unless a boundary selection is imposed.

\item\label{item:particle-lipschitz} \textnormal{(Locally Lipschitz noise: $q\ge1/2$, $\rho<1$.)}
For every initial configuration, there is a unique maximal strong solution up to explosion.  It is global if
\begin{equation*}
 2q<\bar{p} ,\
 \text{ or }
 2q=\bar{p} \text{ and either } p=0\ \text{or}\
 \left(p>0\ \text{ and }\ c_N\le \cA_{p,n,N}^{\sharp}+\frac12a_N^2\right).
\end{equation*}
If $2q>\bar{p} $ and $S_0^N>0$, it is global almost surely for $m_N\le2$ and explodes in finite time with positive probability for $m_N>2$.

\item\label{item:particle-consensus} \textnormal{(Consensus initial data: $0<q<1/2$, $\rho<1$, $S_0^N=0$.)}
The constant configuration is absorbing.  If $m_N\ge2$, uniqueness in law fails.  If $m_N=1$, uniqueness fails for $0<q<1/4$, while pathwise uniqueness holds for $1/4\le q<1/2$.
\end{enumerate}
\end{theorem}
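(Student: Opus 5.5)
The proof runs through the center--shape decomposition of \autoref{prop:decomposition}: it suffices to solve the relative system \eqref{eq:relativeSDE} on $H_N$ and then obtain $\barX^N$ by integrating \eqref{eq:centerSDE}. The variance is handled with the scalar identity \eqref{eq:SNpower}, comparing its drift against the bounds in \eqref{eq:shape-extrema-explicit-bracket}.

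\emph{Parts (i)--(ii).} For (i), with $\rho=1$ the noise in \eqref{eq:relativeSDE} vanishes. The relative configuration therefore solves the deterministic ODE $\dot Y^i=N^{-1}\sum_k K(Y^k-Y^i)$, which does not involve $q$. Its drift is locally Lipschitz, since $K\in C^1$ for $p\ge0$, and $S^N$ is nonincreasing, so the ODE solution is global. Then $\barX^N$ is a stochastic integral of the continuous deterministic integrand $\psi(S^N_t)$. For (ii), the coefficients are locally Lipschitz and the noise is additive. Moreover \eqref{eq:SNpower} gives $\E S^N_{t\wedge\tau_R}\le S^N_0+c_Nt$, so $\tau_R\to\infty$ and the solution is global.

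\emph{Part (iv).} For $q\ge1/2$, the map $Y\mapsto(S^N)^q$ is locally Lipschitz, so standard localization gives a unique maximal strong solution. Nonexplosion follows from Lyapunov arguments on \eqref{eq:SNpower}.
\begin{itemize}[label=$\cdot$]
\item If $2q<\bar p$, the lower bound $D_{p,N}\ge\cA_{p,n,N}(S^N)^{\bar p}$ makes the drift negative for large $S^N$, so $\E S^N_{t\wedge\tau_R}$ stays bounded and $\tau_R\to\infty$.
\item If $2q=\bar p$, use $V=\log S^N$ while $S^N>0$. Its drift is $\bigl(-\Theta_N(U)+c_N-\tfrac12a_N^2\bigr)(S^N)^{2q-1}$, which is bounded above by $(c_N-\cA^\sharp_{p,n,N}-\tfrac12a_N^2)(S^N)^{2q-1}\le0$ under the stated condition. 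Also $d\langle V\rangle=a_N^2(S^N)^{2q-1}dt$. A time change by $\int(S^N)^{2q-1}$ shows that $V$ is dominated by a Brownian motion with nonpositive drift, so it cannot reach $+\infty$ in finite time.
\item If $p=0$, then $D_{0,N}=2S^N$, the variance equation is a closed geometric-type SDE, and it is global.
\item If $2q>\bar p$, compare $S^N$ with the one-dimensional diffusions whose drifts are $-\cB(S)^{\bar p}+c_NS^{2q}$ and $-\cA S^{\bar p}+c_NS^{2q}$, both with diffusion coefficient $a_NS^{q+1/2}$. Feller's test gives the answer. For large $S$ the scale density behaves like $S^{-2c_N/a_N^2}=S^{-m_N/2}$ by \eqref{eq:cn-an-relation}, so $+\infty$ is at finite scale distance exactly when $m_N>2$. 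The speed-measure integral then converges because $2q>\bar p$ means the noise is superlinear. This makes explosion accessible when $m_N>2$ and inaccessible when $m_N\le2$. The lower-order $-D$ term does not affect this, although it needs checking that the comparison survives the shape dependence; the dependence enters only through a drift bounded between the two bracketing drifts, so a comparison theorem for one-dimensional SDEs should apply.
\end{itemize}

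\emph{Parts (iii) and (v).} Away from consensus the coefficients are locally Lipschitz, so the solution is unique up to $\tau_0=\inf\{t:S^N_t=0\}$. The main obstacle is showing that $\tau_0=\infty$ when $m_N\ge2$. Take $V=(S^N)^{1-2q}$, which behaves like the scale function of a Bessel-type process. Its drift is
\[
(1-2q)\bigl[-D(S^N)^{-2q}+c_N-q\,a_N^2\bigr].
\]
Using $D\le\cB(S^N)^{\bar p}$ and $\bar p-2q>0$, the $D$ term is negligible near zero. The leading constant is positive iff $c_N/a_N^2>q$, that is $m_N/4>q$; this holds for $m_N\ge2$ since $q<1/2$. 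More precisely, comparing with a squared Bessel process of dimension $m_N$, after a time change, shows that $0$ is not hit when $m_N\ge2$ but is hit when $m_N=1$. Because the $D$ term is attractive and of higher order, I would do this comparison on the small-$S$ region via Feller's test for the bracketing one-dimensional processes. Globality for large $S$ follows as in (ii), since $2q<1\le\bar p$.

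For $m_N=1$, $S$ hits zero in finite time almost surely. The uniqueness questions after $\tau_0$, and in (v), reduce to a Yamada--Watanabe analysis of the one-dimensional relative coordinate $Z=Y^1-Y^2$, with $dZ=-|Z|^pZ\,dt+c|Z|^{2q}dW$. The diffusion coefficient $|Z|^{2q}$ is Hölder of order $2q$, and this exponent is at least $1/2$ exactly when $q\ge1/4$, which gives pathwise uniqueness. For $q<1/4$, the point $0$ is regular for the process, and the Engelbert--Schmidt zero-time-at-zero criterion ($\int_{0}|z|^{-4q}dz<\infty$) gives infinitely many weak continuations, for example sticky or excursion-restarting ones, beside the absorbed solution. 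For (v) with $m_N\ge2$, nonuniqueness in law would come from exhibiting, besides the constant solution, a solution that leaves $0$. I would build it as the radial process $R=\sqrt{NS}$ started from $0$, which is an entrance point by the same scale and speed computation, together with an independent angular diffusion on $\S(H_N)$, and then show it is a weak solution.
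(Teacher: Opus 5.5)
\paragraph{Parts (i)--(iv).} Your arguments here are sound and largely follow the paper. Where they differ, the routes are equivalent.

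In the critical case of (iv), you use $\log S^N$ with the intrinsic clock $\int(S^N)^{2q-1}\dd t$. The paper instead compares $Z=(S^N)^{-p/2}$ with a squared Bessel process under the Feller condition. Both encode the same criterion $c_N\le\cA^\sharp_{p,n,N}+\tfrac12a_N^2$. With your route you should say explicitly why the intrinsic clock cannot diverge at a finite physical time. Since $\dd t=e^{-(p/2)V_\tau}\dd\tau$ and $V_\tau\le V_0+a_NW_\tau$, the inverse clock diverges because $W$ spends infinite time in $(-\infty,0]$.

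In (iii), your squared-Bessel comparison replaces the paper's Lyapunov functions $s^{-\eta}$ and $\log(R/s)$. The relevant dimension is $m_N$ for $S^N$ after the time change $\dd\tau\propto(S^N)^{2q}\dd t$, or equivalently $(m_N-4q)/(1-2q)$ for $(S^N)^{1-2q}$. Either way the threshold is $m_N\ge2$, and the dissipation term is negligible at the boundary. Your Yamada--Watanabe/Engelbert--Schmidt treatment of $m_N=1$ is also fine, because in one dimension the drift is removed by passing to natural scale.

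\paragraph{Gap in part (v) for $m_N\ge2$.} The problem is your construction of a nontrivial solution started from consensus. For $p>0$ the relative drift $-\nabla\cV_{p,N}$ is homogeneous but not radial. In polar coordinates the radial equation contains $-(p+2)\cV_{p,N}(U)R^{p+1}$, and the angular equation contains the tangential drift $-R^{p}\nabla_{\S}\cV_{p,N}(U)$. This has three consequences.
\begin{itemize}
\item $R=\sqrt{NS^N}$ is not an autonomous one-dimensional diffusion, so there is no scale/speed entrance computation for it. The bracketing processes do have one, but they cannot be used to build a solution.
\item Gluing a radial process to an independent angular diffusion does not solve \eqref{eq:relativeSDE}.
\item You would also need an entrance law for the angle, since the angular clock diverges at $R=0$.
\end{itemize}
Your skew product is correct only for $p=0$, where $\cV_{0,N}\equiv1/2$ on $\S(H_N)$.

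The missing step is the paper's two-stage construction, and in $m_N\ge2$ dimensions there is no scale transformation to do it for you.
\begin{enumerate}
\item Build a nontrivial solution of the driftless equation $\dd Z=\sigma_0|Z|^{2q}\dd W$ on $H_N\cong\R^{m_N}$. Take $Z_t=W_{T_t}$, where $W$ is an $m_N$-dimensional Brownian motion from $0$ and $T$ is the inverse of $A_u=\sigma_0^{-2}\int_0^u|W_v|^{-4q}\dd v$. This clock is finite because $4q<2\le m_N$ and $2q<1$, and $W$ never returns to $0$.
\item Insert the drift $\tilde b(z)=O(|z|^{p+1})$ by a localized Girsanov change of measure. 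This is legitimate because $|\tilde b(z)|/(\sigma_0|z|^{2q})\le C|z|^{p+1-2q}$ is bounded on balls.
\end{enumerate}
The equivalent measure keeps the solution nontrivial, so together with $Y\equiv0$ this shows that uniqueness in law fails.
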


\begin{proof}
Although mostly standard, we give a full proof for completeness. 

For part~\ref{item:particle-common}, \autoref{prop:decomposition} gives
\[
 \dY_t^i=\frac1N\sum_{k=1}^N K(Y_t^k-Y_t^i)\dt,
\]
so the relative system is a deterministic ODE with a locally Lipschitz vector field.  Its squared radius is $NS_t^N$, and \autoref{prop:variance-general} reduces to
\[
 \frac{\dd}{\dd t}S_t^N=-D_{p,N}(Y_t)\le0.
\]
Thus the relative configuration cannot blow up.  Once $Y$ is known, the center is the stochastic integral
\[
 \barX_t^N=\barX_0^N+\kappa\int_0^t(S_s^N)^q\dB_s^0,
\]
with a continuous deterministic integrand.  This proves global strong existence and uniqueness for every $q\ge0$.

For part~\ref{item:particle-constant}, the diffusion coefficient is constant.  The drift is locally Lipschitz and the variance equation satisfies, by \autoref{lem:D-bounds},
\[
 \dd S_t^N\le\left[-\cA_{p,n,N}(S_t^N)^{\bar{p}}+c_N\right]\dt+\dd M_t^N.
\]
Stopping at $\tau_R: =\inf\{t \,|\, S_t^N\geq R\}$ and taking expectations in the variance equation gives
\[
\E[S^N_{t\wedge\tau_R}] +\cA_{p,n,N}\E\int_{0}^{t\wedge\tau_R}(S_s^N)^{\bar{p}}\dd s \leq S_0^N + c_N t,
\]
since the martingale term vanishes and $(S^N)^{2q} = 1$ for $q=0$. In particular,
\[
\E[S^N_{t\wedge\tau_R}] \leq S_0^N + c_N t,
\]
which is independent of $R$. Since $S^N_{T\wedge\tau_R}=R$ on $\{\tau_R\le T\}$,
\[
 \P(\tau_R\le T)\le\frac{S_0^N+c_NT}{R}\rightarrow0 \quad \text{ as } R\to \infty.
\]
This rules out radial explosion. By Fatou's lemma we obtain
$\sup_{t\le T}\E S_t^N\le S_0^N+c_NT$ for every finite $T$.
The center has quadratic variation bounded by a constant multiple of $t$.  Since
\[
 \frac1N\sum_{i=1}^N|X_t^i|^2=|\barX_t^N|^2+S_t^N,
\]
the standard stopping-time criterion rules out explosion (see, for example \cite[Section 5.5]{KaratzasShreve1991}).  Local Lipschitz continuity then yields global pathwise uniqueness.

We next prove part~\ref{item:particle-holder}. On the open set $\{S^N>0\}$ the coefficient $Y\mapsto(S^N(Y))^q$ is smooth, so there is a unique strong solution up to the first hitting time of consensus or explosion.  Because $2q<1\le \bar{p} $, for any $a>0$ Young's inequality gives 
\[
c_N s^{2q}\leq a s^{\bar{p}} + C_{a}, \quad \text{ with } C_{a}:= c_N^{\bar{p} /(\bar{p} -2q)}a^{-2q/(\bar{p} -2q)}C_{p,q}.
\]
Choosing $a = \cA_{p,n,N}/2, C=C_a$, we have
\[
 -\cA_{p,n,N}s^{\bar{p}}+c_Ns^{2q}\le -a s^{\bar{p}}+C.
\]
Then we use the same stopping time argument for the variance estimate as for part ~\ref{item:particle-constant}  to conclude that explosion is impossible on every finite time interval. 

We next prove that consensus is not reached when $m_N\ge2$.
Fix $R>S_0^N$ and $0<\e<S_0^N$, and set
\[
 \tau_R:=\inf\{t\ge0:S_t^N\ge R\},\qquad
 \tau_\e:=\inf\{t\ge0:S_t^N\le\e\}.
\] The upper estimate in \autoref{lem:D-bounds} gives
\begin{equation}\label{eq:D-upper-hit}
 D_{p,N}(Y)\le 2^{\bar{p}}N^{p/2}(S^N)^{\bar{p}}.
\end{equation}
First suppose $m_N>2$.  Choose a constant $\eta$ such that
\[
 0<\eta<\frac{m_N}{2}-1
\]
and apply It\^o's formula to $f(s)=s^{-\eta}$ for $ s\in [\e, R]$.  From \eqref{eq:SNpower}, \eqref{eq:bracket-power}, and \eqref{eq:cn-an-relation}, the drift of $f(S^N)$ is, writing in terms of the generator $\cL$ of the diffusion,
\begin{align*}
 \cL f(S^N)
 &=\eta D_{p,N}(Y)(S^N)^{-\eta-1}
 +\eta\Big[-c_N+\frac{\eta+1}{2}a_N^2\Big](S^N)^{2q-\eta-1}.
\end{align*}
Using \eqref{eq:cn-an-relation}, the coefficient of the second term can be written explicitly as
\[
 -c_N+\frac{\eta+1}{2}a_N^2
 =\frac{a_N^2}{4}\bigl(2(\eta+1)-m_N\bigr)<0.
\]
Dropping this favorable term and using \eqref{eq:D-upper-hit}, we obtain that on $\{S^N\le R\}$,
\[
 \cL f(S^N)\le C_R\bigl(1+f(S^N)\bigr).
\]
Thus $f$ is a Lyapunov function, so taking expectations and using Dynkin's formula and Gr\"onwall's inequality give a bound, independent of $\e$, on the expectation of $(S^N)^{-\eta}$ up to the stopping time $T\wedge\tau_R\wedge\tau_{\e}$.  Consequently
\[
 \P\bigl(\tau_\e\le T,\ \tau_R>T\bigr)\le C_{T,R}\e^{\eta}.
\]
The non-explosion estimate proved at the beginning of part~\ref{item:particle-holder} gives $\P(\tau_R\le T)\to0$ as $R\to\infty$.  Hence, first letting $\e\to0$ at fixed $R$ and then $R\to\infty$ proves that zero is not hit in finite time.

When $m_N=2$, use the nonnegative function $f_R(s)=\log(R/s)$ on $(0,R]$. Applying It\^o's formula gives
\[
 \cL(-\log S^N)
 =\frac{D_{p,N}(Y)}{S^N}
 +\Big(-c_N+\frac12a_N^2\Big)(S^N)^{2q-1}.
\]
Since \eqref{eq:cn-an-relation} becomes $c_N=a_N^2/2$, the singular terms cancel exactly and
\begin{equation*}
 \cL(f_R(S^N))=\frac{D_{p,N}(Y)}{S^N}\le C_R \quad \text{ on } \{S^N\le R\}.
\end{equation*}
 Optional stopping gives 
 \[
 \log(R/\e)\P(\tau_\e\le T\wedge\tau_R)\le f_R(S_0^N)+C_RT. 
 \]
 Hence, as $\e\to0$,
\[
 \P\bigl(\tau_\e\le T,\ \tau_R>T\bigr)
 \le\frac{C_{T,R}}{|\log\e|}\rightarrow 0.
\]
This proves \eqref{eq:positive-variance-finiteN} for $m_N\ge2$ and therefore global pathwise uniqueness.

If $m_N=1$, write $Z_t=X_t^1-X_t^2$.  Then
\begin{equation*}
 \dd Z_t=-|Z_t|^pZ_t\dt
 +\sigma_q|Z_t|^{2q}\dd W_t,
 \qquad
 \sigma_q:=\kappa 2^{1/2-2q}\sqrt{1-\rho^2}.
\end{equation*}
On $(0,\infty)$ a straightforward calculation gives a scale density as
\begin{equation}\label{eq:exception-scale}
 \mathfrak s'(x)=
 \exp\Big(\frac{2}{\sigma_q^2(p+2-4q)}x^{p+2-4q}\Big).
\end{equation}

As $q<1/2$, the exponent $p+2-4q$ is positive.  Hence $\mathfrak s(0)>-\infty$ and $\mathfrak s(\infty)=\infty$.  The corresponding speed density is asymptotic to $Cx^{-4q}$ at zero, and
\[
 \int_0^1(\mathfrak s(x)-\mathfrak s(0))x^{-4q}\,\dd x<\infty
\]
exactly because $q<1/2$.  Thus zero is accessible, while infinity is not reached before zero. The standard hitting formula in scale therefore gives
\begin{equation*}
 \P_z(\tau_0<\infty)=1,\qquad z\ne0.
\end{equation*}
The Yamada--Watanabe criterion \cite{YamadaWatanabe1971} gives pathwise uniqueness when the diffusion coefficient is at least $1/2$-H\"older, that is, when $2q\ge 1/2$ or $q\ge 1/4$.  Since both coefficients vanish at zero, the unique continuation is then the absorbing one.  For $0<q<1/4$, pathwise uniqueness fails at zero.  A nontrivial weak solution leaving zero is constructed by the time change used below in part~\ref{item:particle-consensus}, and concatenating it after an arbitrary deterministic waiting time yields infinitely many weak continuations.  This proves the exceptional statement in part~\ref{item:particle-holder}.

For part~\ref{item:particle-lipschitz}, $2q\ge 1$ implies that $Y\mapsto(S^N(Y))^q$ is locally Lipschitz on the whole relative space, hence local strong existence and pathwise uniqueness hold up to explosion.  If $2q< \bar{p}$, localization in \eqref{eq:SNpower} and \autoref{lem:D-bounds} gives
\[
 \E S_{t\wedge\tau_R}^N
 +a\E\int_0^{t\wedge\tau_R}(S_s^N)^{\bar{p}}\ds
 \le S_0^N+Ct
\]
for constants independent of $R$, and the usual stopping argument rules out explosion.

Consider the critical case $2q=\bar{p}$.  If $p=0$, then $\bar{p} =1$ and the relative coefficients have at most linear growth, so the maximal strong solution is global for every $c_N$.  Suppose $p>0$ and put $\gamma=p/2=\bar{p} -1$.  On the set $S^N>0$, the martingale representation theorem and \eqref{eq:bracket-power} give a one-dimensional Brownian motion $\beta$ such that
\[
 \dd S_t^N=\bigl[-\theta_t+c_N\bigr](S_t^N)^{\bar{p}}\dt
 +a_N(S_t^N)^{(\bar{p} +1)/2}\dd\beta_t,
 \qquad
 \theta_t:=\frac{D_{p,N}(Y_t)}{(S_t^N)^{\bar{p}}}\ge\cA_{p,n,N}^{\sharp}.
\]
For $Z_t=(S_t^N)^{-\gamma}$, It\^o's formula yields, up to the first hitting time of zero or infinity,
\begin{equation*}
 \dd Z_t
 =\gamma\left(\theta_t-c_N+\frac{\bar{p}}{2}a_N^2\right)\dt
 -\gamma a_N\sqrt{Z_t}\,\dd\beta_t.
\end{equation*}
Let
\[
 b:=\gamma\left(\cA_{p,n,N}^{\sharp}-c_N+\frac{\bar{p}}{2} a_N^2\right).
\]
If $c_N\le\cA_{p,n,N}^{\sharp}+a_N^2/2$, then $2b\ge\gamma^2a_N^2$. We compare $Z$ with the squared-Bessel-type diffusion $U$, driven by a Brownian motion $W$,
\[
 \dd U_t=b\,\dt+\gamma a_N\sqrt{U_t}\,\dd W_t.
 \qquad U_0=Z_0,
\]
The origin is polar for $U$ under the Feller condition $2b\ge\gamma^2a_N^2$ (see, for example, \cite[Section~5.5]{KaratzasShreve1991}).  To make the comparison completely local, take $W=-\beta$ and stop at
$\tau_{\e,R}:=\inf\{t:U_t\notin(\e,R)\text{ or }Z_t\notin(\e,R)\}$.
On this interval both square-root coefficients are Lipschitz.  Since the drift of $Z$ is at least $b$, Tanaka's formula for $(U-Z)_+$ \cite[Chapter~3]{KaratzasShreve1991}, taking expectations, followed by a standard Gr\"onwall argument, gives $U_{t\wedge\tau_{\e,R}}\le Z_{t\wedge\tau_{\e,R}}$.  Letting first $\e\to 0$ and then $R\to\infty$ yields $U\le Z$ up to the lifetime.  (Under strict inequality the drift of $Z$ is positive, so $Z$ is locally a submartingale. No supermartingale argument is being used.)  Hence $S^N$ cannot explode.  If $S^N$ hits zero instead, the relative process is absorbed there by pathwise uniqueness, so this also gives a global solution.  The center is then nonexplosive because its quadratic variation is finite on every interval on which $S^N$ is bounded.  This proves the critical assertion.  The supercritical case $2q>\bar{p} $ is treated separately in \autoref{thm:supercritical-dimension}.

We finish with part~\ref{item:particle-consensus}.  In orthonormal coordinates on the relative subspace, the diffusion near consensus has the form
\begin{equation}\label{eq:relative-local-form}
 \dd Z_t=\tilde{b}(Z_t)\dt+\sigma_0|Z_t|^{\alpha}\dd W_t,
 \qquad \alpha=2q\in(0,1),
\end{equation}
where $\tilde{b}(Z)=O(|Z|^{p+1})$ and $\sigma_0>0$.  The identically zero path is one weak solution.  To construct a nontrivial one for $m_N\ge2$, start an $m_N$-dimensional Brownian motion $W$ from zero and set
\[
 A_u:=\sigma_0^{-2}\int_0^u|W_v|^{-2\alpha}\,\dd v.
\]
Since $m_N\ge2$, Brownian motion leaves the origin immediately and does not return to it, and $A_u<\infty$ for every sufficiently small $u>0$ because $\alpha<1$.  If $T_t$ is the right-continuous inverse of $A$, then $Z_t=W_{T_t}$ is a nontrivial weak solution of the driftless equation $\dd Z=\sigma_0|Z|^\alpha\dd W$.  On a bounded ball,
\[
 \frac{|\tilde{b}(z)|}{\sigma_0|z|^\alpha}\le C|z|^{p+1-\alpha}
\]
is bounded because $p+1-\alpha>0$. Define a stopping time to be the first time of leaving that ball or reaching a deterministic time horizon.  The integrand in Girsanov's Theorem is then bounded, so Novikov's condition 
holds for the stopped exponential martingale.  The stopped change of measure in \cite[Theorem~3.5.1]{KaratzasShreve1991} therefore inserts the drift $\tilde{b}$ and gives a nontrivial weak solution of \eqref{eq:relative-local-form}. Localization removes the auxiliary deterministic horizon.  Thus uniqueness in law fails.  In dimension one the same time-change is valid exactly when $\alpha<1/2$, i.e. $q<1/4$, while the Yamada--Watanabe criterion gives uniqueness for $q\ge1/4$.  This completes the proof.
\end{proof}

\begin{remark}\label{rem:allq-three-issues}
For $0<q<1/2$, the assertion of global uniqueness from positive variance when $m_N\ge2$ does not contradict nonuniqueness at exact consensus: the singular state is polar for a solution started away from it, but it admits more than one weak continuation when it is taken as initial data.  This distinction is important in both the finite-particle and mean-field theories.

Whenever the paper needs a single finite-particle process on any such nonunique consensus event, we select the absorbing continuation $Y_t\equiv0$ from the first consensus time onward.  This is a boundary selection rule, not a uniqueness statement.
\end{remark}

The proof also records a quantitative finite-horizon lower-tail estimate in the non-Lipschitz regime.  If $0<q<1/2$, $m_N>2$, $S_0^N>0$, and $0<\eta<m_N/2-1$, then for every $T,R<\infty$,
\begin{equation*}
 \P\left(\inf_{t\le T}S_t^N\le\e,\ \sup_{t\le T}S_t^N<R\right)
 \le C_{T,R,\eta}\e^\eta.
\end{equation*}
For $m_N=2$ the right-hand side is $C_{T,R}/|\log\e|$.  Together with the upper-radius tightness used in \autoref{thm:particle-allq}, these estimates imply that $\inf_{t\le T}S_t^N$ is bounded away from zero in probability for every fixed $T$.  They do not give, and one should not expect, a deterministic almost-sure floor of the form $cN^{-\alpha}$: the relative diffusion is nondegenerate away from consensus and can make arbitrarily deep excursions with positive probability.

The exceptional relative dimension $m_N=1$ admits a complete boundary classification.

\begin{corollary}\label{cor:two-particle}
Assume $n=1$, $N=2$, $\rho<1$, $S_0^2>0$, and $0<q<1/2$.  Then consensus occurs in finite time almost surely.  For $1/4\le q<1/2$, the unique strong solution is absorbed at consensus. For $0<q<1/4$, the solution is unique only up to the consensus time and requires a boundary selection thereafter.
\end{corollary}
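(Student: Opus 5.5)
The plan is to reduce to the scalar difference process and then read off the boundary behaviour from Feller's test, as in the $m_N=1$ part of \autoref{thm:particle-allq}\ref{item:particle-holder}.

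\textbf{Reduction to one dimension.} With $n=1$ and $N=2$ we have $Y^1=-Y^2=Z/2$, where $Z=X^1-X^2$, and $S^2=Z^2/4$. By \autoref{prop:decomposition},
\[
\dd Z_t=-|Z_t|^pZ_t\,\dt+\sigma_q|Z_t|^{2q}\,\dd W_t,
\qquad W=(B^1-B^2)/\sqrt2 .
\]
This is a one-dimensional Brownian motion. The coefficients are smooth on $\{Z\neq0\}$, so there is a unique strong solution up to the first hitting time of $0$ or of $\pm\infty$. Because the drift and the noise are both odd/even symmetric, it suffices to study $|Z|$ on $(0,\infty)$ up to $\tau_0$.

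\textbf{Boundary classification.} I would use the scale density \eqref{eq:exception-scale}. Since $p+2-4q>0$, we get $\mathfrak s(0+)>-\infty$ and $\mathfrak s(\infty)=\infty$. The speed density is $m(x)=2/(\sigma_q^2x^{4q}\mathfrak s'(x))$, which behaves like $Cx^{-4q}$ near $0$. It is integrable there exactly because $4q<2$, so Feller's test shows that $0$ is accessible (regular or exit). Since $\mathfrak s(\infty)=\infty$, the boundary at infinity is not reached. The standard formula
\[
\P_z(\tau_0<\tau_b)=\frac{\mathfrak s(b)-\mathfrak s(z)}{\mathfrak s(b)-\mathfrak s(0)},
\]
with $b\to\infty$, then gives $\P_z(\tau_0<\infty)=1$. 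To show the hitting time is finite and not merely reached asymptotically, I also need $\tau_0<\infty$ rather than only $\lim Z=0$. This follows from the integrability $\int_0(\mathfrak s(x)-\mathfrak s(0))m(x)\,\dd x<\infty$ together with the recurrence-type argument: the process cannot escape to $\infty$, and it hits $0$ with probability one.

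\textbf{Behaviour after consensus.} For $q\ge1/4$ the diffusion coefficient $|z|^{2q}$ is $1/2$-Hölder and the drift is locally Lipschitz, so Yamada--Watanabe \cite{YamadaWatanabe1971} gives pathwise uniqueness for all times. Since $Z\equiv0$ solves the equation from $0$, the unique solution is absorbed there, and hence so is the strong solution of \eqref{eq:particle}'s relative system. The centre then follows from \eqref{eq:centerSDE} with $\psi(S^N)=0$ after $\tau_0$. For $0<q<1/4$, uniqueness holds up to $\tau_0$ by the local smoothness argument. After $\tau_0$, I would invoke the construction of \autoref{thm:particle-allq}\ref{item:particle-consensus} in dimension one. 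This is a time change of Brownian motion, with $A_u$ finite because $\int_0^u|W_v|^{-4q}\,\dd v<\infty$ when $4q<1$ (occupation-time formula), followed by a Girsanov change of measure to insert the drift. Concatenating this solution after arbitrary waiting times gives distinct continuations, so a boundary selection is needed.

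The main obstacle I expect is the sharpness at $q=1/4$: showing that the time-change continuation genuinely fails, or is irrelevant, there. This is handled by Yamada--Watanabe uniqueness itself, so the only delicate point is checking that $0$ is regular (not exit) for $q<1/4$. I would verify that by checking that the speed measure near $0$ and the time-change integral are both finite precisely when $4q<1$.
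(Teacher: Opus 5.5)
Your proposal is correct and is essentially the paper's argument: the corollary is the $m_N=1$ case in the proof of part~\ref{item:particle-holder} of \autoref{thm:particle-allq}, which uses the scale density \eqref{eq:exception-scale}, accessibility of $0$ with non-attainment of $\infty$, and Yamada--Watanabe for $q\ge1/4$; for $q<1/4$ it adds the one-dimensional time-change and Girsanov construction from part~\ref{item:particle-consensus}, concatenated after arbitrary waiting times. One slip to correct: the speed density $\sim Cx^{-4q}$ is integrable at $0$ only for $q<1/4$, which is exactly the regular-versus-exit distinction you invoke at the end, so accessibility on the whole range $q<1/2$ must come from $\int_0(\mathfrak s(x)-\mathfrak s(0))x^{-4q}\,\dd x<\infty$ (as you correctly use in the next step), not from integrability of the speed density itself.
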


At critical homogeneity we shall use polar coordinates in the relative configuration.  The following non-attainment statement makes that reduction legitimate independently of the sign of the eventual radial drift.

\begin{lemma}\label{lem:critical-nonattainment}
Assume $p>0$, $\rho<1$, $q=(p+2)/4$, and $S_0^N>0$.  Let $\tau_N^{\rm expl}$ be the lifetime of the maximal particle solution.  Then
\begin{equation}\label{eq:critical-zero-polar}
 \P\left(S_t^N>0\text{ for every }0\le t<\tau_N^{\rm expl}\right)=1.
\end{equation}
In particular, consensus cannot be reached before explosion.
\end{lemma}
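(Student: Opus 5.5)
Since $q=(p+2)/4\ge 1/2$ (indeed $q>1/2$ as $p>0$), the relative coefficients are locally Lipschitz on the whole relative space. By \autoref{thm:particle-allq}\ref{item:particle-lipschitz} there is a unique maximal strong solution up to $\tau_N^{\rm expl}$, and zero is a fixed point of the relative dynamics. The plan is therefore to show that $S^N$ does not reach zero in finite time before $\tau_N^{\rm expl}$. We work with $\log S^N$ rather than with a power of $S^N$, because at critical homogeneity every term in the variance equation scales like $(S^N)^{\bar p}$. Concretely, writing $2q=\bar p$, \eqref{eq:SNpower} and \eqref{eq:bracket-power} give, on $\{S^N>0\}$,
\[
 \dd\log S_t^N=\Bigl(-\theta_t+c_N-\tfrac12a_N^2\Bigr)(S_t^N)^{\bar p-1}\dt+\dd M^N_t/S^N_t,
 \qquad \theta_t:=\frac{D_{p,N}(Y_t)}{(S_t^N)^{\bar p}}\in\bigl[\cA^\sharp_{p,n,N},\cB^\sharp_{p,n,N}\bigr],
\]
where the local martingale part has quadratic variation $a_N^2(S_t^N)^{\bar p-1}\dt$. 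Since $\bar p-1=p/2>0$, both the drift and the quadratic variation of $\log S^N$ vanish as $S^N\to0$.

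Fix $0<\e<S_0^N<R$ and the stopping times $\tau_\e,\tau_R$ as in the proof of part~\ref{item:particle-holder}. We take $f(s)=\log(R/s)\ge0$ on $(0,R]$. On $\{S^N\le R\}$ the generator satisfies
\[
 \cL f(S^N)=\Bigl(\theta-c_N+\tfrac12a_N^2\Bigr)(S^N)^{p/2}\le\bigl(\cB_{p,N}+\tfrac12a_N^2\bigr)R^{p/2}=:C_R,
\]
a bounded quantity. The stopped local martingale is a true martingale because its bracket is bounded by $a_N^2R^{p/2}t$. Optional stopping at $T\wedge\tau_\e\wedge\tau_R$ then gives
\[
 \log(R/\e)\,\P(\tau_\e\le T\wedge\tau_R)\le\log(R/S_0^N)+C_RT.
\]
Hence $\P(\tau_\e\le T,\ \tau_R>T)\le C_{T,R}/|\log\e|\to0$ as $\e\to0$. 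This is exactly the $m_N=2$ argument, but here it works for every $m_N$, because the factor $(S^N)^{p/2}$ removes the singularity instead of relying on an exact cancellation. Letting $\e\to0$ shows that zero is not hit on $[0,T]$ before $\tau_R$.

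It remains to remove the upper localization, and this is the only delicate point, since explosion is allowed. We use the identity $\{\tau_0<\tau_N^{\rm expl}\}=\bigcup_{R,T\in\N}\{\tau_0\le T,\ \tau_0<\tau_R\}$, where $\tau_0$ is the hitting time of zero. This holds because, on $\{\tau_0<\tau_N^{\rm expl}\}$, the path $S^N$ is continuous on $[0,\tau_0]$ and hence bounded there. Each event in the union is null by the previous step, so there is no need for any non-explosion estimate, and the sign of the radial drift plays no role. The main obstacle is thus already resolved by the observation that the logarithmic drift is bounded near zero. One also checks that the relative radius can only vanish through $S^N\to0$, so \eqref{eq:critical-zero-polar} follows.
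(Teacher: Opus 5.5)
Your proposal is correct and follows the paper's proof essentially step for step: It\^o's formula for $\log(R/S^N)$, the bound $D_{p,N}\le\cB_{p,N}(S^N)^{\bar p}$ giving a drift bounded by a constant $C_R$ on $\{S^N\le R\}$, optional stopping at $T\wedge\tau_\e\wedge\tau_R$, and then $\e\to0$ followed by $T,R\to\infty$. Your union identity $\{\tau_0<\tau_N^{\rm expl}\}=\bigcup_{R,T}\{\tau_0\le T,\ \tau_0<\tau_R\}$ removes the localization a little more cleanly than the paper's remark that the center cannot explode first; that remark is not actually needed for this statement.
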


\begin{proof}
Fix $T<\infty$ and $R>S_0^N$, and let
\[
 \tau_R:=\inf\{t:S_t^N\ge R\},
 \qquad
 \tau_\e:=\inf\{t:S_t^N\le\e\}.
\]
At criticality $2q-1=p/2$.  Applying It\^o's formula with the function
$f_R(s)=\log(R/s)$ for $s\in [\e,R]$ and using \eqref{eq:SNpower}--\eqref{eq:bracket-power} gives the drift
\begin{equation*}
 \cL f_R(S^N)
 =\frac{D_{p,N}}{S^N}
 +\Big(-c_N+\frac12a_N^2\Big)(S^N)^{p/2}.
\end{equation*}
By the upper bound in \autoref{lem:D-bounds},
$D_{p,N}\le \cB_{p,N}(S^N)^{1+p/2}$.  Hence on $\{S^N\le R\}$,
\[
 \cL f_R(S^N)\le
 \Big(\cB_{p,N}+\Big|-c_N+\frac12a_N^2\Big|\Big)R^{p/2}
 =:C_R.
\]
Optional stopping at $T\wedge\tau_\e\wedge\tau_R$ therefore yields
\[
 \log(R/\e)\,
 \P(\tau_\e\le T\wedge\tau_R)
 \le \log(R/S_0^N)+C_RT.
\]
Letting $\e\to0$ shows that zero is not reached before $T\wedge\tau_R$.  Since $T$ and $R$ are arbitrary, zero cannot be reached before the relative radius leaves every bounded set.  The center has zero drift and, while the relative radius is bounded, the quadratic variation is also bounded. Hence, the center cannot explode first. Therefore \eqref{eq:critical-zero-polar} follows. 
\end{proof}

At critical homogeneity, the radial and angular dynamics have the same intrinsic time scale.  This produces an exact skew-product representation which resolves the shape dependence left open by scalar comparison.

\begin{theorem}\label{thm:critical-shape-gibbs}
Assume $p>0$, $\rho<1$, $q=(p+2)/4$, and $S_0^N>0$.  Let $H_N$ be the relative space \eqref{def:HN}, $m_N=\dim H_N=n(N-1)$, and  define the homogeneous interaction potential for $y \in H_N$ by
\begin{equation*}
 \cV_{p,N}(y):=\frac1{N(p+2)}\sum_{1\le i<j\le N}|y^i-y^j|^{p+2}.
\end{equation*}
For $u\in\S(H_N)$, the normalized $p$-dissipation in \eqref{eq:shape-extrema-global} satisfies
\begin{equation*}
 \Theta_N(u)=2(p+2)N^{p/2}\cV_{p,N}(u).
\end{equation*}
For $m_N\ge2$, define $ \varsigma_N:=\kappa\sqrt{1-\rho^2}\,N^{-(p+2)/4}$ and
\begin{equation}\label{eq:Gibbs-shape-law}
 \Pi_N^{\rm sh}(\dd u)
 :=\frac1{Z_N^{\rm sh}}\exp\left(-\frac{2\cV_{p,N}(u)}{\varsigma_N^2}\right)
 \sigma_{m_N-1}(\dd u),
\end{equation}
where $\sigma_{m_N-1}$ is surface measure on $\S(H_N)$ and $Z_N^{\rm sh}$ is the normalization factor. For $m_N=1$ let $\Pi_N^{\rm sh}$ be the normalized counting measure on the two-point sphere $\S(H_N)$.  Set
\begin{equation*}
 \overline{\Theta}_N:=\int_{\S(H_N)}\Theta_N(u)\,\Pi_N^{\rm sh}(\dd u),
\end{equation*}
so that $\overline{\Theta}_N=2^{p+1}$  when $m_N=1$, and define
\begin{equation}\label{eq:critical-exact-index}
 \Gamma_N:=c_N-\frac12a_N^2-\overline{\Theta}_N.
\end{equation}
Then the sign of $\Gamma_N$ gives the exact critical trichotomy:
\begin{align}
 \Gamma_N<0
 &\quad\Rightarrow\quad
 \tau_N^{\rm expl}=\infty\ \text{ and }\ S_t^N\rightarrow 0\quad\text{a.s.},\nonumber\\
 \Gamma_N=0
 &\quad\Rightarrow\quad
 \tau_N^{\rm expl}=\infty,\qquad
 \liminf_{t\to\infty}S_t^N=0,\quad
 \limsup_{t\to\infty}S_t^N=\infty\quad\text{a.s.},
 \label{eq:critical-Gamma-zero}\\
 \Gamma_N>0
 &\quad\Rightarrow\quad
 \tau_N^{\rm expl}<\infty\quad\text{a.s.}.\nonumber
\end{align}
Moreover
\begin{equation}\label{eq:Gibbs-theta-bracket}
 \cA_{p,n,N}
 \le\cA_{p,n,N}^{\sharp}
 \le\overline{\Theta}_N
 \le\cB_{p,n,N}^{\sharp}
 \le \cB_{p,N}.
\end{equation}
\end{theorem}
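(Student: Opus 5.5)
The plan is to pass to polar coordinates in the relative space and then apply an intrinsic time change, so that the shape becomes an autonomous diffusion and the logarithmic radius becomes an additive functional of it. By \autoref{lem:critical-nonattainment}, $S^N>0$ up to $\tau_N^{\rm expl}$, so $R_t=|Y_t|=\sqrt{NS_t^N}$ and $U_t=Y_t/R_t\in\S(H_N)$ are well defined. For the identity $\Theta_N=2(p+2)N^{p/2}\cV_{p,N}$, I only need to note that on $\S(H_N)$ we have $D_{p,N}(u)=\frac{2}{N^2}\sum_{i<j}|u^i-u^j|^{p+2}=\frac{2(p+2)}{N}\cV_{p,N}(u)$, and then multiply by $N^{\bar p}$. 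The relative drift is $-\nabla\cV_{p,N}$: indeed $\partial_{y^i}\cV_{p,N}=\frac1N\sum_k|y^i-y^k|^p(y^i-y^k)$, which matches \eqref{eq:relativeSDE}. The relative noise is $\sqrt{1-\rho^2}\,\kappa (S^N)^q$ times a standard Brownian motion on $H_N$. This follows because $B^i-\barB^N$ is the orthogonal projection onto $H_N$ of $(B^i)$, so its covariance is the identity on $H_N$.

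Next I apply Itô's formula in polar form. Homogeneity gives $\nabla\cV(Ru)=R^{p+1}\nabla\cV(u)$, and $(S^N)^q=N^{-q}R^{(p+2)/2}$. Writing $\sigma=\kappa\sqrt{1-\rho^2}N^{-q}=\varsigma_N$, I expect
\[
\dd U_t=R_t^{p}\Bigl(-P_{U_t}^\perp\nabla\cV(U_t)-\tfrac{m_N-1}{2}\sigma^2U_t\Bigr)\dt+\sigma R_t^{p/2}P_{U_t}^\perp\,\dd W_t .
\]
Both the drift and the diffusion coefficient carry the common factor $R^p$ in time. With the intrinsic clock $A_t=\int_0^t R_s^p\,\dd s$ and its inverse, $\tilde U_\tau=U_{t(\tau)}$ becomes an autonomous diffusion on the sphere. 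Its generator is $\frac{\sigma^2}{2}\Delta_{\S}-\nabla_{\S}\cV\cdot\nabla_{\S}$, which is reversible with respect to $e^{-2\cV/\sigma^2}\sigma_{m_N-1}$. This gives \eqref{eq:Gibbs-shape-law}.

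For the radius I compute $\dd\log S^N$ from \eqref{eq:SNpower}. At criticality, $D_{p,N}/S^N=\Theta(U)(S^N)^{p/2}$, and $(S^N)^{p/2}=N^{-p/2}R^p$. After the time change, this gives
\[
\dd\log S_{t(\tau)}=N^{-p/2}\bigl(c_N-\tfrac12a_N^2-\Theta_N(\tilde U_\tau)\bigr)\dd\tau+a_NN^{-p/4}\dd\tilde\beta_\tau .
\]
The bracket \eqref{eq:Gibbs-theta-bracket} is immediate, since $\overline\Theta_N$ is an average of $\Theta_N$ and \eqref{eq:shape-extrema-explicit-bracket} applies. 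For $m_N=1$, $\Theta_N=2^{p+1}$ at both points.

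The trichotomy is then obtained as follows. The shape diffusion is elliptic on a compact manifold when $m_N\ge2$, so it is uniquely ergodic, and the ergodic theorem gives $\tau^{-1}\log S_{t(\tau)}\to N^{-p/2}\Gamma_N$. The physical time is $t(\tau)=\int_0^\tau R^{-p}\,\dd\tau'=\int_0^\tau N^{-p/2}(S_{t(\tau')})^{-p/2}\,\dd\tau'$.
\begin{itemize}
\item If $\Gamma_N<0$, then $\log S\to-\infty$ linearly in $\tau$, the integrand grows exponentially, and $t(\infty)=\infty$. Hence the solution is global and $S\to0$.
\item If $\Gamma_N>0$, then $S$ grows exponentially in $\tau$, so $t(\infty)<\infty$ and explosion occurs at that finite time.
\end{itemize}

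The main obstacle is $\Gamma_N=0$. Here I solve the Poisson equation $\cL g=\Theta_N-\overline\Theta_N$ on the sphere; this is possible because the generator is elliptic and the right-hand side is smooth. Then $\log S_{t(\tau)}-N^{-p/2}g(\tilde U_\tau)$ is a continuous martingale $M_\tau$ with $\dd\langle M\rangle_\tau=|a_NN^{-p/4}\tilde\beta\text{-part}-\sigma N^{-p/2}\nabla_{\S} g|^2\dd\tau$. I need this bracket to be comparable to $\tau$. I would first check that the radial and angular noises are orthogonal: the radial noise is along $u$ and the angular noise lies in $u^\perp$. Since $a_N>0$, this gives $\langle M\rangle_\tau\ge a_N^2N^{-p/2}\tau$, and by boundedness also $\langle M\rangle_\tau\le C\tau$.

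By Dambis--Dubins--Schwarz, $M$ is a time-changed Brownian motion running over an infinite horizon, and $g$ is bounded, so $\limsup\log S=+\infty$ and $\liminf\log S=-\infty$. It remains to show $t(\infty)=\infty$, that is, that $\int_0^\infty e^{-\frac p2\log S_{t(\tau)}}\dd\tau=\infty$. Up to bounded factors from $g$ this integral equals $\int e^{-\frac p2 \mathcal{W}_{\langle M\rangle_\tau}}\dd\tau$ for a Brownian motion $\mathcal W$. By comparability of $\langle M\rangle$ with $\tau$, it is bounded below by $c\int_0^\infty e^{-\frac p2\mathcal W_s}\dd s$, which diverges almost surely because $\mathcal W$ spends infinite time below any fixed level (recurrence). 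This is the step I expect to need the most care.

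Finally, the degenerate case $m_N=1$ is handled directly. There the shape is frozen, the log-radius is a Brownian motion with constant drift $N^{-p/2}\Gamma_N$, and the same arguments apply verbatim.
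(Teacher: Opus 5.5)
Your proposal is correct and follows essentially the same route as the paper. It uses polar coordinates justified by \autoref{lem:critical-nonattainment}, and the intrinsic clock $\int_0^tR_s^p\,\dd s$ turns the shape into an autonomous reversible diffusion with invariant law $\Pi_N^{\rm sh}$. The ergodic theorem handles $\Gamma_N\neq0$. At $\Gamma_N=0$ you use a Poisson corrector, orthogonality of radial and angular noise, Dambis--Dubins--Schwarz, and Brownian occupation to make the inverse clock diverge.

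Only minor repairs are needed. With your convention $\cL g=\Theta_N-\overline\Theta_N$, the martingale is $\log S_{t(\tau)}+N^{-p/2}g(\tilde U_\tau)$, not the difference. $\Theta_N$ is not smooth at collision configurations unless $p$ is an even integer, but its $C^{2,\alpha}$ regularity suffices for Schauder theory and It\^o's formula. You should also state, as the paper does, that the intrinsic clock reaches $+\infty$ at the lifetime. The reason is that the log-radius has bounded drift and Brownian noise in intrinsic time, so it cannot blow up or collapse at a finite intrinsic time, and therefore the time-changed processes are defined for all $\tau\ge0$.
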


\begin{proof}
Identify $H_N$ isometrically with $\R^{m_N}$. Since $S^N(y)=|y|^2/N$ and $q=(p+2)/4$, the process $Y$ in the relative space $H_N$, satisfies, up to its lifetime, the SDE
\begin{equation*}
  \dd Y_t=-\nabla\cV_{p,N}(Y_t)\,\dd t
  +\varsigma_N|Y_t|^{(p+2)/2}\,\dd W_t^{H},
\end{equation*}
where the projected idiosyncratic Brownian motion, $W^H$, is a standard Brownian motion on $H_N$.
The interaction potential $\cV_{p,N}$ is homogeneous of degree $p+2$.  For $R_t:=|Y_t|>0$ and $U_t:=Y_t/R_t$, Euler's identity in these polar variables reads, for every $R>0$ and $u\in\S(H_N)$,
\[
 \nabla\cV_{p,N}(Ru)
 =R^{p+1}\bigl((p+2)\cV_{p,N}(u)u+\nabla_{\S}\cV_{p,N}(u)\bigr).
\]
Below it is applied with $R=R_t$ and $u=U_t$.
\autoref{lem:critical-nonattainment} shows that $R_t>0$ throughout the lifetime of the maximal solution.  Hence it is valid to use polar coordinates up to explosion.

Introduce the intrinsic clock
\begin{equation*}
 A_t:=\int_0^tR_s^p\,\dd s,
 \qquad T_\tau:=\inf\{t\ge0:A_t>\tau\}.
\end{equation*}
Write $\widehat U_\tau:=U_{T_\tau}$ and $\ell_\tau:=\log R_{T_\tau}$.  It\^o's formula for the Euclidean polar decomposition, followed by the time change, gives
\begin{align}
 \dd\widehat U_\tau
 &=-\nabla_{\S}\cV_{p,N}(\widehat U_\tau)\,\dd\tau
 +\varsigma_N(I-\widehat U_\tau\otimes\widehat U_\tau)\,\dd\widehat W_\tau
 -\frac{m_N-1}{2}\varsigma_N^2\widehat U_\tau\,\dd\tau,
 \label{eq:critical-angular-SDE}\\
 \dd\ell_\tau
 &=\Big[-(p+2)\cV_{p,N}(\widehat U_\tau)
 +\frac{m_N-2}{2}\varsigma_N^2\Big]\dd\tau
 +\varsigma_N\,\dd\widehat\beta_\tau.
 \label{eq:critical-log-radius}
\end{align}
In particular, the time-changed process
$ \widehat W_\tau:=\int_0^{T_\tau}R_s^{p/2}\,\dd W_s^H$
is a standard Brownian motion on $H_N$, and  thus
$\widehat\beta_\tau:=\int_0^\tau\widehat U_s\cdot\dd\widehat W_s$, is
the one-dimensional Brownian motion driving the 
logarithmic radius $\ell=\log R$ in \eqref{eq:critical-log-radius}. Also 
$M_\tau^{\rm ang}:=\varsigma_N\int_0^\tau
 (I-\widehat U_s\otimes\widehat U_s)\,\dd\widehat W_s$
is the angular martingale in \eqref{eq:critical-angular-SDE}.  Their cross-variation vanishes because
$\widehat U_s^{\mathsf T}(I-\widehat U_s\otimes\widehat U_s)=0$.
 Equation \eqref{eq:critical-angular-SDE} is autonomous.  For $f\in C^2(\S(H_N))$, its generator is
\begin{equation}\label{eq:angular-generator}
 \cL_N^{\rm ang}f
 =-\nabla_{\S}\cV_{p,N}\cdot\nabla_{\S}f
 +\frac{\varsigma_N^2}{2}\Delta_{\S}f.
\end{equation}
For $m_N\ge2$ this is uniformly elliptic on the compact connected sphere, and the unique invariant probability is precisely the reversible Gibbs measure \eqref{eq:Gibbs-shape-law}.  The intrinsic representation extends to every $\tau\ge0$: the angular equation is a globally defined diffusion on a compact sphere (a constant sign when $m_N=1$), and the drift of $\ell$ is bounded. Thus $\ell$ cannot diverge at a finite intrinsic time. Conversely, the inverse clock has derivative $e^{-p\ell_\tau}>0$, and the reconstructed radius remains finite and positive on every compact intrinsic interval. Uniqueness of the original relative SDE identifies this reconstruction with its maximal solution. A finite endpoint of the intrinsic clock along that solution would therefore be extendible, contradicting maximality. This justifies the limits as $\tau\to\infty$ below.

Since $\widehat\beta$ in \eqref{eq:critical-log-radius} is a standard Brownian motion, $\widehat\beta_\tau/\tau\to0$ almost surely as $\tau\to\infty$.  
Combining this with the ergodic theorem for the angular diffusion yields
\begin{equation}\label{eq:critical-log-radius-LLN}
 \frac{\ell_\tau}{\tau}\rightarrow
 \gamma_N^{\rm rad}:=- (p+2)\int_{\S(H_N)}\cV_{p,N}\,\dd\Pi_N^{\rm sh}
 +\frac{m_N-2}{2}\varsigma_N^2
 \qquad\text{a.s.}
\end{equation}
For $m_N=1$, the shape has only two possible signs and $\cV_{p,N}$ is even, so the same limit holds with the constant shape value.

The relationship with \eqref{eq:critical-exact-index} is exact.  Since
\[
 c_N-\frac12a_N^2=\frac{m_N-2}{N}\kappa^2(1-\rho^2),
\]
and $N^{-p/2}\Theta_N(u)=2(p+2)\cV_{p,N}(u)$, we have
\begin{equation}\label{eq:lambda-Gamma-relation}
 2\gamma_N^{\rm rad}=N^{-p/2}\Gamma_N.
\end{equation}
Finally, the inverse time change satisfies
\begin{equation}\label{eq:critical-physical-time}
 T_\tau=\int_0^\tau e^{-p\ell_s}\,\dd s.
\end{equation}
If $\Gamma_N>0$, then $\gamma_N^{\rm rad}>0$ and \eqref{eq:critical-log-radius-LLN} implies $\ell_\tau\ge(\gamma_N^{\rm rad}/2)\tau$ for all sufficiently large $\tau$.  Hence \eqref{eq:critical-physical-time} converges as $\tau\to\infty$, while $R_{T_\tau}\to\infty$.  The original process therefore explodes in finite time almost surely.  If $\Gamma_N<0$, then $\gamma_N^{\rm rad}<0$, so $T_\tau\to\infty$ and $R_{T_\tau}\to0$.  Thus the original solution is global and $S_t^N=R_t^2/N\to0$ almost surely.

It remains to treat the boundary case $\Gamma_N=0$.  First suppose $m_N\ge2$ and define
\[
 g_N(u):=-(p+2)\cV_{p,N}(u)+\frac{m_N-2}{2}\varsigma_N^2.
\]
By \eqref{eq:critical-log-radius-LLN} and \eqref{eq:lambda-Gamma-relation},
$\int_{\S(H_N)} g_N\,\dd\Pi_N^{\rm sh}=0$.  The generator $\cL_N^{\rm ang}$ is uniformly elliptic on the compact connected sphere.  Hence the centered Poisson equation
\begin{equation*}
 \cL_N^{\rm ang}\phi_N=-g_N,
 \qquad \int_{\S(H_N)}\phi_N\,\dd\Pi_N^{\rm sh}=0,
\end{equation*}
has a solution $\phi_N\in C^{2,\alpha}(\S(H_N))$ for some $\alpha\in(0,1)$. Indeed, $\cV_{p,N}$ is $C^{2,\alpha}$ for any $0<\alpha<\min\{p,1\}$, which suffices for elliptic regularity and It\^o's formula. Infinite differentiability at particle collisions is not required.  Applying It\^o's formula to $\phi_N(\widehat U_\tau)$ in \eqref{eq:critical-angular-SDE} and using \eqref{eq:critical-log-radius} gives
\begin{equation}\label{eq:critical-zero-martingale-decomp}
 \ell_\tau
 =\ell_0+\phi_N(\widehat U_0)-\phi_N(\widehat U_\tau)+M_\tau,
\end{equation}
where
\[
 M_\tau
 :=\varsigma_N\widehat\beta_\tau
 +\varsigma_N\int_0^\tau
 \nabla_{\S}\phi_N(\widehat U_s)\cdot\dd\widehat W_s.
\]
The radial and angular martingales are orthogonal, and therefore
\begin{equation}\label{eq:critical-zero-bracket}
 \varsigma_N^2\tau
 \le \langle M\rangle_\tau
 =\varsigma_N^2\int_0^\tau
 \bigl(1+|\nabla_{\S}\phi_N(\widehat U_s)|^2\bigr)\,\dd s
 \le C_N\tau
\end{equation}
for a deterministic finite constant $C_N$.  By the Dambis--Dubins--Schwarz theorem 
$M_\tau=B_{\langle M\rangle_\tau}$ for a one-dimensional Brownian motion $B$.  Since the bracket tends to infinity and the corrector in \eqref{eq:critical-zero-martingale-decomp} is bounded,
\begin{equation}\label{eq:critical-zero-oscillation-intrinsic}
 \liminf_{\tau\to\infty}\ell_\tau=-\infty,
 \qquad
 \limsup_{\tau\to\infty}\ell_\tau=\infty
 \qquad\text{a.s.}
\end{equation}
Moreover, if $Q_\tau:=\langle M\rangle_\tau$ and $Q^{-1}$ denotes its inverse, then \eqref{eq:critical-zero-bracket} implies $(Q^{-1})'(u)\ge C_N^{-1}$ for almost every $u$.  With
$C:=|\ell_0|+2\|\phi_N\|_\infty$,
\begin{align*}
 \int_0^\infty e^{-p\ell_\tau}\,\dd\tau
 &\ge e^{-pC}\int_0^\infty e^{-pM_\tau}\,\dd\tau\\
 &=e^{-pC}\int_0^\infty e^{-pB_u}(Q^{-1})'(u)\,\dd u\\
 &\ge \frac{e^{-pC}}{C_N}\int_0^\infty e^{-pB_u}\,\dd u=\infty
 \qquad\text{a.s.}
\end{align*}
The last integral diverges because Brownian motion spends infinite Lebesgue time in $(-\infty,0]$.  Thus $T_\tau\to\infty$ in \eqref{eq:critical-physical-time}, there is no finite-time explosion.  Combining this with \eqref{eq:critical-zero-oscillation-intrinsic} and $S_{T_\tau}^N=e^{2\ell_\tau}/N$ proves \eqref{eq:critical-Gamma-zero}.

For $m_N\le2$, the zero-drift case cannot occur: $c_N-a_N^2/2=(m_N-2)a_N^2/4\le0$ and $\overline\Theta_N>0$, so $\Gamma_N<0$. In particular, the two-point sphere at $m_N=1$ belongs only to the consensus alternative.

To prove \eqref{eq:Gibbs-theta-bracket}, observe that $\Theta_N(U)=D_{p,N}(U)/(S^N(U))^{\bar{p}}$ is exactly the normalized dissipation.  Its minimum and maximum are $\cA_{p,n,N}^{\sharp}$ and
$\cB_{p,n,N}^{\sharp}$ by definition.  \autoref{lem:D-bounds} gives
$\cA_{p,n,N}\le\cA_{p,n,N}^{\sharp}$ and
$\cB_{p,n,N}^{\sharp}\le \cB_{p,N}$.  Integrating against $\Pi_N^{\rm sh}$ gives the result. 
\end{proof}

The compact angular diffusion also yields a quantitative fixed-$N$ mixing statement.  It does not by itself give a useful uniform large-$N$ rate, but it makes precise the time scale on which the Gibbs average enters the radial dynamics.

\begin{corollary}\label{cor:angular-mixing}
Assume the hypotheses of \autoref{thm:critical-shape-gibbs} and $m_N\ge2$.  Let $\gap_N>0$ be the spectral gap of $-\cL_N^{\rm ang}$ in $L^2(\Pi_N^{\rm sh})$.  If the initial angular law has density $h_0\in L^2(\Pi_N^{\rm sh})$, then for every $f\in L^2(\Pi_N^{\rm sh})$, writing $\Pi_N^{\rm sh}f:=\int_{\S(H_N)} f\,\dd\Pi_N^{\rm sh}$
\begin{equation}\label{eq:angular-L2-mixing}
 \left|\E f(\widehat U_\tau)-\Pi_N^{\rm sh}f\right|
 \le e^{-\gap_N\tau}
 \|h_0-1\|_{L^2(\Pi_N^{\rm sh})}
 \left\|f-\Pi_N^{\rm sh}f\right\|_{L^2(\Pi_N^{\rm sh})}.
\end{equation}
If $\widehat U_0\sim\Pi_N^{\rm sh}$, then
\begin{equation}\label{eq:angular-time-average-variance}
 \operatorname{Var}_{\mathbb P}\left(
 \frac1\tau\int_0^\tau\Theta_N(\widehat U_s)\,\dd s\right)
 \le
 \frac{2\left\|\Theta_N-\overline{\Theta}_N \right\|^2_{L^2(\Pi_N^{\rm sh})}}
 {\gap_N\tau}.
\end{equation}
For deterministic $\widehat U_0=u_0$ and every $\tau_0>0$, the law of $\widehat U_{\tau_0}$ has a bounded continuous, and hence $L^2(\Pi_N^{\rm sh})$, density, so \eqref{eq:angular-L2-mixing} applies for $\tau\ge\tau_0$.
\end{corollary}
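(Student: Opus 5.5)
The plan is to treat $-\cL_N^{\rm ang}$ as a nonnegative self-adjoint operator on $L^2(\Pi_N^{\rm sh})$ and apply standard spectral-gap arguments. Reversibility of $\Pi_N^{\rm sh}$ was established in \autoref{thm:critical-shape-gibbs}. Since $\cL_N^{\rm ang}$ is uniformly elliptic on the compact connected sphere $\S(H_N)$ with a $C^{1,\alpha}$ drift, the operator has compact resolvent and kernel equal to the constants. Hence $\gap_N>0$, and the transition semigroup $P_\tau=e^{\tau\cL_N^{\rm ang}}$ satisfies
\[
\|P_\tau g-\Pi_N^{\rm sh}g\|_{L^2(\Pi_N^{\rm sh})}\le e^{-\gap_N\tau}\|g-\Pi_N^{\rm sh}g\|_{L^2(\Pi_N^{\rm sh})}.
\]

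For \eqref{eq:angular-L2-mixing}, I would write the expectation using the initial density:
\[
\E f(\widehat U_\tau)=\int h_0\,P_\tau f\,\dd\Pi_N^{\rm sh}.
\]
Because $h_0$ has mean one, this gives
\[
\E f(\widehat U_\tau)-\Pi_N^{\rm sh}f=\int(h_0-1)\bigl(P_\tau f-\Pi_N^{\rm sh}f\bigr)\,\dd\Pi_N^{\rm sh}.
\]
Cauchy--Schwarz combined with the decay estimate above then yields the bound.

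For \eqref{eq:angular-time-average-variance}, set $g=\Theta_N-\overline\Theta_N$ and assume stationarity, $\widehat U_0\sim\Pi_N^{\rm sh}$. The variance of the time average is
\[
\tau^{-2}\int_0^\tau\!\!\int_0^\tau\E[g(\widehat U_s)g(\widehat U_r)]\,\dd r\,\dd s .
\]
By the Markov property, $\E[g(\widehat U_s)g(\widehat U_r)]=\langle g,P_{|r-s|}g\rangle$, which is at most $e^{-\gap_N|r-s|}\|g\|^2$. Bounding the double integral by $2\tau\int_0^\infty e^{-\gap_N u}\,\dd u=2\tau/\gap_N$ gives the stated constant. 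Here $\Theta_N$ is continuous on the sphere, so it lies in $L^2$.

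For the last claim, with deterministic $\widehat U_0=u_0$, I would use that the law of $\widehat U_{\tau_0}$ is $p_{\tau_0}(u_0,\cdot)\,\Pi_N^{\rm sh}$. I expect this step to be the main obstacle: showing the density is bounded and continuous.

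My first attempt would invoke classical parabolic regularity. The Kolmogorov forward equation is uniformly parabolic on a compact manifold with Hölder coefficients, since $\nabla_{\S}\cV_{p,N}\in C^{1,\alpha}$. It therefore has a continuous fundamental solution for $\tau>0$, bounded uniformly in $u_0$ for $\tau\ge\tau_0$ by Gaussian bounds on a compact manifold. The Gibbs weight is bounded above and below, so the density relative to $\Pi_N^{\rm sh}$ is also bounded and continuous.

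As an alternative, I could avoid heat-kernel theory. Using the Markov property, $\E f(\widehat U_\tau)=\E[(P_{\tau-\tau_0}f)(\widehat U_{\tau_0})]$, so it suffices to apply the $L^2$ estimate from time $\tau_0$ onward, with $h_{\tau_0}$ in place of $h_0$. This gives the stated bound with $\tau$ replaced by $\tau-\tau_0$, which is presumably the intended reading of the bound for $\tau\ge\tau_0$.
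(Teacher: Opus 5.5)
Your proposal is correct and follows the paper's proof essentially step for step. Both use symmetry and compact resolvent on the connected sphere for the $L^2$ decay, the identity $\E f(\widehat U_\tau)-\Pi_N^{\rm sh}f=\int(h_0-1)\,P_\tau\bigl(f-\Pi_N^{\rm sh}f\bigr)\,\dd\Pi_N^{\rm sh}$ (which uses invariance of $\Pi_N^{\rm sh}$ as well as $\int h_0\,\dd\Pi_N^{\rm sh}=1$) with Cauchy--Schwarz for \eqref{eq:angular-L2-mixing}, the stationary covariance bound for \eqref{eq:angular-time-average-variance}, and parabolic regularity on the compact sphere for the bounded continuous density at time $\tau_0$.

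One correction: your ``alternative'' for the last claim does not avoid heat-kernel theory. Restarting at $\tau_0$ requires $h_{\tau_0}\in L^2(\Pi_N^{\rm sh})$, which is exactly what the parabolic-regularity step supplies. It is best read as the intended meaning of ``applies for $\tau\ge\tau_0$'', namely with $h_{\tau_0}$ and $\tau-\tau_0$ in place of $h_0$ and $\tau$.
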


\begin{proof}
The generator \eqref{eq:angular-generator} is symmetric in $L^2(\Pi_N^{\rm sh})$, uniformly elliptic, and defined on a compact connected manifold.  Its resolvent is compact, so the first nonzero eigenvalue $\gap_N$ is strictly positive and the semigroup satisfies (see for example \cite[Chapters~1 and~4]{BakryGentilLedoux2014}),
\[
 \|P_\tau f-\Pi_N^{\rm sh}f\|_2
 \le e^{-\gap_N\tau}\|f-\Pi_N^{\rm sh}f\|_2.
\]
Let $ g:=f-\Pi_N^{\rm sh}f$. If the initial law is $h_0\Pi_N^{\rm sh}$, invariance of $\Pi_N^{\rm sh}$ gives
\[
 \E f(\widehat U_\tau)-\Pi_N^{\rm sh}f
 =\int_{\S(H_N)} h_0\,P_\tau g\,\dd\Pi_N^{\rm sh} =\int_{\S(H_N)} (h_0-1)P_\tau g\,\dd\Pi_N^{\rm sh}.
\]
Cauchy--Schwarz and the $L^2$ decay estimate give \eqref{eq:angular-L2-mixing}.

Under stationarity, the Markov property gives
\[
 \Cov_{\mathbb P}\bigl(f(\widehat U_0),f(\widehat U_t)\bigr)
 =\langle g,P_tg\rangle_{L^2(\Pi_N^{\rm sh})}.
\]
Hence
\[
 \left|\Cov_{\mathbb P}\bigl(f(\widehat U_0),f(\widehat U_t)\bigr)\right|
 \le e^{-\gap_Nt}\|g\|_2^2
 =e^{-\gap_Nt}\left\|f-\Pi_N^{\rm sh}f\right\|^2_{L^2(\Pi_N^{\rm sh})}.
\]
Finally, stationarity and Fubini's theorem yield
\begin{align*}
 \operatorname{Var}_{\mathbb P}\Big(\frac1\tau\int_0^\tau f(\widehat U_s)\,\dd s\Big)
 &=\frac{2}{\tau^2}\int_0^\tau(\tau-t)
 \Cov_{\mathbb P}\bigl(f(\widehat U_0),f(\widehat U_t)\bigr)\,\dd t \le \frac{2\left\|f-\Pi_N^{\rm sh}f\right\|^2_{L^2(\Pi_N^{\rm sh})}}{\gap_N\tau}.
\end{align*}
Taking $f=\Theta_N$ proves \eqref{eq:angular-time-average-variance}. For $\tau_0>0$, uniform ellipticity on the compact sphere gives a bounded continuous transition density. This suffices for the last assertion.
\end{proof}

The mixing estimate in \autoref{cor:angular-mixing} is for fixed $N$,
no positive lower bound on $\gap_N$ uniform in $N$
is established here.
The Gibbs representation has an exact empirical-measure interpretation which also clarifies what can, and cannot, be inferred as $N\to\infty$.

For $m\ge1$, let $\cP_m(\R^n)$ denote the space of probability measures with finite $m$-th moment, and $\cW_m$ denote the $m$-Wasserstein distance.

\begin{proposition}\label{prop:gibbs-empirical-bridge}
For $u=(u^1,\ldots,u^N)\in\S(H_N)$ define
\begin{equation*}
 \nu_u^N:=\frac1N\sum_{i=1}^N\delta_{\sqrt N\,u^i}.
\end{equation*}
Then the following hold.
\begin{enumerate}[label=\textnormal{(\roman*)}]
\item \textnormal{(Exact empirical-shape identity.)}
The law $\nu_u^N$ satisfies
\begin{equation}\label{eq:theta-empirical-identity}
 \int_{\R^n} x\,\nu_u^N(\dd x)=0,\qquad
 \int_{\R^n}|x|^2\nu_u^N(\dd x)=1,\qquad
 \Theta_N(u)=D_p(\nu_u^N).
\end{equation}
Hence,
\begin{equation}\label{eq:gibbs-mean-empirical}
 \overline\Theta_N
 =\int_{\S(H_N)}D_p(\nu_u^N)\,\Pi_N^{\rm sh}(\dd u).
\end{equation}

\item \textnormal{(Sharp mean-field variational limit.)}
Define
\begin{equation}\label{eq:sharp-mf-shape-constant}
 \cA_{p,n}^{\rMF,\sharp}
 :=\inf\left\{D_p(\nu):
 \nu\in\cP_{p+2}(\R^n),\ \int_{\R^n} x\nu(\dd x)=0,\ \int_{\R^n}|x|^2\nu(\dd x)=1\right\}.
\end{equation}
The infimum is attained and, as $N\to\infty$,
\begin{equation}\label{eq:sharp-minima-largeN}
 \cA_{p,n,N}^{\sharp}\rightarrow
 \cA_{p,n}^{\rMF,\sharp}.
\end{equation}
Moreover,
\begin{equation}\label{eq:sharp-mf-vs-explicit}
 \cA_{p,n}^{\rMF,\sharp}\ge\cA_{p,n},
\end{equation}
with equality exactly for $p=0$ and $p=2$, and strict inequality for $p>0$, $p\ne2$.  In particular,
\begin{equation}\label{eq:sharp-mf-p2}
 \cA_{2,n}^{\rMF,\sharp}=4\Big(1+\frac1n\Big).
\end{equation}

\item \textnormal{(Fixed-$N$ zero-temperature limit.)}
For fixed $N$, replace $\varsigma_N$ in \eqref{eq:Gibbs-shape-law} by $\e>0$.  As $\e\to0$, the resulting Gibbs measures concentrate on the minimizers of $\Theta_N$, and their
means converge to $\cA_{p,n,N}^{\sharp}$.
\end{enumerate}
\end{proposition}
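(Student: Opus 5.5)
Part (i) is a direct computation. For $u\in\S(H_N)$, the points $x_i=\sqrt N u^i$ have zero mean because $\sum_i u^i=0$. Their second moment is $N^{-1}\sum_i N|u^i|^2=|u|^2=1$. Moreover
\[
D_p(\nu_u^N)=N^{-2}\sum_{i,k}N^{\bar p}|u^i-u^k|^{p+2}=N^{\bar p}D_{p,N}(u)=\Theta_N(u)
\]
by \eqref{eq:shape-extrema-global}. Integrating this identity against $\Pi_N^{\rm sh}$ gives \eqref{eq:gibbs-mean-empirical}.

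Part (ii) splits into existence, a $\Gamma$-convergence-type limit, and explicit comparisons.

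\textbf{Existence.} Take a minimizing sequence of centered unit-variance laws. Unit variance gives tightness. Lower semicontinuity of $D_p$ under weak convergence follows from Fatou's lemma, since the integrand is nonnegative and continuous. The constraints pass to the limit because $D_p(\nu)\ge c\bigl(\int|x|^{p+2}d\nu\bigr)$, an estimate obtained by averaging over the second variable and using the centering. This bounds the $(p+2)$-moments along the sequence, so for $p>0$ the second moments are uniformly integrable; the case $p=0$ is trivial since $D_0=2$.

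\textbf{Lower bound.} Part (i) gives $\cA_{p,n,N}^\sharp\ge\cA_{p,n}^{\rMF,\sharp}$ for every $N$, so $\liminf_N\cA^\sharp_{p,n,N}\ge\cA^{\rMF,\sharp}_{p,n}$.

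\textbf{Upper bound.} Let $\nu^*$ be a minimizer. Approximate it by empirical measures of $N$ points, for instance by quantization or by i.i.d.\ samples with the law of large numbers in $\cW_{p+2}$. Recenter and rescale so that the mean is exactly $0$ and the second moment exactly $1$; these corrections tend to $0$. Continuity of $D_p$ in $\cW_{p+2}$ then gives $\limsup_N\cA^\sharp_{p,n,N}\le D_p(\nu^*)$. This proves \eqref{eq:sharp-minima-largeN}.

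\textbf{Comparison with $\cA_{p,n}$.} The inequality \eqref{eq:sharp-mf-vs-explicit} repeats the Jensen and moment argument of \autoref{lem:D-bounds} for a general law with $X,\widetilde X$ i.i.d.\ $\sim\nu$. One has $\E|X-\widetilde X|^2=2$. For $p\ge2$,
\[
\E|X-\widetilde X|^4=2\E|X|^4+2+4\tr\Sigma^2\ge 4(1+1/n),
\]
using $\tr\Sigma=1$ and $\operatorname{rank}\Sigma\le n$.

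\textbf{Equality cases.} For $p=0$, $D_0\equiv2$. For $p=2$, the two inequalities are equalities for the uniform law on a centered sphere of radius $1$, which gives $|X|=1$ and $\Sigma=\Id/n$; this yields \eqref{eq:sharp-mf-p2}.

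\textbf{Strict inequality.} For $0<p<2$, equality in Jensen would force $|X-\widetilde X|^2\equiv2$ $\nu\otimes\nu$-a.s. Taking $X=\widetilde X$ on atoms, or using the fact that the diagonal has positive mass in the support whenever the support has an accumulation point, rules this out: a set with all mutual distances equal is finite, and then the diagonal carries mass. For $p>2$, equality requires $|X-\widetilde X|^2$ to be a.s. constant for the $L^{\bar p/2}\ge L^2$ step, which is impossible for the same reason. Hence the infimum, which is attained, is strictly larger.

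Part (iii) is a Laplace-principle argument on the compact sphere. The continuous function $\cV_{p,N}$ is proportional to $\Theta_N$, and the density is proportional to $e^{-2\cV_{p,N}/\e^2}$. For any $\delta>0$, the set $\{\Theta_N<\cA^\sharp_{p,n,N}+\delta\}$ is open and nonempty, so it has positive surface measure. Its complement therefore has exponentially small relative mass, which gives concentration on the minimizers. Boundedness of $\Theta_N$ then gives convergence of the means to $\cA^\sharp_{p,n,N}$.

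\textbf{Main obstacle.} I expect the strict inequality for $p\ne0,2$ in (ii) to be the main obstacle, together with the exact recentering in the empirical upper bound. The delicate point is the diagonal contribution: mass on the diagonal contributes zero to $D_p$ but nonzero to the Jensen constraint. This must be handled carefully in the mean-field setting, where there is no $N/(N-1)$ correction.
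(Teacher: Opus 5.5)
Your proposal is correct and follows essentially the same route as the paper. Part (i) is the direct scaling identity. In (ii) you use the $(p+2)$-moment bound $\int|x|^{p+2}\,\dd\nu\le D_p(\nu)$ for tightness and existence, the recentred and rescaled i.i.d.-sample upper bound, and the Jensen/$L^a$-monotonicity chain in which equality forces $|X-\widetilde X|^2$ to be a.s.\ constant. Part (iii) is the compact Laplace principle. The only minor difference is how a constant-distance law is excluded: you use a diagonal neighbourhood in the support, or the finiteness of equidistant sets, while the paper notes that $n+2$ i.i.d.\ samples of a non-atomic law would be pairwise equidistant in $\R^n$. Both arguments are valid.
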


\begin{proof}
Since $\sum_{i=1}^N u^i=0$ and $\sum_{i=1}^N|u^i|^2=1$, the first two identities in \eqref{eq:theta-empirical-identity} are immediate.  The last one is the exact scaling computation
\[
 D_p(\nu_u^N)
 =\frac1{N^2}\sum_{i,j=1}^N
 \bigl|\sqrt N(u^i-u^j)\bigr|^{p+2}
 =N^{(p+2)/2}D_{p,N}(u)
 =\Theta_N(u),
\]
because $S^N(u)=1/N$.  This proves \eqref{eq:gibbs-mean-empirical} and shows that
$\cA_{p,n,N}^{\sharp}$ is exactly the minimum of $D_p$ over centered, unit-variance, equal-weight $N$-atomic laws.

For (ii) we first note that the mean-field infimum is attained.  For $p=0$ the functional is identically $D_0=2$ on the admissible set.  If $p>0$ and $(\nu_j)$ is a minimizing sequence, conditional Jensen gives
\[
 \int_{\R^n} |x|^{p+2}\nu_j(\dd x)
 \le \iint_{\R^n\times\R^n} |x-y|^{p+2}\nu_j(\dd x)\nu_j(\dd y)
 =D_p(\nu_j).
\]
Thus the minimizing sequence has a uniformly bounded $(p+2)$-moment.  Since $p+2>2$, this sequence is tight with uniformly integrable first and second moments.  Any weak limit remains centered with variance one, and lower semicontinuity of the nonnegative continuous interaction cost gives a minimizer.
The lower bound
$\cA_{p,n,N}^{\sharp}\ge\cA_{p,n}^{\rMF,\sharp}$ is therefore immediate.  

For the reverse asymptotic inequality, fix $\nu$ admissible in \eqref{eq:sharp-mf-shape-constant}.  Let $X_1,X_2,\ldots$ be i.i.d. with law $\nu$, put
\[
 \bar X_N:=\frac1N\sum_{i=1}^NX_i,
 \qquad
 s_N:=\frac1N\sum_{i=1}^N|X_i-\bar X_N|^2,
 \qquad
 Z_i^N:=\frac{X_i-\bar X_N}{\sqrt{s_N}},
\]
on the event $s_N>0$, and let $\nu_N=N^{-1}\sum_{i=1}^N\delta_{Z_i^N}$.  Since $\nu$ is nondegenerate and has variance one, $s_N\to1$ almost surely. In particular, $s_N>0$ almost surely for all sufficiently large $N$, so the exceptional finitely many indices may be ignored.  The strong law for empirical measures in $\cW_{p+2}$, together with $\bar X_N\to0$ and $s_N\to1$, gives $\nu_N\to\nu$ in $\cW_{p+2}$ almost surely, see the standard characterization of Wasserstein convergence in \cite{Villani}. Hence
$D_p(\nu_N)\to D_p(\nu)$ because the $p$-dissipation functional is continuous under $\cW_{p+2}$ convergence.  Each $\nu_N$ is centered, has variance one, and is an equal-weight $N$-atomic law, so
\[
 \limsup_{N\to\infty}\cA_{p,n,N}^{\sharp}
 \le D_p(\nu).
\]
Taking the infimum over admissible $\nu$ proves \eqref{eq:sharp-minima-largeN}.  The explicit bound
$\cA_{p,n}\le\cA_{p,n}^{\rMF,\sharp}$ is obtained directly from the same moment inequalities used at finite $N$: for every centered unit-variance law,
\[
 D_p(\nu)\ge 2^{\bar{p}},
\]
and, when $p\ge2$,
\[
 D_p(\nu)\ge D_2(\nu)^{\bar{p}/2}
 \ge \bigl[4(1+1/n)\bigr]^{\bar{p}/2}.
\]
Here the last inequality follows from
$D_2(\nu)=2\E|X|^4+2+4\tr(\Sigma^2)$,
$\E|X|^4\ge1$, and $\tr(\Sigma^2)\ge1/n$.
For $p=0$, $D_0=2$ at unit variance.  For $p=2$, these inequalities give
$D_2(\nu)\ge4(1+1/n)$, and equality is attained by any centered isotropic law supported on the unit sphere.

It remains to identify when the explicit lower bound can be an equality.  Let $X,Y$ be i.i.d. with an admissible nondegenerate law and set $\Delta:=|X-Y|^2$.  If $0<p<2$, equality in Jensen's inequality
$\E \Delta^{\bar{p}}\ge(\E \Delta)^{\bar{p}}$ would force $\Delta$ to be almost surely constant.  If $p>2$, equality in the $L^{\bar{p}}$ step
$\E \Delta^{\bar{p}}\ge(\E \Delta^2)^{\bar{p}/2}$ has the same consequence.  Such a constant-distance law cannot exist in $\R^n$: if the law has an atom then $\P(\Delta=0)>0$ while $\E \Delta=2$. If it is non-atomic, $n+2$ i.i.d. samples would almost surely be distinct and pairwise equidistant, which is impossible in $\R^n$.  Since the variational infimum is attained, the inequalities are therefore strict for every $p>0$, $p\ne2$.  This proves \eqref{eq:sharp-mf-vs-explicit} and \eqref{eq:sharp-mf-p2} without appealing to later mean-field estimates.

The fixed-$N$ zero-temperature statement is the compact-space Laplace principle applied to the continuous potential $\Theta_N$: outside any neighborhood of its compact minimizer set there is a strictly positive energy gap, so the mass of the Gibbs measure there is exponentially small in $\e^{-2}$.  Bounded continuity of $\Theta_N$ then gives convergence of the Gibbs mean. 
\end{proof}

\begin{remark}\label{rem:gibbs-largeN-free-energy}
\autoref{prop:gibbs-empirical-bridge} separates two different large-$N$ questions.  The minima have a clean mean-field limit, namely \eqref{eq:sharp-minima-largeN}.  The actual critical threshold, however, contains the Gibbs average \eqref{eq:gibbs-mean-empirical}.  For the particle model,
\[
 \frac{2\cV_{p,N}(u)}{\varsigma_N^2}
 =\frac{N}{(p+2)\kappa^2(1-\rho^2)}\,D_p(\nu_u^N),
\]
while the dimension of the sphere is also of order $N$.  Energy and shape entropy therefore compete on the same exponential scale.  A limit theorem for $\overline\Theta_N$ should consequently be formulated as a mean-field free-energy problem for the empirical shape laws under the spherical reference measure, rather than as a zero-temperature argument.  Establishing the corresponding large deviation principle and identifying its minimizers would give a direct answer to whether the finite-particle critical threshold converges to a mean-field shape criterion. 
\end{remark}

The exact Gibbs criterion immediately yields sharp comparison regions and a quantitative finite-size correction.

\begin{corollary}\label{cor:critical-finite-stabilization}
Assume the hypotheses of \autoref{thm:critical-shape-gibbs}.
\begin{enumerate}[label=\textnormal{(\roman*)}]
\item \textnormal{(Sharp shape-extremum criteria.)}
\begin{align*}
 c_N<\cA_{p,n,N}^{\sharp}+\frac12a_N^2
 &\quad\Rightarrow \quad S_t^N\to 0\quad\text{a.s.},\\
 c_N>\cB_{p,n,N}^{\sharp}+\frac12a_N^2
 &\quad\Rightarrow\quad \tau_N^{\rm expl}<\infty\quad\text{a.s.}
\end{align*}
At $c_N=\cA_{p,n,N}^{\sharp}+a_N^2/2$, the solution is global. It reaches consensus if
$\overline\Theta_N>\cA_{p,n,N}^{\sharp}$ and is scale-recurrent if equality holds.
The explosion criterion is optimal among criteria using only a uniform upper bound on the normalized shape dissipation.  In particular,
$c_N>\cB_{p,N}+a_N^2/2$ implies explosion, while
$c_N=\cA_{p,n,N}+a_N^2/2$ is nonexplosive.

\item \textnormal{(Optimal uniform-coercivity consensus bound.)}
For $N>1+2/n$, define
\begin{equation}\label{eq:finite-stabilization-cstar-sharp}
 c_N^{\sharp,\rm stab}
 :=\frac{\cA_{p,n,N}^{\sharp}}{1-(1+2/n)/N}.
\end{equation}
Then $c<c_N^{\sharp,\rm stab}$ implies consensus and
\begin{equation}\label{eq:sharp-stabilization-limit}
 c_N^{\sharp,\rm stab}\rightarrow \cA_{p,n}^{\rMF,\sharp}.
\end{equation}
With $\alpha_N=(1+2/n)/N$,
\begin{equation}\label{eq:sharp-stabilization-gap}
 c_N^{\sharp,\rm stab}-\cA_{p,n}^{\rMF,\sharp}
 \ge
 \cA_{p,n}^{\rMF,\sharp}\frac{\alpha_N}{1-\alpha_N}
 =\frac{\cA_{p,n}^{\rMF,\sharp}(1+2/n)}{N}+O(N^{-2}).
\end{equation}

\item \textnormal{(Explicit sufficient consensus bound.)}
For $N>1+2/n$, define
\begin{equation*}
 c_N^{\rm stab}:=\frac{\cA_{p,n,N}}{1-(1+2/n)/N}.
\end{equation*}
Then $c<c_N^{\rm stab}$ implies consensus and, for fixed $p>0$ and $n$,
$c_N^{\rm stab}>\cA_{p,n}$.  Moreover,
\begin{equation*}
 c_N^{\rm stab}=
 \begin{cases}
 2^{\bar{p}}\Big[1+\dfrac{p/2+1+2/n}{N}+O(N^{-2})\Big],&0<p<2,\\[3mm]
 \cA_{p,n}\Big[1+\dfrac{1+2/n}{N}+O(N^{-2})\Big],&p\ge2,
 \end{cases}
\end{equation*}
where the second expansion holds for all sufficiently large $N$.
\end{enumerate}
\end{corollary}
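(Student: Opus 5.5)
The corollary follows from the sign criterion for $\Gamma_N$ in \autoref{thm:critical-shape-gibbs} together with the bracket \eqref{eq:Gibbs-theta-bracket}. The only other inputs are the algebraic identity \eqref{eq:cn-an-relation} and the variational limit in \autoref{prop:gibbs-empirical-bridge}.

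For (i), write $\Gamma_N=(c_N-\tfrac12a_N^2)-\overline\Theta_N$ and use $\cA_{p,n,N}^{\sharp}\le\overline\Theta_N\le\cB_{p,n,N}^{\sharp}$.
\begin{itemize}
\item If $c_N<\cA_{p,n,N}^{\sharp}+a_N^2/2$, then $\Gamma_N<0$, and the theorem gives $S_t^N\to0$.
\item If $c_N>\cB_{p,n,N}^{\sharp}+a_N^2/2$, then $\Gamma_N>0$, and explosion follows.
\item At $c_N=\cA_{p,n,N}^{\sharp}+a_N^2/2$ we have $\Gamma_N=\cA_{p,n,N}^{\sharp}-\overline\Theta_N\le0$. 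It is strictly negative (consensus) when $\overline\Theta_N>\cA_{p,n,N}^{\sharp}$, and zero (global and scale-recurrent) when equality holds. In both cases the solution is global.
\end{itemize}
The explicit statements then follow from \eqref{eq:shape-extrema-explicit-bracket}. If $c_N>\cB_{p,N}+a_N^2/2\ge\cB_{p,n,N}^{\sharp}+a_N^2/2$, explosion follows. If $c_N=\cA_{p,n,N}+a_N^2/2$, then $\Gamma_N\le\cA_{p,n,N}-\overline\Theta_N\le0$, which is nonexplosive.

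For optimality I would argue within the class of criteria that use only the information $\Theta_N\le B$ for a uniform constant $B$. For any $B<\cB_{p,n,N}^{\sharp}$, such a bound is false for the actual $\Theta_N$. Conversely, the constant shape functional $\Theta\equiv\cB_{p,n,N}^{\sharp}$ is compatible with the bound $B=\cB_{p,n,N}^{\sharp}$, and for it the index is $\Gamma=c_N-\tfrac12a_N^2-\cB_{p,n,N}^{\sharp}$. This index is $\le0$ as soon as $c_N\le\cB_{p,n,N}^{\sharp}+a_N^2/2$. So no weaker threshold can be deduced from an upper bound alone. I expect this to be the only step needing care, since one must phrase what "optimal among criteria" means; I would state it as this compatibility argument.

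For (ii), the key identity comes from $c_N=c(1-1/N)$ and $a_N^2=4\kappa^2(1-\rho^2)/N=4c/(nN)$:
\[
c_N-\tfrac12a_N^2=c\Bigl(1-\tfrac1N-\tfrac{2}{nN}\Bigr)=c(1-\alpha_N).
\]
Hence the sharp condition in (i) is exactly $c<c_N^{\sharp,\rm stab}$, which requires $1-\alpha_N>0$, i.e. $N>1+2/n$.
\begin{itemize}
\item The limit \eqref{eq:sharp-stabilization-limit} follows from \eqref{eq:sharp-minima-largeN} and $\alpha_N\to0$.
\item For the gap, I would use the lower bound $\cA_{p,n,N}^{\sharp}\ge\cA_{p,n}^{\rMF,\sharp}$ established in the proof of \autoref{prop:gibbs-empirical-bridge}. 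It gives $c_N^{\sharp,\rm stab}\ge\cA^{\rMF,\sharp}_{p,n}/(1-\alpha_N)$. Subtracting $\cA^{\rMF,\sharp}_{p,n}$ yields \eqref{eq:sharp-stabilization-gap}, and expanding $\alpha_N/(1-\alpha_N)$ gives the $N^{-1}$ term.
\end{itemize}

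For (iii), the same algebra applies with $\cA_{p,n,N}\le\cA_{p,n,N}^{\sharp}$: $c<c_N^{\rm stab}$ gives $c(1-\alpha_N)<\cA_{p,n,N}\le\cA^{\sharp}_{p,n,N}$, so (ii) applies. For the strict inequality $c_N^{\rm stab}>\cA_{p,n}$, I would show $\cA_{p,n,N}\ge\cA_{p,n}$ from \eqref{eq:AeffN}:
\begin{itemize}
\item For $p<2$, the factor $(1+1/(N-1))^{p/2}$ is at least $1$.
\item For $p\ge2$, $d_N\le n$ gives $(1+d_N^{-1})^{\bar p/2}\ge(1+n^{-1})^{\bar p/2}$.
\end{itemize}
Combined with $1/(1-\alpha_N)>1$, this gives strictness.

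For the expansions:
\begin{itemize}
\item When $0<p<2$, $(1+1/(N-1))^{p/2}=1+\tfrac{p}{2N}+O(N^{-2})$ and $(1-\alpha_N)^{-1}=1+\tfrac{1+2/n}{N}+O(N^{-2})$. Multiplying gives the coefficient $p/2+1+2/n$.
\item When $p\ge2$ and $N$ is large, $d_N=n$ and $(1+1/(N-1))^{p/2}\to1<(1+1/n)^{\bar p/2}$. So the maximum in \eqref{eq:AeffN} equals $\cA_{p,n}$ exactly, and only the factor $(1-\alpha_N)^{-1}$ contributes. This is why that expansion is asserted only for sufficiently large $N$.
\end{itemize}
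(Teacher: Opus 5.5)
Your proposal is correct and follows essentially the same route as the paper: the sign of $\Gamma_N$ combined with the bracket \eqref{eq:Gibbs-theta-bracket} for (i), the identity $c_N-\tfrac12a_N^2=c(1-\alpha_N)$ together with \eqref{eq:sharp-minima-largeN} and $\cA_{p,n,N}^{\sharp}\ge\cA_{p,n}^{\rMF,\sharp}$ for (ii), and the explicit form of $\cA_{p,n,N}$ for (iii). The one difference is at the boundary $c_N=\cA_{p,n,N}^{\sharp}+a_N^2/2$: the paper proves globality separately by comparing $Z=(S^N)^{-p/2}$ with a squared-Bessel process of dimension two, whereas you read it directly off the $\Gamma_N<0$ and $\Gamma_N=0$ cases of the theorem; both arguments are valid, and your compatibility argument for the optimality remark and your proof of $c_N^{\rm stab}>\cA_{p,n}$ fill in points the paper leaves implicit.
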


\begin{proof}
For (i), the strict implications follow from \eqref{eq:Gibbs-theta-bracket} and \autoref{thm:critical-shape-gibbs}.  For
$c_N=\cA_{p,n,N}^{\sharp}+a_N^2/2$, the transform $Z=(S^N)^{-p/2}$ dominates a squared-Bessel process of dimension two, so zero is polar for $Z$ and explosion of $S^N$ is impossible.  The exact alternatives at that boundary follow from
$\Gamma_N=\cA_{p,n,N}^{\sharp}-\overline\Theta_N\le0$.

For (ii), since $c_N=c(1-1/N)$ and $a_N^2/2=2c/(nN)$, the sharp inequality
$c_N<\cA_{p,n,N}^{\sharp}+a_N^2/2$ is equivalent to $c<c_N^{\sharp,\rm stab}$.  The convergence \eqref{eq:sharp-stabilization-limit} follows from \eqref{eq:sharp-minima-largeN}.  Since every finite-$N$ admissible empirical law is also admissible in the mean-field variational problem, $\cA_{p,n,N}^{\sharp}\ge\cA_{p,n}^{\rMF,\sharp}$.  Substituting this into \eqref{eq:finite-stabilization-cstar-sharp} gives \eqref{eq:sharp-stabilization-gap}. Similarly, for (iii), the explicit inequality
$c_N<\cA_{p,n,N}+a_N^2/2$ is equivalent to $c<c_N^{\rm stab}$.  For $0<p<2$, $\cA_{p,n,N}=2^{\bar{p}}(N/(N-1))^{p/2}$, giving the first expansion.  If $p\ge2$ and $n$ is fixed, then $d_N=n$ for all sufficiently large $N$, and the dimension-refined term $\cA_{p,n}=2^{\bar{p}}(1+1/n)^{\bar{p}/2}$ eventually dominates the pairwise term.  This gives the second expansion. 
\end{proof}

The critical radius depends on the time-dependent shape coefficient $\Theta_N(U_t)$, whose long-time average is determined by the angular Gibbs measure and bracketed by the sharp shape extrema.  At $\Gamma_N=0$, the Poisson-equation decomposition in \autoref{thm:critical-shape-gibbs} makes the logarithmic radius a nondegenerate recurrent martingale up to a bounded corrector, yielding global existence and oscillations between arbitrarily small and arbitrarily large variance scales.

Above critical homogeneity, the boundary behavior of the reciprocal radius is governed by a dimension index.  The almost-sure nonexplosive side occurs only in the exceptional low-dimensional relative spaces, while the sharp shape extrema give quantitative two-sided bounds on the explosion probability.

\begin{theorem}\label{thm:supercritical-dimension}
Assume $\rho<1$, $p\ge0$, $q>(p+2)/4$, and $S_0^N>0$.  Let $\tau_N^{\rm expl}$ denote the explosion time of the unique maximal strong solution.  Then
\begin{equation}\label{eq:supercritical-dimension-dichotomy}
 \begin{cases}
  \P(\tau_N^{\rm expl}=\infty)=1, & m_N=n(N-1)\le2,\\[1mm]
  \P(\tau_N^{\rm expl}<\infty)>0, & m_N=n(N-1)>2.
 \end{cases}
\end{equation}
Hence $m_N=2$ is the sharp boundary-accessibility threshold.  If $m_N>2$, set
\begin{equation*}
 k_N^{\rm sup}:=\frac{m_N-2}{2(2q- \bar{p})},
 \qquad
 \Xi_N(C;s):=\frac{2C}{a_N^2(2q-\bar{p})}s^{\bar{p}-2q},
\end{equation*}
and let $Q(k,x):=\Gamma(k,x)/\Gamma(k)$ denote the regularized upper incomplete gamma function.  Then the explosion probability satisfies the sharp comparison bounds
\begin{equation}\label{eq:supercritical-probability-bounds}
 Q\left(k_N^{\rm sup},\Xi_N(\cB_{p,n,N}^{\sharp};S_0^N)\right)
 \le \P(\tau_N^{\rm expl}<\infty)
 \le Q\left(k_N^{\rm sup},\Xi_N(\cA_{p,n,N}^{\sharp};S_0^N)\right).
\end{equation}
The same bounds hold with $\cB_{p,n,N}^{\sharp}$ and $\cA_{p,n,N}^{\sharp}$ replaced by $\cB_{p,N}$ and $\cA_{p,n,N}$, respectively.  For $p=0$ all four constants coincide, so \eqref{eq:supercritical-probability-bounds} is exact.
For fixed $N$ with $m_N>2$,
\begin{equation}\label{eq:supercritical-small-large-data}
 \P(\tau_N^{\rm expl}<\infty)\to0\quad(S_0^N\to0),
 \qquad
 \P(\tau_N^{\rm expl}<\infty)\to1\quad(S_0^N\to\infty).
\end{equation}
\end{theorem}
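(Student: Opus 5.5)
The plan is to reduce everything to one-dimensional diffusions on the variance scale and then run Feller boundary tests on them. We sandwich $S^N$ between two comparison diffusions in which the shape coefficient $\theta_t:=D_{p,N}(Y_t)/(S_t^N)^{\bar p}$ is frozen at a constant $C$.

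\textbf{Step 1: the variance equation and the comparison diffusions.} By \autoref{thm:particle-allq}\ref{item:particle-lipschitz} (note $q>(p+2)/4\ge1/2$), there is a unique maximal strong solution. Consensus is absorbing for it, since all coefficients vanish there and are locally Lipschitz. As in the critical part of that proof, on $\{S^N>0\}$ we write
\[
 \dd S_t^N=\bigl[-\theta_t(S_t^N)^{\bar p}+c_N(S_t^N)^{2q}\bigr]\dd t+a_N(S_t^N)^{q+1/2}\dd\beta_t,
 \qquad \cA_{p,n,N}^{\sharp}\le\theta_t\le\cB_{p,n,N}^{\sharp}.
\]
For a constant $C>0$, let $\Sigma^{C}$ solve the same equation with $\theta_t\equiv C$, driven by the same $\beta$. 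The drift is decreasing in $\theta$. The diffusion coefficient is locally Lipschitz on $(0,\infty)$. Hence the one-dimensional comparison theorem (Yamada--Ikeda--Watanabe), localized to windows $[\e,R]$ as in the proof of \autoref{thm:particle-allq}, gives
\[
 \Sigma^{\cB^\sharp}_t\le S_t^N\le\Sigma^{\cA^\sharp}_t
\]
up to exit from such windows. Letting $\e\to0$ and $R\to\infty$ yields:
\begin{itemize}
 \item explosion of $\Sigma^{\cB^\sharp}$ forces explosion of $S^N$;
 \item explosion of $S^N$ forces explosion of $\Sigma^{\cA^\sharp}$.
\end{itemize}
So it suffices to compute $\P(\text{explosion})$ for $\Sigma^C$. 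Replacing $\cA^\sharp,\cB^\sharp$ by the explicit constants $\cA_{p,n,N},\cB_{p,N}$ only weakens the bounds, by monotonicity in $C$. For $p=0$ we have $\theta\equiv2$, so the bounds coincide and are exact.

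\textbf{Step 2: scale function.} For $\Sigma^C$ the generator is
\[
 \tfrac12a_N^2x^{2q+1}f''+\bigl(c_Nx^{2q}-Cx^{\bar p}\bigr)f'.
\]
Using \eqref{eq:cn-an-relation}, $2b/\sigma^2=\frac{m_N}{2x}-\frac{2C}{a_N^2}x^{\bar p-2q-1}$. This gives the scale density
\[
 \mathfrak s'(x)=x^{-m_N/2}\exp\bigl(-\Xi_N(C;x)\bigr).
\]
Since $\bar p-2q<0$, we have $\Xi_N(C;x)\to\infty$ as $x\to0$, so $\mathfrak s(0+)>-\infty$. Near infinity $\mathfrak s'\sim x^{-m_N/2}$, so $\mathfrak s(\infty)<\infty$ if and only if $m_N>2$.

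Substitute $y=\Xi_N(C;x)$, so that $x\propto y^{-1/(2q-\bar p)}$. Then $\int_x^\infty\mathfrak s'$ becomes a constant multiple of $\int_0^{\Xi_N(C;x)}y^{k-1}e^{-y}\,\dd y$ with $k=k_N^{\rm sup}$. Hence
\[
 \P_x(\Sigma^C\text{ reaches }\infty\text{ before }0)
 =\frac{\mathfrak s(x)-\mathfrak s(0)}{\mathfrak s(\infty)-\mathfrak s(0)}
 =Q\bigl(k_N^{\rm sup},\Xi_N(C;x)\bigr).
\]
If $m_N\le2$ this probability is $0$, so $S^N$ is sandwiched below a nonexplosive process. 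This proves the first line of \eqref{eq:supercritical-dimension-dichotomy}; it needs only the upper comparison $S^N\le\Sigma^{\cA^\sharp}$.

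\textbf{Step 3: finite-time explosion (main obstacle).} The delicate point is that $\Sigma^C\to\infty$ must happen in \emph{finite} physical time, not merely as $t\to\infty$. This is what turns the hitting probability into an explosion probability, and it is what makes the $m_N>2$ line and \eqref{eq:supercritical-probability-bounds} hold.

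I would handle it with the intrinsic clock $\dd\tau=a_N^2(\Sigma_t^C)^{2q-1}\dd t$. In that clock $\log\Sigma^C$ is a Brownian motion with drift
\[
 \frac{m_N-2}{4}-\frac{C}{a_N^2}\,e^{(\bar p-2q)\log\Sigma}.
\]
On the event $\{\Sigma^C\to\infty\}$ the second term vanishes, so $\log\Sigma^C_\tau\ge\frac{m_N-2}{8}\tau$ eventually. Since $2q>1$, the physical time $\int a_N^{-2}\Sigma^{1-2q}\dd\tau$ then converges. As a cross-check, Feller's test for explosion (finiteness of $\int^\infty(\mathfrak s(\infty)-\mathfrak s(y))\,m(\dd y)$ with speed density $\propto y^{m_N/2-2q-1}e^{\Xi}$) gives the same conclusion.

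If $\Sigma^C$ instead tends to or hits zero, it is absorbed there and never explodes. Therefore
\[
 \P(\Sigma^C\text{ explodes})=Q\bigl(k_N^{\rm sup},\Xi_N(C;S_0^N)\bigr).
\]
Combining this with the sandwich of Step 1, and noting that $Q$ is decreasing in $C$, yields \eqref{eq:supercritical-probability-bounds}. Positivity of $Q$ gives the $m_N>2$ line of \eqref{eq:supercritical-dimension-dichotomy}.

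\textbf{Step 4: small and large initial data.} As $S_0^N\to0$, $\Xi_N\to\infty$ and $Q\to0$. As $S_0^N\to\infty$, $\Xi_N\to0$ and $Q\to1$. Applying this to the upper and lower bounds of \eqref{eq:supercritical-probability-bounds} gives \eqref{eq:supercritical-small-large-data}.
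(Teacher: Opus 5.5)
Your proposal is correct, and its skeleton matches the paper's. You freeze the shape coefficient at $\cA_{p,n,N}^{\sharp}$ and $\cB_{p,n,N}^{\sharp}$, compare pathwise on windows $[\e,R]$, and read the explosion probability off a scale function.

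The difference is the coordinate. The paper works with $Z=(S^N)^{1-2q}$. In that variable, radial explosion becomes hitting of the finite boundary $0$ by a squared-Bessel-type process whose extra drift $(2q-1)\theta_tZ^{\delta}$ vanishes at $0$. The threshold $m_N=2$ is then the Feller condition $2b_0\ge\sigma_0^2$ for the index $\Upsilon_N=(4q-m_N)/(2(2q-1))$, which makes it visible that accessibility depends only on $m_N$. Finiteness of the explosion time is automatic there, because $0$ is an accessible finite boundary; the paper only needs a linear-growth bound to rule out $Z\to\infty$ in finite time.

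Working with $S^N$ directly, your scale computation is just as short and gives the same ratio $Q(k_N^{\rm sup},\Xi_N(C;S_0^N))$. However, $\infty$ is an unbounded boundary, so ``reaching $\infty$ before $0$'' does not by itself mean explosion in finite time. That is exactly why your Step 3 is needed, and it is correct: on the escape event the log-variance drift in intrinsic time tends to $(m_N-2)/4>0$, and $\int e^{(1-2q)\log\Sigma}\,\dd\tau$ converges because $2q>1$. Your cross-check via Feller's explosion integral is also right; its integrand behaves like $y^{-2q}$ at infinity.

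For $m_N\le2$, the paper compares with the pure squared-Bessel process (no interaction term). You instead use $C=\cA_{p,n,N}^{\sharp}$ together with $\mathfrak s(\infty)=\infty$; both work.

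One detail should be added. Identifying $\tau_N^{\rm expl}$ with explosion of $S^N$ requires noting, as the paper does, that the center cannot explode first. This holds because its quadratic variation $\kappa^2\int(S^N_t)^{2q}\,\dd t$ is finite on every interval on which $S^N$ stays bounded.
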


\begin{proof}
Up to the first hitting of $S^N=0$ or the explosion time, the martingale representation theorem and \eqref{eq:bracket-power} give a Brownian motion $\beta$ such that
\begin{equation*}
 \dd S_t^N
 =\left[-\theta_t(S_t^N)^{\bar{p}}+c_N(S_t^N)^{2q}\right]\dd t
 +a_N(S_t^N)^{q+1/2}\dd\beta_t,
 \qquad
 \theta_t:=\frac{D_{p,N}(Y_t)}{(S_t^N)^{\bar{p}}}.
\end{equation*}
By the extremal definitions in \eqref{eq:shape-extrema-global},
\begin{equation}\label{eq:theta-two-sided-supercritical}
 \cA_{p,n,N}^{\sharp}\le\theta_t\le \cB_{p,n,N}^{\sharp}.
\end{equation}
For $Z_t:=(S_t^N)^{1-2q}$, $W:=-\beta$, It\^o's formula, \eqref{eq:cn-an-relation}, and a change of sign of the Brownian motion give
\begin{equation}\label{eq:supercritical-Z}
 \dd Z_t
 =(2q-1)\bigl[\theta_t Z_t^{\delta}+q a_N^2-c_N\bigr]\dd t
 +(2q-1) a_N\sqrt{Z_t}\dd W_t,
\end{equation}
with $\delta:=\frac{2q-\bar{p}}{2q-1}\in(0,1].$
The drift of $Z$, before a boundary hit, is bounded above by $C_N(1+Z)$ because $0<\delta\le1$ and $\theta_t$ is bounded. Together with its square-root diffusion, this prevents $Z$ from escaping to infinity in finite time. Its hitting of zero is therefore precisely radial explosion of $S^N$.  Denote
\begin{equation*}
 b_0:=(2q-1)(q a_N^2-c_N),
 \qquad
 \sigma_0:=(2q-1) a_N,
 \qquad
 \Upsilon_N:=\frac{2b_0}{\sigma_0^2}
 =\frac{4q-m_N}{2(2q-1)}.
\end{equation*}
The last identity uses $c_N=m_Na_N^2/4$.  The term $\Upsilon_N$ is the index in Feller's test for explosion.  In particular,
\begin{equation}\label{eq:lambda-dimension}
 \Upsilon_N\ge1 \quad\Leftrightarrow\quad m_N\le2.
\end{equation}

Suppose first that $m_N\le2$.  Let $V$ solve, up to its first boundary hit,
\begin{equation}\label{eq:supercritical-lower-BES}
 \dd V_t=b_0\dd t+\sigma_0\sqrt{V_t}\dd W_t,
 \qquad V_0=Z_0.
\end{equation}
By \eqref{eq:theta-two-sided-supercritical}, the drift of $Z$ is at least $b_0$.  Localizing on $[\e,R]$ and applying the one-dimensional comparison theorem yields $Z_t\ge V_t$ up to the localization time.  Since \eqref{eq:lambda-dimension} is precisely the Feller condition $2b_0\ge\sigma_0^2$, the origin is polar for \eqref{eq:supercritical-lower-BES}.  Letting $\e\to0$ and $R\to\infty$ shows that $Z$ cannot hit zero in finite time.  Hence $S^N$ cannot explode.  If $S^N$ reaches zero, local pathwise uniqueness for $q>1/2$ makes the relative configuration absorbing, and the center is nonexplosive on every finite interval.  Therefore $\tau_N^{\rm expl}=\infty$ almost surely.

Now suppose that $m_N>2$, so $\Upsilon_N<1$.  For a constant $C>0$, let $V^C$ solve
\begin{equation*}
 \dd V_t^C=\Big[b_0+(2q-1) C(V_t^C)^{\delta}\Big]\dd t
 +\sigma_0\sqrt{V_t^C}\dd W_t,
 \qquad V_0^C=Z_0,
\end{equation*}
up to its boundary hitting time.  Its scale density is
\begin{equation*}
 (\mathfrak s_C)'(u)
 =u^{-\Upsilon_N}
 \exp\Big(-\frac{2C}{a_N^2(2q- \bar{p})}u^{\delta}\Big).
\end{equation*}
  Since $\Upsilon_N<1$, the scale increment at zero is of order $u^{1-\Upsilon_N}$, whereas the speed density is of order $u^{\Upsilon_N-1}$. Their product is integrable there, so zero is accessible. Since $0<\delta\le1$, the drift has at most linear upper growth and the diffusion has square-root growth. Localization and Gr\"onwall's inequality rule out a finite-time escape to infinity for $V^C$. The stretched exponential makes its scale finite at infinity.  The scale formula therefore gives
\begin{align}
 \P_z(\tau_0^{V^C}<\infty)
 &=\frac{\int_z^\infty u^{-\Upsilon_N}e^{-\lambda_Cu^\delta}\,\dd u}
 {\int_0^\infty u^{-\Upsilon_N}e^{-\lambda_Cu^\delta}\,\dd u} =Q\Big(\frac{1-\Upsilon_N}{\delta},\lambda_Cz^\delta\Big),
 \qquad
 \lambda_C:=\frac{2C}{a_N^2(2q- \bar{p})}.
 \label{eq:supercritical-C-hit}
\end{align}
Here
\[
 \frac{1-\Upsilon_N}{\delta}=\frac{m_N-2}{2(2q- \bar{p})}=k_N^{\rm sup},
 \qquad
 Z_0^\delta=(S_0^N)^{\bar{p}-2q}.
\]

For fixed $0<\e<Z_0<R$, all coefficients are locally Lipschitz on $[\e,R]$.  The drift bounds \eqref{eq:theta-two-sided-supercritical} and the one-dimensional SDE comparison theorem, proved via Tanaka's formula, see \cite[Proposition~5.2.18]{KaratzasShreve1991}, therefore give
\[
 V_t^{\cA_{p,n,N}^{\sharp}}\le Z_t\le V_t^{\cB_{p,n,N}^{\sharp}}
\]
up to the common localized boundary hitting time.  Letting $\e\to0$ and $R\to\infty$, a hit of zero by $V^{\cB_{p,n,N}^{\sharp}}$ forces a hit by $Z$, while a hit by $Z$ forces a hit by $V^{\cA_{p,n,N}^{\sharp}}$.  Formula \eqref{eq:supercritical-C-hit} gives \eqref{eq:supercritical-probability-bounds}, and in particular proves the positive-probability assertion in \eqref{eq:supercritical-dimension-dichotomy}.  Since $\bar{p}-2q<0$ and $Q(k,x)$ decreases continuously from $1$ to $0$ as $x$ increases from $0$ to $\infty$, the two bounds also imply \eqref{eq:supercritical-small-large-data}.
\end{proof}

\begin{remark}\label{rem:supercritical-p}
In \eqref{eq:supercritical-Z}, the interaction term
$(2q-1)\theta_t Z_t^\delta$ vanishes at $Z=0$ because $\delta>0$ and
$\theta_t$ is bounded. It therefore leaves the Feller index
$\Upsilon_N$ unchanged, and the threshold for boundary accessibility depends
only on $m_N$. The exponent $p$ still affects $\delta$ and the bounds on
$\theta_t$, and hence the explosion probabilities through
\eqref{eq:supercritical-probability-bounds}.
\end{remark}

\section{Conditional McKean--Vlasov dynamics}\label{sec:mf}

Let $\cF_t^0:=\sigma(B_\tau^0:0\le \tau\le t)$, augmented in the usual way.  The conditional McKean--Vlasov equation associated with \eqref{eq:particle} is
\begin{equation}\label{eq:MV}
 \dd X_t
 =\int_{\R^n}K(y-X_t)\,\mu_t(\dd y)\,\dd t
 +\kappa s_t^q\Bigl(\rho\,\dd B_t^0+\sqrt{1-\rho^2}\,\dd B_t\Bigr),
 \qquad \mu_t=\Law(X_t\mid\cF_t^0),
\end{equation}
where
\begin{equation*}
 s_t:=\frac12\iint_{\R^n\times\R^n}|x-y|^2\,\mu_t(\dd x)\mu_t(\dd y).
\end{equation*}
The initial $X_0$ has deterministic law $\mu_0\in \cP_2(\R^n)$ and is independent of both Brownian motions.

We introduce a little more notation. We write $f\push\nu$ for the push forward of the measure $\nu$ by a function $f$, $\tau_a(x)=x+a$ for translation by $a$ and $\mathsf{Dil}_a(x)=ax$ for dilation by a factor $a$.
We next remove the common noise from the conditional law by a random translation.

\begin{theorem}\label{thm:translation}
Let $(X,\mu)$ solve \eqref{eq:MV} on the stochastic interval $[0,T)$, where $T$ is an $\cF^0$-stopping time and $\mu$ has continuous $\cP_2$-valued paths. Assume that there are $\cF^0$-stopping times $T_j\to T$, with $T_j<T$ and $T_j\le j$, such that
\begin{equation}\label{eq:translation-localization}
 \E\Big[\sup_{t\le T_j}\int_{\R^n}|x|^2\mu_t(\dd x)
 +\int_0^{T_j}\Big(s_t^{2q}
 +\iint_{\R^n\times\R^n}|x-y|^{p+1}\mu_t(\dd x)\mu_t(\dd y)\Big)\dd t\Big]<\infty.
\end{equation}
Let
\[
 m_t:=\int_{\R^n} x\mu_t(\dd x),\qquad Y_t:=X_t-m_t,\qquad
 \nu_t:=\Law(Y_t\mid\cF_t^0).
\]
Then we have for $t\in [0,T)$ 
\begin{align}
 \dd m_t&=\kappa\rho s_t^q\,\dd B_t^0,\label{eq:mSDE}\\
 \dd Y_t&=\int_{\R^n} K(z-Y_t)\,\nu_t(\dd z)\,\dd t
 +\kappa\sqrt{1-\rho^2}\,s_t^q\,\dd B_t,
 \label{eq:centeredMV}
\end{align}
and
\begin{equation}\label{eq:translation-law}
 \mu_t=(\tau_{m_t})\push\nu_t,
\end{equation}
\begin{equation}\label{eq:scenteredMV}
 \int_{\R^n} y\nu_t(\dd y)=0,\qquad s_t=\int_{\R^n}|y|^2\nu_t(\dd y).
\end{equation}
If the centered equation has a unique solution in law, the maximal lifetime of this centered law path is a deterministic number $T_{\star}\in(0,\infty]$.
\end{theorem}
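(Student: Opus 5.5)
The plan is to compute the conditional mean $m_t$ by conditioning the integrated form of \eqref{eq:MV} on $\cF_t^0$. Work on $[0,T_j]$ throughout, so the moment bound \eqref{eq:translation-localization} makes every term integrable; then let $j\to\infty$.

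\textbf{Step 1: the mean.} Write
\[
 X_{t\wedge T_j}=X_0+\int_0^{t\wedge T_j}\!\!\int K(y-X_s)\mu_s(\dd y)\,\dd s+\kappa\rho\int_0^{t\wedge T_j}s_s^q\,\dd B_s^0+\kappa\sqrt{1-\rho^2}\int_0^{t\wedge T_j}s_s^q\,\dd B_s .
\]
Take $\E[\,\cdot\mid\cF^0_t]$ term by term.
\begin{itemize}[leftmargin=*]
\item Since $X_0$ is independent of $B^0$, $\E[X_0\mid\cF_t^0]=m_0$.
\item For the drift, use conditional Fubini. Because $\Law(X_s\mid\cF^0_t)=\mu_s$ for $s\le t$ (the compatibility/immersion property of conditional McKean--Vlasov solutions), the conditional expectation of the drift is $\int_0^{t}\iint K(y-x)\mu_s(\dd y)\mu_s(\dd x)\,\dd s$. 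This vanishes because $K$ is odd. The $(p+1)$-moment term in \eqref{eq:translation-localization} justifies Fubini, since $|K(z)|=|z|^{p+1}$.
\item The $B^0$ integral has an $\cF^0$-adapted integrand, since $s$ is a functional of $\mu$. It is therefore $\cF^0_t$-measurable and passes through the conditioning unchanged.
\item The idiosyncratic integral has conditional mean zero. To see this, note that $B$ is independent of $\cF^0_\infty$, so conditionally on $\cF^0$ it remains a Brownian motion and the stochastic integral is a conditional martingale. The term $\E\int s^{2q}$ in \eqref{eq:translation-localization} guarantees it is square integrable.
\end{itemize}
Together these give \eqref{eq:mSDE} up to $T_j$.

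\textbf{Step 2: the centered process and its law.} Subtract to obtain the equation for $Y$. Translation invariance gives $\int K(y-X_t)\mu_t(\dd y)=\int K(z-Y_t)\nu_t(\dd z)$, and $\nu_t=(\tau_{-m_t})\push\mu_t$ because $m_t$ is $\cF_t^0$-measurable. This yields \eqref{eq:centeredMV} and \eqref{eq:translation-law}. Identity \eqref{eq:scenteredMV} follows from the definition of $m_t$ and the variance identity $\tfrac12\iint|x-y|^2\mu\mu=\int|x-m|^2\mu$. Finally, let $j\to\infty$ to cover all of $[0,T)$.

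\textbf{Step 3: deterministic lifetime.} The centered equation \eqref{eq:centeredMV} involves only $B$ and the coefficients $\nu_t,s_t$. The main obstacle is to show that the pair $(\nu_t,s_t)$ is in fact deterministic rather than merely $\cF^0$-adapted. The approach is to use uniqueness in law of the centered equation. Conditionally on $\cF^0$, the process $Y$ solves the same centered McKean--Vlasov equation driven by $B$, which is independent of $\cF^0$, started from the deterministic law of $X_0-m_0$. Uniqueness in law then forces the conditional law path $\nu$ to coincide with the unique deterministic solution path, almost surely. Its maximal lifetime $T_\star$ is therefore deterministic, and $T\le T_\star$. Making this rigorous requires a regular conditional probability given $\cF^0_\infty$, under which $B$ is still a Brownian motion and the equation still holds. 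I would verify this with the martingale-problem formulation and its conditional version.
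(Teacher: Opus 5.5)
Your proposal is correct and follows the paper's proof essentially step for step. Both arguments project the equation, stopped at the $\cF^0$-stopping times $T_j$, onto $\cF^0_t$. The paper remarks explicitly that this stopping commutes with the conditional projection, a point you use implicitly. Both then use oddness of $K$ to kill the drift, the $\cF^0$-measurability of the common-noise integral, and the vanishing conditional mean of the idiosyncratic integral, followed by subtraction and translation invariance.

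For the deterministic lifetime, the paper argues through the weak Fokker--Planck equation \eqref{eq:centered-weak-FP} obtained from conditional It\^o's formula. You instead use regular conditional probabilities and a conditional martingale problem. The idea is the same: the centered equation no longer involves $B^0$ and starts from deterministic data, so uniqueness forces every common-noise realization onto one deterministic law path.
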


\begin{proof}
Write $b_\mu(x)=\int_{\R^n} K(y-x)\mu(\dd y)$. Up to $T_j$, the idiosyncratic integral is square-integrable by \eqref{eq:translation-localization}. Its conditional expectation given the common noise is zero, first for simple predictable integrands and then by $L^2$ approximation. The common-noise integral is $\cF^0$-adapted. Moreover, by oddness of $K$,
\[
 \int_{\R^n} b_{\mu_t}(x)\mu_t(\dd x)
 =\iint_{\R^n\times\R^n} K(y-x)\mu_t(\dd x)\mu_t(\dd y)=0.
\]
 Conditional projection of the stopped equation therefore gives
\[
 m_{t\wedge T_j}
 =m_0+\kappa\rho\int_0^{t\wedge T_j}s_u^q\,\dd B_u^0.
\]
Here stopping commutes with conditional projection because $T_j$ is an $\cF^0$-stopping time and the Brownian system is non-anticipating. Letting $j\to\infty$ proves \eqref{eq:mSDE} on $[0,T)$. Subtraction and translation invariance give \eqref{eq:centeredMV}--\eqref{eq:scenteredMV}.

For $\varphi\in C_c^2(\R^n)$, conditional It\^o's formula gives
\begin{equation}\label{eq:centered-weak-FP}
 \frac{\dd}{\dd t}\int_{\R^n}\varphi\,\dd\nu_t
 =\int_{\R^n}\Big[b_{\nu_t}\cdot\nabla\varphi
 +\frac{\kappa^2(1-\rho^2)}2s_t^{2q}\Delta\varphi\Big]\dd\nu_t
\end{equation}
in its integrated, stopped form. This equation has deterministic coefficients and deterministic initial data. Uniqueness in law of the solution forces each common-noise realization to agree with the same deterministic law path. 
\end{proof}

If there is uniqueness, the centered dynamics therefore depend on $(\kappa,\rho)$ only through the term in the effective idiosyncratic variance $\kappa^2(1-\rho^2)$. 

\begin{corollary}\label{cor:common-noise-translation}
Consider two parameter pairs $(\kappa,\rho)$ and $(\widetilde\kappa,\widetilde\rho)$ satisfying
\[
 \kappa^2(1-\rho^2)=\widetilde\kappa^2(1-\widetilde\rho^2).
\]
Whenever the centered equation is unique for the same centered initial law, the two models have exactly the same centered law path $\nu_t$, the same variance path $s_t$, and the same centered propagation-of-chaos problem.  Their conditional laws differ only through the random translations generated by their respective center equations.  In particular, every assertion about relative configurations depends on $(\kappa,\rho)$ only through the idiosyncratic variance parameter $\sigma^2=\kappa^2(1-\rho^2)$.
\end{corollary}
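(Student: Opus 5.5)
The corollary follows from \autoref{thm:translation} once we observe that the centered problem sees $(\kappa,\rho)$ only through $\sigma^2=\kappa^2(1-\rho^2)$. I would rewrite the centered equation \eqref{eq:centeredMV} as
\[
 \dd Y_t=\int_{\R^n}K(z-Y_t)\,\nu_t(\dd z)\,\dd t+\sigma\, s_t^q\,\dd B_t,\qquad s_t=\int_{\R^n}|y|^2\nu_t(\dd y),
\]
with $\sigma=\sqrt{\kappa^2(1-\rho^2)}\ge0$. The same holds for the weak Fokker--Planck form \eqref{eq:centered-weak-FP}, whose diffusion coefficient is $\tfrac{\sigma^2}{2}s_t^{2q}$. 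Both the drift functional $b_\nu$ and the variance functional $s$ are independent of the parameters. So for the two pairs $(\kappa,\rho)$ and $(\widetilde\kappa,\widetilde\rho)$ with equal $\sigma^2$, the centered equations coincide: they have the same coefficients, the same law of $B$, and, by hypothesis, the same centered initial law $\nu_0$.

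Next I would invoke uniqueness. By \autoref{thm:translation}, uniqueness in law of the centered equation forces the centered law path to be a deterministic curve $\nu_t$ on a deterministic lifetime $[0,T_\star)$. Since the two models solve literally the same centered problem, uniqueness identifies their curves: $\nu_t=\widetilde\nu_t$ and $T_\star=\widetilde T_\star$. Via \eqref{eq:scenteredMV}, this gives $s_t=\widetilde s_t$. The conditional laws are then recovered from \eqref{eq:translation-law}, $\mu_t=(\tau_{m_t})\push\nu_t$, where $m_t$ solves \eqref{eq:mSDE} with $\dd m_t=\kappa\rho s_t^q\,\dd B^0_t$. Hence the two models differ only through the random translations $m_t$ and $\widetilde m_t$, which are driven by the different coefficients $\kappa\rho$ and $\widetilde\kappa\widetilde\rho$.

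For the particle side, \autoref{prop:decomposition} shows that the relative system \eqref{eq:relativeSDE} involves only $\sqrt{1-\rho^2}\,\psi(S^N)=\sigma (S^N)^q$ and no $B^0$. Therefore the relative configurations $Y^{i,N}$, together with their coupling to the centered McKean--Vlasov copies built from the same $B^i$, depend only on $\sigma^2$. This is the sense in which the centered propagation-of-chaos problem is the same for both models. The final assertion, that every statement about relative configurations depends only on $\sigma^2$, follows because all such quantities are functionals of $Y$ or $\nu$.

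The only delicate point is the precise meaning of ``same centered problem'' when $\sigma$ enters through $\sigma\,\dd B$ rather than $\sigma^2$. I would handle it by noting that the law of $\sigma B$ depends only on $\sigma^2$, since $B$ and $-B$ have the same law. Uniqueness in law is insensitive to this sign choice, so no further work is needed beyond citing \autoref{thm:translation}.
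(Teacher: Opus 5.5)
Your proposal is correct and follows the paper's own argument: after centering via \autoref{thm:translation}, the only remaining noise coefficient is $\kappa\sqrt{1-\rho^2}$, so uniqueness identifies the centered law paths and variances, and the conditional laws are recovered through \eqref{eq:translation-law}. The sign issue in your last paragraph does not arise, since $\kappa>0$ and $\rho\in[0,1]$ already make $\kappa\sqrt{1-\rho^2}$ the nonnegative root $\sigma$.
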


\begin{proof}
This is immediate from \eqref{eq:centeredMV}: after centering, the common Brownian motion disappears and the only remaining noise coefficient is $\kappa\sqrt{1-\rho^2}$.  The original conditional law is reconstructed by \eqref{eq:translation-law}. 
\end{proof}

The translated equation has a finite-variation variance identity with the same coercive structure as the particle system.

\begin{proposition}\label{prop:variance-MV}
Let $\nu\in C([0,T_{\star});\cP_2(\R^n))$ be a deterministic centered law solution of \eqref{eq:centeredMV}, where $T_{\star}\in(0,\infty]$ is deterministic, and assume
\[
 \int_0^T\int_{\R^n}|x|^{p+2}\nu_t(\dd x)\,\dd t<\infty
 \qquad\text{for every }T<T_{\star}.
\]
Set $c=n\kappa^2(1-\rho^2)$ and $\bar p =(p+2)/2$. Then $s_t=\int_{\R^n}|x|^2\nu_t(\dd x)$ is locally absolutely continuous and, for almost every $t<T_{\star}$,
\begin{equation}\label{eq:s-exact}
 s_t'=-D_p(\nu_t)+cs_t^{2q},
 \qquad
 \text{ with } D_p(\nu)=\iint_{\R^n\times\R^n}|x-y|^{p+2}\,\nu(\dd x)\nu(\dd y).
\end{equation}

Moreover,
\begin{equation}\label{eq:D-MV-sharp-lower}
 D_p(\nu_t)\ge \cA_{p,n}^{\rMF,\sharp}s_t^{\bar{p}}
 \ge\cA_{p,n}s_t^{\bar{p}},
\end{equation}
and therefore
\begin{equation}\label{eq:sbarrier-sharp}
 s_t'\le-\cA_{p,n}^{\rMF,\sharp}s_t^{\bar{p}}+cs_t^{2q}
 \le-\cA_{p,n}s_t^{\bar{p}}+cs_t^{2q}.
\end{equation}

For $p=2$ one has the exact identity
\begin{equation}\label{eq:D2-MV-identity}
 D_2(\nu_t)=2\int_{\R^n}|x|^4\nu_t(\dd x)+2s_t^2+4\tr(\Sigma_t^2),
 \qquad \Sigma_t:=\int_{\R^n} x\otimes x\,\nu_t(\dd x),
\end{equation}
and hence the sharp bound
\begin{equation}\label{eq:D2-MV-sharp}
 D_2(\nu_t)\ge4\left(1+\frac1n\right)s_t^2.
\end{equation}
Equality holds in \eqref{eq:D2-MV-sharp} for centered isotropic laws supported on a sphere.
\end{proposition}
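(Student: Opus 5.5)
The plan is to test the weak Fokker--Planck equation \eqref{eq:centered-weak-FP} against $\varphi(x)=|x|^2$ and identify the drift. Since $|x|^2$ is not compactly supported, I will use a smooth truncation $\varphi_R(x)=|x|^2\chi(|x|/R)$, with $\chi$ a cutoff equal to $1$ on $[0,1]$ and vanishing on $[2,\infty)$. Applying \eqref{eq:centered-weak-FP} in integrated form gives
\[
\int\varphi_R\,\dd\nu_t-\int\varphi_R\,\dd\nu_0=\int_0^t\int\Big[b_{\nu_u}\cdot\nabla\varphi_R+\tfrac{\kappa^2(1-\rho^2)}{2}s_u^{2q}\Delta\varphi_R\Big]\dd\nu_u\,\dd u .
\]

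I then let $R\to\infty$ by dominated convergence. The bounds needed are $|\nabla\varphi_R(x)|\le C|x|$ and $|\Delta\varphi_R|\le C$ uniformly in $R$, together with $|b_\nu(x)|\le\int|z-x|^{p+1}\nu(\dd z)$. These give
\[
|b_\nu(x)\cdot\nabla\varphi_R(x)|\le C|x|\int|z-x|^{p+1}\nu(\dd z)\le C'\Big(|x|^{p+2}+\int|z|^{p+2}\dd\nu\Big),
\]
by Young's inequality. This is integrable in $(x,u)$ on $[0,T]$ by the assumed $(p+2)$-moment integrability. The diffusion term is bounded because $s_u$ is continuous on $[0,T]$, since $\nu\in C([0,T_\star);\cP_2)$.

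In the limit, $\Delta|x|^2=2n$, so the diffusion term contributes exactly $cs_u^{2q}$. The drift term becomes $2\iint x\cdot K(z-x)\,\nu(\dd z)\nu(\dd x)$. By the same symmetrization as in Proposition~\ref{prop:variance-general}, this equals $-D_p(\nu_u)$. This yields the integrated identity, hence local absolute continuity and \eqref{eq:s-exact} for almost every $t$.

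The lower bound \eqref{eq:D-MV-sharp-lower} follows by scaling. If $s_t=0$, then $\nu_t=\delta_0$ and the bound is trivial. Otherwise I apply the dilation $\mathsf{Dil}_{s_t^{-1/2}}$, under which $\tilde\nu=\mathsf{Dil}_{s_t^{-1/2}}\push\nu_t$ is centered with unit variance. Its $(p+2)$-moment is finite for a.e.\ $t$, and otherwise $D_p=\infty$ and the bound is trivial. Homogeneity gives $D_p(\nu_t)=s_t^{\bar p}D_p(\tilde\nu)\ge\cA^{\rMF,\sharp}_{p,n}s_t^{\bar p}$ from the definition \eqref{eq:sharp-mf-shape-constant}. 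The second inequality is \eqref{eq:sharp-mf-vs-explicit} from Proposition~\ref{prop:gibbs-empirical-bridge}. Inserting these bounds into \eqref{eq:s-exact} gives \eqref{eq:sbarrier-sharp}.

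For $p=2$, I expand $|x-y|^4=(|x|^2+|y|^2-2x\cdot y)^2$ and integrate against $\nu_t\otimes\nu_t$. Centering kills the cross terms $|x|^2\,x\cdot y$, which leaves $2\int|x|^4+2s_t^2+4\int\int(x\cdot y)^2$. Since $\iint(x\cdot y)^2\,\nu_t(\dd x)\nu_t(\dd y)=\tr\Sigma_t^2$, this is \eqref{eq:D2-MV-identity}. Then $\int|x|^4\ge s_t^2$ by Jensen's inequality, and $\tr\Sigma_t^2\ge(\tr\Sigma_t)^2/n=s_t^2/n$ by Cauchy--Schwarz on the eigenvalues. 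Together these give \eqref{eq:D2-MV-sharp}. Equality requires $|x|^2$ to be $\nu_t$-a.s.\ constant and $\Sigma_t=(s_t/n)\Id$, and both hold for isotropic laws on a sphere.

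The main obstacle is justifying the truncation limit. The cubic-type growth of $b_\nu\cdot\nabla\varphi_R$ must be controlled by the time-integrated $(p+2)$-moment hypothesis, and the passage to the limit must happen in integrated rather than pointwise-in-time form.
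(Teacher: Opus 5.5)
Your proof is correct and follows essentially the same route as the paper. The paper applies It\^o's formula to $|Y_t|^2$ and takes expectations, which is the same as testing the Fokker--Planck equation with $|x|^2$; it then symmetrizes the drift, and uses the same dilation argument and the same fourth-power expansion with Jensen and Cauchy--Schwarz. Your cutoff-and-dominated-convergence step simply makes explicit the localization that the paper leaves implicit.
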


\begin{proof}
Apply It\^o's formula to $|Y_t|^2$ in \eqref{eq:centeredMV}. Conditionally on the common noise, let $\widetilde Y_t$ be an independent copy of $Y_t$ and interpret the expectations below conditionally.  Then
\begin{align*}
 2\E\left[Y_t\cdot K(\widetilde Y_t-Y_t)\right]
 &=\E\left[(Y_t-\widetilde Y_t)\cdot K(\widetilde Y_t-Y_t)\right]=-\E|Y_t-\widetilde Y_t|^{p+2}.
\end{align*}
The It\^o correction is $n\kappa^2(1-\rho^2)s_t^{2q}=cs_t^{2q}$, which proves \eqref{eq:s-exact}.  Since
\[
 \iint_{\R^n\times\R^n}|x-y|^2\nu_t(\dd x)\nu_t(\dd y)=2s_t,
\]
Applying Jensen's inequality gives $D_p\ge2^{\bar{p}}s^{\bar{p}}$ for every $p\ge0$.  If $s_t>0$, normalize by $x\mapsto s_t^{-1/2}x$.  The resulting law is centered with unit variance, and homogeneity gives
\[
 D_p(\nu_t)=s_t^{\bar{p}}D_p\bigl((\mathsf{Dil}_{s_t^{-1/2}})\push\nu_t\bigr)
 \ge \cA_{p,n}^{\rMF,\sharp}s_t^{\bar{p}}.
\]
For $s_t=0$ the same inequality is trivial.  This proves the first inequality in \eqref{eq:D-MV-sharp-lower}, the second follows from \autoref{prop:gibbs-empirical-bridge}.  

For the explicit improvement when $p\ge2$, let $X,\widetilde X$ be independent with law $\nu$ and covariance $\Sigma_\nu$.  Expanding the fourth power and using $\E X=0$ gives \eqref{eq:D2-MV-identity}.  Since $\int_{\R^n}|x|^4\nu(\dd x)\ge s^2$ and $\tr(\Sigma_\nu^2)\ge s^2/n$, \eqref{eq:D2-MV-sharp} follows.  If $p\ge2$, then $\bar{p}\ge2$ and, with $R=|X-\widetilde X|^2$,
\[
 D_p(\nu)=\E R^{\bar{p}}\ge(\E R^2)^{\bar{p}/2}
 \ge\Big[4\Big(1+\frac1n\Big)s^2\Big]^{\bar{p}/2},
\]
which proves the explicit second inequality in \eqref{eq:D-MV-sharp-lower}. The equality in \eqref{eq:D2-MV-sharp} requires $|X|^2=s$ almost surely and $\Sigma_\nu=(s/n)\Id_n$, conditions satisfied by isotropic spherical laws. 
\end{proof}

At critical homogeneity the mean-field equation has an exact dilation symmetry.  This shows, in particular, that the size of the initial variance cannot by itself create a separate large-data phase at the critical exponent.

\begin{proposition}\label{prop:critical-mf-scaling}
Assume $q=(p+2)/4$ and let $Y$ be a centered mean-field solution of \eqref{eq:centeredMV} with deterministic law $\nu_t$, variance $s_t$, and maximal lifetime $\tau_{\star}\in(0,\infty]$.  For $a>0$ set
\[
 Y_t^{(a)}:=aY_{a^pt},
 \qquad
 \nu_t^{(a)}:=(\mathsf{Dil}_a)\push\nu_{a^pt},
\]
for $0\le t<a^{-p}\tau_{\star}$.  Then, after the Brownian rescaling
$B_t^{(a)}:=a^{-p/2}B_{a^pt}$, the process $Y^{(a)}$ solves the same centered McKean--Vlasov equation with initial law $(\mathsf{Dil}_a)\push\nu_0$.  Its variance and maximal lifetime satisfy
\begin{equation}\label{eq:critical-mf-scaling}
 s_t^{(a)}=a^2s_{a^pt},
 \qquad
 \tau_{\star}^{(a)}=a^{-p}\tau_{\star}.
\end{equation}
Thus finite-time explosion, global existence, and convergence to consensus are invariant under dilation of the centered initial law. Only the spatial and temporal scales change.
\end{proposition}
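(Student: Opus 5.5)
The plan is a direct scaling computation in the centered equation \eqref{eq:centeredMV}, followed by an argument that the correspondence $Y\mapsto Y^{(a)}$ is a bijection between solutions, so that maximal lifetimes correspond. Fix $a>0$ and set $Y_t^{(a)}=aY_{a^pt}$ and $B^{(a)}_t=a^{-p/2}B_{a^pt}$. By Brownian scaling $B^{(a)}$ is a standard Brownian motion, and it is adapted to the time-changed filtration $\mathcal G_t:=\mathcal F_{a^pt}$. Since $\nu$ is deterministic (we are in the unique-in-law setting of \autoref{thm:translation}), $\Law(Y^{(a)}_t)=(\mathsf{Dil}_a)\push\nu_{a^pt}=\nu^{(a)}_t$. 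This law is again centered, and its second moment is $s^{(a)}_t=a^2s_{a^pt}$, which is the first identity in \eqref{eq:critical-mf-scaling}.

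The core step is to check that the coefficients transform correctly. Write the integral form of \eqref{eq:centeredMV} at time $a^pt$, multiply by $a$, and substitute $u=a^pr$ in the drift integral. For the drift, homogeneity $K(az)=a^{p+1}K(z)$ gives
\[
a\cdot a^p\int_{\R^n}K(z-Y_{a^pr})\,\nu_{a^pr}(\dd z)=\int_{\R^n}K\bigl(z-Y^{(a)}_r\bigr)\,\nu^{(a)}_r(\dd z),
\]
so the drift term becomes exactly $\int_0^t b_{\nu^{(a)}_r}(Y^{(a)}_r)\,\dd r$. For the stochastic integral, $a\int_0^{a^pt}s_u^q\,\dd B_u=a\cdot a^{p/2}\int_0^t s_{a^pr}^q\,\dd B^{(a)}_r$. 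Using $s_{a^pr}=a^{-2}s^{(a)}_r$, this equals $a^{1+p/2-2q}\int_0^t(s^{(a)}_r)^q\,\dd B^{(a)}_r$. At $q=(p+2)/4$ the exponent $1+p/2-2q$ vanishes, so $Y^{(a)}$ solves the same equation with the same constants $\kappa$ and $\rho$, started from $(\mathsf{Dil}_a)\push\nu_0$. The stochastic-integral substitution is justified by approximating with simple integrands, or equivalently by checking that the quadratic variations and the cross-variation with $B^{(a)}$ agree. Criticality is used only in this exponent cancellation.

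For the lifetime, note that the map with parameter $a^{-1}$ inverts the map with parameter $a$: it sends solutions of the rescaled problem back to solutions of the original one, and it maps intervals $[0,T)$ to $[0,a^{p}T)$. So any extension of $Y^{(a)}$ past $a^{-p}\tau_\star$ would produce an extension of $Y$ past $\tau_\star$, contradicting maximality, and symmetrically in the other direction. Hence $\tau^{(a)}_\star=a^{-p}\tau_\star$, where $\tau^{(a)}_\star$ is the maximal lifetime in whatever sense the paper fixes, for example finiteness of $s$ together with the moment localization \eqref{eq:translation-localization}, which scales by explicit powers of $a$.

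The final assertions follow from \eqref{eq:critical-mf-scaling}. Finiteness of the lifetime is preserved, $s^{(a)}_t\to0$ if and only if $s_t\to0$, and global existence is preserved. The main obstacle is bookkeeping rather than mathematics. One has to make sure that the uniqueness and maximality notion used for $\tau_\star$, and the moment integrability hypotheses such as $\int|x|^{p+2}\dd\nu$, are dilation invariant. These quantities scale as $a^{p+2}$ times a time-rescaled integral, so local integrability on compact subintervals is preserved.
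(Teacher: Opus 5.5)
Your proposal is correct and follows the paper's proof: homogeneity of degree $p+1$ handles the drift, Brownian scaling together with the cancellation $1+p/2-2q=0$ handles the noise, and the time change $u=a^pt$ recovers the equation for $Y^{(a)}$. Your inverse-map argument for the lifetime (the map with parameter $a^{-1}$ undoes the map with parameter $a$) spells out the maximality step that the paper justifies in one line via $|Y^{(a)}_t|=a|Y_{a^pt}|$.
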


\begin{proof}
The interaction field is homogeneous of degree $p+1$:
\[
 b_{(\mathsf{Dil}_a)\push\nu}(ax)
 :=\int_{\R^n} K(y-ax)\,(\mathsf{Dil}_a\push\nu)(\dd y)
 =a^{p+1}b_\nu(x).
\]
Since $B^{(a)}$ is a standard Brownian motion and $2q=(p+2)/2=1+p/2$,
\[
 a\,a^{p/2}s^q\,\dd B_t^{(a)}
 =a^{1+p/2}s^q\,\dd B_t^{(a)}
 =(a^2s)^q\,\dd B_t^{(a)}.
\]
Applying the time change $u=a^pt$ to \eqref{eq:centeredMV} therefore gives exactly the equation for $Y^{(a)}$.  The variance identity in \eqref{eq:critical-mf-scaling} is immediate, and the lifetime relation follows because $|Y_t^{(a)}|=a|Y_{a^pt}|$. 
\end{proof}

At critical homogeneity, variance normalization and the intrinsic clock yield an autonomous mean-field shape equation with a constrained free-energy structure.

\begin{theorem}\label{thm:critical-mf-shape}
Assume $\rho<1$, $q=(p+2)/4$, and let $\nu_t$ be the deterministic centered law of a positive-variance mean-field solution with locally bounded $(p+2)$-moment on its maximal lifetime.  Set
\[
 \sigma^2:=\kappa^2(1-\rho^2)=\frac cn,
 \qquad
 b_{\tilde{\nu}}(x):=\int_{\R^n} K(y-x)\tilde{\nu}(\dd y).
\]
Then the following hold.
\begin{enumerate}[label=\textnormal{(\roman*)}]
\item \textnormal{(Autonomous shape dynamics.)}
Define
\begin{equation*}
 \tau(t):=\int_0^t s_u^{p/2}\,\dd u,
 \qquad
 \tilde{\nu}_\tau:=\bigl(\mathsf{Dil}_{s_{t(\tau)}^{-1/2}}\bigr)\push\nu_{t(\tau)},
\end{equation*}
where $t(\tau)$ is the inverse clock, and set
\begin{equation*}
 \Theta(\tilde{\nu}):=D_p(\tilde{\nu}).
\end{equation*}
Then $\tilde{\nu}_\tau$ is centered with unit variance, and
$U_\tau:=s_{t(\tau)}^{-1/2}Y_{t(\tau)}$ satisfies
\begin{equation}\label{eq:mf-critical-shape-SDE}
 \dd U_\tau
 =\Big[b_{\tilde{\nu}_\tau}(U_\tau)
 +\frac12\bigl(\Theta(\tilde{\nu}_\tau)-c\bigr)U_\tau\Big]\dd\tau
 +\sigma\,\dd\widetilde B_\tau,
 \qquad \tilde{\nu}_\tau=\Law(U_\tau),
\end{equation}
where $\widetilde B$ is an $n$-dimensional Brownian motion in the time-changed filtration.  Equivalently, $\tilde{\nu}$ is the weak solution to the following Fokker-Planck equation
\begin{equation}\label{eq:mf-critical-shape-PDE}
 \partial_\tau\tilde{\nu}
 =-\nabla\cdot\Big[
 \Big(b_{\tilde{\nu}}+\frac12(\Theta(\tilde{\nu})-c)x\Big)\tilde{\nu}\Big]
 +\frac{\sigma^2}{2}\Delta\tilde{\nu}.
\end{equation}

\item \textnormal{(Exact radial reconstruction and phase criterion.)}
The variance is recovered from
\begin{equation}\label{eq:mf-critical-log-variance}
 \frac{\dd}{\dd\tau}\log s_{t(\tau)}
 =c-\Theta(\tilde{\nu}_\tau),
 \qquad
 t(\tau)=\int_0^\tau s_{t(a)}^{-p/2}\,\dd a.
\end{equation}
Assume the normalized flow exists for all intrinsic times.  If
\begin{equation}\label{eq:mf-critical-average-align}
 \liminf_{\tau\to\infty}
 \frac1\tau\int_0^\tau\Theta(\tilde{\nu}_a)\,\dd a>c,
\end{equation}
then the original solution is global and $s_t\to0$.  If $p>0$ and
\begin{equation}\label{eq:mf-critical-average-explode}
 \limsup_{\tau\to\infty}
 \frac1\tau\int_0^\tau\Theta(\tilde{\nu}_a)\,\dd a<c,
\end{equation}
then the physical clock has a finite limit and $s_t\to\infty$ at that time. Hence, the original solution explodes in finite physical time.

\item \textnormal{(Constrained free-energy dissipation.)}
For a centered unit-variance law $\tilde{\nu}=f\,\dd x$, define
\begin{equation}\label{eq:mf-critical-free-energy}
 \cF(\tilde{\nu})
 :=\frac{D_p(\tilde{\nu})}{2(p+2)}
 +\frac{\sigma^2}{2}\int_{\R^n}f\log f\,\dd x.
\end{equation}
Assume that $f_\tau>0$ is $C^{1,2}$. On each compact intrinsic-time interval $I$, assume that $f_\tau$, $\partial_\tau f_\tau$, and its first two spatial derivatives are rapidly decreasing in $x$, uniformly in time, and that $|\log f_\tau(x)|$ has at most polynomial growth, uniformly in time. Assume also that
\[
\int_I\int_{\R^n}\left(|x|^{2p+2}+|\nabla\log f_\tau(x)|^2\right)
 f_\tau(x)\,\dd x\,\dd\tau<\infty.
\]
Then $\cF(\tilde{\nu}_\tau)$ is locally absolutely continuous. With
\[
 V_{\tilde{\nu}}(x):=\frac1{p+2}\int_{\R^n}|x-y|^{p+2}\tilde{\nu}(\dd y),
 \qquad
 \lambda_{\tilde{\nu}}:=\frac12(\Theta(\tilde{\nu})-c),
\]
for almost every intrinsic time one has
\begin{equation}\label{eq:mf-critical-free-energy-dissipation}
 \frac{\dd}{\dd\tau}\cF(\tilde{\nu}_\tau)
 =-\int_{\R^n}\Big|
 \nabla V_{\tilde{\nu}_\tau}
 +\frac{\sigma^2}{2}\nabla\log f_\tau
 -\lambda_{\tilde{\nu}_\tau}x
 \Big|^2 f_\tau(x)\,\dd x\le0.
\end{equation}

\end{enumerate}
\end{theorem}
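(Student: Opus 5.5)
The plan is to obtain all three parts from one It\^o computation for $U=s^{-1/2}Y$, followed by the intrinsic time change. Part~(iii) is then a gradient-flow calculation in which the variance constraint turns the dissipation into a perfect square. First I will check that the setting is legitimate. By \autoref{thm:translation} the law path $\nu_t$ and hence $s_t$ are deterministic, and by \autoref{prop:variance-MV} $s$ is locally absolutely continuous with $s'=-D_p(\nu_t)+cs^{2q}$. Since the variance is positive on the maximal lifetime, $\log s$ and $s^{-1/2}$ are locally absolutely continuous there, and the clock $\tau(t)=\int_0^t s_u^{p/2}\,\dd u$ is strictly increasing with an absolutely continuous inverse $t(\tau)$.

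\emph{Step 1 (shape SDE).} I apply the product rule to $U_t=s_t^{-1/2}Y_t$. Since $s^{-1/2}$ has finite variation, no cross-variation term appears:
\[
 \dd U_t=s_t^{-1/2}\dd Y_t-\tfrac12 s_t^{-1}s_t'U_t\,\dd t .
\]
Homogeneity of $K$ gives $b_{\nu_t}(Y_t)=s_t^{(p+1)/2}b_{\tilde\nu}(U_t)$, with $\tilde\nu=(\mathsf{Dil}_{s_t^{-1/2}})\push\nu_t$. Homogeneity of $D_p$ gives $D_p(\nu_t)=s_t^{\bar p}\Theta(\tilde\nu)$. At $q=(p+2)/4$ one has $2q=\bar p$ and $q-\tfrac12=p/4$, so
\[
 \dd U_t=s_t^{p/2}\Big[b_{\tilde\nu}(U_t)+\tfrac12(\Theta(\tilde\nu)-c)U_t\Big]\dd t+\sigma s_t^{p/4}\,\dd B_t .
\]
I then change time with $\dd\tau=s^{p/2}\dd t$. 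The process $\widetilde B_\tau:=\int_0^{t(\tau)}s_u^{p/4}\,\dd B_u$ is a continuous local martingale in the time-changed filtration with bracket $\tau\,\Id_n$, so by L\'evy's characterization it is a Brownian motion. This yields \eqref{eq:mf-critical-shape-SDE}. Centering and unit variance of $\tilde\nu_\tau$ hold by construction. Testing against $\varphi\in C_c^2$ and taking expectations gives the weak form of \eqref{eq:mf-critical-shape-PDE}; the local $(p+2)$-moment bound makes the drift integrable.

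\emph{Step 2 (radial reconstruction and phase criterion).} From $s'=s^{\bar p}(c-\Theta)$ and $\dd\tau=s^{p/2}\dd t$ one gets $\frac{\dd}{\dd\tau}\log s=c-\Theta(\tilde\nu_\tau)$, and inverting the clock gives $t(\tau)=\int_0^\tau s_{t(a)}^{-p/2}\dd a$. This is \eqref{eq:mf-critical-log-variance}.
\begin{itemize}
\item Under \eqref{eq:mf-critical-average-align}, $\log s_{t(\tau)}\le \log s_0-\e\tau$ for large $\tau$. Hence $s\to0$, and $t(\tau)\ge$ a constant times $\tau$ (for $p=0$ simply $t=\tau$), so $t(\tau)\to\infty$. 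Because the normalized flow exists for all $\tau$, the reconstructed solution is defined on all of $[0,\infty)$ in physical time and is global.
\item Under \eqref{eq:mf-critical-average-explode} with $p>0$, $\log s_{t(\tau)}\ge\e\tau-C$. Then $\int_0^\infty s^{-p/2}\dd\tau<\infty$, so the physical clock tends to a finite $T$ with $s\to\infty$; this is finite-time explosion.
\end{itemize}
I must also identify the image of the intrinsic half-line with the maximal lifetime. I will do this by the same maximality argument as in \autoref{thm:critical-shape-gibbs}: a finite endpoint at which $s$ stays bounded and positive could be extended.

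\emph{Step 3 (free energy).} Put $V=V_{\tilde\nu}$, note $b_{\tilde\nu}=-\nabla V$, and write $\lambda=\lambda_{\tilde\nu}$. Equation \eqref{eq:mf-critical-shape-PDE} reads $\partial_\tau f=\nabla\cdot(G f)$ with $G:=\nabla V+\frac{\sigma^2}{2}\nabla\log f-\lambda x$. By symmetry of the pair interaction, $\frac{\dd}{\dd\tau}\frac{D_p}{2(p+2)}=\int V\,\partial_\tau f$. The entropy gives $\frac{\dd}{\dd\tau}\int f\log f=\int(\log f+1)\partial_\tau f$. Integrating by parts,
\[
 \frac{\dd}{\dd\tau}\cF(\tilde\nu_\tau)=-\int\nabla\Big(V+\tfrac{\sigma^2}{2}\log f\Big)\cdot G\,f .
\]
To complete the square I add $\lambda\int x\cdot G f$, which I claim vanishes. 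Indeed $\int x\cdot\nabla V f=\tfrac12 D_p=\Theta/2$ by symmetrization, $\tfrac{\sigma^2}{2}\int x\cdot\nabla f=-n\sigma^2/2=-c/2$, and $\lambda\int|x|^2f=\lambda=(\Theta-c)/2$. (This is exactly the statement that the flow preserves unit variance.) The sum is $-\int|G|^2f$, which is \eqref{eq:mf-critical-free-energy-dissipation}.

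The main obstacle is the rigorous part of Step~3 rather than the algebra. One needs absolute continuity of $\cF$, differentiation of $D_p$ under the integral, and integration by parts with no boundary terms. The plan is as follows.
\begin{itemize}
\item Use the assumed rapid decay of $f,\partial_\tau f,\nabla f,\nabla^2 f$, together with polynomial growth of $\log f$, to justify dominated differentiation of the entropy and discard boundary terms on large balls.
\item Use the $|x|^{2p+2}$-moment to place $\nabla V$, which grows like $|x|^{p+1}$, in $L^2(f\,\dd x\,\dd\tau)$.
\item Use the Fisher-information assumption to place $\nabla\log f$ in the same space, so that $|G|^2f$ is integrable on $I$.
\end{itemize}
I would write $\cF(\tilde\nu_{\tau_2})-\cF(\tilde\nu_{\tau_1})$ as a time integral of the above expression first on truncated balls, and then pass to the limit. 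A secondary subtlety is in Step~1: checking that $\widetilde B$ is a Brownian motion for the time-changed filtration. This holds because $t(\tau)$ is a deterministic, continuous, strictly increasing map, so it is a routine deterministic time change.
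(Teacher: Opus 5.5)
Your proposal is correct and follows the paper's proof essentially step for step. The common steps are: the It\^o/product-rule computation for $U=s^{-1/2}Y$ using homogeneity of $K$ and $D_p$; the time change $\dd\tau=s^{p/2}\dd t$ with L\'evy's characterization; the log-variance reconstruction, with exponential decay or growth of $\log s$ giving the phase criterion; and the free-energy computation, in which the unit-variance constraint ($\int x\cdot\nabla V\,f=\Theta/2$ and $\frac{\sigma^2}{2}\int x\cdot\nabla f=-c/2$) kills the $\lambda x$ cross term and completes the square, justified by spatial cutoffs. The one cosmetic difference is that you put $-\lambda x$ inside $G$ from the start, whereas the paper's $G$ omits it and uses $\int x\cdot(G-\lambda x)f=0$; these are the same identity.
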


\begin{proof}
Write $Y_t=\sqrt{s_t}\,U_t$ and let $\tilde{\nu}_t$ denote the law of $U_t$.  Homogeneity of $K$ gives
\[
 b_{\nu_t}(\sqrt{s_t}u)=s_t^{(p+1)/2}b_{\tilde{\nu}_t}(u),
 \qquad
 D_p(\nu_t)=s_t^{(p+2)/2}D_p(\tilde{\nu}_t).
\]
At critical homogeneity $2q=(p+2)/2$, \autoref{prop:variance-MV} therefore reads
\[
 s_t'=s_t^{1+p/2}\bigl(c-D_p(\tilde{\nu}_t)\bigr).
\]
Since $s_t$ is deterministic and positive before the lifetime, It\^o's formula gives
\[
 \dd U_t
 =s_t^{p/2}\Big[b_{\tilde{\nu}_t}(U_t)
 +\frac12\bigl(D_p(\tilde{\nu}_t)-c\bigr)U_t\Big]\dd t
 +\sigma s_t^{p/4}\dd B_t.
\]
The time change $\dd\tau=s_t^{p/2}\dd t$ turns the martingale into $\sigma\dd\widetilde B_\tau$ and proves \eqref{eq:mf-critical-shape-SDE}.  The normalization gives $\int_{\R^n} x\tilde{\nu}_\tau(\dd x)=0$ and $\int_{\R^n}|x|^2\tilde{\nu}_\tau(\dd x)=1$, so $\Theta(\tilde{\nu}_\tau)=D_p(\tilde{\nu}_\tau)$.  Dividing the variance equation by $s_t$ and by $\dd\tau/\dd t=s_t^{p/2}$ gives \eqref{eq:mf-critical-log-variance}. The inverse-clock formula follows immediately.

If \eqref{eq:mf-critical-average-align} holds, there are $\e>0$ and $\tau_0$ such that
$\log s_{t(\tau)}\le C-\e\tau$ for $\tau\ge\tau_0$.  Hence $s_{t(\tau)}\to0$, while the inverse clock in \eqref{eq:mf-critical-log-variance} diverges, so physical time tends to infinity.  If $p>0$ and \eqref{eq:mf-critical-average-explode} holds, then
$\log s_{t(\tau)}\ge-C+\e\tau$ for large $\tau$. Consequently,
$\int_0^\infty s_{t(a)}^{-p/2}\dd a<\infty$ and $s_{t(\tau)}\to\infty$, which is finite-time explosion.

For the last assertion, $b_{\tilde{\nu}}=-\nabla V_{\tilde{\nu}}$, so \eqref{eq:mf-critical-shape-PDE} can be written
\[
 \partial_\tau f
 =\nabla\cdot\Big(
 f\Big[\nabla V_{\tilde{\nu}}+\frac{\sigma^2}{2}\nabla\log f-\lambda_{\tilde{\nu}} x\Big]
 \Big).
\]
Let $G_{\tilde{\nu}}:=\nabla V_{\tilde{\nu}}+(\sigma^2/2)\nabla\log f$.  Symmetrization and integration by parts give
\[
 \int_{\R^n} x\cdot\nabla V_{\tilde{\nu}}\,f\,\dd x=\frac12D_p(\tilde{\nu}),
 \qquad
 \frac{\sigma^2}{2}\int_{\R^n} x\cdot\nabla\log f\,f\,\dd x
 =-\frac{n\sigma^2}{2}=-\frac c2.
\]
Oddness gives $\int_{\R^n} G_{\tilde{\nu}} f\,\dd x=0$, so the mean constraint requires no additional multiplier. Moreover, $\int_{\R^n} x\cdot G_{\tilde{\nu}} f\,\dd x=\lambda_{\tilde{\nu}}$.  Since the variance is one,
$\int_{\R^n} x\cdot(G_{\tilde{\nu}}-\lambda_{\tilde{\nu}} x)f\,\dd x=0$.  Differentiating \eqref{eq:mf-critical-free-energy} along the flow and integrating by parts yields
\[
 \frac{\dd}{\dd\tau}\cF(\tilde{\nu}_\tau)
 =-\int_{\R^n} G_{\tilde{\nu}}\cdot(G_{\tilde{\nu}}-\lambda_{\tilde{\nu}} x)f\,\dd x
 =-\int_{\R^n}|G_{\tilde{\nu}}-\lambda_{\tilde{\nu}} x|^2f\,\dd x,
\]
which is \eqref{eq:mf-critical-free-energy-dissipation}.  The calculation is justified first with spatial cutoffs under the stated decay assumptions. In order to pass to less regular limits we would require an energy chain rule to get equality in \eqref{eq:mf-critical-free-energy-dissipation}. Lower semicontinuity alone yields only the corresponding integrated energy inequality.
\end{proof}

\begin{remark}\label{rem:critical-mf-shape}
For $p>0$, the mean-field classification at critical homogeneity is
governed by the long-time average of $D_p(\tilde{\nu}_\tau)$ in
\eqref{eq:mf-critical-log-variance}. Under the regularity and decay
assumptions of part (iii) of
\autoref{thm:critical-mf-shape}, a stationary normalized density satisfies
\[
 f(x)=\frac{1}{Z}\exp\Big(
 -\frac{2}{\sigma^2}V_{\tilde{\nu}}(x)
 +\frac{\lambda_{\tilde{\nu}}}{\sigma^2}|x|^2\Big),\quad
 \int_{\R^n}x f(x)\dd x=0,
 \qquad
 \int_{\R^n}|x|^2f(x)\dd x=1,
\]
where $\tilde{\nu}(\dd x)=f(x)\dd x$ and $Z$ is the normalizing constant.
If a trajectory exists for all intrinsic times and
$D_p(\tilde{\nu}_\tau)\to D_\infty$, then
\autoref{thm:critical-mf-shape} gives consensus when $D_\infty>c$ and
finite-time explosion in physical time when $D_\infty<c$.
Convergence to a stationary shape and the classification of the boundary
case $D_\infty=c$ remain open in the general nonlinear setting.
\end{remark}

For $p=2$, the difference between $D_2(\nu_t)$ and its sharp lower bound $4(1+1/n)s_t^2$ is the sum of two explicitly nonnegative quantities. The first measures the variation of the squared distance from the center. The second measures the deviation of the covariance from isotropy.

\begin{corollary}\label{cor:p2-defect}
Assume $p=2$ and let $\nu_t$ satisfy the hypotheses of \autoref{prop:variance-MV}. Write
\[
 \Sigma_t:=\int_{\R^n} x\otimes x\,\nu_t(\dd x),\qquad
 \operatorname{Var}_{\nu_t}(|x|^2):=\int_{\R^n} (|x|^2-s_t)^2\nu_t(\dd x),
\]
and $\|A\|_{\rm HS}^2:=\tr(A^{\mathsf T}A)$ for the Hilbert--Schmidt norm. At the critical homogeneity $q=1$, for almost every $t$,
\begin{equation}\label{eq:p2-defect}
 \begin{split}
 s_t'= &-\Big(4\Big(1+\frac1n\Big)-c\Big)s_t^2
 -2\operatorname{Var}_{\nu_t}(|x|^2)
 -4\left\|\Sigma_t-\frac{s_t}{n}\Id_n\right\|_{\rm HS}^2.
 \end{split}
\end{equation}
In particular, at the sharp coercive noise level $c=4(1+1/n)$,
\[
 -s_t'=2\operatorname{Var}_{\nu_t}(|x|^2)
 +4\left\|\Sigma_t-\frac{s_t}{n}\Id_n\right\|_{\rm HS}^2\ge0
\]
for almost every $t$. 
\end{corollary}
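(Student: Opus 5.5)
The plan is to combine the exact variance identity \eqref{eq:s-exact} of \autoref{prop:variance-MV} with the $p=2$ expansion \eqref{eq:D2-MV-identity}, and then rewrite each term in that expansion as its sharp lower bound plus a nonnegative defect. At $p=2$ the critical exponent is $q=(p+2)/4=1$, so $2q=2$ and \eqref{eq:s-exact} becomes
\[
 s_t'=-D_2(\nu_t)+c\,s_t^2
\]
for almost every $t$. The hypotheses of \autoref{prop:variance-MV} include the locally integrable fourth moment, which is exactly what is needed for $D_2(\nu_t)$ to be finite for almost every $t$. So it remains to decompose $D_2(\nu_t)$.

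First I will handle the fourth moment. Since $\nu_t$ has second moment $s_t$, expanding the square gives
\[
 \int_{\R^n}|x|^4\nu_t(\dd x)=s_t^2+\operatorname{Var}_{\nu_t}(|x|^2).
\]
Next comes the covariance term. Since $\tr\Sigma_t=s_t$, expanding $\|\Sigma_t-(s_t/n)\Id_n\|_{\rm HS}^2$ yields
\[
 \tr(\Sigma_t^2)=\frac{s_t^2}{n}+\left\|\Sigma_t-\frac{s_t}{n}\Id_n\right\|_{\rm HS}^2,
\]
where the cross term is $-2(s_t/n)\tr\Sigma_t+n(s_t/n)^2=-s_t^2/n$. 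Substituting both into \eqref{eq:D2-MV-identity} gives
\[
 D_2(\nu_t)=4\Big(1+\frac1n\Big)s_t^2+2\operatorname{Var}_{\nu_t}(|x|^2)+4\left\|\Sigma_t-\frac{s_t}{n}\Id_n\right\|_{\rm HS}^2 .
\]
The $s_t^2$ coefficients combine as $2+2+4/n=4(1+1/n)$.

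Inserting this into the variance equation gives \eqref{eq:p2-defect}. The special case follows by setting $c=4(1+1/n)$, which cancels the $s_t^2$ term. Both remaining terms are manifestly nonnegative.

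There is no real obstacle here. The only point needing care is that these identities hold only for almost every $t$, namely where $\int|x|^4\,\dd\nu_t<\infty$, and this is guaranteed by the integrability assumption. Everything else is algebra on the quantities already identified in \autoref{prop:variance-MV}.
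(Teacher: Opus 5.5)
Your proposal is correct and follows the paper's proof essentially line for line. You rewrite $\int|x|^4\,\dd\nu_t=s_t^2+\operatorname{Var}_{\nu_t}(|x|^2)$ and $\tr(\Sigma_t^2)=s_t^2/n+\|\Sigma_t-(s_t/n)\Id_n\|_{\rm HS}^2$ inside \eqref{eq:D2-MV-identity}, then substitute the result into $s_t'=-D_2(\nu_t)+cs_t^2$; your remark that finiteness of the fourth moment holds only for almost every $t$ is the right justification for the almost-everywhere qualifier.
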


Note, here $-s_t'$ is precisely the instantaneous rate of decrease of the variance. At any time where \eqref{eq:p2-defect} holds, $s_t'=0$ at the sharp coercive noise level if and only if both defects vanish: $|x|^2=s_t$ for $\nu_t$-almost every $x$, and $\Sigma_t=(s_t/n)\Id_n$. Thus the equality condition is a statement about the law at that time, not a claim that a spherical law remains stationary under the evolution. At a positive-variance time when $\nu_t$ has a density, the spherical condition is impossible. 

\begin{proof}
Subtract $4(1+1/n)s^2$ from the exact identity \eqref{eq:D2-MV-identity} and use
\[
 \int_{\R^n}|x|^4\nu(\dd x)-s^2=\operatorname{Var}_{\nu}(|x|^2),
 \qquad
 \tr(\Sigma^2)-\frac{s^2}{n}
 =\left\|\Sigma-\frac{s}{n}\Id_n\right\|_{\rm HS}^2.
\]
Consequently,
\[
 D_2(\nu_t)=4\Big(1+\frac1n\Big)s_t^2
 +2\operatorname{Var}_{\nu_t}(|x|^2)
 +4\left\|\Sigma_t-\frac{s_t}{n}\Id_n\right\|_{\rm HS}^2.
\]
Inserting this formula into $s_t'=-D_2(\nu_t)+cs_t^2$ proves \eqref{eq:p2-defect}. 
\end{proof}

\begin{remark}\label{rem:closure}
For $p=0$, $D_0(\nu)=2s$ and the variance equation closes.
For $p>0$, the dissipation is generally not determined by the variance
alone. At $p=2$, \eqref{eq:p2-defect} separates the variance contribution
from radial fluctuations and covariance anisotropy, but the identity
does not provide closed evolution equations for these two defects.
\end{remark}
The superlinear attraction also yields a coercive moment estimate and instantaneous generation of centered moments.

\begin{lemma}\label{lem:moment-generation}
Let $\nu_t$ be a centered solution and write $M_k(t):=\int_{\R^n}|x|^k\nu_t(\dd x)$ for $k\ge2$.  As long as the indicated moments are finite,
\begin{equation}\label{eq:moment-diff-sharp}
 M_k'(t)
 \le-a_k M_k(t)^{1+p/k}
 +b_k M_k(t)^{1+(4q-2)/k},
\end{equation}
where
\begin{equation*}
 a_k:=k 2^{1-k},
 \qquad
 b_k:=\frac{k(k+n-2)}2\kappa^2(1-\rho^2).
\end{equation*}
Consequently, if $4q<p+2$, every initially finite centered $k$th moment is bounded uniformly on the maximal lifetime of the solution.  If in addition $p>0$, then the drift comes down from infinity: for every $k\ge2$ and every $\tau>0$ there is $C_{k,\tau}<\infty$, independent of any higher initial moment, such that
\begin{equation}\label{eq:moment-generation}
 \sup_{\tau\le t<\tau_{\star}}M_k(t)\le C_{k,\tau},
\end{equation}
where $\tau_{\star}$ is the maximal lifetime.
In particular, if $p>0$ and $4q<p+2$, initial data in $\cP_{p+2}$ acquire every finite centered moment at each positive time before $\tau_{\star}$. 
\end{lemma}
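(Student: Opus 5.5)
The plan is to apply It\^o's formula to $|Y_t|^k$ in the centered equation \eqref{eq:centeredMV} and take expectations. A localization by stopping times will let us drop the martingale term, and the assumed finiteness of the moments will then allow passage to the limit. Since $\nabla|y|^k=k|y|^{k-2}y$ and $\Delta|y|^k=k(k+n-2)|y|^{k-2}$, the It\^o correction equals
\[
\frac{k(k+n-2)}2\kappa^2(1-\rho^2)s_t^{2q}M_{k-2}(t).
\]
Combining $s_t\le M_k^{2/k}$ with $M_{k-2}\le M_k^{(k-2)/k}$ (Jensen) bounds this by $b_kM_k^{(4q+k-2)/k}=b_kM_k^{1+(4q-2)/k}$, which is the second term of \eqref{eq:moment-diff-sharp}.

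For the drift, let $\widetilde Y$ be an independent copy and symmetrize:
\[
k\E\bigl[|Y|^{k-2}Y\cdot K(\widetilde Y-Y)\bigr]
=-\frac k2\E\bigl[(|Y|^{k-2}Y-|\widetilde Y|^{k-2}\widetilde Y)\cdot(Y-\widetilde Y)\,|Y-\widetilde Y|^p\bigr].
\]
Two ingredients are needed to bound this from above.
\begin{enumerate}
\item The elementary monotonicity inequality $(|x|^{k-2}x-|y|^{k-2}y)\cdot(x-y)\ge 2^{2-k}|x-y|^k$, valid for $k\ge2$. This makes the drift at most $-k2^{1-k}\E|Y-\widetilde Y|^{k+p}$.
\item Because $\widetilde Y$ is centered and independent of $Y$, conditional Jensen gives $\E|Y-\widetilde Y|^{k+p}\ge\E|Y|^{k+p}\ge M_k^{(k+p)/k}$.
\end{enumerate}
Together these yield the first term $-a_kM_k^{1+p/k}$. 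The main technical points are the monotonicity inequality with the explicit constant $2^{2-k}$ and the justification of the localization. For the inequality I would reduce to the one-dimensional comparison $(x^{k-1}-y^{k-1})(x-y)$ via the convexity of $\frac1k|x|^k$, or cite the standard $p$-Laplacian inequality. For the localization, the moment hypotheses and Fatou's lemma supply the needed integrability.

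For the consequences, assume $4q<p+2$. Then $1+p/k>1+(4q-2)/k$, so the negative term dominates for large $M_k$. The right-hand side of \eqref{eq:moment-diff-sharp} is therefore negative above a threshold $\bar M_k$, giving $M_k(t)\le\max\{M_k(0),\bar M_k\}$ on the whole lifetime. When $p>0$, Young's inequality gives $M_k'\le-\frac{a_k}2M_k^{1+p/k}+C_k$. Comparing with the scalar ODE $y'=-\frac{a_k}2y^{1+p/k}+C_k$, whose solutions come down from infinity, yields $M_k(t)\le (p a_kt/(2k))^{-k/p}\vee(2C_k/a_k)^{k/(k+p)}$ plus a constant, independently of $M_k(0)$.

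The catch is that the differential inequality is only valid when $M_k$ is finite, which is not yet known for data merely in $\cP_{p+2}$. I would handle this by applying the argument to the truncated moments $\E[\phi_R(|Y|)]$, where $\phi_R$ is a smooth concave-type truncation of $r^k$, or to $M_k$ along an approximating sequence of compactly supported initial data with uniform bounds. The estimate above does not depend on $M_k(0)$, so letting $R\to\infty$ via Fatou gives \eqref{eq:moment-generation}. The delicate part is this truncation: the truncated function must keep enough monotonicity that the symmetrized drift still has a good sign up to controllable errors. I expect this to be the main obstacle. The fallback is to bootstrap from $k=p+2$ upward in steps, using the finiteness of lower moments at each stage.
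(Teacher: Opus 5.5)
Your proposal is correct and follows the paper's proof essentially step for step. The shared steps are It\^o's formula for $|Y|^k$, symmetrization with the monotonicity inequality $(\Phi_k(x)-\Phi_k(y))\cdot(x-y)\ge 2^{2-k}|x-y|^k$, conditional Jensen using $\E\widetilde Y=0$, the three Jensen interpolations of $M_{k+p}$, $s$ and $M_{k-2}$ against $M_k$, and scalar comparison for both the threshold bound and the coming-down-from-infinity bound. For data only in $\cP_{p+2}$ the paper takes your second option: it approximates the centered initial law in $\cW_{p+2}$ by bounded-support laws, uses that the bound for $t\ge\tau$ is uniform in the approximating $k$th moment, and passes to the limit by compactness from the uniform $(p+2)$-moment bound, identification of the limit through monotonicity and local pathwise uniqueness, and Fatou. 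The identification step you leave implicit is the only thing to add, and the concave-truncation route is not needed.
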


\begin{proof}
For $\Phi_k(x)=|x|^{k-2}x$, symmetrization and the strong monotonicity inequality
\[
 (\Phi_k(x)-\Phi_k(y))\cdot(x-y)\ge2^{2-k}|x-y|^k
\]
give
\begin{align*}
 k\E\Big[|Y|^{k-2}Y\cdot\int_{\R^n} K(z-Y)\nu_t(\dd z)\Big]
 &=-\frac k2\E\left[(\Phi_k(Y)-\Phi_k(\widetilde Y))\cdot K(Y-\widetilde Y)\right]\\
 &\le-k2^{1-k}\E|Y-\widetilde Y|^{k+p}\\
 &\le-k2^{1-k}M_{k+p}(t),
\end{align*}
where the last step is Jensen's inequality conditional on $Y$ and uses $\E\widetilde Y=0$. Applying It\^o to $|Y|^k$, the It\^o correction is
\[
 \frac{k(k+n-2)}2\kappa^2(1-\rho^2)s_t^{2q}M_{k-2}(t).
\]
Now taking expectations in the SDE for $|Y|^k$, and using Jensen's inequality to see
\[
 M_{k+p}\ge M_k^{1+p/k},\qquad
 s=M_2\le M_k^{2/k},\qquad
 M_{k-2}\le M_k^{(k-2)/k},
\]
proves \eqref{eq:moment-diff-sharp}.  If $4q<p+2$, the exponent of the negative term is strictly larger than that of the positive term, so the right-hand side is negative above a deterministic level. This gives the stated uniform bound for finite $M_k(0)$.

When $p>0$, the exponent $1+p/k$ for the negative term is strictly greater than one.  Above a sufficiently large level $R_k$, \eqref{eq:moment-diff-sharp} implies
\[
 M_k'\le-\frac{a_k}{2}M_k^{1+p/k}.
\]
Comparison with this scalar ODE gives, for a finite initial $k$-th moment,
\begin{equation}\label{eq:moment-bound}
  M_k(t)\le \max\Big\{R_k,
 \Big(\frac{2k}{a_k p\,t}\Big)^{k/p}\Big\}.   
\end{equation}
To justify the estimate for initial laws that need not have a finite $k$-th moment, approximate the centered initial law in $\cW_{p+2}$ by  centered laws with bounded support, but keep the original interaction $K(z)=|z|^pz$.   The local construction in \autoref{lem:local-MV}, whose proof uses only the $(p+2)$-moment, gives the corresponding solutions on every compact admissible radius window.  For each approximation, apply the preceding calculation to smooth regularizations of $|x|^k$ and then remove the state-space localization.  The scalar comparison bound \eqref{eq:moment-bound} for times $t\ge\tau>0$ is uniform in the approximating initial $k$-th moment.  The uniform $(p+2)$-moment estimate gives compactness, monotonicity and local pathwise uniqueness identify the limit on each such window, and Fatou's lemma transfers the bound to the limiting solution.  Restarting over successive radius windows proves \eqref{eq:moment-generation} up to the maximal lifetime.  A branch that remains at consensus trivially satisfies the estimate. After any departure, the same argument starts on a positive-radius window. 
\end{proof}

We shall also use the fact that a positive-variance mean-field solution cannot collapse to a Dirac mass at a finite time while the solution remains regular.

\begin{lemma}\label{lem:mf-nonattainment}
Let $\nu_t$ be the deterministic centered law of a strong solution of \eqref{eq:centeredMV} on $[0,T]$, with $\sup_{t\le T}\int_{\R^n}|x|^{p+2}\nu_t(\dd x)<\infty$.  If $s_0>0$, then
\begin{equation}\label{eq:mf-finite-time-positive}
 s_t>0\qquad\text{for every }0\le t\le T.
\end{equation}
By continuity, therefore, $\inf_{0\le t\le T}s_t>0$.
\end{lemma}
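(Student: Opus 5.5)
The plan is to argue by contradiction and avoid the variance ODE \eqref{eq:s-exact} altogether. A direct attempt through $(\log s_t)'=-D_p(\nu_t)/s_t+cs_t^{2q-1}$ would need a bound $D_p(\nu_t)\le Cs_t$ near collapse. Under only a $(p+2)$-moment assumption, and for general $q$, such a bound is not available. Instead I would use a pathwise coupling argument, which exploits that the diffusion coefficient is spatially constant.

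First I would freeze the law path. Since $\nu$ is deterministic, the centered equation \eqref{eq:centeredMV} becomes the ordinary SDE
\[
 \dd Y_t=b_{\nu_t}(Y_t)\,\dd t+\kappa\sqrt{1-\rho^2}\,s_t^q\,\dd B_t,
\]
with deterministic, time-dependent coefficients. Its diffusion coefficient does not depend on the state. Its drift $b_{\nu_t}(x)=-\nabla V_{\nu_t}(x)$ is locally Lipschitz: for $p>0$ one has $|\nabla b_{\nu_t}(x)|\le C_p\bigl(|x|^p+M_p(t)\bigr)$, and $M_p(t)$ is bounded on $[0,T]$ by the $(p+2)$-moment hypothesis. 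Hence pathwise uniqueness holds for this linearized SDE. Take $Y_0,Y_0'$ i.i.d.\ with law $\nu_0$, independent of $B$, and let $Y,Y'$ be the strong solutions driven by the \emph{same} Brownian motion $B$. By uniqueness in law for the frozen SDE, $Y'$ is a copy of the given solution. In particular $Y'$ does not explode on $[0,T]$, and $\Law(Y'_t)=\nu_t$ for all $t\le T$.

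Next, the difference $Z_t:=Y_t-Y'_t$ has no martingale part, because the noise cancels exactly:
\[
 \frac{\dd}{\dd t}Z_t=b_{\nu_t}(Y_t)-b_{\nu_t}(Y'_t).
\]
The local Lipschitz bound then gives
\[
 \frac{\dd}{\dd t}|Z_t|^2\ge-2L_t|Z_t|^2,
 \qquad L_t:=C_p\bigl(|Y_t|^p+|Y'_t|^p+M_p(t)\bigr).
\]
Since $Y$ and $Y'$ have continuous paths on $[0,T]$, we have $\int_0^TL_t\,\dd t<\infty$ almost surely. Gr\"onwall therefore yields $|Z_t|\ge|Z_0|\exp\bigl(-\int_0^tL_u\,\dd u\bigr)$ pathwise, so $Z_t\neq0$ whenever $Z_0\ne0$. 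Because $s_0>0$, the law $\nu_0$ is not a Dirac mass, and hence $\P(Y_0\ne Y_0')>0$.

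To conclude, suppose $s_t=0$ for some $t\le T$. Then $\nu_t=\delta_0$, since the law is centered with zero variance. Both $Y_t$ and $Y'_t$ would then vanish almost surely, so $Z_t=0$ almost surely. This contradicts $\P(Z_t\neq0)\ge\P(Y_0\neq Y_0')>0$, and proves \eqref{eq:mf-finite-time-positive}. Continuity of $t\mapsto s_t$, which comes from $\nu\in C([0,T];\cP_2)$, then gives $\inf_{t\le T}s_t>0$.

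The case $p=0$ is immediate: $b_\nu(x)=-x$ for centered $\nu$, so the drift is globally Lipschitz. The argument is also insensitive to whether $\rho=1$ or $q<1/2$, since $s_t^q$ is merely a deterministic time factor.

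The main obstacle is the first step. One must check that the frozen SDE has pathwise uniqueness and that the coupled copy $Y'$ reproduces $\nu_t$ and survives up to $T$. I would handle this by localizing at exit times from balls, where $b_{\nu_t}$ is Lipschitz uniformly in $t\le T$, and then using Yamada--Watanabe to identify $\Law(Y')$ with $\Law(Y)$, hence with $\nu$.
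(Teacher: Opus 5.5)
Your proposal is correct and follows essentially the same route as the paper: freeze the deterministic law path, couple two independent copies of the initial datum through the same Brownian motion so that the additive noise cancels, use the local Lipschitz bound on $b_{\nu_t}$ (controlled via the $(p+2)$-moment) to show that distinct trajectories never meet, and contradict $\nu_t=\delta_0$. The differences are cosmetic. You use a forward Gr\"onwall lower bound $|Z_t|\ge|Z_0|e^{-\int_0^tL_u\,\dd u}$, whereas the paper argues by backward Gr\"onwall from a hypothetical meeting time. You also make explicit, via pathwise uniqueness and Yamada--Watanabe, that the coupled copy has marginals $\nu_t$ and survives to $T$, a step the paper takes for granted.
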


\begin{proof}
For the prescribed deterministic law path $\nu_t$, write
\[
 b_t(x):=\int_{\R^n} K(y-x)\,\nu_t(\dd y),
 \qquad
 \sigma_t:=\kappa\sqrt{1-\rho^2}\,s_t^q.
\]
The map $x\mapsto b_t(x)$ is locally Lipschitz, locally uniformly in time after localization: indeed
$|DK(z)|\le C_p|z|^p$ for $p>0$ (and $K(z)=z$ for $p=0$), while the assumed moment bound controls $\int|y|^p\nu_t(\dd y)$ on compact time intervals.

On a product probability space take two independent initial variables $Y_0,Z_0$ with law $\nu_0$, but drive the two equations with the same Brownian motion $B$.  Both marginals solve
\[
 \dd Y_t=b_t(Y_t)\,\dd t+\sigma_t\,\dd B_t.
\]
After subtracting the common additive path $M_t:=\int_0^t\sigma_u\,\dd B_u$, the processes $\widetilde Y:=Y-M$ and $\widetilde Z:=Z-M$ solve the same nonautonomous random ordinary differential equation
\[
 \dot y_t=b_t(y_t+M_t).
\]
The random ODE has an injective flow.  Indeed, suppose two trajectories meet at some $t_0\le T$.  By continuity they remain in a ball $B_R$ on $[0,t_0]$ for some finite $R$.  On that ball,
\[
 |b_t(x)-b_t(z)|\le L_R(t)|x-z|,
 \qquad
 L_R(t)\le C_{p,R}\Big(1+\int_{\R^n} |y|^p\nu_t(\dd y)\Big),
 \qquad \int_0^{t_0}L_R(t)\,\dd t<\infty.
\]
Since the additive noises are identical and $Y_{t_0}=Z_{t_0}$,
\[
 |Y_t-Z_t|
 \le \int_t^{t_0} L_R(u)|Y_u-Z_u|\,\dd u,
 \qquad 0\le t\le t_0.
\]
Backward Gr\"onwall gives $Y_t=Z_t$ on $[0,t_0]$, hence $Y_0=Z_0$.  Therefore, pathwise,
\[
 Y_0\ne Z_0\quad\Rightarrow\quad Y_t\ne Z_t
 \qquad\text{for every }0\le t\le T.
\]
Since $s_0>0$, the law $\nu_0$ is not a Dirac mass and hence $\P(Y_0\ne Z_0)>0$.  If $s_t=0$ at some finite time, then $\nu_t=\delta_0$, so every coupling of two $\nu_t$-distributed random variables would satisfy $Y_t=Z_t$ almost surely, a contradiction.  This proves \eqref{eq:mf-finite-time-positive}. Continuity of $s$ yields the compact-time lower bound. 
\end{proof}

The following local construction gives the approximation step used in the classification theorem.

\begin{lemma}\label{lem:local-MV}
Let $h=p+2$ and let the centered initial law belong to $\cP_h(\R^n)$, and let $J\subset[0,\infty)$ be a compact neighborhood of the initial radius $r_0=s_0^{1/2}$.  Suppose either $\rho=1$, or $q=0$, or $r\mapsto r^{2q}$ is Lipschitz on $J$.  If $q\ge1/2$, $J$ may contain $0$, and if $0<q<1/2$, require $J\Subset(0,\infty)$.  Then the centered equation \eqref{eq:centeredMV} has a unique strong solution up to the first exit of $r_t=s_t^{1/2}$ from $J$.  The solution can be continued as long as the radius remains in a compact set on which $r^{2q}$ is Lipschitz and its $h$-th moment stays finite.
\end{lemma}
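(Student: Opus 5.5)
The plan is a fixed-point argument on the deterministic radius path, with the interaction handled through monotonicity rather than a global Lipschitz bound. First consider the cases $\rho=1$ and $q=0$: the diffusion coefficient is deterministic (zero or constant), so only the nonlinear drift needs treatment. In the main case, where $r\mapsto r^{2q}$ is Lipschitz on $J$, fix $T>0$ small. Take a continuous deterministic path $\lambda=(\lambda_t)_{t\le T}$ with values in $J$ and $\lambda_0=r_0$. Let $\mathcal M$ be the set of continuous centered law paths with bounded $h$-th moment and radius in $J$. For each $\lambda$ and each $\mu\in\mathcal M$, solve the linear-noise equation
\[
 \dd Y_t=b_{\mu_t}(Y_t)\,\dd t+\kappa\sqrt{1-\rho^2}\,\lambda_t^{2q}\,\dd B_t .
\]
This solves pathwise. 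After subtracting the additive noise, as in the proof of \autoref{lem:mf-nonattainment}, it is a random ODE with locally Lipschitz, one-sided monotone drift: $b_\mu(x)=-\nabla V_\mu(x)$ with $V_\mu$ convex. Hence it has a global solution.

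The first estimate is a uniform $h$-moment bound. Apply It\^o to $|Y|^h$ and use $x\cdot b_\mu(x)\le C(1+|x|^{h-1}\int|y|\,\dd\mu)$ together with the monotonicity of $K$. The noise amplitude is bounded because $\lambda$ takes values in the compact set $J$. Gr\"onwall then gives $\sup_{t\le T}\E|Y_t|^h\le C(\E|Y_0|^h,J,T)$.

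The contraction step is the core. Take two inputs $(\mu,\lambda)$ and $(\mu',\lambda')$ with the same $B$ and the same $Y_0$. Compute $\dd|Y_t-Y'_t|^2$ and split the drift difference into two parts. The first part is $b_{\mu}(Y)-b_{\mu}(Y')$, which has a nonpositive inner product with $Y-Y'$ by monotonicity. The second part is $b_{\mu}(Y')-b_{\mu'}(Y')$. Bound it by coupling $\mu,\mu'$ optimally: $|K(z)-K(z')|\le C(|z|^p+|z'|^p)|z-z'|$, followed by H\"older with the $h=p+2$ moments, gives $\le C\,\cW_2(\mu,\mu')$ times an $L^{2(p+1)/p}$-type factor. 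This is the main obstacle. H\"older with only $p+2$ moments is too weak to close in $\cW_2$, so I would instead run the fixed point in $\cW_2$ for $\mu$ but use the synchronous coupling of the laws themselves, that is, $\mu=\Law(Y)$ iterates. The cross term is then
\[
 \E\bigl[(Y-Y')\cdot(K(\widetilde Y-Y)-K(\widetilde Y'-Y'))\bigr],
\]
with independent copies; symmetrization makes it $\le0$, exactly as in \autoref{prop:variance-MV}. The noise difference contributes $C\|\lambda-\lambda'\|_\infty^2$, by the Lipschitz property of $r^{2q}$ on $J$.

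The consequence is that the McKean--Vlasov solution with a given radius input $\lambda$ exists and is unique, by monotone coupling and Picard iteration on $[0,T]$. Moreover the map $\lambda\mapsto \bigl(\E|Y_t^\lambda|^2\bigr)^{1/2}$ is Lipschitz with constant $C\sqrt T$, because $\cW_2$-radius is $1$-Lipschitz. For $T$ small this map is a contraction on paths in $J$, and Banach's theorem yields the unique solution up to the first exit of $r_t$ from $J$. For $0<q<1/2$, the condition $J\Subset(0,\infty)$ is exactly what makes $r^{2q}$ Lipschitz. Uniqueness holds up to exit by the same stability estimate with stopping at the exit time. Continuation follows by restarting from time $T$, which is allowed as long as the radius stays in a compact Lipschitz set and the $h$-moment stays finite, since $T$ depends only on these quantities.
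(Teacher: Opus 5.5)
Your outer reduction is sound, and it differs from the paper's. Because the centered law is deterministic, freezing the radius path $\lambda$ leaves a McKean--Vlasov equation with deterministic additive noise. For two \emph{exact} solutions driven by the same $B$ and $Y_0$, symmetrization gives
\[
 \frac{\dd}{\dd t}\E\bigl|Y_t^{\lambda}-Y_t^{\lambda'}\bigr|^2\le n\kappa^2(1-\rho^2)\bigl|\lambda_t^{2q}-{\lambda'_t}^{2q}\bigr|^2,
\]
which yields uniqueness of the inner problem and the $C\sqrt T$ contraction of $\lambda\mapsto\mathcal R(\Law Y^\lambda)$.

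The gap is existence of the inner solution $Y^\lambda$ when $p>0$. This inner problem is also all there is in your cases $\rho=1$ and $q=0$, so those are not covered either. The symmetrization needs each process to be driven by its own law, with $(\widetilde Y,\widetilde Y')$ an independent copy of the same pair. In a Picard scheme $Y^{(k+1)}$ is driven by $\Law(Y^{(k)})$, so the cross term is
\[
 \E\Bigl[(Y^{(k+1)}-Y^{(k)})\cdot\bigl(K(\widetilde Y^{(k)}-Y^{(k+1)})-K(\widetilde Y^{(k-1)}-Y^{(k)})\bigr)\Bigr].
\]
Its indices do not match, and it has no sign. Removing the monotone part $b_{\mu^{(k)}}(Y^{(k+1)})-b_{\mu^{(k)}}(Y^{(k)})$ leaves exactly $b_{\mu^{(k)}}(Y^{(k)})-b_{\mu^{(k-1)}}(Y^{(k)})$, the term you flagged yourself. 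Bounding it against $\cW_2(\mu^{(k)},\mu^{(k-1)})$ uses $2p$ moments, which $h=p+2$ controls only for $p\le2$. So ``monotone coupling and Picard iteration'' is an a priori stability estimate, not an existence mechanism, and the core of the lemma is left unproved.

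The missing ingredient is a truncation and compactness step. The paper replaces $K$ by $K_R(z)=\min\{|z|^p,R^p\}z$, which is odd, monotone and globally Lipschitz. Picard iteration therefore works for each $R$, together with a bounded Lipschitz extension $g_J$ of $r\mapsto\kappa\sqrt{1-\rho^2}\,r^{2q}$ composed with the $1$-Lipschitz radius. Symmetrization is applied only to these exact truncated solutions, with the convex weight $(1+|y|^2)^{h/2}$, to get $h$-moment bounds uniform in $R$. Aldous tightness and uniform integrability of $|K_R(z)|\le|z|^{p+1}$ then identify subsequential limits as solutions. The monotone stability estimate gives pathwise uniqueness, and Gy\"ongy--Krylov yields a strong solution.

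You can insert this step for fixed $\lambda$ in your scheme. There strong solvability is even automatic once a solution law path exists, since the equation with that law frozen is a pathwise solvable random ODE. Your radius contraction then finishes the argument.

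A smaller point: your bound $x\cdot b_\mu(x)\le C(1+|x|^{h-1}\int|y|\,\dd\mu)$ is false for $p>0$. With a small mass placed far away, the left side at $|x|=1$ is of order $\int|y|^{p+1}\dd\mu$, while $\int|y|\,\dd\mu$ stays bounded. Monotonicity gives the correct replacement $x\cdot b_\mu(x)\le|x|\,|b_\mu(0)|\le|x|\int|y|^{p+1}\mu(\dd y)$.
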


\begin{proof}
For $\nu\in\cP_2(\R^n)$, set
\[
 \mathcal R(\nu):=\Big(\int_{\R^n}|x-\bar\nu|^2\nu(\dd x)\Big)^{1/2},
 \qquad \bar\nu:=\int_{\R^n} x\nu(\dd x).
\]
Then, for $\nu,\widetilde\nu\in\cP_2(\R^n)$,
\begin{equation}\label{eq:radius-W2-local}
 |\mathcal R(\nu)-\mathcal R(\widetilde\nu)|\le\cW_2(\nu,\widetilde\nu).
\end{equation}
Indeed, for any coupling $(U,V)$ of $\nu,\widetilde\nu$, orthogonal projection onto centered random variables gives
$|\|U-\E U\|_2-\|V-\E V\|_2|\le\|U-V\|_2$, and taking the infimum over couplings proves \eqref{eq:radius-W2-local}.

If $p=0$, then $K(z)=z$ and the interaction field $(x,\nu)\mapsto\int_{\R^n}(z-x)\nu(\dd z)$ is globally Lipschitz with respect to $|\cdot|+\cW_2$.  Extend $r\mapsto\kappa\sqrt{1-\rho^2}\,r^{2q}$ from $J$ to a bounded globally Lipschitz function $g_J$ on $[0,\infty)$, with $g_J\equiv0$ when $\rho=1$ and $g_J\equiv\kappa\sqrt{1-\rho^2}$ when $q=0$.  By \eqref{eq:radius-W2-local}, the diffusion coefficient $\nu\mapsto g_J(\mathcal R(\nu))$ is globally Lipschitz in $\cW_2$.  Standard McKean--Vlasov Picard iteration therefore gives the claimed unique strong solution and continuation directly.

It remains to consider $p>0$, so $h=p+2>2$.  For $R\ge1$ define
\[
 K_R(z):=\min\{|z|^p,R^p\}z.
\]
The map $K_R$ is odd, monotone, and globally Lipschitz.  Extend $r\mapsto\kappa\sqrt{1-\rho^2}\,r^{2q}$ from $J$ to a bounded globally Lipschitz function $g_J$ on $[0,\infty)$. When $\rho=1$ take $g_J\equiv0$, and when $q=0$ take $g_J\equiv\kappa\sqrt{1-\rho^2}$.

Consequently, the truncated coefficients
\[
 b_R(x,\nu):=\int_{\R^n} K_R(z-x)\nu(\dd z),
 \qquad
 \sigma_J(\nu):=g_J(\mathcal R(\nu))
\]
are globally Lipschitz in $(x,\nu)$ with respect to $|\cdot|+\cW_2$.  Picard iteration in $C([0,T];\cP_2)$, coupled synchronously to the driving Brownian motion, therefore gives a unique strong solution $Y^R$ for every $R$.

The estimates needed to remove the drift truncation are uniform in $R$.  Put
$H_h(y)=(1+|y|^2)^{h/2}$.  Since $h=p+2\ge2$, the function $H_h$ is convex.  Since $K_R(z)=a_R(|z|)z$ with $a_R\ge0$, symmetrization and convexity of $H_h$ show that the interaction contribution to $\frac{\dd}{\dd t}\E H_h(Y_t^R)$ is nonpositive.  Moreover
$\|D^2H_h(y)\|\le C_h(1+|y|^{h-2})$, while $g_J$ is bounded.  Hence
\begin{equation}\label{eq:local-uniform-hmoment}
 \sup_R\sup_{t\le T}\E H_h(Y_t^R)\le C_{T,J}\bigl(1+\E H_h(Y_0)\bigr).
\end{equation}
Also $|K_R(z)|\le|z|^{p+1}$, so with $a=h/(p+1)>1$,
\[
 \sup_R\E\int_0^T|b_R(Y_t^R,\Law(Y_t^R))|^a\dd t<\infty.
\]
Together with the bounded diffusion coefficient, this gives tightness of the laws of $Y^R$ in $C([0,T];\R^n)$ by Aldous' tightness criterion \cite{Aldous1978}.  The moment bound \eqref{eq:local-uniform-hmoment} gives uniform integrability of the nonlinear drift.  
The same bound also gives uniform integrability of the squared norm.  Therefore, along every convergent subsequence, local uniform convergence $K_R\to K$ and truncation of the region $|Y|+|\widetilde Y|>L$, followed by $L\to\infty$, identify the limiting drift as
$\int K(z-Y_t)\nu_t(\dd z)$.  Weak convergence together with uniform integrability of the second moments yields convergence in $\cW_2$ at each time, so \eqref{eq:radius-W2-local} identifies the diffusion coefficient.  Thus every subsequential limit solves the centered McKean--Vlasov equation with the extended coefficient $g_J$.

It remains to identify the limit and obtain a strong solution.  If $Y$ and $Z$ are two solutions driven by the same Brownian motion, symmetrization gives a nonpositive interaction contribution to $\frac{\dd}{\dd t}\E|Y_t-Z_t|^2$, while \eqref{eq:radius-W2-local} and the Lipschitz property of $g_J$ give
\[
 \frac{\dd}{\dd t}\E|Y_t-Z_t|^2\le C_J\E|Y_t-Z_t|^2.
\]
Hence pathwise uniqueness holds.  Applying the Gy\"ongy--Krylov argument \cite{GyongyKrylov1996} to the joint laws of $(Y^R,B)$ shows that $Y^R$ converges in probability on the original stochastic basis. Its limit is adapted to the given Brownian filtration and is therefore a strong solution.  Until the radius exits $J$, the extension $g_J$ agrees with the original coefficient.  Restarting the same construction at deterministic times proves the continuation statement. 
\end{proof}

These estimates lead to the mean-field well-posedness classification.

\begin{theorem}\label{thm:MV-allq} 
Let $\mu_0\in\cP_{p+2}(\R^n)$ be deterministic, and set
\[
 s_0:=\frac12\iint_{\R^n\times\R^n}|x-y|^2\,\mu_0(\dd x)\mu_0(\dd y),
 \qquad c=n\kappa^2(1-\rho^2).
\]
Let $\bar p =(p+2)/2$, let $\cA_{p,n}$ be defined by \eqref{eq:Aeff-MV}, and let $\cA_{p,n}^{\rMF,\sharp}$ be the sharp variational constant in \eqref{eq:sharp-mf-shape-constant}.

\begin{enumerate}[label=\textnormal{(\roman*)}]
\item\label{item:mf-common} \textnormal{(Pure common noise $\rho=1$.)}  For every $q\ge0$ there is a unique global strong solution in the finite-$(p+2)$-moment class.

\item\label{item:mf-constant} \textnormal{(Constant noise $q=0$.)}  For every $\rho\in[0,1]$ there is a unique global strong solution.

\item\label{item:mf-holder} \textnormal{(The non-Lipschitz range $0<q<1/2$ away from consensus.)}  Assume $\rho<1$ and $s_0>0$.  Then there is a unique global strong solution, its centered conditional law is deterministic, and there exists a deterministic constant $\underline s>0$ such that
\begin{equation}\label{eq:mf-positive-lower}
 \inf_{t\ge0}s_t\ge\underline s>0.
\end{equation}
No additional moment assumption is required.

\item\label{item:mf-lipschitz} \textnormal{(The locally Lipschitz range $q\ge1/2$.)}  There is a unique maximal strong solution up to its explosion time.  It is global if at least one of the following holds:
\begin{align*}
 &2q< \bar{p},\\
 &2q=\bar{p}\quad\text{and}\quad \bigl(p=0\ \text{or}\ c\le\cA_{p,n}^{\rMF,\sharp}\bigr),\\
 &2q>\bar{p}\quad\text{and}\quad 0\le s_0\le s_{\rm c}:=
 \Big(\frac{\cA_{p,n}^{\rMF,\sharp}}{c}\Big)^{1/(2q- \bar{p})},\qquad(c>0).
\end{align*}
In the last case $s_t\le s_{\rm c}$ for all $t\ge0$.

\item\label{item:mf-consensus} \textnormal{(Consensus data.)}  If $s_0=0$ and $q\ge1/2$, the centered solution is uniquely $\nu_t=\delta_0$.  If $q=0$ and $\rho<1$, constant idiosyncratic noise immediately spreads a Dirac mass.  For $0<q<1/2$, uniqueness for arbitrary consensus data fails in general: for $p=0$, \autoref{thm:nonunique} gives infinitely many solutions from a Dirac mass.
\end{enumerate}
\end{theorem}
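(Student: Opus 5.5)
The proof reduces every case to the centered equation. \autoref{thm:translation} shows that, once the centered law path $\nu_t$ is unique and deterministic, the original solution is recovered by adding the center $m_t=m_0+\kappa\rho\int_0^t s_u^q\,\dd B_u^0$. The plan has three steps.

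\emph{Step 1: local well-posedness.} Construct local strong solutions of \eqref{eq:centeredMV} with \autoref{lem:local-MV}, choosing the radius window $J$ in each regime.
\begin{itemize}
\item For $\rho=1$ and for $q=0$, $J$ may be taken arbitrarily large.
\item For $q\ge1/2$, $J$ may contain $0$.
\item For $0<q<1/2$ with $s_0>0$, $J\Subset(0,\infty)$.
\end{itemize}
By the continuation clause of \autoref{lem:local-MV}, the maximal solution lives as long as the radius stays in an admissible compact set and the $(p+2)$-moment stays finite.

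\emph{Step 2: global existence, case by case.} The tool is the exact variance identity of \autoref{prop:variance-MV} together with the barrier \eqref{eq:sbarrier-sharp}. Finiteness of the $(p+2)$-moment is controlled by the convexity estimate \eqref{eq:local-uniform-hmoment}, whose constant depends only on the bound for the diffusion coefficient over the window. So it is enough to keep $s_t$ bounded, and away from zero where the window requires it.

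\begin{itemize}
\item \textbf{Case (i), $\rho=1$.} There is no idiosyncratic noise, so $s_t'=-D_p\le0$ and the radius stays in $[0,r_0]$. The interaction is monotone, so synchronous coupling gives pathwise uniqueness and hence a global solution.
\item \textbf{Case (ii), $q=0$.} The comparison $s'\le -\cA_{p,n}s^{\bar p}+c$ bounds $s$ globally.
\item \textbf{Case (iv), $q\ge1/2$.} If $2q<\bar p$, the ODE comparison bounds $s$. If $2q=\bar p$ and $c\le \cA^{\rMF,\sharp}_{p,n}$, then $s'\le0$. For $p=0$ the equation is linear: $s'=(c-2)s$. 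If $2q>\bar p$ and $s_0\le s_{\rm c}$, the right side of \eqref{eq:sbarrier-sharp} is $\le0$ at $s=s_{\rm c}$, so the interval $[0,s_{\rm c}]$ is invariant. In every subcase the radius stays in a compact set containing $0$, which is admissible for $q\ge1/2$.
\item \textbf{Case (v), consensus data.} For $q\ge1/2$, $\delta_0$ is a solution. Uniqueness of the centered law follows from \autoref{lem:local-MV} with $J\ni0$. For $q=0$, the Laplacian term in \eqref{eq:centered-weak-FP} gives $s'=c>0$ at $s=0$, so a Dirac mass spreads immediately. Nonuniqueness for $0<q<1/2$ is deferred to \autoref{thm:nonunique}.
\end{itemize}

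\emph{Step 3: case (iii), the main obstacle.} Here $0<q<1/2$, and we need the uniform floor \eqref{eq:mf-positive-lower} using only the $(p+2)$-moment.

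\textbf{Upper bound and moments.} Since $2q<1\le\bar p$, the variance is bounded above. Moments of the solution then stay controlled:
\begin{itemize}
\item if $p>0$, \autoref{lem:moment-generation} gives $\sup_{t\ge\tau}M_k(t)\le C_{k,\tau}$ for every $k$, after any positive time $\tau$;
\item if $p=0$, the equation closes as $s'=-2s+cs^{2q}$, whose flow pushes $s$ toward the positive equilibrium $(c/2)^{1/(1-2q)}$, so a floor is immediate.
\end{itemize}

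\textbf{Floor for $p>0$.} Interpolate $M_{p+2}$ between $M_2$ and a higher moment $M_k$ by Hölder:
\[
 D_p(\nu)\le 2^{p+1}M_{p+2}\le C\, s^{\theta(p+2)/2}M_k^{(1-\theta)(p+2)/k},
\]
with $\theta\to1$ as $k\to\infty$. Choose $k$ large enough that $\theta(p+2)/2>2q$. Using $M_k\le C_{k,\tau}$ for $t\ge\tau$, this yields
\[
 s'\ge c s^{2q}-C s^{\theta\bar p},
\]
so $s'>0$ whenever $s$ is below a deterministic level $s_\ast$. On $[0,\tau]$, positivity comes from \autoref{lem:mf-nonattainment}. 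Taking $\underline s=\min\bigl(s_\ast,\inf_{[0,\tau]}s\bigr)$ then gives \eqref{eq:mf-positive-lower}.

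\textbf{Uniqueness and global existence.} The radius stays in the compact window $[\underline s^{1/2},\bar s^{1/2}]$, on which $r^{2q}$ is Lipschitz. \autoref{lem:local-MV} therefore gives uniqueness, and restarting over this window gives global existence.

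The delicate point is circularity. The moment bound must be available for the local solution before the floor is known. I would handle this by running the argument up to the exit time of successively larger windows $J$ and showing that exit through the lower boundary never occurs.
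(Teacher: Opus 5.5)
Your proposal is correct and follows the paper's proof essentially step for step: the local construction via \autoref{lem:local-MV}, the scalar barrier \eqref{eq:sbarrier-sharp} together with finite-horizon moment control for globality, and in (iii) moment generation with $L^2$--$L^k$ interpolation to force $s'\ge\frac c2 s^{2q}$ near zero. The differences are cosmetic: you obtain positivity on $[0,\tau]$ from \autoref{lem:mf-nonattainment}, applied up to a hypothetical hitting time, where the paper reruns the interpolation contradiction on $[T_0/2,T_0)$, and you cite \autoref{lem:local-MV} for uniqueness where the paper writes out the coupling. One harmless slip is that in your interpolation it is the exponent $\theta\bar p=(k-p-2)/(k-2)$ that tends to $1$, not $\theta$ itself; this still exceeds $2q$ for large $k$ because $2q<1$.
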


\begin{proof}
Except for the pure-common-noise case, which is a deterministic relative equation and is treated directly below, local strong existence and pathwise uniqueness follow from \autoref{lem:local-MV}.  In the H\"older range $0<q<1/2$ with $s_0>0$, choose the initial radius window compactly inside $(0,\infty)$. For $q=0$ or $q\ge1/2$ no lower cutoff is needed.  The continuation statement in that lemma shows that the only possible obstructions are loss of the required positive-variance window when $0<q<1/2$ and divergence of the $(p+2)$-moment.  The estimates below exclude precisely these obstructions in the regimes claimed in the theorem.

Part~\ref{item:mf-common} follows from \autoref{thm:translation}, the centered equation has no stochastic term.  Monotonicity of $K$ yields global moment control and synchronous coupling gives uniqueness. The center is then the unique stochastic integral in \eqref{eq:mSDE}.  For part~\ref{item:mf-constant}, the diffusion coefficient is constant and \eqref{eq:sbarrier-sharp} gives $s_t'\le-\cA_{p,n}^{\rMF,\sharp}s_t^{\bar{p}}+c$.  Hence the second moment is bounded on bounded intervals, while the same moment calculation as in \autoref{lem:moment-generation} controls the $(p+2)$-moment.  This rules out explosion and proves global uniqueness.

Consider part~\ref{item:mf-holder}.  Since $s_0>0$, \autoref{lem:local-MV} gives local existence and uniqueness until a hypothetical first time at which $s$ reaches zero or the $(p+2)$-moment diverges.  Because $2q<1\le \bar{p}$, \eqref{eq:sbarrier-sharp} implies a global scalar upper bound on $s$ and \autoref{lem:moment-generation} controls the initially finite $(p+2)$-moment, so finite-time moment explosion is impossible.

If $p=0$, then $D_0(\nu)=2s$ and
\[
 s'=-2s+cs^{2q}.
\]
For $0<q<1/2$ the positive term dominates near zero, and the positive solution started at $s_0>0$ never reaches zero. Indeed, it stays above $\min\{s_0,(c/2)^{1/(1-2q)}\}$.

Now suppose $p>0$ and argue by contradiction.  If $T_0<\infty$ were the first time with $s_t\to0$, fix $\tau=T_0/2$.  Choose $m> (p+2-4q)/(1-2q)$, we have
\[
 \sup_{\tau\le t<T_0}\int_{\R^n}|x|^m\nu_t(\dd x)<\infty,
\]
since $4q<p+2$ and by applying \autoref{lem:moment-generation}.
For $k=p+2$, let $\eta:=(m-k)/(m-2) > 2q$, then interpolation between $L^2$ and $L^m$ yields
\begin{equation*}
 \int_{\R^n}|x|^k\nu_t(\dd x)\le C_{\tau,T_0}s_t^{\eta},
 \qquad
D_p(\nu_t)\le2^k\int_{\R^n}|x|^k\nu_t(\dd x)
 \le C_{\tau,T_0}s_t^{\eta}.
\end{equation*}
Hence, for all sufficiently small $s_t$,
\[
 s_t'=-D_p(\nu_t)+cs_t^{2q}
 \ge-C_{\tau,T_0}s_t^{\eta}+cs_t^{2q}
 \ge\frac c2s_t^{2q}>0,
\]
which prevents a positive trajectory from crossing into zero.  This contradiction proves positivity for all time.

To upgrade positivity to the uniform bound \eqref{eq:mf-positive-lower}, apply \autoref{lem:moment-generation} with the same $m$ on $[1,\infty)$.  The moment bound there is uniform in time, so the preceding interpolation gives $D_p(\nu_t)\le C s_t^{\eta}$ for all $t\ge1$ with a fixed $C$.  Choose $\delta>0$ so that $Cs^{\eta}\le(c/2)s^{2q}$ on $(0,\delta]$.  Then $s'\ge(c/2)s^{2q}$ whenever $t\ge1$ and $s\le\delta$, and therefore
\[
 \inf_{t\ge1}s_t\ge\min\{s_1,\delta\}>0.
\]
Continuity and positivity on $[0,1]$ complete the proof of \eqref{eq:mf-positive-lower}.

For uniqueness in part~\ref{item:mf-holder}, let $Y,Z$ be two centered solutions driven by the same Brownian motion and with the same initial variable. Symmetrization with an independent copy of the joint coupling gives
\begin{align*}
 &\E (Y-Z)\cdot\left(\int_{\R^n} K(y-Y)\nu_Y(\dd y)-\int_{\R^n} K(y-Z)\nu_Z(\dd y)\right)\\
 &\qquad=-\frac12\E\left[((Y-\widetilde Y)-(Z-\widetilde Z))\cdot
 \bigl(K(Y-\widetilde Y)-K(Z-\widetilde Z)\bigr)\right]\le0.
\end{align*}
Apply the lower-barrier argument above separately to each of the two solutions.  Since they have the same positive initial variance, both $s_Y$ and $s_Z$ stay in compact subsets of $(0,\infty)$ on every bounded time interval.  Consequently
\[
 |s_Y^q-s_Z^q|\le C_T|s_Y-s_Z|
 \le C_T(\E|Y-Z|^2)^{1/2}.
\]
It\^o's formula gives $\frac{\dd}{\dd t}\E|Y_t-Z_t|^2\le C_T\E|Y_t-Z_t|^2$, hence $Y-Z\equiv0$ by Gr\"onwall's inequality.

For part~\ref{item:mf-lipschitz}, the map from the centered $L^2$ radius to the amplitude is $r\mapsto\kappa r^{2q}$, which is locally Lipschitz at $r=0$ exactly when $2q\ge1$.  Thus the maximal strong solution is unique up to explosion.  By \eqref{eq:sbarrier-sharp},
\[
 s_t'\le f(s_t):=-\cA_{p,n}^{\rMF,\sharp}s_t^{\bar{p}}+cs_t^{2q}.
\]
If $2q<\bar{p}$, then $f(s)\le-a s^{\bar{p}}+C$. If $2q=\bar{p}$ and $p=0$, then $q=1/2$ and the closed equation $s'=(-2+c)s$ is global (although $s$ may grow exponentially when $c>2$).  If $2q=\bar{p}$, $p>0$, and $c\le\cA_{p,n}^{\rMF,\sharp}$, then $f\le0$.  In every case just listed, $s$ is bounded on each finite time interval. Once such a bound is fixed, retaining the factor $s^{2q}$ in the exact $k$th-moment It\^o formula gives for $k=p+2$,
\[
 M_k'\le-a_k M_k^{1+p/k}+C_T M_k^{(k-2)/k},
\]
so the $(p+2)$-moment is bounded and explosion is impossible.  If $2q>\bar{p}$ and $s_0\le s_{\rm c}$, the scalar solution of $\dot u=f(u)$ starting from $s_0$ remains in $[0,s_{\rm c}]$.  Scalar comparison gives $s_t\le u_t\le s_{\rm c}$, and the same bounded-variance moment estimate gives global continuation.  The case $s_0=0$ is included by uniqueness at the origin.

Part~\ref{item:mf-consensus} follows from the same local-Lipschitz argument when $q\ge1/2$, from the nonzero constant diffusion when $q=0$, and from the explicit $p=0$ construction in \autoref{thm:nonunique} for the failure of the blanket uniqueness when $0<q<1/2$. 
\end{proof}

\begin{remark}\label{rem:mf-consensus-selection-general}
The singular initial condition $s_0=0$ should be distinguished from the positive-variance theory.  For the finite particle system, part~\ref{item:particle-consensus} of \autoref{thm:particle-allq} proves nonuniqueness in law at exact consensus for $m_N\ge2$ for every $p\ge0$ when $0<q<1/2$. The higher-order attractive drift does not remove the local entrance-law mechanism.  At the mean-field level, \autoref{sec:p0} gives a complete waiting-time family when $p=0$.  For $p>0$, constructing and classifying nontrivial entrance laws from $\delta_0$ is a separate nonlinear problem and is not addressed here.

This distinction causes no ambiguity in \autoref{thm:poc}: that theorem assumes a unique limiting solution on the time interval in question.  Hence it applies directly to every positive-variance initial law covered by part~\ref{item:mf-holder} of \autoref{thm:MV-allq}, but it does not assert propagation of chaos from a nonunique consensus datum without first specifying a selection.  When $p=0$, \autoref{rem:selection} shows explicitly that exact microscopic consensus and vanishing positive-variance regularization select different mean-field branches.
\end{remark}

In the H\"older regime, the preceding argument yields a uniform quantitative separation from consensus.

\begin{corollary}\label{cor:noncollapse-general}
Assume $\rho<1$, $\mu_0\in\cP_{p+2}(\R^n)$, and $s_0>0$.  If $0\le q<1/2$, then the unique positive-variance mean-field solution is global and
\begin{equation*}
 0<\underline s\le s_t\le\overline s<\infty,
 \qquad t\ge0,
\end{equation*}
for deterministic constants $\underline{s}, \overline{s}$ depending on the data and parameters. In particular, the mean-field system cannot reach consensus from positive variance.  If $q=0$ and $s_0=0$, constant idiosyncratic noise instantaneously spreads the Dirac law: $s_t>0$ for every $t>0$, and for each $\tau>0$ there is $\underline s_\tau>0$ such that $\inf_{t\ge\tau}s_t\ge\underline s_\tau$.
\end{corollary}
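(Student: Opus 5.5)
For $0<q<1/2$ with $s_0>0$, the plan is to read off both bounds from \autoref{thm:MV-allq}\ref{item:mf-holder} together with the scalar barrier \eqref{eq:sbarrier-sharp}. Global existence, uniqueness and the deterministic lower bound $\underline s$ are exactly \eqref{eq:mf-positive-lower}. For the upper bound, since $2q<1\le\bar p$, Young's inequality gives $-\cA_{p,n}s^{\bar p}+cs^{2q}\le-\tfrac12\cA_{p,n}s^{\bar p}+C$. Then $s'<0$ whenever $s>s_*:=(2C/\cA_{p,n})^{1/\bar p}$. Scalar comparison gives $s_t\le\overline s:=\max\{s_0,s_*\}$, and this bound is deterministic. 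Non-attainment of consensus is then immediate.

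For $q=0$ we first apply \autoref{thm:MV-allq}\ref{item:mf-lipschitz}/\ref{item:mf-constant} to get global well-posedness. For the upper bound we use $s'\le-\cA_{p,n}s^{\bar p}+c$, so $s_t\le\max\{s_0,(c/\cA_{p,n})^{1/\bar p}\}$. For the lower bound, whether $s_0>0$ or $s_0=0$, the lower barrier of \autoref{thm:MV-allq}\ref{item:mf-holder} has to be rerun with $2q=0$. If $p=0$, then $s'=-2s+c$, which gives the explicit solution $s_t=c/2+(s_0-c/2)e^{-2t}$. This is positive for $t>0$ and bounded below on $[\tau,\infty)$.

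The case $p>0$, $q=0$ is where most of the work lies. Here I would use \autoref{lem:moment-generation} with $4q=0<p+2$: for every $\tau>0$ and $m$ large, $\sup_{t\ge\tau}M_m(t)\le C_{m,\tau}$, and this holds even for Dirac data, since the bound is independent of higher initial moments. Interpolating between $L^2$ and $L^m$ then gives $D_p(\nu_t)\le C_\tau s_t^{\eta}$ with $\eta>0$ for $t\ge\tau$. Consequently $s'\ge c-C_\tau s^\eta\ge c/2$ whenever $s\le\delta$, and so $\inf_{t\ge\tau}s_t\ge\min\{s_\tau,\delta\}$.

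It remains to see that $s_\tau>0$ for every $\tau>0$ when $s_0=0$. From the exact identity $s'=-D_p(\nu)+c$, together with $D_p\le C s^{\eta}$ on the window where the moments are controlled, $s$ cannot stay at zero: near $t=0$ we have $s'\ge c-o(1)$ as $s\to0$. Making this rigorous near $t=0$ is the main obstacle, because the moment bound degenerates as $\tau\to0$. I would handle it with the crude bound $D_p\le 2^{p+2}M_{p+2}$ and the fact that $M_{p+2}(t)\to0$ as $t\to0$, since the initial law is $\delta_0$ and the solution is continuous in $\cP_{p+2}$. This gives $s'\ge c/2$ on some $[0,t_0]$, hence $s_t>0$ for small $t>0$. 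Positivity at later times follows from the barrier above, and the compact-interval minimum on $[\tau,1]$ is handled by continuity.
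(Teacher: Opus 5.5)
Your proposal is correct and follows essentially the paper's argument. The case $0<q<1/2$ is read off from \eqref{eq:mf-positive-lower} and scalar comparison in \eqref{eq:sbarrier-sharp}, and for $q=0$ you rerun the moment-generation/interpolation barrier; your bound $D_p\le 2^{p+2}M_{p+2}\to0$ near $t=0$ is a rigorous version of the paper's ``$s'(0)=c$'', and your use of the barrier replaces its appeal to \autoref{lem:mf-nonattainment}. (The ``obstacle'' you flag is milder than you think: for $\delta_0$ data every moment is initially finite, so \autoref{lem:moment-generation} bounds $M_m$ uniformly from $t=0$, which gives $D_p(\nu_t)\le Cs_t^\eta$ for all $t\ge0$ and hence directly $s_t\ge\min\{ct/2,\delta\}$; also, for $q=0$ the relevant well-posedness item is \ref{item:mf-constant}, not \ref{item:mf-lipschitz}.)
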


\begin{proof}
For $0<q<1/2$ the lower bound is \eqref{eq:mf-positive-lower}. The upper bound follows from scalar comparison in \eqref{eq:sbarrier-sharp}.  For $q=0$ and $s_0>0$, the same proof applies with $cs^{2q}=c$: when $p>0$, use moment generation and interpolation with any exponent $\eta>0$, while for $p=0$ the closed equation is $s'=-2s+c$.

If $q=0$ and $s_0=0$, the exact identity \eqref{eq:s-exact} gives $s'(0)=c>0$, so $s_t>0$ for all sufficiently small $t>0$.  Once positive, \autoref{lem:mf-nonattainment} prevents a finite-time return to zero.  Applying the preceding positive-variance lower-barrier argument from time $\tau/2$ onward gives $\inf_{t\ge\tau}s_t>0$ for every $\tau>0$. 
\end{proof}

\section{Conditional propagation of chaos}\label{sec:poc}

Propagation of chaos originates in the classical mean-field theory of McKean, Sznitman, and M\'el\'eard \cite{McKean1966,Meleard1996,Sznitman}. Quantitative approaches for interacting diffusions include \cite{BCC2011,Lacker2023,LackerLeFlem2023}, while common-noise variants are developed in \cite{CoghiFlandoli2016,Nikolaev2025}.  We next compare the interacting system with conditionally independent copies of the McKean--Vlasov solution.

Throughout this section, whenever $X^{1,N},\ldots,X^{N,N}$ are coupled synchronously with conditionally independent McKean--Vlasov copies $\hX^1,\ldots,\hX^N$, we define

\[
\mu_t^N:=\frac1N\sum_{i=1}^N\delta_{X_t^{i,N}}, \quad \text{ and }
\]
\begin{equation}\label{def:eEN}
  e_i(t):=X_t^{i,N}-\hX_t^i,
 \qquad
 \cE_N(t):=\frac1N\sum_{i=1}^N|e_i(t)|^2.   
\end{equation}

\begin{theorem}\label{thm:poc}
Let $\mu_0\in\cP_{p+2}(\R^n)$ be deterministic and let $X_0^1,X_0^2,\dots$ be i.i.d. with law $\mu_0$, independent of all Brownian motions.  For each $N$, let $(X^{1,N},\ldots,X^{N,N})$ denote the solution of \eqref{eq:particle} with initial data $(X_0^1,\ldots,X_0^N)$.  Fix $T<\infty$ and suppose that the mean-field equation \eqref{eq:MV} has a unique nonexplosive strong solution on $[0,T]$.  Assume in addition that one of the following regularity alternatives holds:
\begin{enumerate}[label=\textnormal{(\roman*)}]
\item\label{item:poc-constant} $q=0$,
\item\label{item:poc-lipschitz} $q\ge1/2$,
\item\label{item:poc-holder} $0<q<1/2$ and $\inf_{0\le t\le T}s_t>0$.
\end{enumerate}
Then, as $N\to\infty$,
\begin{equation}\label{eq:particle-exit-before-mf}
 \P(\tau_N^{\rm expl}\le T)\rightarrow0,
\end{equation}
and, for every $\e>0$, as $N\to\infty$,
\begin{equation}\label{eq:pocW2}
 \P\Big(
 \tau_N^{\rm expl}>T,
 \ \sup_{0\le t\le T}\cW_2(\mu_t^N,\mu_t)>\e
 \Big)\rightarrow 0.
\end{equation}
Equivalently, with any arbitrary post-explosion extension of $\mu^N$, as $N\to\infty$,
$\sup_{t\le T}\cW_2(\mu_t^N,\mu_t)\to0$ in probability.  Thus the result applies on every interval strictly before a mean-field blow-up time.
\end{theorem}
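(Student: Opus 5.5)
The proof uses a synchronous coupling with localization. On one probability space carrying $B^0,B^1,\dots$ and the i.i.d.\ data, I construct for each $i$ the McKean--Vlasov copy $\hX^i$ driven by $(B^0,B^i)$ with $\hX^i_0=X^i_0$. This is possible because the limiting solution is a unique strong solution, so it is a measurable functional of $(X_0,B^0,B)$. Given $\cF^0$, the copies are i.i.d.\ with law $\mu_t$, and by \autoref{thm:translation} the centered law $\nu_t$ and the variance $s_t$ are deterministic.

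Under alternatives \ref{item:poc-constant}--\ref{item:poc-holder} I fix a compact radius window $J$ that contains $\{s_t^{1/2}:t\le T\}$ in its interior. Under \ref{item:poc-holder}, $J\Subset(0,\infty)$; under \ref{item:poc-lipschitz}, $J$ may contain $0$. On $J$ the amplitude $r\mapsto\kappa r^{2q}$ is Lipschitz. I stop at
\[
\sigma_N:=\inf\{t:(S_t^N)^{1/2}\notin J\}\wedge T.
\]
Before $\sigma_N$ explosion is impossible, since the center has bounded quadratic variation. The reduction is therefore to two claims: (a) $\E\,\cE_N(t\wedge\sigma_N)\to0$, and (b) $\P(\sigma_N<T)\to0$.

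For (a) I apply It\^o's formula to $|e_i|^2$ and average over $i$. The Brownian terms cancel in the drift because the driving noises coincide. The diffusion mismatch $\kappa^2|(S^N)^q-s^q|^2$ is bounded by $C_J|(S^N)^{1/2}-s^{1/2}|^2$. Since the empirical centered $L^2$ radius is $1$-Lipschitz in the coupling, this gives
\[
C_J|(S^N)^{1/2}-s^{1/2}|^2\le C_J\bigl(\cE_N+|\hat S^N{}^{1/2}-s^{1/2}|^2\bigr),
\]
where $\hat S^N$ is the empirical variance of the copies. For the drift I write
\[
\frac1N\sum_kK(X^k-X^i)-b_{\mu}(\hX^i)=\Bigl[\tfrac1N\sum_k\bigl(K(X^k-X^i)-K(\hX^k-\hX^i)\bigr)\Bigr]+\Bigl[\tfrac1N\sum_kK(\hX^k-\hX^i)-b_\mu(\hX^i)\Bigr].
\]
Paired with $e_i$ and symmetrized over $(i,k)$, the first bracket is nonpositive by monotonicity, and I simply drop it. In the second (fluctuation) bracket I truncate $K$ at level $L$. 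The truncated part has conditionally i.i.d.\ centered summands (diagonal aside), so Young's inequality and the usual variance computation bound it by $\cE_N+C_L/N$. For the tail I use H\"older's inequality with the uniform $(p+2)$-moment of the copies. The moment on $[0,T]$ is finite because the limit is nonexplosive, using \autoref{lem:moment-generation} and the barrier of \autoref{prop:variance-MV}. This gives a tail contribution $\e(L)\to0$, paired with $|e_i|\le$ a quantity whose $(p+2)$-moment is controlled before $\sigma_N$ through the particle moment bound under localization.

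The term $|\hat S^N{}^{1/2}-s^{1/2}|$ tends to $0$ in $L^2$ uniformly on $[0,T]$ by a conditional law of large numbers, using Doob's inequality on a fine time grid and continuity. Gr\"onwall's inequality then gives $\sup_{t\le T}\E\,\cE_N(t\wedge\sigma_N)\le C_J(\e(L)+C_L/N+o(1))$. I take $N\to\infty$ first and then $L\to\infty$. To obtain a supremum inside the expectation, I apply BDG to the martingale part.

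Step (b) is the main obstacle, because it must be done without assuming that the particle system is global. On $\{\sigma_N<T\}$ the particle radius at $\sigma_N$ lies on $\partial J$, which is at distance $\delta>0$ from $s_{\sigma_N}^{1/2}$. Moreover $\hat S^N{}^{1/2}$ is uniformly within $\delta/2$ of $s^{1/2}$ with probability tending to one. The Lipschitz property of the radius then forces $\cE_N(\sigma_N)\ge\delta^2/4$. Markov's inequality and (a) give $\P(\sigma_N<T)\to0$, and hence \eqref{eq:particle-exit-before-mf} as well. Finally,
\[
\cW_2(\mu^N_t,\mu_t)\le \cE_N(t)^{1/2}+\cW_2\Bigl(\tfrac1N\textstyle\sum_i\delta_{\hX^i_t},\mu_t\Bigr).
\]
The last term tends to $0$ uniformly in $t\le T$, in probability. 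I prove this conditionally on $\cF^0$ via the translation $\mu_t=(\tau_{m_t})\push\nu_t$: the centered parts are i.i.d.\ from a deterministic continuous path $\nu_t$ in $\cP_{p+2}$, so the Glivenko--Cantelli theorem in $\cW_2$ applies, with equicontinuity supplied by the moment bounds. Combining these estimates yields \eqref{eq:pocW2}.
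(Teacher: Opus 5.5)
Your proposal is correct and follows essentially the same route as the paper: a synchronous coupling with conditionally i.i.d.\ copies, dropping the interaction difference by monotonicity, truncating the empirical fluctuation at level $L$ with a H\"older tail bound that uses only $(p+2)$-moments, and removing the stopping via the $1$-Lipschitz empirical radius plus the law of large numbers for the copies, followed by the triangle inequality for $\cW_2$. The differences are minor. You stop only on the interacting radius and compare $(S^N)^q$ directly with $s_t^q$ on $J$, so the copy radius need not be stopped as in the paper. You also get uniform-in-time convergence of the copies by a grid/equicontinuity argument instead of the paper's path-space $\cW_2$ law of large numbers; either way this needs the path bound $\E\|\hX\|_{E_T}^2<\infty$, not just the pointwise-in-time moments you cite.
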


Note, for $\rho<1$ and $s_0>0$, condition~\ref{item:poc-holder} follows from \autoref{thm:MV-allq}. For $\rho=1$ and $s_0>0$ it follows directly from the deterministic relative equation on every finite interval.  For the finite system, use the absorbing consensus convention stated in \autoref{sec:allq-particle} whenever a boundary selection is needed.

\begin{proof}
Let $\hX^1,\dots,\hX^N$ be conditionally i.i.d. copies of the McKean--Vlasov solution, driven by the same common Brownian motion $B^0$, the same idiosyncratic Brownian motions $B^i$, and with the same initial variables $X_0^i$.  Define
\[
\widehat\mu_t^N:=\frac1N\sum_{i=1}^N\delta_{\hX_t^i}.
\]
For the interaction error of the $i$-th component, write
\begin{align*}
 A_i&:=\frac1N\sum_{k=1}^N\left[K(X^{k,N}-X^{i,N})-K(\hX^k-\hX^i)\right],\\
 \Xi_i&:=\frac1N\sum_{k=1}^NK(\hX^k-\hX^i)
 -\int_{\R^n} K(y-\hX^i)\,\mu_t(\dd y).
\end{align*}
Thus, $\Xi_i$ denotes the empirical interaction-field fluctuation, namely the difference between the conditionally empirical interaction field at $\hX^i$ and its conditional McKean--Vlasov mean.
Monotonicity gives the exact sign
\begin{align}\label{eq:monotone-poc}
 \frac1N\sum_{i=1}^N e_i\cdot A_i
 &=-\frac1{2N^2}\sum_{i,k=1}^N
 \left[(X^{k,N}-X^{i,N})-(\hX^k-\hX^i)\right]\nonumber\\
 &\hspace{22mm}\cdot
 \left[K(X^{k,N}-X^{i,N})-K(\hX^k-\hX^i)\right]\le0,
\end{align}
which follows from the monotonicity of $K$. Thus no polynomial Lipschitz bound for the interacting particle drift is needed.

As $T$ lies strictly inside the lifetime of the mean-field solution, the variance is bounded above on $[0,T]$.  If $s_0>0$, \autoref{lem:mf-nonattainment} also gives $\inf_{t\le T}s_t>0$. If $s_0=0$ in the locally Lipschitz regime $q\ge1/2$, uniqueness gives $s_t\equiv0$.  Put $h=p+2\ge2$.  The attractive drift is dissipative, and the same symmetrized moment calculation used in \autoref{lem:moment-generation}, now retaining the bounded variance in the It\^o correction, gives
\begin{equation}\label{eq:hat-moment-poc}
 \sup_{t\le T}\E|\hX_t|^{h}<\infty.
\end{equation}
We now fix the stopping interval used throughout the coupling argument.  For
\[
 r_N(x):=\Big(\frac1N\sum_{i=1}^N|x^i-\bar x|^2\Big)^{1/2},
 \qquad \widehat S_t^N:=r_N(\hX_t)^2,
\]
choose compact intervals $J_{\rm in}\Subset J_{\rm out}\subset[0,\infty)$, with compact containment understood in the relative topology of $[0,\infty)$, such that
$r_t=s_t^{1/2}\in\operatorname{int}_{[0,\infty)}J_{\rm in}$ for every $t\le T$ and such that
$r\mapsto r^{2q}$ is Lipschitz on $J_{\rm out}$.  In cases~\ref{item:poc-constant}--\ref{item:poc-lipschitz} the intervals may have lower endpoint zero. In case~\ref{item:poc-holder} they are contained in $(0,\infty)$.  Let
\begin{equation}\label{eq:poc-stopping-time}
 \tau_N:=\inf\bigl\{t\ge0:r_N(X_t^N)\notin J_{\rm out}
 \text{ or }r_N(\hX_t)\notin J_{\rm out}\bigr\},
\end{equation}
with the convention $\inf\varnothing=\infty$.

We record the stopped empirical moment estimate rather than merely invoking it.  Define
\[
Y_t^{i,N}:=X_t^{i,N}-\bar X_t^N, \qquad H_h(y)=(1+|y|^2)^{h/2},\quad \text{ and}
\]
\[
 \mathcal H_h^N(t):=\frac1N\sum_{i=1}^N(1+|Y_t^{i,N}|^2)^{h/2} = \frac1N\sum_{i=1}^N H_h(Y^{i,N}_t).
\]
Applying It\^o's formula to $\mathcal H_h^N(t)$ we show that, up to $\tau_N$, the diffusion amplitude is bounded. By symmetrization using $K$ and convexity of $H_h$, the interaction contribution in It\^o's formula is
\[
 -\frac1{2N^2}\sum_{i,k=1}^N
 [\nabla H_h(Y^{i,N})-\nabla H_h(Y^{k,N})]
 \cdot K(Y^{i,N}-Y^{k,N})\le0.
\]
Since $\|D^2H_h(y)\|\le C_h(1+|y|^{h-2})$, the It\^o correction is bounded by
$C_{T,J_{\rm out}}(1+\mathcal H_h^N)$.  Localization, expectation, and Gr\"onwall's inequality therefore give
\begin{equation*}
 \sup_N\sup_{t\le T}\E\mathcal H_h^N(t\wedge\tau_N)<\infty.
\end{equation*}
The center equation has bounded quadratic variation up to $\tau_N$. Jensen's inequality for the empirical initial center and the Burkholder--Davis--Gundy inequality give
\[
 \sup_N\E\sup_{t\le T}|\bar X_{t\wedge\tau_N}^N|^h<\infty.
\]
Using $|X^{i,N}|^h\le C_h(|Y^{i,N}|^h+|\bar X^N|^h)$, we obtain the asserted stopped empirical estimate in the stronger form
\begin{equation}\label{eq:particle-moment-poc}
 \sup_N\sup_{t\le T}\E\Big[\frac1N\sum_{i=1}^N
 |X_{t\wedge\tau_N}^{i,N}|^h\Big]<\infty.
\end{equation}
Together with \eqref{eq:hat-moment-poc}, these are the only moments needed below.

To estimate the empirical interaction-field  fluctuation $\Xi_i$ without assuming a $2p+2$ moment, fix $L>1$ and write
\[
 K=K_L+K_L^{\rm tail},\qquad
 K_L(z):=K(z)\one_{\{|z|\le L\}}.
\]
We define the truncated and tail fluctuations by
\[
 \Xi_i^L
 :=\frac1N\sum_{k=1}^NK_L(\hX^k-\hX^i)
 -\int_{\R^n} K_L(y-\hX^i)\mu_t(\dd y),
\]
\[
 \Xi_i^{L,\mathrm{tail}}
 :=\frac1N\sum_{k=1}^N K_L^{\rm tail}(\hX^k-\hX^i)
 -\int_{\R^n} K_L^{\rm tail}(y-\hX^i)\,\mu_t(\dd y),
 \qquad
 \Xi_i=\Xi_i^L+\Xi_i^{L,\mathrm{tail}}.
\]
Conditionally on $(\cF_t^0,\hX_t^i)$, the off-diagonal summands
\[
 K_L(\hX_t^k-\hX_t^i)
 -\int_{\R^n} K_L(y-\hX_t^i)\,\mu_t(\dd y),
 \qquad k\ne i,
\]
are independent and centered. The $k=i$ term produces the bias
$-N^{-1}\int_{\R^n} K_L(y-\hX_t^i)\mu_t(\dd y)$, whose square is $O(N^{-2})$. Since $K_L$ is bounded,
\begin{equation}\label{eq:Rbound-truncated}
 \frac1N\sum_{i=1}^N\E|\Xi_i^L(t)|^2\le\frac{C_L}{N}.
\end{equation}
Let $h'=h/(h-1)=(p+2)/(p+1)$.  Jensen's inequality for the empirical average and $|K(z)|^{h'}=|z|^h$ give
\begin{equation*}
 \int_0^T\frac1N\sum_{i=1}^N \E|\Xi_i^{L,\mathrm{tail}}(t)|^{h'}\dd t
 \le C\int_0^T\E\left[|\hX_t-\widetilde X_t|^h
 \one_{\{|\hX_t-\widetilde X_t|>L\}}\right]\dd t
 =:\e_L^{h'},
\end{equation*}
where $\widetilde X$ is a conditionally independent copy and $\e_L\to0$ as $L\to\infty$ by dominated convergence in spacetime and \eqref{eq:hat-moment-poc}.  Combining \eqref{eq:particle-moment-poc}, \eqref{eq:hat-moment-poc}, and H\"older's inequality yields
\begin{equation}\label{eq:Rtail-energy-poc}
 \E\int_0^{T\wedge\tau_N}\frac1N\sum_{i=1}^N
 |e_i(t)|\,|\Xi_i^{L,\mathrm{tail}}(t)|\dd t\le C_T\e_L.
\end{equation}
Thus the empirical interaction-field fluctuation is controlled first at fixed $L$ with an $O(N^{-1})$ variance estimate and then by sending $L\to\infty$. No moment beyond $p+2$ is required.

For the diffusion amplitudes, orthogonal projection onto the zero-sum subspace has norm one, so for all 
$t\leq T$,
\begin{equation}\label{eq:rLip}
 |r_N(X_t)-r_N(\hX_t)|
 \le\Big(\frac1N\sum_{i=1}^N|X^i_t-\hX^i_t|^2\Big)^{1/2}
 =\cE_N^{1/2}(t).
\end{equation}
The noise amplitude is $\kappa r_N(X_t^N)^{2q}=\kappa(S_t^N)^q$.  In cases~\ref{item:poc-constant}--\ref{item:poc-lipschitz}, the map $r\mapsto r^{2q}$ is either constant or locally Lipschitz on $[0,\infty)$.  In case~\ref{item:poc-holder}, the limiting radius $r_t=s_t^{1/2}$ stays a positive distance from zero, so the same map is Lipschitz on a compact neighborhood of the limiting radius path.

By the choice of $J_{\rm out}$ in \eqref{eq:poc-stopping-time}, the map $r\mapsto r^{2q}$ has a deterministic Lipschitz constant on the stopped radius range.  Hence, up to $\tau_N$,
\begin{equation}\label{eq:amp-local}
 |(S_t^N)^q-(\widehat S_t^N)^q|^2\le C_T\cE_N(t),
\end{equation}
where $\widehat S_t^N=r_N(\hX_t)^2$.

Conditionally on $\cF_T^0$, the paths $\hX^i(\cdot)$ are i.i.d. in the Polish space
$E_T:=C([0,T];\R^n)$ equipped with the supremum norm.  Moreover, monotonicity gives $y\cdot b_{\nu_t}(y)\le |y|\,|b_{\nu_t}(0)|$, where $|b_{\nu_t}(0)|\le\int_{\R^n}|z|^{p+1}\nu_t(\dd z)$ is uniformly bounded on $[0,T]$. Applying It\^o's formula to $|\widehat Y|^2$, followed by the Burkholder--Davis--Gundy, Young and Gr\"onwall inequalities, controls the centered path supremum. Adding the center martingale therefore gives $\E\|\hX\|_{E_T}^2<\infty.$
Let
\[
 \widehat{\mathbf P}_N:=\frac1N\sum_{i=1}^N\delta_{\hX^i(\cdot)},
 \qquad
 \mathbf P^0:=\Law(\hX(\cdot)\mid\cF_T^0).
\]
By \autoref{thm:translation}, conditionally i.i.d. paths have the form $\hX^i=m+\widehat Y^i$, where the centered paths $\widehat Y^i$ are i.i.d. and independent of the common-noise path $m$.  Translation by the same path is an isometry of $E_T$.  Hence the ordinary strong law for empirical measures gives weak convergence of $\widehat{\mathbf P}_N$ to $\mathbf P^0$ almost surely, and the scalar strong law applied to $\|\widehat Y^i\|_{E_T}^2$ gives convergence of the centered second moments.  The standard characterization of Wasserstein convergence on a Polish space \cite{Villani} therefore yields
\begin{equation}\label{eq:path-W2-conditional-LLN}
 \cW_{2,E_T}(\widehat{\mathbf P}_N,\mathbf P^0)\rightarrow0
 \qquad\text{almost surely conditionally on }\cF_T^0,
\end{equation}
and hence in probability unconditionally.

The evaluation map $\omega\mapsto\omega(t)$ is $1$-Lipschitz from $E_T$ to $\R^n$, so
\[
 \sup_{t\le T}\cW_2(\widehat\mu_t^N,\mu_t)
 \le \cW_{2,E_T}(\widehat{\mathbf P}_N,\mathbf P^0).
\]
The centered $L^2$ radius is itself $1$-Lipschitz under $\cW_2$: for any coupling $(U,V)$,
\[
 \left|\|U-\E U\|_2-\|V-\E V\|_2\right|\le\|U-V\|_2.
\]
Consequently
\begin{equation}\label{eq:samplevariance-uniform}
 \sup_{t\le T}\Big|\sqrt{\widehat S_t^N}-\sqrt{s_t}\Big|
 \le \sup_{t\le T}\cW_2(\widehat\mu_t^N,\mu_t)
 \rightarrow0
 \qquad\text{in probability},
\end{equation}
and therefore also $\sup_{t\le T}|\widehat S_t^N-s_t|\to0$ in probability.  On $\{t<\tau_N\}$ both empirical radii lie in $J_{\rm out}$, so the squared amplitude difference is uniformly bounded. We use a time-integrated sampling error, which is zero if the initial sample lies outside the stopping window and $\tau_N=0$. The preceding convergence and bounded convergence in probability imply
\begin{equation}\label{eq:etaN}
 \eta_N(T):=\E\int_0^{T\wedge\tau_N}
 |(\widehat S_t^N)^q-s_t^q|^2\,\dd t\rightarrow0.
\end{equation}

Apply It\^o's formula to $\cE_N$.  The term containing $A_i$ is nonpositive by \eqref{eq:monotone-poc}.  Split $\Xi_i=\Xi_i^L+\Xi_i^{L,\mathrm{tail}}$.  Young's inequality and \eqref{eq:Rbound-truncated} control the truncated part, while \eqref{eq:Rtail-energy-poc} controls the tail after time integration.  The It\^o correction from the common and idiosyncratic noises is bounded, using \eqref{eq:amp-local} and \eqref{eq:etaN}, by $C_T(\cE_N+|(\widehat S^N)^q-s^q|^2)$.  The same quantities control the quadratic variation of the error martingale.  The Burkholder--Davis--Gundy and Young inequalities therefore yield, for each fixed $L$,
\begin{equation}\label{eq:stoppedE}
 \E\sup_{u\le t\wedge\tau_N}\cE_N(u)
 \le C_T\left(\frac{C_L}{N}+\eta_N(T)+\e_L
 +\int_0^t\E\sup_{v\le s\wedge\tau_N}\cE_N(v)\,\dd s\right).
\end{equation}
Gr\"onwall's inequality gives
\begin{equation}\label{eq:stopped-coupling-final}
 \limsup_{N\to\infty}\E\sup_{t\le T\wedge\tau_N}\cE_N(t)
 \le C_T\e_L.
\end{equation}
Letting $L\to\infty$ proves that the left-hand side vanishes.

The stopping can be removed without a global particle moment estimate.  Put
\[
 d_{\star}:=\operatorname{dist}\bigl(J_{\rm in},[0,\infty)\setminus J_{\rm out}\bigr)>0.
\]
By \eqref{eq:path-W2-conditional-LLN}--\eqref{eq:samplevariance-uniform},
\begin{equation}\label{eq:iid-inner-containment}
 \P\bigl(r_N(\hX_t)\in J_{\rm in}\text{ for every }t\le T\bigr)\rightarrow1.
\end{equation}
If the event in \eqref{eq:iid-inner-containment} occurs but $\tau_N\le T$, then the interacting radius must reach $J_{\rm out}^c$ at the stopping time.  The Lipschitz estimate \eqref{eq:rLip} therefore forces
\[
 \sup_{t\le T\wedge\tau_N}\cE_N(t)\ge d_{\star}^2.
\]
Thus
\begin{align*}
 \P(\tau_N\le T)
 &\le \P\bigl(\exists t\le T:r_N(\hX_t)\notin J_{\rm in}\bigr)
 +d_{\star}^{-2}\E\sup_{t\le T\wedge\tau_N}\cE_N(t),
\end{align*}
and \eqref{eq:stopped-coupling-final}, followed by $L\to\infty$, gives
\begin{equation}\label{eq:stop-prob-vanish}
 \P(\tau_N\le T)\rightarrow0.
\end{equation}
For any fixed bounded outer radius level, a finite particle explosion before $T$ must cross that level first.  While the relative radius is bounded, the center has bounded quadratic variation and hence cannot explode.  Repeating the preceding argument with an outer interval containing that level proves
\eqref{eq:particle-exit-before-mf}.

Finally,
\[
 \cW_2(\mu_t^N,\widehat\mu_t^N)^2\le\cE_N(t).
\]
The conditional i.i.d. law of large numbers gives
\[
 \sup_{t\le T}\cW_2(\widehat\mu_t^N,\mu_t)\rightarrow0
 \qquad\text{in probability},
\]
while \eqref{eq:stopped-coupling-final}--\eqref{eq:stop-prob-vanish} control the first term uniformly in time.  On $\{\tau_N^{\rm expl}>T\}$ the triangle inequality proves \eqref{eq:pocW2}. Together with \eqref{eq:particle-exit-before-mf} this gives the asserted survival-event convergence in $C([0,T];\cP_2)$. 
\end{proof}

On compact variance windows, the same synchronous coupling is quantitative for every $q\ge0$.
Define
\[
 r_t^{N,\mathrm{int}}:=r_N(X_t^N)=\sqrt{S_t^N},
 \qquad
 \widehat r_t^N:=r_N(\hX_t)=\sqrt{\widehat S_t^N}.
\]
These are, respectively, the centered empirical radius of the interacting particle system and the centered empirical radius of the conditionally i.i.d.\ comparison system. 

\begin{proposition}\label{prop:poc-local-rate}
Assume the hypotheses of \autoref{thm:poc} and
\[
 \mu_0\in\cP_{m_{\star}}(\R^n),\qquad
 m_{\star}:=
 \begin{cases}
  2p+2,&q=0,\\
  \max\{4,2p+2\},&q>0.
 \end{cases}
\]
Fix a compact radius interval $J=[r_-,r_+]\subset[0,\infty)$ on which the map $r\mapsto r^{2q}$ is Lipschitz. Let
\[
 \tau_N^J:=\inf\{t\ge0:r_t^{N,\mathrm{int}}\notin J
 \text{ or }\widehat r_t^N\notin J\}.
\]
Then
\begin{equation}\label{eq:poc-local-allq}
 \sup_{0\le t\le T}\E\,\cE_N(t\wedge\tau_N^J)\le\frac{C_T}{N}.
\end{equation}
Thus, the mean-square coupling error is $O(N^{-1})$ on every compact radius window on which the noise amplitude is Lipschitz.
\end{proposition}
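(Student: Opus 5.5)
We redo the synchronous coupling from the proof of \autoref{thm:poc} on the window $J$, but without the truncation $K=K_L+K_L^{\rm tail}$ and without the qualitative $\eta_N(T)\to0$. The stronger moment $m_\star$ lets every error term be bounded by $C_T/N$. On $\{t<\tau_N^J\}$ both empirical radii lie in $J$, so the amplitudes are bounded and $r\mapsto r^{2q}$ has a deterministic Lipschitz constant $L_J$.

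\textbf{Step 1: It\^o expansion.} Apply It\^o's formula to $\cE_N(t\wedge\tau_N^J)$ and take expectations. The stopped martingale terms vanish, since their integrands are bounded on the window times $\cE_N$, which is integrable by the stopped moment bound \eqref{eq:particle-moment-poc}. The interaction difference $A_i$ contributes a nonpositive term by \eqref{eq:monotone-poc}. The fluctuation term $\frac2N\sum_i e_i\cdot\Xi_i$ is bounded by Young's inequality by $\cE_N+\frac1N\sum_i|\Xi_i|^2$.

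\textbf{Step 2: the fluctuation variance.} Conditionally on $(\cF^0_t,\hX^i_t)$, the off-diagonal summands of $\Xi_i$ are independent and centered, and the diagonal term is an $O(N^{-1})$ bias. Hence
\[
 \frac1N\sum_i\E|\Xi_i(t)|^2\le \frac{C}{N}\,\E|K(\hX_t-\widetilde X_t)|^2
 =\frac CN\,\E|\hX_t-\widetilde X_t|^{2p+2}.
\]
This is finite uniformly on $[0,T]$ because the moment computation of \autoref{lem:moment-generation}, with the variance bounded on $[0,T]$, propagates $\mu_0\in\cP_{2p+2}$. The centre adds a martingale with bounded quadratic variation. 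This is exactly why $m_\star\ge 2p+2$ is needed.

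\textbf{Step 3: the amplitude term.} The It\^o correction is
\[
 n\kappa^2\,|(S^N_t)^q-s_t^q|^2\le 2n\kappa^2\bigl(|(S^N_t)^q-(\widehat S^N_t)^q|^2+|(\widehat S^N_t)^q-s_t^q|^2\bigr).
\]
It is the same for the common and idiosyncratic parts, up to the factors $\rho^2$ and $1-\rho^2$. By \eqref{eq:rLip} and the Lipschitz property on $J$, the first piece is at most $C\cE_N$. For $q=0$ the whole term vanishes, which is why only $2p+2$ moments are needed there.

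For $q>0$ the main obstacle is the sampling term $\E[|(\widehat S^N_t)^q-s_t^q|^2;\,t<\tau^J_N]$, which must be shown to be $O(N^{-1})$, not merely $o(1)$. On the window, write $(\widehat S^N)^q=(\widehat r^N)^{2q}$, and note that $s_t$ is also in the window (if not, interpolate; since $r_t$ is deterministic, enlarge $J$ to contain the limiting path, which is legitimate under the hypotheses of \autoref{thm:poc}). Then
\[
 |(\widehat S^N)^q-s^q|\le L_J\,|\widehat r^N-r_t|\le L_J\,\frac{|\widehat S^N-s_t|}{\widehat r^N+r_t}.
\]
If $r_->0$, the denominator is bounded below. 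If $r_-=0$, then $q\ge1/2$, and we use Lipschitzness of $s\mapsto s^q$ on bounded sets directly. Either way the term is at most $C\,\E|\widehat S^N_t-s_t|^2$. Now
\[
 \widehat S^N_t=\frac1N\sum_i|\widehat Y^i_t|^2-\Bigl|\frac1N\sum_i\widehat Y^i_t\Bigr|^2,
\]
where the $\widehat Y^i$ are conditionally i.i.d.\ and centered with $\E|\widehat Y|^2=s_t$. The first sum has variance $N^{-1}\operatorname{Var}|\widehat Y_t|^2$, which needs the fourth moment, and the squared mean is $O(N^{-1})$ in $L^2$ by the same fourth moment. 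This gives $C/N$ uniformly on $[0,T]$ and accounts for the $4$ in $m_\star$.

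\textbf{Step 4: conclusion.} Collecting the estimates gives
\[
 \frac{\dd}{\dd t}\E\,\cE_N(t\wedge\tau^J_N)\le C\,\E\,\cE_N(t\wedge\tau^J_N)+\frac{C}{N},
\]
with $\cE_N(0)=0$, since the initial values coincide. Gr\"onwall's inequality then yields \eqref{eq:poc-local-allq}.
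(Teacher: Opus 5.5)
Your structure is the paper's: It\^o's formula for the stopped $\cE_N$, the sign \eqref{eq:monotone-poc} for $A_i$, Young plus the $(2p+2)$-moment variance bound for $\Xi_i$, the split of the amplitude error through $\widehat S^N$, the fourth-moment sample-variance estimate, and Gr\"onwall. The gap is in Step~3, in the branch $r_-=0$.

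\textbf{Where it fails.} In that branch you say that $s\mapsto s^q$ is Lipschitz on bounded sets. This is true only for $q\ge1$. The proposition allows $r_-=0$ whenever $1/2\le q<1$; this range includes $q=1/2$ and the critical exponent $q=(p+2)/4$ for $0\le p<2$. For such $q$ the derivative $qs^{q-1}$ blows up at $0$. Without a positive lower bound on the denominator $\widehat r^N_t+r_t$, the best general estimate is $|a^q-b^q|\le|a-b|^q$. That gives only $\E|(\widehat S^N_t)^q-s_t^q|^2\le(\E|\widehat S^N_t-s_t|^2)^q\le CN^{-q}$, so the Gr\"onwall step would produce $N^{-q}$ rather than $N^{-1}$.

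\textbf{The missing ingredient.} You need a lower bound on the \emph{limiting} radius, not on the window.
\begin{itemize}
\item If $s_0>0$, \autoref{lem:mf-nonattainment} applies because the $(p+2)$-moment is propagated on $[0,T]$. It gives $\inf_{t\le T}s_t>0$. Hence $\widehat r^N_t+r_t\ge\inf_{t\le T}r_t>0$ whatever $r_-$ is, and your division argument goes through.
\item If $s_0=0$, then for $q>0$ the hypotheses of \autoref{thm:poc} force $q\ge1/2$. In that case $\mu_0$ is a Dirac mass, and uniqueness at consensus makes the centered limit law $\delta_0$. So $\widehat r^N_t=r_t=0$ identically and the sampling term vanishes.
\end{itemize}
The paper's proof uses exactly this split ($s_0>0$ versus $s_0=0$) instead of your split on $r_-$. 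With that replacement, the rest of your argument is correct.
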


\begin{proof}
Write $e_i=X^{i,N}-\hX^i$ and use the decomposition $A_i+\Xi_i$ from the proof of \autoref{thm:poc}.  On the stopped radius interval there is a deterministic constant $L_q$ such that
\begin{equation}\label{eq:local-rate-amplitude-lip}
 |x^{2q}-y^{2q}|\le L_q|x-y|
\end{equation}
for empirical radii in $J$ and limiting radii $r_t$, $0\le t\le T$. If necessary, enlarge $J$ to include the deterministic radius path when choosing this Lipschitz constant. For $0<q<1/2$, both sets stay away from zero. The stopping time itself is unchanged. The radius coupling estimate \eqref{eq:rLip} remains valid.

We first record the two sampling estimates.  Conditionally on $(\cF_t^0,\hX_t^i)$, the variables
$K(\hX_t^j-\hX_t^i)$, $j\ne i$, are independent with conditional mean
$\int_{\R^n} K(y-\hX_t^i)\mu_t(\dd y)$.  Since
$|K(z)|^2=|z|^{2p+2}$ and the assumed $m_{\star}$-th moment controls $2p+2$,
\begin{equation}\label{eq:poc-local-R-bound}
 \sup_{t\le T}\frac1N\sum_{i=1}^N\E|\Xi_i(t)|^2\le\frac{C_T}{N}.
\end{equation}
The missing self-interaction term contributes only an $O(N^{-2})$ squared bias.  Suppose now that $q>0$.  The definition of $m_{\star}$ then also gives a propagated fourth moment.  After subtracting the common center, the $\hX^i$ are i.i.d. with deterministic centered law $\nu_t$.  Writing
\begin{equation}\label{eq:radius-in-y}
 \widehat S_t^N=\frac1N\sum_{i=1}^N|\widehat Y_t^i|^2-|\overline{\widehat Y}_t^N|^2, \mbox{ where } \overline{\widehat Y}_t^N:=\frac1N\sum_{i=1}^N\widehat Y_t^i,
\end{equation}
therefore gives
\begin{equation}\label{eq:poc-local-sample-var}
 \sup_{t\le T}\E|\widehat S_t^N-s_t|^2\le\frac{C_T}{N}.
\end{equation}
If $s_0>0$, \autoref{lem:mf-nonattainment} gives $\inf_{t\le T}s_t>0$, and division by the sum of the radii in \eqref{eq:poc-local-sample-var} yields
\begin{equation*}
 \sup_{t\le T}\E|r_N(\hX_t)-r_t|^2\le\frac{C_T}{N}.
\end{equation*}
If instead $s_0=0$ and $q\ge1/2$, uniqueness at consensus implies that the centered limiting law is $\delta_0$, so $r_t=r_N(\hX_t)=0$ identically.  When $q=0$, the diffusion amplitude is constant, and the entire sample-variance step is omitted. In particular, no fourth-moment assumption is needed.

Combining \eqref{eq:rLip}, \eqref{eq:local-rate-amplitude-lip}, and these sampling estimates gives, up to $\tau^J_N$,
\begin{equation}\label{eq:poc-local-amplitude-error}
\E\left[\one_{\{t<\tau_N^J\}}\left|(S_t^N)^q-s_t^q\right|^2\right]
 \le C_T\Big(\E\cE_N(t\wedge\tau^J_N)+\frac1N\Big).
\end{equation}
Apply It\^o's formula to $\cE_N=N^{-1}\sum_{i=1}^N|e_i|^2$ stopped at $\tau^J_N$.  The interaction-difference term is nonpositive by \eqref{eq:monotone-poc}.  For the empirical interaction-field fluctuation, Young's inequality and \eqref{eq:poc-local-R-bound} give
\[
 \frac2N\sum_{i=1}^N\E[e_i\cdot \Xi_i]\le
 \E\cE_N+\frac{C_T}{N}.
\]
The quadratic variation of the common and idiosyncratic error noises contributes a constant multiple of
$\E|(S_t^N)^q-s_t^q|^2$.  Therefore \eqref{eq:poc-local-amplitude-error} yields
\[
 \frac{\dd}{\dd t}\E\cE_N(t\wedge\tau^J_N)
 \le C_T\E\cE_N(t\wedge\tau^J_N)+\frac{C_T}{N}
\]
for almost every $t$.  Since $\cE_N(0)=0$, Gr\"onwall's inequality proves
\eqref{eq:poc-local-allq}. 
\end{proof}

The exit probability itself can also be quantified before a nonexplosive mean-field horizon, at the cost of two additional moments.  This gives a partial quantitative result in the supercritical range where an ordinary post-explosion $\cW_2$ distance is not defined.

\begin{proposition}\label{prop:poc-exit-rate}
Assume $q\ge1/2$, fix $T$ strictly inside the lifetime of a unique mean-field solution, and suppose
\begin{equation}\label{eq:poc-exit-moment}
 \mu_0\in\cP_{\max\{4,2p+4\}}(\R^n).
\end{equation}
Let $r_t=s_t^{1/2}$ and choose $R>\sup_{t\le T}r_t$.  Let $\widehat r_t^N$ be the empirical centered radius of the conditionally independent comparison system and let
\[
 \tau_N^R:=\inf\{t\ge0:r_N(X_t^N)\ge R\text{ or }\widehat r_t^N\ge R\}.
\]
Then
\begin{equation}\label{eq:poc-exit-rate}
 \P(\tau_N^R\le T)\le\frac{C_{T,R}}{N},
 \qquad
 \P(\tau_N^{\rm expl}\le T)\le\frac{C_{T,R}}{N}.
\end{equation}
\end{proposition}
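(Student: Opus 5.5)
The plan is to separate the exit event into exit of the comparison radius and exit of the interacting radius, and bound each by Chebyshev-type estimates of order $N^{-1}$.

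\textbf{Comparison radius.} Fix an intermediate level $R'$ with $\sup_{t\le T}r_t<R'<R$ and set $J=[0,R]$; since $q\ge1/2$, the map $r\mapsto r^{2q}$ is Lipschitz on $J$. I need
\[
\P\Bigl(\sup_{t\le T}\widehat r_t^N\ge R'\Bigr)\le C/N.
\]
Pointwise-in-time variance bounds such as \eqref{eq:poc-local-sample-var} are not enough, because the event involves a supremum over $[0,T]$. I will therefore use the representation \eqref{eq:radius-in-y}, which gives
\[
\widehat S_t^N-s_t=\frac1N\sum_{i=1}^N\bigl(|\widehat Y_t^i|^2-s_t\bigr)-\bigl|\overline{\widehat Y}_t^N\bigr|^2 .
\]
The centered paths $\widehat Y^i$ are i.i.d. and have deterministic law. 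The first term is then an average of i.i.d. centered processes $\xi^i_t:=|\widehat Y^i_t|^2-s_t$. Itô's formula applied to $|\widehat Y^i|^2$ writes each $\xi^i$ as a drift integral plus a martingale. Hence $\frac1N\sum_i\xi^i_t$ splits into two pieces:
\begin{itemize}
\item the average of the martingales, to which Doob's $L^2$ inequality applies, with expected bracket $O(N^{-1})$;
\item a time integral of the empirical average of the centered drifts $2\widehat Y^i\cdot b_{\nu_t}(\widehat Y^i)-\E[\cdots]$. Its supremum is bounded by the $L^2$-in-time norm of an i.i.d. centered average, also $O(N^{-1})$ in mean square.
\end{itemize}
The moment condition \eqref{eq:poc-exit-moment} is what makes these variances finite. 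The drift contains $|y|^{p+2}$, so its square needs the $2p+4$ moment; the martingale part needs the fourth moment. Propagation of these moments on $[0,T]$ follows as in \autoref{lem:moment-generation} with the bounded variance. The squared mean $|\overline{\widehat Y}^N|^2$ is handled by Doob's inequality applied to $\sum_i\widehat Y^i$ in the same way. Chebyshev then gives
\[
\P\bigl(\sup_{t\le T}|\widehat S^N_t-s_t|\ge R'^2-\sup_t s_t\bigr)\le C/N .
\]

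\textbf{Interacting radius.} On the event $\{\sup_{t\le T}\widehat r^N_t<R'\}\cap\{\tau_N^R\le T\}$, the interacting radius reaches $R$ at $\tau^R_N$. By \eqref{eq:rLip} this forces
\[
\sup_{t\le T\wedge\tau_N^R}\cE_N(t)\ge(R-R')^2 .
\]
I would therefore upgrade \autoref{prop:poc-local-rate} to a supremum estimate $\E\sup_{t\le T\wedge\tau^R_N}\cE_N\le C/N$. This needs the following ingredients:
\begin{itemize}
\item Repeat the Itô computation for $\cE_N$ stopped at $\tau^R_N$, keeping the nonpositive term \eqref{eq:monotone-poc}.
\item Control the fluctuation $\Xi_i$ by \eqref{eq:poc-local-R-bound}, which uses the $2p+2$ moment available here.
\item Control the amplitude error through \eqref{eq:poc-local-amplitude-error}. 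For $s_0>0$, Lemma~\ref{lem:mf-nonattainment} lets me pass from $\widehat S^N$ to $\widehat r^N$; for $s_0=0$ everything is identically zero.
\item Handle the supremum of the error martingale by the Burkholder--Davis--Gundy inequality, whose bracket is bounded by $C\int(\cE_N+|(S^N)^q-s^q|^2)$, followed by Gronwall's inequality.
\end{itemize}
Markov's inequality then gives a contribution $C/((R-R')^2N)$. Adding the two pieces proves the first bound in \eqref{eq:poc-exit-rate}.

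\textbf{Explosion.} Explosion before $T$ requires the relative radius to exceed $R$ first, because while the radius is bounded the center has bounded quadratic variation and cannot explode. Hence $\{\tau_N^{\rm expl}\le T\}\subset\{\tau_N^R\le T\}$, which gives the second bound.

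The main obstacle is the uniform-in-time $O(N^{-1})$ deviation bound for the empirical variance of the i.i.d. comparison system. This is exactly where the extra two moments in \eqref{eq:poc-exit-moment} are consumed, and the semimartingale decomposition must be set up so that Doob's inequality applies rather than a union bound over time.
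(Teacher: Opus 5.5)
Your proposal is correct and is essentially the paper's proof. It uses the same It\^o decomposition of $Q_N(t)-s_t$ and of $\overline{\widehat Y}^N$ to get a uniform-in-time $O(N^{-1})$ bound for the comparison radius, then \eqref{eq:rLip} and Markov's inequality for the interacting radius. Two small corrections. Your split of $Q_N(t)-s_t$ omits the initial term $Q_N(0)-s_0$, and that term is where the fourth moment is actually needed; the martingale bracket needs only second moments.

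The supremum upgrade of \autoref{prop:poc-local-rate} is unnecessary. On the bad event the interacting radius equals $R$ exactly at $\tau_N^R\le T$, so $\cE_N(T\wedge\tau_N^R)\ge(R-R')^2$ already holds. The fixed-time bound $\E\cE_N(T\wedge\tau_N^R)\le C_{T,R}/N$ plus Markov's inequality then suffices, and this is what the paper does.

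If you do keep the BDG step, the additive bracket bound you wrote only gives $N^{-1/2}$. You would need the multiplicative form $\langle M\rangle_t\le C\sup_{u\le t}\cE_N(u)\int_0^t|(S_u^N)^q-s_u^q|^2\,\dd u$, absorbed with Young's inequality.
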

Together with \autoref{prop:poc-local-rate}, this Proposition gives an $O(N^{-1})$ mean-square coupling error on the safe-radius event $\{\tau_N^R>T\}$, on which both $r_N(X_t^N)$ and $\widehat r_t^N$ remain below $R$ for all $t\le T$, and an $O(N^{-1})$ probability of leaving that regime before $T$.

\begin{proof}
Write $\widehat Y^i$ for the centered conditionally independent copies.  Their law is deterministic by \autoref{thm:translation}.  From \eqref{eq:radius-in-y}, we put
\[
 Q_N(t):=\frac1N\sum_{i=1}^N|\widehat Y_t^i|^2,
 \qquad
 (\widehat r_t^N)^2=\widehat S_t^N = Q_N(t)-|\overline{\widehat Y}_t^N|^2.
\]
As $T$ is strictly before the mean-field lifetime, $s$ is bounded on $[0,T]$.  The dissipative moment estimate propagates the $M$-th moment in \eqref{eq:poc-exit-moment}.  If
\[
 G_t(x):=2x\cdot b_{\nu_t}(x)+c s_t^{2q},
\]
then It\^o's formula gives, for a martingale $\mathcal M_t^N$,
\[
 Q_N(t)-s_t
 =Q_N(0)-s_0
 +\int_0^t\frac1N\sum_{i=1}^N\bigl(G_u(\widehat Y_u^i)-\E G_u(\widehat Y_u^i)\bigr)\,\dd u
 +\mathcal M_t^N.
\]
The growth bound $|G_t(x)|^2\le C_T(1+|x|^{2p+4}+M_{2p+4}(t))$ and conditional independence imply
\[
 \E\sup_{t\le T}|Q_N(t)-s_t|^2\le\frac{C_T}{N}.
\]
Here the drift average is estimated by the Cauchy--Schwarz inequality in time and the martingale by the Doob--Burkholder--Davis--Gundy inequality.  Similarly, since $\int_{\R^n} b_{\nu_t}(x)\nu_t(\dd x)=0$ and $|b_{\nu_t}(x)|^2$ has growth at most $2p+2$,
\[
 \E\sup_{t\le T}|\overline{\widehat Y}_t^N|^2\le\frac{C_T}{N}.
\]
Consequently, for every fixed $\delta>0$,
\begin{equation}\label{eq:comparison-radius-sup}
 \P\Big(\sup_{t\le T}|\widehat r_t^N-r_t|\ge\delta\Big)
 \le\frac{C_{T,\delta}}{N}.
\end{equation}
Indeed, for positive $a,b$, $|a-b|\ge\delta$ implies $|a^2-b^2|\ge\delta^2$, and the sample-variance difference is bounded by
$|Q_N-s|+|\overline{\widehat Y}^N|^2$.

Choose $\delta>0$ so that $\sup_{t\le T}r_t+3\delta<R$.  On the event complementary to \eqref{eq:comparison-radius-sup} with threshold $2\delta$, if the interacting radius reaches $R$ before time $T$, then at the first such time
\[
 |r_N(X^N)-\widehat r^N|\ge\delta.
\]
By the $1$-Lipschitz radius estimate \eqref{eq:rLip}, this implies
$\mathcal E_N(\tau_N^R)\ge\delta^2$.  \autoref{prop:poc-local-rate}, applied on the radius interval $[0,R]$, gives
$\E\mathcal E_N(T\wedge\tau_N^R)\le C_{T,R}/N$.  Markov's inequality and \eqref{eq:comparison-radius-sup} therefore prove the first estimate in \eqref{eq:poc-exit-rate}.  A finite-particle explosion before $T$ must cross the level $R$ first, because while the relative radius is bounded the center has finite quadratic variation on $[0,T]$.  Hence
$\{\tau_N^{\rm expl}\le T\}\subset\{\tau_N^R\le T\}$, proving the second estimate. 
\end{proof}

For $q\le1/2$, a global secant bound comparing the empirical radius with the deterministic limiting radius yields an unstopped finite-horizon rate.

\begin{theorem}\label{thm:poc-quant}
Assume the hypotheses of \autoref{thm:poc}, let $0\le q\le1/2$, and suppose
\begin{equation}\label{eq:poc-quant-moment}
 \mu_0\in\cP_{m_{\star}}(\R^n),\qquad
 m_{\star}:=
 \begin{cases}
  2p+2,&q=0,\\
  \max\{4,2p+2\},&0<q\le1/2.
 \end{cases}
\end{equation}
If $0<q<1/2$, assume $s_0>0$.  At $q=1/2$ no lower-variance assumption is needed: if $s_0>0$, finite-time non-attainment gives $\inf_{t\le T}s_t>0$, while if $s_0=0$ both centered systems remain at consensus.  For every fixed $k\in\N$ there is $C_{T,k}<\infty$, independent of $N\ge\max\{k,2\}$, such that
\begin{equation}\label{eq:poc-fixed-k-rate}
 \sup_{0\le t\le T}
 \E \cW_2^2\left(
 \Law\bigl((X_t^{1,N},\ldots,X_t^{k,N})\mid\cF_t^0\bigr),
 \mu_t^{\otimes k}\right)
 \le \frac{C_{T,k}}{N}.
\end{equation}
Here $\cW_2$ is computed with the Euclidean product metric on $\R^{nk}$. 
\end{theorem}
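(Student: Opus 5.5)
We would run the synchronous coupling of \autoref{thm:poc} without any stopping time, and make the comparison of diffusion amplitudes global by a secant bound against the deterministic limiting radius. The point is that for $0\le q\le 1/2$ the map $r\mapsto r^{2q}$ admits a global estimate of the form
\[
 |x^{2q}-r^{2q}|\le L(r)\,|x-r|,\qquad x\ge0,
\]
whenever $r$ stays in a compact subset of $(0,\infty)$. For $q=1/2$ one may take $L\equiv1$. For $0<q<1/2$, concavity gives $|x^{2q}-r^{2q}|\le r^{2q-1}|x-r|$ for all $x\ge0$, since the secant slope from $r$ is bounded by the tangent slope at $r$ when $x>r$, and by $r^{2q}/r$ when $x<r$. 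For $q=0$ the amplitudes coincide and there is nothing to do.

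The lower bound $\inf_{t\le T}r_t>0$ comes from \autoref{lem:mf-nonattainment} when $s_0>0$. When $q=1/2$ and $s_0=0$, both centered systems remain at consensus and the amplitude difference vanishes. With these facts, and with $\widehat r^N_t=r_N(\hX_t)$,
\[
 |(S_t^N)^q-s_t^q|^2\le C_T\bigl(|r_t^{N,\rm int}-\widehat r_t^N|^2+|\widehat r_t^N-r_t|^2\bigr)
 \le C_T\bigl(\cE_N(t)+|\widehat r_t^N-r_t|^2\bigr)
\]
holds for all $t\le T$ with no localization, using \eqref{eq:rLip}.

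Next we establish the two sampling bounds unstopped, exactly as in \autoref{prop:poc-local-rate}. The first is \eqref{eq:poc-local-R-bound}, which uses the $(2p+2)$-moment of the McKean--Vlasov copies propagated by the dissipative moment estimate. The second is $\sup_{t\le T}\E|\widehat r_t^N-r_t|^2\le C_T/N$, obtained from \eqref{eq:poc-local-sample-var} using the fourth moment and division by $\widehat r+r\ge r_t\ge\inf r>0$. We then apply It\^o's formula to $\cE_N$. The interaction difference is nonpositive by \eqref{eq:monotone-poc}. The fluctuation term is bounded by $\E\cE_N+C_T/N$ through Young's inequality. The quadratic variation of both the common and the idiosyncratic error noises is at most $C\,|(S^N)^q-s^q|^2$, which is controlled by the bound above.

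The step that needs care is removing the localization of the particle system. We need nonexplosion, which holds for $q\le 1/2$ because $2q\le1<\bar p$ except at $p=0,q=1/2$, where the drift is linear (\autoref{thm:particle-allq}). We also need enough integrability for the stochastic integral to have zero mean. We would localize at $\tau_R$, note that every bound above is uniform in $R$, and let $R\to\infty$ using Fatou's lemma. This yields $\frac{\dd}{\dd t}\E\cE_N\le C_T\E\cE_N+C_T/N$, and hence $\sup_{t\le T}\E\cE_N(t)\le C_T/N$ by Gr\"onwall's inequality since $\cE_N(0)=0$. One subtlety is that for $0<q<1/2$ the particle variance may approach zero; the secant bound was chosen precisely so that only the limiting radius needs a lower bound.

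Finally, we pass from $\cE_N$ to the $k$-marginal. Conditionally on $\cF_t^0$, the vector $(\hX^1_t,\dots,\hX^k_t)$ has law $\mu_t^{\otimes k}$, since the copies are conditionally i.i.d. The pair is therefore a coupling, and
\[
 \E\cW_2^2\bigl(\Law(X^{1..k,N}_t\mid\cF^0_t),\mu_t^{\otimes k}\bigr)\le\sum_{i\le k}\E|e_i(t)|^2=k\,\E\cE_N(t),
\]
where the last equality uses exchangeability of the pairs $(X^{i,N},\hX^i)$. This gives \eqref{eq:poc-fixed-k-rate} with $C_{T,k}=kC_T$.
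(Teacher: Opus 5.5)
Your proposal is correct and follows essentially the same route as the paper: untruncated synchronous coupling, the global concavity secant bound $|x^{2q}-r_t^{2q}|\le \underline r^{\,2q-1}|x-r_t|$, which needs a lower bound only on the deterministic radius, the $O(N^{-1})$ sampling bounds for $\Xi_i$ and for $\widehat r^N_t-r_t$, monotonicity with Young and Gr\"onwall, Fatou to remove the localization, and exchangeability for the $k$-marginal. The one detail the paper states that you leave implicit is the exceptional case $m_N=1$ with $0<q<1/2$, where consensus is hit in finite time and the absorbing continuation must be selected; your secant bound covers that case unchanged, since it needs no lower bound on the particle radius.
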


\begin{proof}
If $q=1/2$ and $s_0=0$, then $\mu_0$ is a Dirac mass.  The i.i.d. particle initial condition is exact consensus almost surely, and both the particle system and the mean-field equation remain at that same constant configuration.  The estimate is therefore trivial.  Hence in the remainder of the proof, whenever $q>0$ we may assume $s_0>0$. \autoref{lem:mf-nonattainment} gives $\inf_{t\le T}s_t>0$.

Use the synchronous coupling from the proof of \autoref{thm:poc}, but do not truncate the empirical interaction-field fluctuation.  The moment assumption \eqref{eq:poc-quant-moment}, the boundedness of $s$ on $[0,T]$, and the dissipative moment calculation give
\begin{equation}\label{eq:poc-quant-moments-propagated}
 \sup_{t\le T}\E|\hX_t|^{m_{\star}}<\infty.
\end{equation}
For $q=0$ globality follows from part~\ref{item:particle-constant} of \autoref{thm:particle-allq}.  For $q=1/2$ it follows from part~\ref{item:particle-lipschitz} of \autoref{thm:particle-allq}.  If $0<q<1/2$, the system is global for $m_N\ge2$. In the exceptional case $m_N=1$, we use the unique absorbing continuation when $q\ge1/4$ and choose the absorbing boundary continuation when $q<1/4$.  Thus the stochastic integrals below may be localized and the localization removed by Fatou's lemma.

For the same  empirical interaction-field fluctuation notation,
\[
 \Xi_i(t)=\frac1N\sum_{j=1}^N K(\hX_t^j-\hX_t^i)
 -\int_{\R^n} K(y-\hX_t^i)\,\mu_t(\dd y),
\]
conditional independence of the off-diagonal variables and $|K(z)|^2=|z|^{2p+2}$ imply
\begin{equation}\label{eq:poc-R-L2-rate}
 \sup_{t\le T}\frac1N\sum_{i=1}^N\E|\Xi_i(t)|^2\le\frac{C_T}{N}.
\end{equation}
The $j=i$ term contributes an $O(N^{-2})$ bias and is covered by the same moment bound.

If $q>0$, as in \eqref{eq:radius-in-y}, we can write $\widehat S_t^N$ in terms of i.i.d. relative variables with deterministic centered law $\nu_t$.  The definition of $m_{\star}$ gives $m_{\star}\ge4$, so \eqref{eq:poc-quant-moments-propagated} yields
\begin{equation}\label{eq:poc-sample-variance-rate}
 \sup_{t\le T}\E|\widehat S_t^N-s_t|^2\le\frac{C_T}{N}.
\end{equation}
As $\inf_{t\le T}s_t>0$, with $r_t=s_t^{1/2}$ and $\widehat r_t^N=(\widehat S_t^N)^{1/2}$, following the proof of Proposition~\ref{prop:poc-local-rate}, \eqref{eq:poc-sample-variance-rate} therefore yields
\begin{equation*}
 \sup_{t\le T}\E|\widehat r_t^N-r_t|^2\le\frac{C_T}{N}.
\end{equation*}
For $0<q<1/2$, put $\alpha=2q\in(0,1)$. The positive lower bound $r_t\ge \underline{r}>0$ and concavity give
\[
 |x^\alpha-r_t^\alpha|\le \underline{r}^{\alpha-1}|x-r_t|,
 \qquad x\ge0.
\]
For $q=1/2$ one simply has $|x^{2q}-r_t^{2q}|=|x-r_t|$, with no lower bound needed.  For $q=0$ the amplitudes are constant and no sample-variance estimate is used.  Combining these observations with the $1$-Lipschitz property of the empirical radius gives
\begin{equation}\label{eq:poc-amp-rate}
 \E\bigl|(S_t^N)^q-s_t^q\bigr|^2
 \le C_T\Big(\E\cE_N(t)+\frac1N\Big).
\end{equation}

Apply It\^o's formula to $\cE_N=N^{-1}\sum_{i=1}^N|X^{i,N}-\hX^i|^2$.  The interaction-difference term is nonpositive by \eqref{eq:monotone-poc}.  Young's inequality, \eqref{eq:poc-R-L2-rate}, and \eqref{eq:poc-amp-rate} give
\[
 \frac{\dd}{\dd t}\E\cE_N(t)
 \le C_T\E\cE_N(t)+\frac{C_T}{N}.
\]
Since $\cE_N(0)=0$, Gr\"onwall's inequality yields
\begin{equation}\label{eq:poc-coupling-rate}
 \sup_{t\le T}\E\cE_N(t)\le\frac{C_T}{N}.
\end{equation}
The localized martingale estimate is uniform in the localization level, so Fatou's lemma removes the localization.

Conditionally on $\cF_t^0$, the vector $(\hX_t^1,\ldots,\hX_t^k)$ has law $\mu_t^{\otimes k}$.  The synchronous construction is therefore a conditional coupling, and exchangeability gives
\begin{align*}
 \E \cW_2^2\left(
 \Law((X_t^{1,N},\ldots,X_t^{k,N})\mid\cF_t^0),
 \mu_t^{\otimes k}\right)
 &\le \sum_{i=1}^k\E|X_t^{i,N}-\hX_t^i|^2 =k\,\E\cE_N(t).
\end{align*}
Taking the supremum over $t\le T$ and using \eqref{eq:poc-coupling-rate} we obtain \eqref{eq:poc-fixed-k-rate}.
\end{proof}

Averaging the conditional limit over the common noise gives the corresponding unconditional mixture limit.

\begin{corollary}\label{cor:poc-unconditional-mixture}
Under the assumptions of \autoref{thm:poc-quant}, fix $k\in\N$ and define the deterministic mixture measure on $(\R^n)^k$ by
\begin{equation*}
 Q_t^{(k)}:=\E\bigl[\mu_t^{\otimes k}\bigr].
\end{equation*}
Then
\begin{equation}\label{eq:unconditional-mixture-rate}
 \sup_{0\le t\le T}
 \cW_2^2\left(
 \Law(X_t^{1,N},\ldots,X_t^{k,N}),Q_t^{(k)}
 \right)
 \le\frac{C_{T,k}}{N}.
\end{equation}
\end{corollary}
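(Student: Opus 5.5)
The plan is to deduce \eqref{eq:unconditional-mixture-rate} from the conditional estimate \eqref{eq:poc-fixed-k-rate} in \autoref{thm:poc-quant}. The one ingredient needed is convexity of the squared Wasserstein distance under mixtures. Fix $t\le T$ and write
\[
\Lambda_t^{N,k}:=\Law\bigl((X_t^{1,N},\ldots,X_t^{k,N})\mid\cF_t^0\bigr),
\]
a regular conditional law; it exists because $(\R^n)^k$ is Polish. Its average satisfies
\[
\E\bigl[\Lambda_t^{N,k}\bigr]=\Law(X_t^{1,N},\ldots,X_t^{k,N}),
\]
and by definition $Q_t^{(k)}=\E[\mu_t^{\otimes k}]$. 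So both laws in \eqref{eq:unconditional-mixture-rate} are averages over the common-noise randomness of the two random measures compared in \eqref{eq:poc-fixed-k-rate}.

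The key step is the inequality
\[
\cW_2^2\bigl(\E\Lambda,\E\mu^{\otimes k}\bigr)\le\E\,\cW_2^2\bigl(\Lambda,\mu^{\otimes k}\bigr).
\]
I would avoid proving this through a measurable selection of optimal plans. It is more direct to use the synchronous coupling already built in the proof of \autoref{thm:poc-quant}. There, conditionally on $\cF_t^0$, the vector $(\hX_t^1,\ldots,\hX_t^k)$ has law $\mu_t^{\otimes k}$. Taking unconditional laws, the pair
\[
\bigl((X_t^{i,N})_{i\le k},(\hX_t^i)_{i\le k}\bigr)
\]
is a coupling of $\Law(X_t^{1,N},\ldots,X_t^{k,N})$ and $Q_t^{(k)}$. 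Hence
\[
\cW_2^2\bigl(\Law(X_t^{1,N},\ldots,X_t^{k,N}),Q_t^{(k)}\bigr)\le\sum_{i=1}^k\E|X_t^{i,N}-\hX_t^i|^2=k\,\E\cE_N(t),
\]
using exchangeability for the last equality. Then \eqref{eq:poc-coupling-rate} gives the bound $kC_T/N$, uniformly in $t\le T$, which is \eqref{eq:unconditional-mixture-rate}.

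The only point needing care is the claim that the unconditional law of $(\hX_t^i)_{i\le k}$ is exactly the mixture $Q_t^{(k)}$. This follows by integrating the conditional i.i.d. structure from \autoref{thm:translation}: for any bounded measurable $f$ on $(\R^n)^k$,
\[
\E f(\hX_t^1,\ldots,\hX_t^k)=\E\Bigl[\int f\,\dd\mu_t^{\otimes k}\Bigr].
\]
One should also check that $Q_t^{(k)}$ has finite second moment, so that $\cW_2$ is defined. This holds by \eqref{eq:poc-quant-moments-propagated} together with the bounded quadratic variation of the center on $[0,T]$. I expect no serious obstacle; the only subtle issue is the measurability of $t\mapsto\mu_t$, which has already been settled by the construction in the paper.
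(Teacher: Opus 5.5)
Your proof is correct and is essentially the paper's argument. The synchronous comparison vector $(\hX_t^1,\ldots,\hX_t^k)$ has unconditional law $Q_t^{(k)}$, so together with $(X_t^{i,N})_{i\le k}$ it gives an unconditional coupling, and exchangeability plus \eqref{eq:poc-coupling-rate} yields the bound $k\,\E\cE_N(t)\le kC_T/N$; the convexity inequality you mention first is, as you note, bypassed by this coupling in exactly the same way as in the paper.
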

If $\rho=0$, the limiting law $\mu_t$ is deterministic and $Q_t^{(k)}=\mu_t^{\otimes k}$, so \eqref{eq:unconditional-mixture-rate} is the usual unconditional propagation of chaos.  If $\rho>0$, the correct unconditional limit is generally the mixture $Q_t^{(k)}$, not the product of the unconditional one-particle marginal, because the common noise leaves nontrivial correlations after averaging over $B^0$.

\begin{proof}
Use the synchronous comparison variables $(\hX^1,\ldots,\hX^k)$ from the proof of \autoref{thm:poc-quant}.  Conditionally on the common noise they have law $\mu_t^{\otimes k}$, and hence their unconditional joint law is exactly $Q_t^{(k)}$.  Therefore this synchronous construction is an unconditional coupling of the two measures in \eqref{eq:unconditional-mixture-rate}, and
\[
 \cW_2^2\left(\Law(X_t^{1,N},\ldots,X_t^{k,N}),Q_t^{(k)}\right)
 \le \sum_{i=1}^k\E|X_t^{i,N}-\hX_t^i|^2
 =k\,\E\cE_N(t).
\]
The bound follows from \eqref{eq:poc-coupling-rate}. 
\end{proof}

\section{Long-time behavior}\label{sec:long-time}

\subsection{A deterministic upper barrier for the mean-field variance}

The sharp mean-field coercivity constant gives the following deterministic variance barrier.

\begin{theorem}\label{thm:barrier}
Assume $c>0$, $2q<\bar{p}$, and let $(\nu_t)_{t\ge0}$ be a global centered mean-field solution with finite $(p+2)$-moment at every time. Assume that its variance $s_t=\int_{\R^n}|x|^2\nu_t(\dd x)$ is absolutely continuous and satisfies \eqref{eq:s-exact} for almost every $t\ge0$. 
Then
\begin{equation*}
 \limsup_{t\to\infty}s_t\le s_{\star}^{\sharp} \quad \text{ with } \quad s_{\star}^{\sharp}:=\Big(\frac{c}{\cA_{p,n}^{\rMF,\sharp}}\Big)^{1/(\bar{p}-2q)}.
\end{equation*}
If $s_0\le s_{\star}^{\sharp}$, then $s_t\le s_{\star}^{\sharp}$ for all $t\ge0$.  If $s_0>s_{\star}^{\sharp}$, there exist $C,\lambda>0$, depending on the parameters and $s_0$, such that
\begin{equation}\label{eq:onesided-general}
 (s_t-s_{\star}^{\sharp})_+\le Ce^{-\lambda t}.
\end{equation}
\end{theorem}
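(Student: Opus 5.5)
The argument is a scalar comparison for the differential inequality \eqref{eq:sbarrier-sharp}. By \autoref{prop:variance-MV}, for almost every $t$,
\[
 s_t'\le f(s_t),\qquad f(s):=-\cA_{p,n}^{\rMF,\sharp}s^{\bar p}+cs^{2q}=s^{2q}\bigl(c-\cA_{p,n}^{\rMF,\sharp}s^{\bar p-2q}\bigr).
\]
Since $\bar p-2q>0$, $f$ is negative exactly on $(s_{\star}^{\sharp},\infty)$ and vanishes at $s_\star^\sharp$. Write $A:=\cA_{p,n}^{\rMF,\sharp}>0$.

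\emph{Invariance.} Suppose $s_0\le s_\star^\sharp$ and $s_{t_1}>s_\star^\sharp$ for some $t_1$. Let $t_0:=\sup\{t<t_1:s_t\le s_\star^\sharp\}$, so $s_{t_0}=s_\star^\sharp$ by continuity and $s>s_\star^\sharp$ on $(t_0,t_1]$. Absolute continuity gives
\[
 s_{t_1}-s_{t_0}=\int_{t_0}^{t_1}s'\le\int_{t_0}^{t_1}f(s_u)\,\dd u<0,
\]
which is a contradiction. The same argument shows that once $s$ enters $[0,s_\star^\sharp]$ it stays there.

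\emph{Exponential decay.} Suppose $s_0>s_\star^\sharp$ and consider any interval on which $s_t>s_\star^\sharp$. On such an interval $s$ is nonincreasing (since $s'\le f(s)<0$ a.e.), so $s_t\in(s_\star^\sharp,s_0]$. On this compact range I would bound $f$ linearly below zero. Set
\[
 f(s)=s^{2q}A\bigl((s_\star^\sharp)^{\bar p-2q}-s^{\bar p-2q}\bigr).
\]
By the mean value theorem,
\[
 s^{\bar p-2q}-(s_\star^\sharp)^{\bar p-2q}\ge m\,(s-s_\star^\sharp),
\]
where $m=(\bar p-2q)\min\{(s_\star^\sharp)^{\bar p-2q-1},s_0^{\bar p-2q-1}\}>0$. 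Together with $s^{2q}\ge(s_\star^\sharp)^{2q}$ (or $s^{2q}=1$ when $q=0$), this gives $f(s)\le-\lambda(s-s_\star^\sharp)$ with $\lambda:=A\,m\,(s_\star^\sharp)^{2q}>0$. Hence $g_t:=s_t-s_\star^\sharp$ satisfies $g'\le-\lambda g$ a.e. while $g>0$. Then $(e^{\lambda t}g_t)'\le0$, so $g_t\le g_0e^{-\lambda t}$ on that interval. Once $g$ hits zero, invariance keeps it nonpositive. This gives \eqref{eq:onesided-general} with $C=s_0-s_\star^\sharp$, and the $\limsup$ bound follows in both cases.

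\emph{Main point.} The step that needs care is working only with an a.e. differential inequality for an absolutely continuous function. This is why I use integrated comparisons rather than pointwise ODE uniqueness. Positivity of $s_\star^\sharp$ requires $c>0$ and $A<\infty$, which holds because $A$ is attained by \autoref{prop:gibbs-empirical-bridge}.
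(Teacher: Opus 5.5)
Your proof is correct and follows essentially the paper's route: a scalar comparison for $s'\le f(s)$ with $f(s)=-\cA_{p,n}^{\rMF,\sharp}s^{\bar p}+cs^{2q}$, whose unique positive zero is $s_\star^\sharp$. The difference is one of presentation. The paper compares with the ODE solution started at $\max\{s_0,s_\star^\sharp\}$ and reads off the exponential rate from $f'(s_\star^\sharp)<0$. You instead prove invariance directly from the integrated a.e.\ inequality and use a mean-value (secant) bound on $[s_\star^\sharp,s_0]$. Your version is more self-contained for an a.e.\ differential inequality and gives explicit constants $C=s_0-s_\star^\sharp$ and $\lambda=\cA_{p,n}^{\rMF,\sharp}\,m\,(s_\star^\sharp)^{2q}$.
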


\begin{proof}
Let
\[
 f(s)=-\cA_{p,n}^{\rMF,\sharp}s^{\bar{p}}+cs^{2q}.
\]
The positive equilibrium of $\dot u=f(u)$ is exactly $s_{\star}^{\sharp}$, and $f(s)>0$ for $0<s<s_{\star}^{\sharp}$, while $f(s)<0$ for $s>s_{\star}^{\sharp}$.  Since $s'\le f(s)$, scalar comparison with the solution starting from $\max\{s_0,s_{\star}^{\sharp}\}$ gives the first two conclusions.  Moreover,
\[
 f'(s_{\star}^{\sharp})=-\cA_{p,n}^{\rMF,\sharp}(\bar{p}-2q)(s_{\star}^{\sharp})^{\bar{p}-1}<0.
\]
The comparison solution therefore approaches $s_{\star}^{\sharp}$ exponentially once it enters a fixed neighborhood of $s_{\star}^{\sharp}$, which proves \eqref{eq:onesided-general}. 
\end{proof}

\begin{remark}\label{rem:general-variance-rate}
For $p>0$, \autoref{thm:barrier} controls the excess
$(s_t-s_{\star}^{\sharp})_+$ but does not assert convergence of $s_t$ to
$s_{\star}^{\sharp}$. At positive variance, the exact evolution also depends on
the normalized dissipation $D_p(\nu_t)/s_t^{\bar{p}}$, whose long-time behavior
is not determined by the scalar comparison.
\end{remark}

At critical homogeneity, strict coercivity yields exponential consensus for $p=0$ and the polynomial upper bound \eqref{eq:critical-alignment-rate} for $p>0$.

\begin{theorem}\label{thm:critical-align}
Let $p\ge0$ and $4q=p+2$. Let $\mu_0\in\cP_{p+2}(\R^n)$ be deterministic 
and let $\nu$ be the unique
maximal centered solution given by \autoref{thm:MV-allq}.

If $s_0=0$, then $\nu_t=\delta_0$ for all $t\ge0$.
Now suppose $s_0>0$. 

If $c\le\cA_{p,n}^{\rMF,\sharp}$, the solution is
global. More precisely, if $c<\cA_{p,n}^{\rMF,\sharp}$ and
$\delta:=\cA_{p,n}^{\rMF,\sharp}-c>0$, then, for every $t\ge0$,
\begin{equation}\label{eq:critical-alignment-rate}
 s_t\le
 \begin{cases}
 s_0e^{-\delta t},&p=0,\\[1mm]
 \left(s_0^{-p/2}+\dfrac p2\delta t\right)^{-2/p},&p>0.
 \end{cases}
\end{equation}
In particular, $s_t\to 0$ as $t\to\infty$.

At the boundary $c=\cA_{p,n}^{\rMF,\sharp}$, the variance $s_t$ is
nonincreasing. For $p=0$, one has $\cA_{0,n}^{\rMF,\sharp}=2$ and
$s_t\equiv s_0$. For $p>0$, define
\[
 \eta_t=(\mathsf{Dil}_{s_t^{-1/2}})\push\nu_t
 \qquad\text{whenever }s_t>0.
\]
Then, for almost every such $t$,
\begin{equation}\label{eq:critical-boundary-exact-dissipation}
 s_t'=-s_t^{2q}\bigl(D_p(\eta_t)-\cA_{p,n}^{\rMF,\sharp}\bigr)\le0.
\end{equation}
Consequently, $s_t\to s_\infty$ for some $s_\infty\in[0,s_0]$.
If $s_\infty>0$, then
\begin{equation}\label{eq:critical-boundary-integrable-defect}
 \int_0^\infty
 \bigl(D_p(\eta_t)-\cA_{p,n}^{\rMF,\sharp}\bigr)\,\dd t<\infty.
\end{equation}
For $p>0$, scalar coercivity alone does not determine whether
$s_\infty=0$.
\end{theorem}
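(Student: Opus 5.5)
The whole argument runs through the exact scalar identity \eqref{eq:s-exact} of \autoref{prop:variance-MV}, read at criticality $2q=\bar p=1+p/2$. First the trivial case. If $s_0=0$ then $q=(p+2)/4\ge1/2$, so part~\ref{item:mf-consensus} of \autoref{thm:MV-allq} gives uniqueness at consensus. Since $\delta_0$ is a solution (zero drift, zero diffusion), we get $\nu_t=\delta_0$.

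For $s_0>0$, globality when $c\le\cA_{p,n}^{\rMF,\sharp}$ is exactly the second alternative in part~\ref{item:mf-lipschitz} of \autoref{thm:MV-allq}, so I will just cite it. For the rate, combine \eqref{eq:s-exact} with the sharp lower bound \eqref{eq:D-MV-sharp-lower}:
\[
 s_t'\le-(\cA_{p,n}^{\rMF,\sharp}-c)s_t^{\bar p}=-\delta s_t^{1+p/2}.
\]
For $p=0$ this is linear, and Gr\"onwall gives $s_0e^{-\delta t}$. For $p>0$, \autoref{lem:mf-nonattainment} keeps $s_t>0$ on every compact interval. Then $\frac{\dd}{\dd t}s_t^{-p/2}=-\frac p2 s_t^{-p/2-1}s_t'\ge\frac p2\delta$, which holds almost everywhere by local absolute continuity and positivity. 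Integrating gives \eqref{eq:critical-alignment-rate}.

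At the boundary $c=\cA_{p,n}^{\rMF,\sharp}$, the same inequality gives $s_t'\le0$. For $p=0$, $D_0=2s$ and $\cA_{0,n}^{\rMF,\sharp}=2$ (from \autoref{prop:gibbs-empirical-bridge}), so the equation reads $s'=0$ exactly. For $p>0$ and $s_t>0$, homogeneity gives $D_p(\nu_t)=s_t^{\bar p}D_p(\eta_t)=s_t^{2q}D_p(\eta_t)$ with $\eta_t$ centered and of unit variance. Substituting into \eqref{eq:s-exact} yields \eqref{eq:critical-boundary-exact-dissipation}, and its sign follows from the definition of the variational infimum. Monotonicity then gives the limit $s_\infty\in[0,s_0]$.

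If $s_\infty>0$, then $s_t\ge s_\infty$ for all $t$, so $s_t^{2q}\ge s_\infty^{2q}$. Integrating \eqref{eq:critical-boundary-exact-dissipation} over $[0,T]$ gives $s_\infty^{2q}\int_0^T(D_p(\eta_t)-\cA)\,\dd t\le s_0-s_T\le s_0$, and letting $T\to\infty$ proves \eqref{eq:critical-boundary-integrable-defect}. The final sentence is a remark rather than a claim, and needs no proof.

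The only delicate point is making sure all of this holds for almost every $t$ on the full positive half-line. This requires the hypotheses of \autoref{prop:variance-MV}, namely a locally integrable $(p+2)$-moment. I would obtain them from the moment calculation in the proof of part~\ref{item:mf-lipschitz} of \autoref{thm:MV-allq}: there the variance is bounded (nonincreasing here), so $M_{p+2}$ stays bounded on bounded intervals. A second point is applying \autoref{lem:mf-nonattainment}, which needs a uniform $(p+2)$-moment bound on $[0,T]$; that bound is supplied by the same estimate.
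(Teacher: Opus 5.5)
Your proposal is correct and follows the paper's proof essentially step for step: the exact variance identity \eqref{eq:s-exact} combined with the sharp coercivity bound, \autoref{thm:MV-allq} for the consensus case and for global existence, the chain rule for $s_t^{-p/2}$, and homogeneity of $D_p$ at the boundary. There are two small differences, both harmless. First, for the decay rate the paper does not invoke \autoref{lem:mf-nonattainment}; it argues that, by monotonicity, if $s$ ever hits zero it stays there. Second, for \eqref{eq:critical-boundary-integrable-defect} the paper applies the chain rule to $s_t^{-p/2}$ and obtains the exact value $\frac2p(s_\infty^{-p/2}-s_0^{-p/2})$, whereas your bound $s_t^{2q}\ge s_\infty^{2q}$ gives the equally sufficient estimate $(s_0-s_\infty)/s_\infty^{2q}$.
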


\begin{proof}

Since $q=(p+2)/4\ge1/2$, part~\ref{item:mf-consensus} of
\autoref{thm:MV-allq} implies that $s_0=0$ gives the unique global
centered solution $\nu_t=\delta_0$. Now take $s_0>0$.
Under $c\le\cA_{p,n}^{\rMF,\sharp}$, the relation $2q=(p+2)/2=\bar{p}$ allows us to
apply part~\ref{item:mf-lipschitz} of \autoref{thm:MV-allq}, which
gives global existence.

The variance is locally absolutely continuous, and
\eqref{eq:s-exact}--\eqref{eq:sbarrier-sharp} give, for almost every
$t\ge0$,
\[
 s_t'=-D_p(\nu_t)+cs_t^{\bar{p}}
 \le-\bigl(\cA_{p,n}^{\rMF,\sharp}-c\bigr)s_t^{\bar{p}}.
\]
In particular, $s_t$ is nonincreasing whenever
$c\le\cA_{p,n}^{\rMF,\sharp}$.

Suppose first that $c<\cA_{p,n}^{\rMF,\sharp}$.
For $p=0$, one has $\bar{p}=1$, and Gr\"onwall's inequality gives
$s_t\le s_0e^{-\delta t}$. For $p>0$, on any interval where $s_t>0$,
the chain rule yields
\[
 \frac{\dd}{\dd t}s_t^{-p/2}
 =-\frac p2s_t^{-\bar{p}}s_t'
 \ge\frac p2\delta
 \qquad\text{for almost every }t.
\]
Integrating from $0$ to $t$ and rearranging gives the second bound in
\eqref{eq:critical-alignment-rate}, as long as $s_t>0$.
If the variance reaches zero, its nonnegativity and monotonicity imply
that it remains zero, so the same bound holds thereafter.
This proves \eqref{eq:critical-alignment-rate} for every $t\ge0$
and implies $s_t\to 0$.

Now let $c=\cA_{p,n}^{\rMF,\sharp}$.
Since $s_t$ is nonnegative and nonincreasing, it converges to some
$s_\infty\in[0,s_0]$. When $p=0$, the identity $D_0(\nu_t)=2s_t$
and $\cA_{0,n}^{\rMF,\sharp}=2$ reduce \eqref{eq:s-exact} to
$s_t'=0$ almost everywhere. Absolute continuity then gives
$s_t\equiv s_0$.

For $p>0$ and $s_t>0$, the law $\eta_t$ is centered and has unit
variance. Homogeneity of $D_p$ and the sharp coercivity bound \eqref{eq:D-MV-sharp-lower} give
\[
 D_p(\nu_t)=s_t^{\bar{p}}D_p(\eta_t),
 \qquad
 D_p(\eta_t)\ge\cA_{p,n}^{\rMF,\sharp}.
\]
Substitution into \eqref{eq:s-exact} proves
\eqref{eq:critical-boundary-exact-dissipation}.

Finally, let $p>0$ and suppose $s_\infty>0$.
Then $s_t\ge s_\infty$ for every $t\ge0$, so the chain rule applies
to $s_t^{-p/2}$ on every finite time interval.
By \eqref{eq:critical-boundary-exact-dissipation},
\[
 \frac{\dd}{\dd t}s_t^{-p/2}
 =\frac p2\bigl(D_p(\eta_t)-\cA_{p,n}^{\rMF,\sharp}\bigr)
 \qquad\text{for almost every }t\ge0.
\]
Integrating over $[0,T]$ and letting $T\to\infty$, using monotone
convergence for the nonnegative integrand, yields
\[
 \int_0^\infty
 \bigl(D_p(\eta_t)-\cA_{p,n}^{\rMF,\sharp}\bigr)\,\dd t
 =\frac2p\bigl(s_\infty^{-p/2}-s_0^{-p/2}\bigr)<\infty.
\]
This proves \eqref{eq:critical-boundary-integrable-defect}.
\end{proof}

For $p=2$, the equation \eqref{eq:p2-defect} in
\autoref{cor:p2-defect}, together with \autoref{thm:critical-align},
yields the following sufficient condition for consensus, with the optimal
constant in the dissipation bound based only on variance and dimension. 

\begin{corollary}\label{cor:p2-critical}
Let $\mu_0\in\cP_4(\R^n)$ be deterministic, and let $p=2$, $q=1$. For the unique maximal solution of \autoref{thm:MV-allq},
\begin{equation}\label{eq:p2-critical-condition}
 n\kappa^2(1-\rho^2)<4\left(1+\frac1n\right)
\end{equation}
implies global existence and consensus, with
\begin{equation*}
 s_t\le\frac{s_0}{1+\left[4(1+n^{-1})-n\kappa^2(1-\rho^2)\right]s_0t}.
\end{equation*}
If there is equality in \eqref{eq:p2-critical-condition}, \autoref{cor:p2-defect} sharpens mere monotonicity to
\[
 s_t'=-2\operatorname{Var}_{\nu_t}(|x|^2)
 -4\left\|\Sigma_t-\frac{s_t}{n}\Id_n\right\|_{\rm HS}^2\le0.
\]
\end{corollary}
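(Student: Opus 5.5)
This follows by specializing \autoref{thm:critical-align} to $p=2$. We then use the explicit identification of the sharp constant in \eqref{eq:sharp-mf-p2}, namely $\cA_{2,n}^{\rMF,\sharp}=4(1+1/n)$. With $p=2$ we have $4q=p+2$, so $q=1$ is critical homogeneity, $\bar p=2$, and $c=n\kappa^2(1-\rho^2)$. Condition \eqref{eq:p2-critical-condition} is exactly $c<\cA_{2,n}^{\rMF,\sharp}$. The assumption $\mu_0\in\cP_4=\cP_{p+2}$ is the moment hypothesis of \autoref{thm:MV-allq} and \autoref{thm:critical-align}.

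First, under \eqref{eq:p2-critical-condition}, I invoke \autoref{thm:critical-align} with $\delta=4(1+1/n)-c>0$. This gives global existence. For $s_0>0$ it also gives the bound
\[
s_t\le\bigl(s_0^{-1}+\delta t\bigr)^{-1}=\frac{s_0}{1+\delta s_0 t},
\]
which is the asserted estimate, and it gives $s_t\to0$, i.e.\ consensus. If $s_0=0$, the solution is $\nu_t=\delta_0$ and the bound holds trivially. As a direct check, one can skip the reference: by \eqref{eq:sbarrier-sharp} with \eqref{eq:D2-MV-sharp}, $s'\le-\delta s^2$, so $\frac{\dd}{\dd t}s_t^{-1}\ge\delta$ wherever $s_t>0$. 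Integrating gives the bound. Once $s$ vanishes it stays zero, by monotonicity.

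For the equality case $c=4(1+1/n)$, substitute this value of $c$ into \eqref{eq:p2-defect} from \autoref{cor:p2-defect}. The term $-(4(1+1/n)-c)s_t^2$ then vanishes, which leaves the displayed identity. The right-hand side is $\le0$ because both defects are nonnegative. This holds for a.e.\ $t$ under the hypotheses of \autoref{prop:variance-MV}. Those hypotheses are met because the solution is global by \autoref{thm:MV-allq}\ref{item:mf-lipschitz} (the case $2q=\bar p$ with $c\le\cA_{p,n}^{\rMF,\sharp}$), and its $4$-moment is locally bounded by the moment estimate in that proof.

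There is no genuine obstacle. The only point needing care is bookkeeping: checking that the moment and regularity hypotheses of \autoref{prop:variance-MV} hold for the solution from \autoref{thm:MV-allq}, so that \eqref{eq:p2-defect} is valid almost everywhere.
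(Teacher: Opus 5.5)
Your proposal is correct and follows the paper's own route. The paper obtains the corollary by specializing \autoref{thm:critical-align} to $p=2$ with $\cA_{2,n}^{\rMF,\sharp}=4(1+1/n)$ from \eqref{eq:sharp-mf-p2}, and by setting $c=4(1+1/n)$ in \eqref{eq:p2-defect} for the equality case, exactly as you do; your check of the moment hypotheses of \autoref{prop:variance-MV} is a sensible detail that the paper leaves implicit.
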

The constant $4(1+1/n)$ cannot be increased in a dissipation bound using only variance and dimension, because equality in \eqref{eq:D2-MV-sharp} is attained by centered isotropic spherical laws. This optimality concerns the static inequality, not a necessary condition for consensus of the evolving noisy law.

\subsection{Pure common noise: consensus and motion of the cluster}

The case $\rho=1$ is special because the relative dynamics contains no stochastic term at all, regardless of the regularity of $\psi$.

When the noise is purely common, the relative configuration is deterministic, and the center carries all 
randomness. The notation $f(t)\simeq g(t)$ means that there exist constants $c_1,c_2 >0$ and $t_0\geq 0$ such that $c_1g(t)\leq f(t)\leq c_2 g(t)$ for all $t\geq t_0$.  

\begin{theorem}\label{thm:rho1}
Assume $\rho=1$
and $S_0^N>0$.  Then the relative system is deterministic and has a unique global solution for every $p,q\ge0$.  Moreover:
\begin{enumerate}[label=\textnormal{(\roman*)}]
\item If $p=0$,
\begin{equation}\label{eq:rho1-p0}
 S_t^N=S_0^Ne^{-2t}.
\end{equation}
\item If $p>0$,
\begin{equation}\label{eq:rho1-twosided}
 \left((S_0^N)^{-p/2}+\frac p2\cB_{p,n,N}^{\sharp}t\right)^{-2/p}
 \le S_t^N\le
 \left((S_0^N)^{-p/2}+\frac p2\cA_{p,n,N}^{\sharp}t\right)^{-2/p}.
\end{equation}
Hence $S_t^N\simeq t^{-2/p}$ as $t\to\infty$.
\end{enumerate}
The center satisfies
\begin{equation}\label{eq:rho1-center}
 \barX_t^N=\barX_0^N+\kappa\int_0^t(S_s^N)^q\,\dd B_s^0.
\end{equation}
Consequently, for $p\geq 0$,
\begin{equation}\label{eq:center-phase}
 \int_0^\infty(S_s^N)^{2q}\,\dd s
 \begin{cases}
 <\infty,&q>p/4,\\
 =\infty,&0\le q\le p/4.
 \end{cases}
\end{equation}

Therefore the particles reach consensus.  The common cluster converges to a finite random point exactly when $q>p/4$ for $p>0$, and exactly when $q>0$ for $p=0$.  For $p>0$, the center has logarithmically divergent quadratic variation at $q=p/4$ and polynomially divergent quadratic variation when $0\le q<p/4$. In the separate case $p=q=0$, the quadratic variation in each coordinate is exactly $\kappa^2t$.
\end{theorem}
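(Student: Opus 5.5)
The plan is to reduce everything to the deterministic relative ODE and a scalar differential inequality for $S^N$, and then read the behavior of the center off its quadratic variation.

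\emph{Relative system.} By part~\ref{item:particle-common} of \autoref{thm:particle-allq}, the relative configuration solves the deterministic ODE $\dot Y^i=\frac1N\sum_k K(Y^k-Y^i)$. This ODE has a unique global solution, and $S^N$ is nonincreasing. For $\rho=1$ we have $a_N=c_N=0$, so \autoref{prop:variance-general} reduces to $\frac{\dd}{\dd t}S_t^N=-D_{p,N}(Y_t)$.

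\emph{The case $p=0$.} \autoref{lem:D-bounds} gives $D_{0,N}=2S^N$. The variance equation is then $\dot S^N=-2S^N$, which yields \eqref{eq:rho1-p0}.

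\emph{The case $p>0$.} By the definition \eqref{eq:shape-extrema-global}, whenever $S^N_t>0$,
\[
\cA_{p,n,N}^{\sharp}(S^N)^{\bar p}\le D_{p,N}\le\cB_{p,n,N}^{\sharp}(S^N)^{\bar p}.
\]
Consensus is not reached in finite time. The upper bound $D_{p,N}\le \cB(S^N)^{\bar p}$ gives $\dot S\ge-\cB S^{\bar p}$ with $\bar p>1$, and comparison with the explicit solution of that ODE keeps $S^N$ positive. Hence we may differentiate $S^{-p/2}$:
\[
\frac{\dd}{\dd t}(S^N)^{-p/2}=\frac p2 (S^N)^{-\bar p}D_{p,N}\in\Big[\frac p2\cA^\sharp,\ \frac p2\cB^\sharp\Big].
\]
Integrating from $0$ to $t$ and inverting the monotone map $x\mapsto x^{-2/p}$ gives \eqref{eq:rho1-twosided}. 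Both sides of \eqref{eq:rho1-twosided} are of order $t^{-2/p}$, which gives $S_t^N\simeq t^{-2/p}$. In either case $S^N\to0$, so consensus holds.

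\emph{Center.} Formula \eqref{eq:rho1-center} is \eqref{eq:centerSDE} with $\rho=1$, and its integrand is deterministic. Hence $\barX^N_t-\barX^N_0$ is a Gaussian martingale with variance $\kappa^2\int_0^t(S_s^N)^{2q}\dd s$ in each coordinate.
\begin{itemize}
\item For $p>0$, the two-sided bound gives $(S_s^N)^{2q}\simeq s^{-4q/p}$ for large $s$. The integral converges iff $4q/p>1$, that is $q>p/4$; it diverges logarithmically at $q=p/4$ and polynomially, like $t^{1-4q/p}$, for $q<p/4$.
\item For $p=0$, $(S_s^N)^{2q}=(S_0^N)^{2q}e^{-4qs}$. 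This is integrable iff $q>0$. For $q=0$ the convention $s^0=1$ gives quadratic variation exactly $\kappa^2t$.
\end{itemize}
This proves \eqref{eq:center-phase}. Small times contribute a finite amount, since $S^N$ is bounded.

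\emph{Convergence of the cluster.} If the quadratic variation $\langle\barX^N\rangle_\infty$ is finite, the martingale is $L^2$-bounded and converges almost surely to a finite random point. If it is infinite, the Dambis--Dubins--Schwarz theorem represents the center as a Brownian motion run to infinite time, so it oscillates with $\limsup|\barX^N_t|=\infty$ and does not converge. (For $n\ge2$ it suffices to argue per coordinate.)

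The only step needing care is the positivity of $S^N$ for all time when $p>0$, which is what legitimizes the $S^{-p/2}$ computation. This is handled by the lower comparison solution, which stays strictly positive for all finite $t$. Everything else is routine.
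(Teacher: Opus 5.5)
Your proposal is correct and takes essentially the same route as the paper. You reduce to $\frac{\dd}{\dd t}S^N=-D_{p,N}$, use $D_{0,N}=2S^N$ and the shape extrema $\cA^{\sharp}_{p,n,N},\cB^{\sharp}_{p,n,N}$ to integrate the two-sided inequality for $(S^N)^{-p/2}$, and then read convergence or divergence of the center from its deterministic quadratic variation, via $L^2$-boundedness or Dambis--Dubins--Schwarz; your explicit positivity argument before differentiating $(S^N)^{-p/2}$ is a justification the paper leaves implicit, while you leave implicit the paper's transfer from the center to every particle via $\max_i|Y_t^i|^2\le NS_t^N\to0$.
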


\begin{proof}
When $\rho=1$, \autoref{prop:decomposition} gives a deterministic relative system and \autoref{prop:variance-general} reduces to
\[
 \frac{\dd}{\dd t}S_t^N=-D_{p,N}(Y_t).
\]
For $p=0$, \autoref{lem:D-bounds} gives $D_{0,N}=2S^N$, proving \eqref{eq:rho1-p0}.  For $p>0$, homogeneity and \eqref{eq:shape-extrema-global} give the optimal uniform bounds
\[
 -\cB_{p,n,N}^{\sharp}(S_t^N)^{\bar{p}}
 \le (S_t^N)'
 \le-\cA_{p,n,N}^{\sharp}(S_t^N)^{\bar{p}}.
\]
Integrating both inequalities yields \eqref{eq:rho1-twosided}.  The fully explicit outer bounds follow by \eqref{eq:shape-extrema-explicit-bracket}.

Equation \eqref{eq:rho1-center} follows from \eqref{eq:centerSDE}.  Its quadratic variation is $\kappa^2\int_0^t(S_s^N)^{2q}\dd s$ in each coordinate.  Formula \eqref{eq:rho1-p0} gives the assertion when $p=0$.  If $p>0$, the two-sided estimate \eqref{eq:rho1-twosided} shows that $(S_t^N)^{2q}\simeq t^{-4q/p}$, proving \eqref{eq:center-phase}. For deterministic initial data the finite total quadratic variation here is deterministic, so the center is bounded in $L^2$ and converges almost surely and in $L^2$. For random initial data the assertion holds conditionally on the initial configuration, unconditional $L^2$ convergence additionally requires integrability of the total quadratic variation and of the initial center.  If the quadratic variation is infinite, the Dambis--Dubins--Schwarz representation 
identifies each coordinate with a Brownian motion run for an unbounded amount of time, so it cannot converge to a finite limit.  Finally, $\max_i|Y_t^i|^2\le NS_t^N\to0$, which transfers the center behavior to every particle. 
\end{proof}

For $\rho=1$ and deterministic initial data, the center process can be written as 
\begin{equation*}\label{eq:rho1-timechanged-bm}
 \bigl(\bar X_t^N\bigr)_{t\ge0}
 \stackrel{\rm law}{=} 
 \bigl(\bar X_0^N+W_{A_N(t)}\bigr)_{t\ge0},
\end{equation*}
where $W$ is a Brownian motion and for $p>0$ and $0\le q\le p/4$, its deterministic clock has the asymptotics
\[ A_N(t):=\kappa^2\int_0^t(S_s^N)^{2q}\,\dd s \simeq
 \begin{cases}
  \log t,&q=p/4,\\[1mm]
  t^{1-4q/p},&0\le q<p/4,
 \end{cases} \]
using \eqref{eq:rho1-twosided}. Consequently, from Brownian scaling, the natural spatial scale of the wandering consensual cluster is $\sqrt{\log t}$ at $q=p/4$ and $t^{(1-4q/p)/2}$ for $q<p/4$.
\section{Linear interaction: explicit results}\label{sec:p0}

In this section we set $p=0$ and hence $K(z)=z$. This case admits exact formulas that settle the behavior of the system for all $q\ge0$.

Recall that $c=n\kappa^2(1-\rho^2)$, and throughout this section we set $s_{\star}:=\left(c/2\right)^{1/(1-2q)}$ when $c>0$ and $q\ne 1/2$.

\subsection{The mean-field equation}
By \autoref{prop:variance-MV}, the mean-field variance satisfies
\begin{equation}\label{eq:sODEp0}
 s'=-2s+cs^{2q}.
\end{equation}
When $q=0$, the convention is $s^0=1$, including at $s=0$.

Linearity of the interaction yields an explicit representation of the conditional law.

\begin{theorem}\label{thm:p0-explicit}
Assume $p=0$ and $\mu_0\in\cP_2(\R^n)$. Let $s$ be a deterministic nonnegative solution of \eqref{eq:sODEp0} on an interval $[0,T)$, with $s(0)=s_0:=\int_{\R^n}|x-m_0|^2\mu_0(\dd x)$.   Let $m_0=\int_{\R^n} x\mu_0(\dd x)$, let $Y_0$ have the centered law $(\tau_{-m_0})\push\mu_0$, and set
\begin{align}
 m_t&=m_0+\kappa\rho\int_0^t s_u^q\,\dd B_u^0,\label{eq:p0-m}\\
 Y_t&=e^{-t}Y_0+\kappa\sqrt{1-\rho^2}
 \int_0^t e^{-(t-u)}s_u^q\,\dd B_u.\label{eq:p0-Y}
\end{align}
Then $X_t=m_t+Y_t$ solves the McKean--Vlasov equation and
\begin{equation}\label{eq:p0-law}
 \mu_t=(\tau_{m_t})\push\left[
 (\mathsf{Dil}_{e^{-t}})\push\nu_0 * \cN(0,\theta_t\Id_n)
 \right],
\end{equation}
where $\nu_0=(\tau_{-m_0})\push\mu_0$,  and
\begin{equation*}
 \theta_t=\kappa^2(1-\rho^2)\int_0^t e^{-2(t-u)}s_u^{2q}\,\dd u.
\end{equation*}
The variance satisfies
\begin{equation}\label{eq:variance-theta}
 s_t=e^{-2t}s_0+n\theta_t.
\end{equation}
Conversely, every mean-field solution whose centered conditional law is deterministic has this form.  In particular, when $s_0>0$, the scalar ODE is locally Lipschitz along its positive trajectory, so the representation covers every solution up to its maximal lifetime.
\end{theorem}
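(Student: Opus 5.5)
The plan has two directions: verify that the explicit formulas define a solution, then show that any solution with deterministic centered law has this form.

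\emph{Forward direction.} Take the deterministic path $s$ solving \eqref{eq:sODEp0} as given and define $m_t$, $Y_t$ by \eqref{eq:p0-m}--\eqref{eq:p0-Y}.

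Since $s$ is deterministic and continuous on $[0,T)$, the stochastic integrals are well defined. Also $B$ is independent of $(Y_0,B^0)$, so conditionally on $\cF_t^0$:
\begin{itemize}
\item $Y_t$ is the sum of $e^{-t}Y_0$ and an independent centered Gaussian with covariance $\theta_t\Id_n$ (Itô isometry);
\item $m_t$ is $\cF_t^0$-measurable.
\end{itemize}
This gives \eqref{eq:p0-law} directly.

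Taking second moments of $Y_t$, using $\E Y_0=0$ and independence, gives $\int|y|^2\nu_t(\dd y)=e^{-2t}s_0+n\theta_t$. Differentiating this expression yields
\[
\frac{\dd}{\dd t}\bigl(e^{-2t}s_0+n\theta_t\bigr)=-2\bigl(e^{-2t}s_0+n\theta_t\bigr)+cs_t^{2q}.
\]
The path $s$ satisfies the same linear ODE, $s'=-2s+cs_t^{2q}$, with the same initial value, regarding $s_t^{2q}$ as a given forcing term. Hence the variance of $\nu_t$ coincides with $s_t$, which is \eqref{eq:variance-theta}. This consistency step is the crux: it guarantees that the $s$ appearing in the noise is the actual variance of the law.

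It remains to check the McKean--Vlasov equation. Itô's product rule on \eqref{eq:p0-Y} gives $\dd Y_t=-Y_t\dd t+\kappa\sqrt{1-\rho^2}s_t^q\dd B_t$. The centered law has mean zero, and $K(z)=z$, so
\[
\int (y-X_t)\mu_t(\dd y)=m_t-X_t=-Y_t.
\]
Adding $\dd m_t$ therefore recovers \eqref{eq:MV}, with $\mu_t=\Law(X_t\mid\cF_t^0)$ and the correct $s_t$.

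\emph{Converse direction.} Let $(X,\mu)$ be any solution whose centered conditional law is deterministic. By \autoref{thm:translation}, $m$ satisfies \eqref{eq:mSDE} and $Y$ satisfies \eqref{eq:centeredMV}. With $K(z)=z$ and $\nu_t$ centered, the drift is $\int(z-Y_t)\nu_t(\dd z)=-Y_t$, so the centered equation is linear in $Y$ with coefficient $s_t$ deterministic.

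By \autoref{prop:variance-MV} with $p=0$ and $D_0=2s$, the variance $s$ solves \eqref{eq:sODEp0}. Variation of constants for the linear SDE then gives exactly \eqref{eq:p0-Y}, and \eqref{eq:mSDE} is \eqref{eq:p0-m}.

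For the final remark: when $s_0>0$, the right-hand side $-2s+cs^{2q}$ is locally Lipschitz on $(0,\infty)$. The positive trajectory stays positive (for $q<1/2$ the forcing dominates near zero; for $q\ge1/2$ the vector field is locally Lipschitz including at $0$), so the variance path is uniquely determined up to its maximal lifetime. Hence the representation covers every solution on that interval.

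The main obstacle is keeping the fixed point consistent. In the converse one must justify the moment identity, which requires integrability of the stopped martingales. I would localize with the $\cF^0$-stopping times from \autoref{thm:translation} and pass to the limit by monotone convergence.
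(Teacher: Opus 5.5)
Your proposal is correct and follows essentially the paper's route: with the variance path fixed, the centered equation is a linear Ornstein--Uhlenbeck equation. Variation of constants then gives \eqref{eq:p0-Y} and the Gaussian-convolution law. The second-moment computation identifies the variance, and your treatment of $c s_t^{2q}$ as a given forcing in a linear ODE is just a cleaner orientation of the paper's remark that differentiating \eqref{eq:variance-theta} reproduces \eqref{eq:sODEp0}. The converse comes from \autoref{thm:translation}.

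The one point to sharpen is the final claim. There the paper uses uniqueness of the positive ODE solution to force the conditional variance of an arbitrary solution to be deterministic, and hence its centered conditional law as well. That variance is a priori only $\cF^0$-adapted, but it satisfies the ODE pathwise because the centered equation carries no common noise. Your last paragraph should state explicitly that this is how the determinism hypothesis is removed, rather than only asserting that the variance path is unique.
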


\begin{proof}
For $p=0$, the centered drift in \eqref{eq:centeredMV} is
\[
 \int_{\R^n}(z-Y_t)\,\nu_t(\dd z)=-Y_t
\]
because $\nu_t$ is centered.  Thus \eqref{eq:centeredMV} is a linear Ornstein--Uhlenbeck equation with deterministic time-dependent diffusion coefficient once $s$ is fixed.  Variation of constants gives \eqref{eq:p0-Y}, while the center is \eqref{eq:p0-m}.  The Gaussian convolution formula \eqref{eq:p0-law} follows from the same representation.  Taking second moments in \eqref{eq:p0-Y} gives \eqref{eq:variance-theta}. Differentiating \eqref{eq:variance-theta} reproduces \eqref{eq:sODEp0}.  Conversely, \autoref{thm:translation} reduces every solution with deterministic centered conditional law to this centered linear equation, so the representation is necessary.  If $s_0>0$, uniqueness of the positive scalar ODE forces the variance path, and hence the centered conditional law, to be deterministic. 
\end{proof}

The scalar variance equation is explicit.  If $c>0$, $s_0>0$, and $q\ne1/2$, the change of variables $z=s^{1-2q}$ gives
\begin{equation}\label{eq:s-explicit-qneq}
 s_t=
 \left[
 \frac c2+\left(s_0^{1-2q}-\frac c2\right)e^{-2(1-2q)t}
 \right]^{1/(1-2q)}
\end{equation}
for as long as the bracket is positive, whereas for $q=1/2$,
\begin{equation}\label{eq:s-explicit-half}
 s_t=s_0e^{(c-2)t}.
\end{equation}
If $c=0$, then $s_t=s_0e^{-2t}$ for every $q\ge0$.

\begin{theorem}\label{thm:p0-phase}
Assume $\mu_0\in\cP_2(\R^n)$ and $s_0>0$.  If $c>0$, the following trichotomy holds.
\begin{enumerate}[label=\textnormal{(\roman*)}]
\item \textnormal{(Subcritical noise.)}
If $0\le q<1/2$, then $s_t\to s_{\star}$ as $t\to \infty$, and the centered law converges in $\cW_2$,
\begin{equation*}
 \nu_t\rightarrow \cN\left(0,\frac{s_{\star}}{n}\Id_n\right) \quad \text{ as } t\to\infty.
\end{equation*}
For $q=0$, $m_t=m_0+\kappa\rho B_t^0$.  For $0<q<1/2$,
\begin{equation*}
 \frac1t\langle m\rangle_t
 \rightarrow \kappa^2\rho^2s_{\star}^{2q}\Id_n.
\end{equation*}
Hence, for $\rho<1$, there is no consensus: the centered cloud converges to a nondegenerate Gaussian profile, with fixed center for $\rho=0$ and diffusive center for $\rho>0$.

\item \textnormal{(Critical noise.)}
Let $q=1/2$. If $c<2$, then $s_t\to0$ exponentially, $\nu_t\to\delta_0$ in $\cW_2$, and
\begin{equation*}
 m_t\rightarrow m_\infty
 :=m_0+\kappa\rho\int_0^\infty\sqrt{s_0}\,e^{-(2-c)u/2}\,\dd B_u^0 \quad \text{ as } t\to\infty
\end{equation*}
almost surely and in $L^2$. The limit $m_\infty$ is Gaussian with mean $m_0$ and covariance
\begin{equation*}
 \Cov(m_\infty)=\frac{\kappa^2\rho^2s_0}{2-c}\Id_n.
\end{equation*}
Hence $\mu_t\to\delta_{m_\infty}$ almost surely in $\cW_2$.

If $c=2$, then $s_t\equiv s_0$,
\begin{equation*}
 \nu_t\rightarrow\cN\left(0,\frac{s_0}{n}\Id_n\right),
\end{equation*}
and $m_t$ is Brownian with covariance matrix $\kappa^2\rho^2s_0\Id_n$.

If $c>2$, then $s_t=s_0e^{(c-2)t}\to\infty$ as $t\to\infty$.  The solution is 
global for every finite time.

\item \textnormal{(Supercritical noise.)}
Let $q>1/2$. 
If $0<s_0<s_{\star}$, then $s_t\to 0$, $\nu_t\to\delta_0$, and $m_t$ converges almost surely and in $L^2$ to a finite random vector $m_\infty$.  Consequently $\mu_t\to\delta_{m_\infty}$ almost surely in $\cW_2$.

If $s_0=s_{\star}$, then $s_t\equiv s_{\star}$,
\begin{equation*}
 \nu_t\rightarrow\cN\left(0,\frac{s_{\star}}{n}\Id_n\right),
\end{equation*}
and the center is Brownian with covariance matrix $\kappa^2\rho^2s_{\star}^{2q}\Id_n$.

If $s_0>s_{\star}$, then the variance blows up at the finite deterministic time
\begin{equation}\label{eq:Tstar}
 T_{\star}=
 \frac1{2(2q-1)}
 \log\Big(
 \frac{c/2}{c/2-s_0^{1-2q}}
 \Big),\quad \text{ and }
\end{equation}

\begin{equation}\label{eq:blowup-asymptotic}
 s_t\sim\bigl(c(2q-1)(T_{\star}-t)\bigr)^{-1/(2q-1)}
 \qquad (\text{as } t\to T_{\star}).
\end{equation}
\end{enumerate}
If $c=0$, then $s_t=s_0e^{-2t}$ for every $q\ge0$. Hence the centered cloud reaches consensus exponentially quickly, and the center dynamics are those of \autoref{thm:rho1}.
\end{theorem}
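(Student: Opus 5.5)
The plan is to work entirely from the explicit representation in \autoref{thm:p0-explicit}, namely $\mu_t=(\tau_{m_t})\push[(\mathsf{Dil}_{e^{-t}})\push\nu_0*\cN(0,\theta_t\Id_n)]$, combined with the closed scalar formulas \eqref{eq:s-explicit-qneq}--\eqref{eq:s-explicit-half}. Each regime then splits into three pieces: (a) the long-time behavior of $s_t$, read off from the explicit formula; (b) the centered law, handled through $\theta_t$ and \eqref{eq:variance-theta}; (c) the center $m_t=m_0+\kappa\rho\int_0^t s_u^q\dd B_u^0$, a Gaussian martingale with deterministic bracket $\kappa^2\rho^2\int_0^t s_u^{2q}\dd u\,\Id_n$.

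For the centered law I would use one $\cW_2$ device throughout. Couple $Y_t=e^{-t}Y_0+G_t$, where $G_t\sim\cN(0,\theta_t\Id_n)$ is independent of $Y_0$, with $G'\sim\cN(0,\theta_\infty\Id_n)$ by taking $G'=\sqrt{\theta_\infty/\theta_t}\,G_t$. This gives
\[
\cW_2(\nu_t,\cN(0,\theta_\infty\Id_n))^2\le e^{-2t}s_0+n(\sqrt{\theta_t}-\sqrt{\theta_\infty})^2 .
\]
Since $n\theta_t=s_t-e^{-2t}s_0$, it suffices to know $\lim s_t=:s_\infty$, and then the limit is $\cN(0,(s_\infty/n)\Id_n)$. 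When $s_\infty=0$ the same bound gives $\nu_t\to\delta_0$.

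Now the cases.
\begin{itemize}
\item For $0\le q<1/2$, the bracket in \eqref{eq:s-explicit-qneq} stays positive (it interpolates between $s_0^{1-2q}>0$ and $c/2$) and converges to $c/2$, so $s_t\to s_\star$. The center bracket divided by $t$ converges to $\kappa^2\rho^2s_\star^{2q}$ by Ces\`aro averaging, and $q=0$ is immediate.
\item For $q=1/2$, the three subcases follow from $s_t=s_0e^{(c-2)t}$. When $c<2$, the bracket $\int_0^\infty s_u\dd u=s_0/(2-c)$ is finite, so $m_t$ is an $L^2$-bounded martingale. It converges a.s. and in $L^2$ to the stated Gaussian, with covariance $\kappa^2\rho^2 s_0/(2-c)$. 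Then $\cW_2(\mu_t,\delta_{m_\infty})^2\le |m_t-m_\infty|^2+s_t\to0$ almost surely.
\item For $q>1/2$ and $0<s_0<s_\star$, the bracket exceeds $c/2$ and grows like $e^{2(2q-1)t}$, so $s_t\approx (2/c)^{\ldots}$-scaled $e^{-2t}$ decay. This makes $\int s^{2q}<\infty$, and the same martingale argument applies. The case $s_0=s_\star$ is the fixed point.
\item For $s_0>s_\star$, the bracket in \eqref{eq:s-explicit-qneq} vanishes at $T_\star$, which solves $c/2=(c/2-s_0^{1-2q})e^{-2(1-2q)T_\star}$; this is exactly \eqref{eq:Tstar}.
\end{itemize}

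The step I expect to need care is the blow-up asymptotic \eqref{eq:blowup-asymptotic}. Write $z_t=s_t^{1-2q}$, which satisfies $z'=2(1-2q)(c/2-z)$ and vanishes linearly at $T_\star$ with slope $z'(T_\star)=(1-2q)c=-(2q-1)c$. Hence $z_t\sim c(2q-1)(T_\star-t)$, and raising to the power $1/(1-2q)=-1/(2q-1)$ gives the claim.

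It also has to be checked that in the subcases the positive solution is the unique one covered by \autoref{thm:p0-explicit}, which holds because $s_0>0$. Finally, for $c=0$ we have $\theta\equiv0$ and $s_t=s_0e^{-2t}$, and the center statements are quoted from \autoref{thm:rho1}: in the $p=0$ row there, with $\rho$ general, the bracket is finite iff $q>0$.
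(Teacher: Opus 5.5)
Your proposal is correct and is essentially the paper's own argument: read the variance off \eqref{eq:s-explicit-qneq}--\eqref{eq:s-explicit-half}, identify the centered limit from the Gaussian representation via $n\theta_t=s_t-e^{-2t}s_0$, treat the center through its deterministic bracket, and obtain $T_\star$ and \eqref{eq:blowup-asymptotic} from the first zero of $s_t^{1-2q}$ and a first-order expansion there; your explicit $\cW_2$ coupling bound simply spells out a step the paper leaves implicit. There are two harmless slips: for $q>1/2$ and $s_0<s_\star$, the prefactor of $e^{-2t}$ is $(s_0^{1-2q}-c/2)^{-1/(2q-1)}$ rather than a power of $2/c$ (integrability of $s^{2q}$ is unaffected), and $c=0$ forces $\rho=1$ because $\kappa>0$, which is exactly the setting of \autoref{thm:rho1}.
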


\begin{proof}
The variance assertions follow directly from \eqref{eq:s-explicit-qneq}--\eqref{eq:s-explicit-half}.  Formula \eqref{eq:variance-theta} then determines the Gaussian covariance in \eqref{eq:p0-law}: when $s_t$ converges to a positive limit $s_\infty$, one has $\theta_t\to s_\infty/n$, while $s_t\to0$ forces the centered law to collapse because the contracted initial part also vanishes.  The center has quadratic variation
\[
 \langle m\rangle_t=\kappa^2\rho^2\int_0^t s_u^{2q}\,\dd u\,\Id_n,
\]
which gives the stated convergence or diffusive behavior.  For $q>1/2$, the bracket in \eqref{eq:s-explicit-qneq} stays positive exactly for $s_0\le s_{\star}$. When $s_0>s_{\star}$ its first zero is \eqref{eq:Tstar}, and a first-order expansion there gives \eqref{eq:blowup-asymptotic}.  The case $c=0$ is immediate.
\end{proof}

\subsection{Nonuniqueness at consensus in the sub-Lipschitz regime}

The Hölder singularity at zero variance is not a technical artifact.
The singular consensus state supports a genuine nonuniqueness mechanism.

\begin{theorem}\label{thm:nonunique}
Assume $p=0$, $0<q<1/2$, $\rho<1$, and $\mu_0=\delta_{x_0}$.  Then the McKean--Vlasov equation has infinitely many solutions in law.

More precisely, for every waiting time $\tau\in[0,\infty]$, define
\begin{equation}\label{eq:waiting-s}
 s^{(\tau)}_t=
 \begin{cases}
 0,&0\le t\le\tau,\\[1mm]
 \left[\dfrac c2\left(1-e^{-2(1-2q)(t-\tau)}\right)\right]^{1/(1-2q)},&t>\tau,
 \end{cases}
\end{equation}
with the convention that $s^{(\infty)}\equiv0$.  Each $s^{(\tau)}$ solves \eqref{eq:sODEp0} with $s_0=0$, and \autoref{thm:p0-explicit} constructs a distinct McKean--Vlasov solution.  The choice $\tau=\infty$ is the absorbing consensus solution.
\end{theorem}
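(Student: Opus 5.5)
The argument has three steps: check that each $s^{(\tau)}$ is an admissible variance path, build the corresponding solution from \autoref{thm:p0-explicit}, and show that different $\tau$ give different laws.

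\emph{Step 1: each $s^{(\tau)}$ solves the ODE.} On $[0,\tau]$ the path is identically zero, and $-2\cdot 0+c\cdot 0^{2q}=0$ because $q>0$. For $t>\tau$ the formula \eqref{eq:waiting-s} is the explicit solution \eqref{eq:s-explicit-qneq} with $s_0=0$, shifted in time. Indeed, putting $z=s^{1-2q}$ turns \eqref{eq:sODEp0} into the linear equation $z'=-2(1-2q)z+(1-2q)c$, and its solution from $z(\tau)=0$ is $z_t=\frac c2\bigl(1-e^{-2(1-2q)(t-\tau)}\bigr)$.

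The only delicate point is the junction at $t=\tau$. We need $s^{(\tau)}$ to be $C^1$ there, or at least absolutely continuous with the ODE holding a.e. Near $\tau^+$,
\[
s_t\sim\bigl(c(1-2q)(t-\tau)\bigr)^{1/(1-2q)}.
\]
Since $1/(1-2q)>1$, this vanishes faster than linearly, so $s'(\tau^+)=0$. This matches both the left derivative and the right-hand side $-2s+cs^{2q}=0$ at $s=0$. Hence $s^{(\tau)}$ is a $C^1$ solution on $[0,\infty)$. For $\tau=\infty$ we take the zero solution.

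\emph{Step 2: the solutions.} Plug each $s^{(\tau)}$ into \eqref{eq:p0-m}--\eqref{eq:p0-Y} with $Y_0=0$, $m_0=x_0$. By \autoref{thm:p0-explicit}, $X=m+Y$ solves the McKean--Vlasov equation provided the variance of the conditional law equals the prescribed $s^{(\tau)}$. To check this self-consistency, note that \eqref{eq:variance-theta} with $s_0=0$ gives
\[
\operatorname{Var}(Y_t)=n\theta_t=nc/n\int_0^t e^{-2(t-u)}\bigl(s^{(\tau)}_u\bigr)^{2q}\,\dd u,
\]
since $\kappa^2(1-\rho^2)=c/n$. Call this quantity $\tilde s_t$. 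It satisfies $\tilde s'=-2\tilde s+c(s^{(\tau)})^{2q}$ with $\tilde s_0=0$. On the other hand, $s^{(\tau)}$ satisfies the same linear inhomogeneous ODE, with the same forcing term and the same initial value. Uniqueness for this linear ODE gives $\tilde s=s^{(\tau)}$.

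Consequently the centered conditional law is $\nu_t=\cN(0,\theta_t\Id_n)$, which is deterministic, and the drift $-Y_t$ is the correct centered interaction term. The paper's convention $0^{2q}=0$ makes the stochastic integral vanish on $[0,\tau]$, so the solution stays at consensus there.

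\emph{Step 3: the solutions are distinct.} For $\tau\neq\tau'$, say $\tau<\tau'$, the variance paths differ at any $t\in(\tau,\tau']$: there $s^{(\tau)}_t>0$ while $s^{(\tau')}_t=0$. The variance is a functional of the law of $X_t-\E[X_t\mid\cF^0_t]$, so the laws of the solutions differ. This gives a continuum of solutions in law indexed by $\tau\in[0,\infty]$, with $\tau=\infty$ the absorbing one.

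The main obstacle is the regularity at the junction in Step 1, together with the self-consistency in Step 2. Both reduce to linear ODE uniqueness once the $C^1$ gluing is established, and the gluing holds precisely because $q<1/2$. Integrability of $(s^{(\tau)})^{2q}$, needed for the Itô integrals, is immediate because the path is bounded by $s_\star$.
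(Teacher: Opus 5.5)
Your proposal is correct and follows the paper's proof. The paper likewise identifies the post-$\tau$ branch as the ODE solution started from zero, uses $1/(1-2q)>1$ to get a vanishing right derivative and hence a $C^1$ gluing at $t=\tau$, invokes \autoref{thm:p0-explicit} for each variance path, and concludes distinctness from distinct variances. Your Step~2 linear-ODE uniqueness check, showing that $n\theta_t$ reproduces $s^{(\tau)}$, makes explicit a self-consistency step that the paper leaves inside \autoref{thm:p0-explicit}.
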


\begin{proof}
For $t>\tau$, formula \eqref{eq:waiting-s} is exactly the solution of \eqref{eq:sODEp0} obtained by starting from zero at time $\tau$.  Since $1/(1-2q)>1$, its right derivative at $t=\tau$ is zero, so the function is continuously differentiable there and satisfies the ODE on all of $[0,\infty)$.  \autoref{thm:p0-explicit} then gives a solution for each variance path.  Distinct waiting times give distinct variances and therefore distinct laws. 
\end{proof}

The zero-variance boundary can therefore be classified completely in the linear case.

\begin{corollary}\label{cor:zero-variance-p0}
Assume $c>0$, and $s_0=0$.
\begin{enumerate}[label=\textnormal{(\roman*)}]
\item If $q=0$, the variance is uniquely determined and
\[
 s_t=\frac c2(1-e^{-2t}).
\]
The initially concentrated law immediately spreads and its centered profile converges to $\cN(0,(c/2n)\Id_n)$.
\item If $0<q<1/2$, there are infinitely many solutions, including the waiting-time family in \autoref{thm:nonunique}.
\item If $q\ge1/2$, the only finite-second-moment solution has $s_t\equiv0$.  Hence $X_t\equiv x_0$ when $\mu_0=\delta_{x_0}$.
\end{enumerate}
\end{corollary}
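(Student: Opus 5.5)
The proof is organized by the three ranges of $q$. Throughout, $p=0$, $c>0$ and $s_0=0$. Every solution, through \autoref{thm:translation} and the converse part of \autoref{thm:p0-explicit}, reduces to a nonnegative variance path solving \eqref{eq:sODEp0},
\[
s'=-2s+cs^{2q},\qquad s(0)=0,
\]
together with the Gaussian/Ornstein--Uhlenbeck representation \eqref{eq:p0-Y}--\eqref{eq:p0-law}. Since $\nu_0=\delta_0$ here, the centered law at time $t$ is exactly $\cN(0,\theta_t\Id_n)$ with $n\theta_t=s_t$. Each case therefore comes down to classifying the nonnegative solutions of the scalar ODE and reading off the law.

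For $q=0$, the right-hand side $-2s+c$ is globally Lipschitz. The ODE has the unique solution $s_t=\frac c2(1-e^{-2t})$, obtained by variation of constants or from \eqref{eq:variance-theta} with $\theta_t=\kappa^2(1-\rho^2)(1-e^{-2t})/2$. Well-posedness is \autoref{thm:MV-allq}\ref{item:mf-constant}, which gives uniqueness of the whole solution and not only of its variance. The centered law is then $\cN(0,\theta_t\Id_n)$ with $\theta_t\to c/(2n)$. Convergence in $\cW_2$ to $\cN(0,(c/2n)\Id_n)$ follows by coupling the Gaussians through a common standard normal, which gives $\cW_2^2\le n(\sqrt{\theta_t}-\sqrt{c/2n})^2\to0$.

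For $0<q<1/2$, the conclusion is exactly \autoref{thm:nonunique}. The waiting-time family $s^{(\tau)}$ gives pairwise distinct variance paths, and \autoref{thm:p0-explicit} turns each into a McKean--Vlasov solution. Nothing further is needed beyond checking that the definition $s_0=0$ matches $\mu_0=\delta_{x_0}$ up to translation. If $\mu_0$ is merely some law with $s_0=0$, it is a Dirac mass anyway.

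For $q\ge1/2$, the map $s\mapsto -2s+cs^{2q}$ is locally Lipschitz on $[0,\infty)$. Near $0$ it is linear for $q=1/2$ and $C^1$ with zero derivative of the nonlinear part for $q>1/2$. Any nonnegative solution with $s(0)=0$ therefore coincides with $s\equiv0$ by Gr\"onwall on a small interval where $s\le1$, and this extends globally because the zero solution never leaves the Lipschitz region. This is the one step needing care: the local Lipschitz property must be applied to an arbitrary finite-second-moment solution. Here I would invoke \autoref{prop:variance-MV}, which gives absolute continuity of $s$ and the identity \eqref{eq:sODEp0} almost everywhere; for $p=0$ its moment hypothesis is just the second moment. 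Once $s\equiv0$, the centered law has zero variance, so $\nu_t=\delta_0$. The center satisfies $\dd m_t=\kappa\rho s_t^q\dd B^0_t=0$, hence $m_t\equiv x_0$ and $X_t\equiv x_0$. This is consistent with \autoref{thm:MV-allq}\ref{item:mf-consensus}.
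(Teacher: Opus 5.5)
Your proposal is correct and follows the paper's proof essentially step for step. In each range you reduce to the scalar equation \eqref{eq:sODEp0} with $s(0)=0$, use Lipschitz uniqueness of that ODE for $q=0$ and $q\ge1/2$ (reading off the Gaussian profile and the vanishing center noise from \autoref{thm:p0-explicit} and \eqref{eq:p0-m}), and cite \autoref{thm:nonunique} for $0<q<1/2$; the extra details you supply, namely the explicit $\cW_2$ coupling of the Gaussians and the appeal to \autoref{prop:variance-MV} for an absolutely continuous variance satisfying the ODE along an arbitrary finite-second-moment solution, are sound and make explicit what the paper leaves implicit.
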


\begin{proof}
For $q=0$, the scalar equation is the linear ODE $s'=-2s+c$, which gives the displayed formula and uniqueness.  The Gaussian limit follows from \autoref{thm:p0-explicit}.  The second assertion is \autoref{thm:nonunique}.  If $q\ge1/2$, the vector field $f(s)=-2s+cs^{2q}$ is locally Lipschitz at zero, so the scalar initial value problem with $s_0=0$ has the unique solution $s\equiv0$.  By \eqref{eq:p0-m} 
the centered law remains as $\delta_0$. Since $q>0$ in this case, the center noise amplitude also vanishes. 
\end{proof}

\begin{remark}\label{rem:selection}
The nonuniqueness in \autoref{thm:nonunique} is sensitive to how consensus is approximated.  If every particle starts at the same deterministic point, then $S_0^N=0$ and the identically constant configuration is a finite-particle solution because both the interaction and the noise amplitude vanish.  The absorbing particle approximation therefore selects $s\equiv0$.

By contrast, positive-variance regularization selects the immediate-departure solution.  Let $s^{(\e)}$ be the unique positive solution of \eqref{eq:sODEp0} with $s_0^{(\e)}=\e>0$.  The explicit formula gives
\begin{equation*}
 s_t^{(\e)}
 =\left[\frac c2\bigl(1-e^{-2(1-2q)t}\bigr)
 +\e^{1-2q} e^{-2(1-2q)t}\right]^{1/(1-2q)}
 \rightarrow
 \left[\frac c2\bigl(1-e^{-2(1-2q)t}\bigr)\right]^{1/(1-2q)}
 =s_t^{(0)}
\end{equation*}
uniformly on every bounded time interval, where $s^{(0)}$ is the waiting-time solution \eqref{eq:waiting-s} with $\tau=0$.  Thus there is no approximation-independent selection principle at the singular boundary: exact consensus with absorbing microscopic dynamics selects $\tau=\infty$, whereas vanishing strictly positive variance selects $\tau=0$.
\end{remark}

\subsection{Finite-particle variance diffusion in the linear case}

Assume from now on in this subsection that $\rho<1$.  We retain the relative dimension
$m_N=n(N-1)$ and recall the notation 
\begin{equation*}
 c_N=n\kappa^2(1-\rho^2)\Bigl(1-\frac1N\Bigr)=\frac{m_N}{4}a_N^2,
 \qquad
 a_N=2\kappa\sqrt{\frac{1-\rho^2}{N}}.
\end{equation*}

At finite $N$, the empirical variance itself is an autonomous one-dimensional diffusion.

\begin{proposition}\label{prop:scalarSN}
Suppose $S_0^N>0$.  Up to the first hitting time of $0$ or explosion, there exists a one-dimensional Brownian motion $\beta$ such that
\begin{equation}\label{eq:scalarSN}
 \dd S_t^N
 =\left[-2S_t^N+c_N(S_t^N)^{2q}\right]\dd t
 +a_N(S_t^N)^{q+1/2}\,\dd\beta_t.
\end{equation}
\end{proposition}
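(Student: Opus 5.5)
The plan is to specialize \autoref{prop:variance-general} to $p=0$ and then represent the martingale part through a scalar Brownian motion, as in \eqref{eq:intro-beta}.

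\textbf{Step 1: close the drift.} For $p=0$, \autoref{lem:D-bounds} gives the identity $D_{0,N}(Y)=2S^N$. Substituting this into \eqref{eq:SNpower} removes the dependence on the shape, so the drift becomes $-2S_t^N+c_N(S_t^N)^{2q}$. This holds up to the lifetime of the solution; by \autoref{thm:particle-allq}, uniqueness holds up to the first hitting of $0$ or explosion.

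\textbf{Step 2: identify the Brownian motion.} Let $\sigma:=\inf\{t:S_t^N=0\}\wedge\tau_N^{\rm expl}$. On $[0,\sigma)$ the variance is positive, so I would define
\[
 \beta_t:=\int_0^{t}\frac{1}{\sqrt{NS_u^N}}\sum_{i=1}^N Y_u^i\cdot\dd B_u^i .
\]
Its quadratic variation is
\[
 \frac{1}{NS^N}\sum_i|Y^i|^2\,\dt=\dt,
\]
so by L\'evy's characterization $\beta$ is a Brownian motion up to $\sigma$.

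To get a Brownian motion on all of $[0,\infty)$, I would continue $\beta$ after $\sigma$ with an independent Brownian motion, or use the $\one_{\{S=0\}}\dd W$ device described after \eqref{eq:intro-beta}.

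\textbf{Step 3: rewrite the martingale term.} Comparing with \eqref{eq:MN},
\[
 \dd M_t^N=\frac{2\kappa\sqrt{1-\rho^2}}{N}(S_t^N)^q\sqrt{NS_t^N}\,\dd\beta_t=a_N(S_t^N)^{q+1/2}\,\dd\beta_t .
\]
This is consistent with the bracket \eqref{eq:bracket-power}.

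\textbf{Main obstacle.} The only delicate point is localization. The integrand $1/\sqrt{NS^N}$ is singular at consensus, so I would define the integrals on the stopping intervals $[0,\tau_\e\wedge\tau_R]$ and let $\e\to0$ and $R\to\infty$. This matches the statement, which holds only up to the first hitting of $0$ or explosion.
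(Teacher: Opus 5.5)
Your proposal is correct and matches the paper's proof: substitute $D_{0,N}=2S^N$ into \eqref{eq:SNpower}, then obtain $\beta$ on $\{S_t^N>0\}$ by normalizing $M^N$ and applying L\'evy's characterization; the normalized martingale is exactly your explicit integral \eqref{eq:intro-beta}. Your $\e$-localization is harmless but not needed, because $|Y^i|/\sqrt{NS^N}\le 1$ keeps the integrand bounded right up to the consensus time.
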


\begin{proof}
For $p=0$, \autoref{lem:D-bounds} gives $D_{0,N}=2S^N$.  \autoref{prop:variance-general} therefore gives the drift in \eqref{eq:scalarSN}, while \eqref{eq:bracket-power} gives
\[
 \dd\langle M^N\rangle_t=a_N^2(S_t^N)^{2q+1}\,\dd t.
\]
On the set $S_t^N>0$, 
L\'evy's characterization gives $\beta$, a Brownian motion, by normalizing $M^N$. 
\end{proof}

One-dimensional diffusion theory then gives the complete finite-particle classification for the linear interaction.

\begin{theorem}\label{thm:finiteN-phase}
Assume $S_0^N>0$, and $m_N\ge 1$.

\begin{enumerate}[label=\textnormal{(\roman*)}]
\item\label{item:finite-linear-subcritical} \textnormal{(Subcritical noise.)} Let $0\le q<1/2$.  If $m_N\ge2$, the variance diffusion is nonexplosive, remains strictly positive, and is positive recurrent.  If $m_N=1$ and $q=0$, it is positive recurrent but hits zero.  In these recurrent cases the invariant density is
\begin{equation}\label{eq:invariant-density}
 \pi_N(s)=\frac1{Z_N^{\rm var}}
 s^{m_N/2-2q-1}
 \exp\left[-\frac{N}{\kappa^2(1-\rho^2)(1-2q)}s^{1-2q}\right],
 \qquad s>0,
\end{equation}
where $Z_N^{\rm var}$ is the normalizing constant so that $\int_0^\infty \pi_N(s)\dd s = 1$.
Thus, starting from $\pi_N$,
\begin{equation}\label{eq:gamma-stationary}
 \lambda_N(S^N)^{1-2q}\sim\mathrm{Gamma}(k_N,1),
\end{equation}
where 
\begin{equation*}
  \lambda_N:=\frac{N}{\kappa^2(1-\rho^2)(1-2q)},
 \qquad
 k_N=\frac{m_N-4q}{2(1-2q)}.
\end{equation*}
If $m_N=1$ and $0<q<1/2$, the variance reaches zero in finite time almost surely.  The absorbing continuation is uniquely selected when $q\ge1/4$, while for $0<q<1/4$ the post-hitting continuation is nonunique. 

\item\label{item:finite-linear-critical} \textnormal{(Critical noise.)} Let  $q=1/2$. The variance is geometric Brownian motion:
\begin{equation}\label{eq:GBM}
 S_t^N=S_0^N\exp\left(\Lambda_Nt+a_N\beta_t\right),
\end{equation}
where
\begin{equation}\label{eq:LambdaN}
 \Lambda_N=-2+c_N-\frac{a_N^2}{2}
 =-2+\kappa^2(1-\rho^2)\left(n-\frac{n+2}{N}\right).
\end{equation}
Hence $S_t^N\to0$ almost surely if $\Lambda_N<0$, $S_t^N\to\infty$ almost surely if $\Lambda_N>0$, and if $\Lambda_N=0$,
\begin{equation}\label{eq:oscillatory}
 \liminf_{t\to\infty}S_t^N=0,
 \qquad
 \limsup_{t\to\infty}S_t^N=\infty
 \quad\text{almost surely}.
\end{equation}

\item\label{item:finite-linear-supercritical} \textnormal{(Supercritical noise.)} Let $q>1/2$. If $m_N\le2$, finite-time explosion has probability zero and $S_t^N\to0$ almost surely.  If $m_N>2$, the explosion probability is
\begin{equation}\label{eq:explosion-prob}
 \P_{S_0^N}(\tau_N^{\rm expl}<\infty)
 =Q\left(k_N^+, \lambda_N^+(S^N_0)^{1-2q}\right),
\end{equation}
where $Q(k,x):=\Gamma(k,x)/\Gamma(k)$ is the regularized upper incomplete gamma function and
\begin{equation*}
 k_N^+=\frac{m_N-2}{2(2q-1)},
 \qquad
  \lambda_N^+=\frac{N}{(2q-1)\kappa^2(1-\rho^2)}.
\end{equation*}
On the complementary nonexplosion event, $S_t^N\to0$ as $t\to\infty$.
\end{enumerate}
\end{theorem}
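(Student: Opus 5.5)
The proof reduces everything to the scalar diffusion \eqref{eq:scalarSN} of \autoref{prop:scalarSN} and applies Feller's scale/speed analysis. The drift is $b(s)=-2s+c_Ns^{2q}$ and the diffusion coefficient is $\sigma^2(s)=a_N^2s^{2q+1}$. Using $c_N=m_Na_N^2/4$ from \eqref{eq:cn-an-relation}, the logarithmic derivative of the scale density is
\[
-\frac{2b}{\sigma^2}=\frac{4}{a_N^2}s^{-2q}-\frac{m_N}{2s}.
\]
Note that $a_N^2=4\kappa^2(1-\rho^2)/N$, so $4/a_N^2=N/(\kappa^2(1-\rho^2))$. This gives, for $q\neq 1/2$,
\[
\mathfrak s'(s)=s^{-m_N/2}\exp\Big(\frac{4}{a_N^2(1-2q)}s^{1-2q}\Big),
\qquad
\mathfrak m(s)=\frac{2}{\sigma^2\mathfrak s'}\propto s^{m_N/2-2q-1}\exp\Big(-\frac{4}{a_N^2(1-2q)}s^{1-2q}\Big).
\]
The exponent constant is exactly $\lambda_N$, so the speed density is the unnormalized $\pi_N$ of \eqref{eq:invariant-density}. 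Throughout, the nonattainment and nonexplosion statements are already contained in \autoref{thm:particle-allq} (parts (ii), (iii), (iv)) and \autoref{thm:supercritical-dimension}. What remains is recurrence, invariant laws, the explosion probability, and the long-time limits.

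\textbf{Subcritical case $q<1/2$.} The exponential factor is bounded near $0$ and blows up at $\infty$, so $\mathfrak s(\infty)=\infty$. At $0$, $\mathfrak s'\sim s^{-m_N/2}$ is nonintegrable iff $m_N\ge 2$, so $\mathfrak s(0+)=-\infty$. Both ends are then nonattracting, which gives recurrence. Integrability of $\mathfrak m$ at $0$ holds because $m_N/2-2q>0$, and at $\infty$ it holds because of the stretched exponential. Hence the process is positive recurrent with invariant law $\pi_N$. Substituting $u=\lambda_N s^{1-2q}$ turns $\pi_N$ into a $\mathrm{Gamma}(k_N,1)$ density with $k_N=(m_N/2-2q)/(1-2q)$, which is \eqref{eq:gamma-stationary}.

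For $m_N=1$ and $q=0$, the point $0$ is regular: it is accessible, and $\mathfrak m$ is integrable there since $s^{-1/2}$ is integrable. With the non-sticky, reflecting continuation inherited from the SDE, the same invariant density applies; here one should check that the boundary behaviour matches the two-particle OU picture $Z=X^1-X^2$. The case $m_N=1$, $0<q<1/2$ is \autoref{cor:two-particle}.

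\textbf{Critical case $q=1/2$.} The equation is linear, $\dd S=\Lambda' S\,\dd t+a_NS\,\dd\beta$ with $\Lambda'=-2+c_N$. Itô's formula for $\log S$ gives \eqref{eq:GBM} with $\Lambda_N=-2+c_N-a_N^2/2$. Expanding $c_N-a_N^2/2=\kappa^2(1-\rho^2)\bigl(n-n/N-2/N\bigr)$ gives \eqref{eq:LambdaN}. The three limits then follow from $\beta_t/t\to0$ when $\Lambda_N\neq0$, and from the recurrence of $\beta$ (its $\limsup=+\infty$ and $\liminf=-\infty$) when $\Lambda_N=0$.

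\textbf{Supercritical case $q>1/2$.} The main obstacle is to compute the explosion probability exactly rather than by comparison, and to identify the behaviour on the complement. For $p=0$ all four constants in \autoref{thm:supercritical-dimension} coincide, with $\theta_t\equiv2$. That theorem therefore gives the exact hitting probability of $0$ for $Z=S^{1-2q}$, namely $Q\bigl(k_N^{\rm sup},\Xi_N(2;S_0^N)\bigr)$. Check that $\bar p=1$ gives $k_N^{\rm sup}=(m_N-2)/(2(2q-1))=k_N^+$. Also $\Xi_N(2;s)=\frac{4}{a_N^2(2q-1)}s^{1-2q}=\lambda_N^+s^{1-2q}$, which yields \eqref{eq:explosion-prob}; the case $m_N\le2$ with no explosion is also covered there.

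On the nonexplosion event, the scale function in the variable $s$ has $\mathfrak s(\infty)<\infty$ when $m_N>2$. At $0$ the exponential factor $\exp(-C s^{1-2q})$ kills the singularity, so $\mathfrak s(0+)$ is finite. Hence the diffusion converges a.s. to one of the endpoints. Since it is nonexplosive on this event and the speed measure near $0$ makes $0$ attracting but non-attained (the process is absorbing-type, with zero not hit because the exponential singularity makes it natural/entrance-inaccessible), one concludes $S_t\to0$.

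For $m_N\le2$, $\mathfrak s(\infty)=+\infty$ since $s^{-m_N/2}$ is nonintegrable at $\infty$, while $\mathfrak s(0+)>-\infty$. The standard dichotomy then gives $S_t\to0$ almost surely. The delicate point to verify is that $0$ is not reached in finite time (Feller's test at $0$ with the exponential weight), so that the limit is only asymptotic; consistency with the locally Lipschitz uniqueness of \autoref{thm:particle-allq}(iv) makes either outcome compatible with $S_t\to0$.
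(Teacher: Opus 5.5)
Your parts (i) and (ii) follow the paper's proof essentially verbatim. That covers the speed density from $2c_N/a_N^2=m_N/2$, the scale test for recurrence and (for $m_N\ge2$) positivity, the substitution $u=\lambda_N s^{1-2q}$, and It\^o's formula for $\log S^N$.

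Part (iii) takes a different route. The paper passes to the reciprocal radius $R=(\alpha a_N)^{-1}(S^N)^{-\alpha}$ with $\alpha=q-\tfrac12$. This solves the Bessel-type equation $\dd R=2\alpha R\,\dd t+\frac{\delta_N-1}{2R}\,\dd t+\dd\beta$ with $\delta_N=(4q-m_N)/(2q-1)$, so explosion of $S^N$ becomes the hitting of $0$ by $R$. The single scale density $r^{1-\delta_N}e^{-2\alpha r^2}$ then gives three things at once: the threshold $m_N>2$, the incomplete-gamma formula via $x=2\alpha u^2$, and $R_t\to\infty$ on $\{T_0=\infty\}$.

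You instead observe that $\theta_t\equiv2$ at $p=0$. Hence the upper and lower comparison processes in \autoref{thm:supercritical-dimension} both coincide with $Z=(S^N)^{1-2q}$, and the two-sided bound is an identity. Your checks $k_N^{\rm sup}=k_N^+$ and $\Xi_N(2;s)=\lambda_N^+s^{1-2q}$ are correct, and the paper itself records that the bound is exact for $p=0$. Your route is shorter because it reuses an earlier result. The paper's route is self-contained and shows the dimension threshold directly as a Bessel-index condition.

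For the long-time claim you switch to the scale function in the variable $s$. Your computations are right: $\mathfrak s(0+)>-\infty$ because of the factor $\exp\bigl(-\lambda_N^+s^{1-2q}\bigr)$, and $\mathfrak s(\infty)<\infty$ iff $m_N>2$. Your claim that $0$ is never attained for $q>1/2$ is also correct; for instance, the coefficients of $\log S^N$ stay bounded near $0$.

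One step is under-justified. For $m_N>2$, knowing that a path is nonexplosive does not by itself exclude $S_t\to\infty$ as $t\to\infty$. You need the boundary-classification fact that, when $\infty$ is accessible and $0$ is not, the event $\{\lim S_t=\infty\}$ coincides almost surely with finite-time exit (see \cite[Section~5.5]{KaratzasShreve1991}). The paper's sentence about $R_t\to\infty$ rests on the same fact, so this is a citation to add rather than a missing idea.

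A minor point: strict positivity for $q=0$, $m_N\ge2$ is not contained in part~\ref{item:particle-constant} of \autoref{thm:particle-allq}. Your own computation $\mathfrak s(0+)=-\infty$ supplies it, though.
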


\begin{proof}
If $m_N=1$ and $0<q<1/2$, part~\ref{item:finite-linear-subcritical} is exactly \autoref{cor:two-particle}, so for the recurrent calculation suppose either $m_N\ge2$ or $q=0$.  In the case $m_N=1$, $q=0$, the relative coordinate is a one-dimensional Ornstein--Uhlenbeck process. Its square is the unique instantaneously reflecting nonnegative solution of \eqref{eq:scalarSN}.  For \eqref{eq:scalarSN},
\[
 \mu(s)=-2s+c_Ns^{2q},\qquad \sigma(s)^2=a_N^2s^{2q+1},
 \qquad \frac{2c_N}{a_N^2}=\frac{m_N}{2}.
\]
When $q<1/2$, the standard speed-density formula
\[
 \pi(s)\ \propto \ \sigma(s)^{-2}
 \exp\left(\int^s\frac{2\mu(u)}{\sigma(u)^2}\,\dd u\right)
\]
gives \eqref{eq:invariant-density}. It is integrable at zero in the recurrent cases under consideration ($m_N>4q$) and at infinity because of the stretched exponential.   The scale--speed test gives positive recurrence and, when $m_N\ge2$, strict positivity. The case $m_N=1$, $q=0$ is instantaneously reflecting at zero.  The substitution $x=\lambda_N s^{1-2q}$ yields \eqref{eq:gamma-stationary}.

At $q=1/2$, \eqref{eq:scalarSN} is geometric Brownian motion, so It\^o's formula for $\log S^N$ gives \eqref{eq:GBM},\eqref{eq:LambdaN}. The sign of $\Lambda_N$ gives the transient alternatives, while Brownian recurrence gives \eqref{eq:oscillatory} at $\Lambda_N=0$.

Finally, for $q>1/2$ put $\alpha=q-1/2$ and $R=(\alpha a_N)^{-1}(S^N)^{-\alpha}$.  It\^o's formula with \eqref{eq:scalarSN} gives
\begin{equation*}
 \dd R_t=2\alpha R_t\,\dd t+\frac{\delta_N-1}{2R_t}\,\dd t+\dd \beta_t,
 \qquad \delta_N=\frac{4q-m_N}{2q-1}.
\end{equation*}
The explosion of $S^N$ corresponds to the hitting of zero by $R$, whose scale density is
\begin{equation*}
 \mathfrak s'(r)=r^{1-\delta_N}e^{-2\alpha r^2}.
\end{equation*}
Thus zero is accessible exactly when $m_N>2$.  In that case, writing $T_0=\inf\{t\geq 0:R_t=0\}$,
\begin{equation*}
 \P_r(T_0<\infty)
 =\frac{\int_r^\infty u^{1-\delta_N}e^{-2\alpha u^2}\,\dd u}
 {\int_0^\infty u^{1-\delta_N}e^{-2\alpha u^2}\,\dd u}.
\end{equation*}
The substitution $x=2\alpha u^2$, together with $2\alpha R_0^2= \lambda_N^+(S_0^N)^{1-2q}$, gives \eqref{eq:explosion-prob}.  If $T_0=\infty$, standard one-dimensional boundary classification gives $R_t\to\infty$, and hence $S_t^N\to0$, see \cite[Section~5.5]{KaratzasShreve1991}.
\end{proof}

The scalar classification immediately yields the corresponding consensus alternatives.

\begin{corollary}\label{cor:finiteN-alignment}
Under the assumptions of \autoref{thm:finiteN-phase}:
\begin{enumerate}[label=\textnormal{(\roman*)}]
\item\label{item:finite-align-subcritical} If $0\le q<1/2$ and $m_N\ge2$, consensus does not occur: the variance is positive recurrent and cannot converge to zero.  If $m_N=1$ and $q=0$, the nondegenerate one-dimensional Ornstein--Uhlenbeck relative coordinate does not converge to consensus. For $0<q<1/2$ the exceptional case is governed by \autoref{cor:two-particle}.
\item\label{item:finite-align-critical} If $q=1/2$, consensus occurs almost surely exactly when $\Lambda_N<0$.  In that case every particle converges to the same finite random point.
\item\label{item:finite-align-supercritical} If $q>1/2$, on the non-explosion event every particle converges to the same finite random point. The complementary event is finite-time explosion with probability given by \eqref{eq:explosion-prob} when $m_N>2$.
\end{enumerate}
\end{corollary}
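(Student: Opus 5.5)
The plan is to read off the three consensus alternatives from the variance classification in \autoref{thm:finiteN-phase} and to handle the center of mass separately through its quadratic variation. The center equation \eqref{eq:centerSDE} gives, coordinatewise,
\[
 \langle \barX^N\rangle_t=\kappa^2\Big(\rho^2+\frac{1-\rho^2}{N}\Big)\int_0^t (S_s^N)^{2q}\,\dd s .
\]
On any event where this is finite, $\barX^N_t$ converges almost surely to a finite limit, by Dambis--Dubins--Schwarz or by martingale convergence on the event of bounded bracket. Once $\barX^N$ converges, the bound $\max_i|Y_t^i|^2\le NS_t^N$ transfers that limit to every particle whenever $S_t^N\to0$.

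For part \ref{item:finite-align-subcritical}, take $m_N\ge2$. Positive recurrence of $S^N$ with invariant density $\pi_N$ supported on $(0,\infty)$ gives, by the ratio ergodic theorem, that $S^N$ spends a positive fraction of time in $[1,2]$. Hence $S^N$ cannot converge to zero. The case $m_N=1$, $q=0$ is the nondegenerate Ornstein--Uhlenbeck relative coordinate, which is recurrent and so does not converge to $0$. The case $m_N=1$, $0<q<1/2$ is handled by \autoref{cor:two-particle}.

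For part \ref{item:finite-align-critical}, use the representation \eqref{eq:GBM}. If $\Lambda_N<0$, then the strong law $\beta_t/t\to0$ gives $S_t^N\le S_0^N e^{\Lambda_N t/2}$ for all large $t$. Since $2q=1$, the integral $\int_0^\infty S_s^N\,\dd s$ is then finite almost surely, so the center converges and consensus holds at a finite random point. Conversely, if $\Lambda_N>0$ then $S^N\to\infty$, and if $\Lambda_N=0$ then \eqref{eq:oscillatory} holds; in either case there is no consensus. This gives the ``exactly when'' statement.

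Part \ref{item:finite-align-supercritical} is the main step, because we need a convergence rate for $S^N$ and not just $S^N\to0$. On the nonexplosion event we know $S^N\to0$ from \autoref{thm:finiteN-phase}, and we must show $\int_0^\infty (S_s^N)^{2q}\,\dd s<\infty$. I would use the transformed process $R=(\alpha a_N)^{-1}(S^N)^{-\alpha}$ with $\alpha=q-1/2$. Its drift is $2\alpha R+(\delta_N-1)/(2R)$ and its noise is unit Brownian, so on $\{R\to\infty\}$ the linear drift dominates. Writing $\dd(e^{-2\alpha t}R_t)=e^{-2\alpha t}\big(\tfrac{\delta_N-1}{2R_t}\,\dd t+\dd\beta_t\big)$, the right side converges almost surely once $R$ is large. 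This yields $R_t\ge c\,e^{2\alpha t}$ eventually, and therefore $S_t^N\le C e^{-2t}$ eventually. It follows that $(S^N)^{2q}$ is integrable, so the center converges and all particles converge to the same finite random point. The complementary event has the probability \eqref{eq:explosion-prob} when $m_N>2$, and probability zero when $m_N\le2$.
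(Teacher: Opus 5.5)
Parts (i) and (ii) match the paper's argument: recurrence of the variance for (i), and for (ii) the explicit geometric Brownian motion \eqref{eq:GBM} together with finite quadratic variation of the center.

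Part (iii) takes a different route from the paper, and it has a gap at the decisive step. Your variation-of-constants identity does show that $L:=\lim_{t\to\infty}e^{-2\alpha t}R_t$ exists almost surely on the nonexplosion event. There $R$ is continuous, positive and tends to infinity, so $\inf_t R_t>0$, the $\dd t$-integral converges absolutely, and $\int_0^\infty e^{-2\alpha u}\,\dd\beta_u$ is an $L^2$-bounded martingale limit. But convergence of $e^{-2\alpha t}R_t$ alone only gives $R_t=O(e^{2\alpha t})$. That is an upper bound on $R$, hence a \emph{lower} bound on $S^N$, which is the wrong direction. The claim $R_t\ge c\,e^{2\alpha t}$ eventually requires $L>0$ almost surely on $\{T_0=\infty\}$, and you give no argument for this. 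Without the singular drift, $L$ would be a nondegenerate Gaussian and hence nonzero almost surely. Here, however, $L$ also contains $\int_0^\infty e^{-2\alpha u}\frac{\delta_N-1}{2R_u}\,\dd u$, which depends on the whole path, so that shortcut is unavailable.

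You can close the gap within your framework. For every $t_0$,
$e^{2\alpha t_0}L=R_{t_0}+\int_{t_0}^\infty e^{-2\alpha(u-t_0)}\frac{\delta_N-1}{2R_u}\,\dd u+G_{t_0}$, where $G_{t_0}:=\int_{t_0}^\infty e^{-2\alpha(u-t_0)}\,\dd\beta_u\sim\cN(0,1/(4\alpha))$.
On $\{L=0,\ R_t\to\infty\}$ the first term tends to infinity and the second tends to zero, which forces $G_{t_0}\to-\infty$. That event has probability at most $\sup_T\P(G_T<-M)=\Phi(-2M\sqrt{\alpha})$ for every $M>0$, hence probability zero. With this added, your argument gives the sharper asymptotics that $e^{2t}S_t^N$ converges to a finite positive random limit.

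The paper avoids the positivity question entirely. It applies It\^o's formula to $\log S_t^N$. Since $S^N\to0$, the correction $\int_0^t(S_u^N)^{2q-1}\,\dd u$ is $o(t)$ by Ces\`aro averaging. The martingale term is also $o(t)$: it converges if its bracket stays finite, and is $o(\langle M\rangle_t)=o(t)$ otherwise. This gives $t^{-1}\log S_t^N\to-2$, which already implies $\int_0^\infty(S_t^N)^{2q}\,\dd t<\infty$. Equivalently, in your variable, $\dd\log R_t$ has drift $2\alpha+\frac{\delta_N-2}{2R_t^2}\to2\alpha$ and martingale bracket $\int_0^tR_u^{-2}\,\dd u=o(t)$, so $t^{-1}\log R_t\to2\alpha$. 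That is the most economical repair if you do not need the sharper limit.
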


\begin{proof}
Part~\ref{item:finite-align-subcritical} is immediate.  For $q>1/2$, \autoref{thm:finiteN-phase} gives $S_t^N\to0$ on the nonexplosion event. It\^o's formula reads
\[
 \log S_t^N=\log S_0^N-2t+
 \Bigl(c_N-\frac12a_N^2\Bigr)\int_0^t(S_u^N)^{2q-1}\dd u+M_t,
 \qquad
 \langle M\rangle_t=a_N^2\int_0^t(S_u^N)^{2q-1}\dd u.
\]
On this event, Ces\`aro averaging gives $\int_0^t(S_u^N)^{2q-1}\dd u=o(t)$. If the bracket stays finite, $M$ converges. Otherwise, the continuous-martingale strong law gives $M_t/\langle M\rangle_t\to0$. In either case $M_t/t\to0$, and therefore $t^{-1}\log S_t^N\to-2$.  Hence $\int_0^\infty(S_t^N)^{2q}\,\dd t<\infty$, the center has finite quadratic variation and converges, and $\max_i|Y_t^i|^2\le NS_t^N\to0$.  For $q=1/2$ the same conclusion follows from the explicit rate $\Lambda_N<0$ in \eqref{eq:GBM}.
\end{proof}

At the mean-field critical point, the finite-particle and long-time limits fail to commute.

\begin{corollary}\label{cor:noncommuting-limits}
Assume $q=1/2$, and $n\kappa^2(1-\rho^2)=2$.
Then, for every fixed $N\ge2$, on the event $\{S_0^N>0\}$,
\begin{equation}\label{eq:finite-critical-decay-rate}
 \lim_{t\to\infty}\frac1t\log S_t^N
 =-\frac{2(n+2)}{nN}<0
 \qquad\text{almost surely}.
\end{equation}

On the event $\{S_0^N=0\}$, one has $S_t^N=0$ for all $t\ge0$. Consequently, for every fixed $N\ge2$,
\[
 S_t^N\rightarrow0
 \qquad\text{almost surely as }t\to\infty.
\]
Suppose, in addition, that the initial particles are i.i.d. with law
$\mu_0\in\cP_4(\R^n)$ and variance $s_0>0$.
Then the mean-field variance satisfies $s_t\equiv s_0$, and for every
fixed $t\ge0$,
\[
 S_t^N\xrightarrow[]{\P}s_t=s_0 \qquad \text{ as } N\to\infty.
\]
Thus the large-population and long-time limits do not commute, that is,
\[
 \lim_{N\to\infty}\Bigl(\lim_{t\to\infty}S_t^N\Bigr)=0
 \quad\text{almost surely},
 \qquad
 \lim_{t\to\infty}s_t=s_0>0.
\]
\end{corollary}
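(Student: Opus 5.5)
The proof splits into three parts: the finite-$N$ decay rate, the large-$N$ limit at fixed time, and the comparison of the two iterated limits.

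\textbf{Fixed $N$.} Here $p=0$, $q=1/2$, and $c=n\kappa^2(1-\rho^2)=2$, so $\rho<1$ and $\kappa^2(1-\rho^2)=2/n$. On $\{S_0^N>0\}$, part~\ref{item:finite-linear-critical} of \autoref{thm:finiteN-phase} gives the geometric Brownian motion representation \eqref{eq:GBM}. Substituting into \eqref{eq:LambdaN},
\[
 \Lambda_N=-2+\frac2n\Big(n-\frac{n+2}{N}\Big)=-\frac{2(n+2)}{nN}.
\]
By the strong law for Brownian motion, $\beta_t/t\to0$ almost surely, so $t^{-1}\log S_t^N\to\Lambda_N$, which is \eqref{eq:finite-critical-decay-rate}. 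This applies conditionally on the initial configuration, since the initial data are independent of the Brownian motions.

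On $\{S_0^N=0\}$, all particles start at one point. Both the interaction term and the noise amplitude $\kappa(S^N)^{1/2}$ vanish there, and the coefficients are locally Lipschitz because $q=1/2$ (part~\ref{item:particle-lipschitz} of \autoref{thm:particle-allq}). Pathwise uniqueness therefore forces the relative configuration to stay at consensus, so $S_t^N\equiv0$. Combining the two events gives $S_t^N\to0$ almost surely for every fixed $N$.

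\textbf{Fixed $t$, $N\to\infty$.} The mean-field ODE \eqref{eq:sODEp0} with $q=1/2$ and $c=2$ reads $s'=0$, so $s_t\equiv s_0$, consistent with \eqref{eq:s-explicit-half}. For the convergence in probability I would invoke \autoref{thm:poc}. Its hypotheses hold: case (ii) applies since $q\ge1/2$, and the mean-field solution is unique and global by \autoref{thm:MV-allq}(iv) with $2q=\bar p=1$ and $p=0$. The theorem gives $\sup_{u\le t}\cW_2(\mu_u^N,\mu_u)\to0$ in probability. The centered $L^2$ radius is $1$-Lipschitz in $\cW_2$, as shown in the proof of \autoref{lem:local-MV}, so $\sqrt{S_t^N}\to\sqrt{s_t}$ in probability and hence $S_t^N\to s_0$.

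Alternatively, there is a self-contained route. Take expectations in \eqref{eq:scalarSN}: the drift is $(-2+c_N)S^N=-\tfrac2N S^N$, so $\E S_t^N=e^{-2t/N}\E S_0^N\to s_0$. The quadratic variation is $a_N^2(S^N)^2$ with $a_N^2=O(1/N)$. Using the $\cP_4$ assumption to bound $\E (S_u^N)^2$ uniformly on $[0,t]$, the martingale part vanishes in $L^2$. Together with $S_0^N\to s_0$ by the law of large numbers, this again gives $S_t^N\to s_0$.

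\textbf{Noncommuting limits.} The first part shows that the inner limit $\lim_{t\to\infty}S_t^N$ equals $0$ almost surely for every $N$, so the outer limit in $N$ is $0$. On the other hand, $\lim_{t\to\infty}s_t=s_0>0$.

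The only delicate point is the boundary event $\{S_0^N=0\}$, where one must invoke pathwise uniqueness at consensus. Under i.i.d. data with $s_0>0$ and nondegenerate $\mu_0$ this event may have positive probability for small $N$, which is why it is handled separately. Everything else is a direct substitution into earlier results.
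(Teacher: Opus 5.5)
Your proposal is correct. The fixed-$N$ part is the paper's argument: compute $\Lambda_N=-2(n+2)/(nN)$ from the geometric Brownian motion representation, apply the Brownian strong law, and use absorption at consensus on $\{S_0^N=0\}$.

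You differ from the paper only in the fixed-$t$, $N\to\infty$ step. The paper stays with the explicit solution
$S_t^N=S_0^N\exp\bigl(-\tfrac{2(n+2)}{nN}t+\sqrt{8/(nN)}\,\beta_t^N\bigr)$. The law of large numbers gives $S_0^N\to s_0$. Since $\E|\sqrt{8/(nN)}\,\beta_t^N|^2=8t/(nN)\to0$, the exponential factor tends to $1$ in probability, and so does the product. This needs only a finite second moment of $\mu_0$ and no moment bounds on $S^N$.

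Your first route via \autoref{thm:poc} is valid. Case (ii) applies, uniqueness and globality come from \autoref{thm:MV-allq}(iv), and the centered radius is $1$-Lipschitz in $\cW_2$. It yields the stronger statement $\sup_{u\le t}\cW_2(\mu_u^N,\mu_u)\to0$, but it brings in the whole coupling machinery for a scalar fact that is immediate from the explicit formula.

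Your second route is the additive version of the paper's multiplicative argument. It genuinely uses the $\cP_4$ hypothesis to bound $\E(S_u^N)^2$, for instance through $\E(S_u^N)^2=\E(S_0^N)^2e^{(a_N^2-4/N)u}$. You should also state explicitly that the drift integral $\frac2N\int_0^tS_u^N\,\dd u$ vanishes in $L^1$. Convergence of $\E S_t^N$ alone does not give convergence in probability; it is the decomposition into $S_0^N$, a vanishing drift, and a vanishing martingale that does.
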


\begin{proof}
At $q=\frac12$, \eqref{eq:scalarSN} is geometric Brownian motion and, as $n\kappa^2(1-\rho^2)=2$, the coefficients give
\[
 \dd S_t^N=-\frac2N S_t^N\dt
       +\sqrt{\frac{8}{nN}}\,S_t^N\dd\beta_t^N,
\]
where $\beta^N$ is a standard one-dimensional Brownian motion. Thus the variance can be written
\begin{equation}\label{eq:sgbm}
 S_t^N=S_0^N\exp\Big(
 -\frac{2(n+2)}{nN}\,t
 +\sqrt{\frac{8}{nN}}\,\beta_t^N
 \Big),
\end{equation}
including the absorbing solution when $S_0^N=0$.
For fixed $N$, the Brownian strong law gives
$\beta_t^N/t\to0$ almost surely as $t\to\infty$.
Taking logarithms on $\{S_0^N>0\}$ and dividing by $t$ proves
\eqref{eq:finite-critical-decay-rate}. Together with absorption at zero,
this yields $S_t^N\to0$ almost surely for every fixed $N$.

The mean-field variance equation \eqref{eq:sODEp0} reduces to
\[
 s_t'=\bigl[-2+n\kappa^2(1-\rho^2)\bigr]s_t=0,
\]
so $s_t\equiv s_0$. For i.i.d. initial particles, the law of large
numbers applied to the empirical first and second moments gives
$S_0^N\xrightarrow[]{\mathbb P}s_0$ as $N\to\infty$.
Moreover, for every fixed $t\ge0$,
\[
 \E\Big[
 \Big|\sqrt{\frac{8}{nN}}\,\beta_t^N\Big|^2
 \Big]=\frac{8t}{nN}\rightarrow 0 \quad \text{ as } N \to \infty.
\]
Hence the exponential factor in \eqref{eq:sgbm} converges
to $1$ in probability, and thus
$S_t^N\xrightarrow[]{\mathbb P}s_0$ as $N\to\infty$.

On a common probability space, the almost-sure limits for all $N\ge2$
hold simultaneously by taking a countable intersection of
probability-one events. This proves the first iterated-limit statement.
The second follows from $s_t\equiv s_0>0$.
\end{proof}

\subsection{Large-population limits of the finite-particle phase transitions}

The exact Gamma representation gives both the law of large numbers and the central-limit fluctuations of the invariant variance.

\begin{theorem}\label{thm:stationary-concentration}
Assume $\rho<1$, and $0\le q<1/2$.  For each $N$ in the recurrent regime of \autoref{thm:finiteN-phase}, let $S^N$ have density $\pi_N$.  Then, as $N\to\infty$, one has $S^N\xrightarrow{\P}s_{\star},$
and
\begin{equation}\label{eq:stationary-clt}
 \sqrt N\,(S^N-s_{\star})\xrightarrow{\rm law}
 \cN\Big(0,\frac{2s_{\star}^2}{n(1-2q)}\Big).
\end{equation}
\end{theorem}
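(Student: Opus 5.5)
The plan is to work entirely with the exact Gamma representation \eqref{eq:gamma-stationary} from \autoref{thm:finiteN-phase}. If $S^N\sim\pi_N$, then $G_N:=\lambda_N(S^N)^{1-2q}\sim\mathrm{Gamma}(k_N,1)$, with
\[
 \lambda_N=\frac{N}{\kappa^2(1-\rho^2)(1-2q)},\qquad k_N=\frac{m_N-4q}{2(1-2q)}=\frac{n(N-1)-4q}{2(1-2q)}.
\]
Both $k_N$ and $\lambda_N$ grow linearly in $N$. We write $\sigma^2:=\kappa^2(1-\rho^2)=c/n$ and compute the ratio
\[
 \frac{k_N}{\lambda_N}=\frac{\sigma^2\,(n(N-1)-4q)}{2N}=\frac{c}{2}-\frac{\sigma^2(n+4q)}{2N}.
\]
So $k_N/\lambda_N=c/2+O(N^{-1})$, and $c/2=s_\star^{1-2q}$ by the definition of $s_\star$. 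Put $W_N:=(S^N)^{1-2q}=G_N/\lambda_N$.

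The first step is a CLT for the Gamma variable. Since $G_N$ is a sum of $\lfloor k_N\rfloor$ i.i.d.\ $\mathrm{Exp}(1)$ variables plus an independent $\mathrm{Gamma}(\{k_N\},1)$ remainder, which is bounded in $L^1$, we get $(G_N-k_N)/\sqrt{k_N}\Rightarrow\cN(0,1)$ as $k_N\to\infty$. One could equally argue through the characteristic function $(1-i\xi/\sqrt{k_N})^{-k_N}e^{-i\xi\sqrt{k_N}}\to e^{-\xi^2/2}$. We then convert this into a statement about $W_N$:
\[
 \sqrt N\,\bigl(W_N-k_N/\lambda_N\bigr)=\frac{\sqrt{N k_N}}{\lambda_N}\cdot\frac{G_N-k_N}{\sqrt{k_N}}.
\]
The prefactor satisfies $\sqrt{Nk_N}/\lambda_N\to\sqrt{n\sigma^2/2}\,\sigma(1-2q)/\sqrt{n}\cdot\sqrt{n}$. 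More cleanly, its square is
\[
 \frac{Nk_N}{\lambda_N^2}=\frac{k_N}{\lambda_N}\cdot\frac{\sigma^2(1-2q)}{1}\to\frac{c}{2}\,\sigma^2(1-2q).
\]
The centering error $\sqrt N\,(k_N/\lambda_N-c/2)=O(N^{-1/2})$ vanishes, so Slutsky's lemma gives
\[
 \sqrt N\,(W_N-s_\star^{1-2q})\Rightarrow\cN\Bigl(0,\tfrac{c}{2}\sigma^2(1-2q)\Bigr).
\]
The law of large numbers $W_N\to s_\star^{1-2q}$ in probability follows at once, either as a consequence of this CLT or from $\mathrm{Var}(W_N)=k_N/\lambda_N^2=O(N^{-1})$ and Chebyshev's inequality.

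The second step is the delta method. Apply $g(w)=w^{1/(1-2q)}$, which is $C^1$ near the point $w_\star=s_\star^{1-2q}>0$ (positive since $c>0$), with $g'(w_\star)=s_\star^{2q}/(1-2q)$. This yields $S^N\to s_\star$ in probability and
\[
 \sqrt N\,(S^N-s_\star)\Rightarrow\cN(0,v),\qquad v=\frac{s_\star^{4q}}{(1-2q)^2}\cdot\frac{c}{2}\sigma^2(1-2q)=\frac{s_\star^{4q}\,c\,\sigma^2}{2(1-2q)}.
\]
To match \eqref{eq:stationary-clt}, use $c/2=s_\star^{1-2q}$, so that $s_\star^{4q}\,c/2=s_\star^{1+2q}$. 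Then $v=s_\star^{1+2q}\sigma^2/(1-2q)$, and since $\sigma^2=c/n=2s_\star^{1-2q}/n$, this becomes $v=2s_\star^2/(n(1-2q))$, exactly the variance in \eqref{eq:stationary-clt}.

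None of these steps is a real obstacle, because the structure is already contained in the Gamma representation. The one thing to check is that the statement concerns the marginal law of $S^N$ for each $N$ separately, so that convergence in law along $N$ suffices and no coupling between different $N$ is needed. We also need the recurrent regime to hold for all large $N$; this is automatic, since $m_N\ge2$ once $N\ge3$. In the case $q=0$ the formulas apply with $1-2q=1$ and require no modification.
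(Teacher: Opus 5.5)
Your proposal is correct and follows the paper's proof essentially step for step. Both arguments use the Gamma representation $G_N=\lambda_N(S^N)^{1-2q}\sim\mathrm{Gamma}(k_N,1)$, the standardized Gamma CLT, and the splitting into $\frac{\sqrt{Nk_N}}{\lambda_N}\cdot\frac{G_N-k_N}{\sqrt{k_N}}$ plus the vanishing centering error $\sqrt N\,(k_N/\lambda_N-s_{\star}^{1-2q})$, followed by the delta method for $w\mapsto w^{1/(1-2q)}$. One slip: your first expression for the limit of $\sqrt{Nk_N}/\lambda_N$ has a factor $(1-2q)$ where $\sqrt{1-2q}$ belongs. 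This is harmless, because the squared computation $Nk_N/\lambda_N^2\to\frac{c}{2}\sigma^2(1-2q)$ that you actually use is correct.
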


\begin{proof}
By \eqref{eq:gamma-stationary}, set
\[
 G_N:=\lambda_N(S^N)^{1-2q}\sim\mathrm{Gamma}(k_N,1),
 \qquad
 \theta:=s_{\star}^{1-2q}=\frac{n\kappa^2(1-\rho^2)}2.
\]
Since a unit-rate Gamma variable with shape $k_N$ has both mean and variance equal to $k_N$,
\[
 \E\Big|\frac{G_N}{k_N}-1\Big|^2=\frac1{k_N}\rightarrow0,
 \qquad
 \frac{k_N}{\lambda_N}\rightarrow\theta.
\]
Hence $G_N/\lambda_N\to\theta$ in probability and therefore
$S^N=(G_N/\lambda_N)^{1/(1-2q)}\to s_{\star}$ in probability.

For the fluctuation statement, put
\[
 Z_N:=\frac{G_N-k_N}{\sqrt{k_N}}.
\]
For arbitrary positive $k_N\to\infty$, a standard argument gives 
$Z_N\Rightarrow\cN(0,1)$.  
Moreover,
\[
 \sqrt N\Big(\frac{G_N}{\lambda_N}-\theta\Big)
 =\frac{\sqrt{Nk_N}}{\lambda_N}Z_N
 +\sqrt N\Big(\frac{k_N}{\lambda_N}-\theta\Big),
\]
where
\[
 \frac{\sqrt{Nk_N}}{\lambda_N}\rightarrow
 \theta\sqrt{\frac{2(1-2q)}{n}},
 \qquad
 \sqrt N\Big(\frac{k_N}{\lambda_N}-\theta\Big)\rightarrow0.
\]
Applying the delta method to $h(x)=x^{1/(1-2q)}$ at $\theta$ gives
\[
 \sqrt N(S^N-s_{\star})
 \rightarrow
 \cN\Big(
 0,\,
 [h'(\theta)]^2\theta^2\frac{2(1-2q)}{n}
 \Big)
 =\cN\Big(0,\frac{2s_{\star}^2}{n(1-2q)}\Big),
\]
which is exactly \eqref{eq:stationary-clt}.
\end{proof}

The probability of the explosion event is dependent on the deterministic mean-field threshold.

\begin{theorem}\label{thm:explosion-limit}
Assume $\rho<1$, $q>1/2$, and let deterministic initial variances satisfy $S_0^N\to s_0>0$.  
If $s_0\ne s_{\star}$, then as $N\to\infty$
\begin{equation}\label{eq:explosion-limit}
 \P(\tau_N^{\rm expl}<\infty)\rightarrow
 \begin{cases}
 0,&s_0<s_{\star},\\
 1,&s_0>s_{\star}.
 \end{cases}
\end{equation}
If $S_0^N=s_{\star}$ for every $N$, then
\begin{equation*}
 \P(\tau_N^{\rm expl}<\infty)\rightarrow\frac12.
\end{equation*}
More generally, if
\begin{equation}\label{eq:critical-window-assumption}
 \sqrt N\left((S_0^N)^{1-2q}-\frac{n\kappa^2(1-\rho^2)}2\right)\rightarrow\zeta,
\end{equation}
then
\begin{equation}\label{eq:critical-window-limit}
 \P(\tau_N^{\rm expl}<\infty)\rightarrow
 1-\Phi\Big(
 \frac{\sqrt2\,\zeta}{\kappa^2(1-\rho^2)\sqrt{n(2q-1)}}
 \Big),
\end{equation}
where $\Phi$ is the standard normal distribution function.  Hence the finite-$N$ transition converges to the deterministic mean-field threshold, with a Gaussian critical window of width $N^{-1/2}$.
\end{theorem}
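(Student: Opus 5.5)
The plan is to work directly from the exact formula \eqref{eq:explosion-prob} in \autoref{thm:finiteN-phase}\ref{item:finite-linear-supercritical}. For $N$ large, $m_N=n(N-1)>2$, so
\[
 \P(\tau_N^{\rm expl}<\infty)=Q\bigl(k_N^+,x_N\bigr),\qquad x_N:=\lambda_N^+(S_0^N)^{1-2q}.
\]
We then read off the asymptotics using the Gamma-variable interpretation. Let $G_k\sim\mathrm{Gamma}(k,1)$. Then $Q(k,x)=\P(G_k>x)$. Since $(G_k-k)/\sqrt k\Rightarrow\cN(0,1)$ as $k\to\infty$ (the same standard fact used in \autoref{thm:stationary-concentration}) and the limit is continuous, Pólya's theorem gives uniform convergence of the distribution functions. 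Hence
\[
 Q(k_N^+,x_N)=1-\Phi\Big(\frac{x_N-k_N^+}{\sqrt{k_N^+}}\Big)+o(1).
\]
This reduces everything to computing the limit of $w_N:=(x_N-k_N^+)/\sqrt{k_N^+}$.

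Write $\sigma^2=\kappa^2(1-\rho^2)$ and $\theta=n\sigma^2/2=s_\star^{1-2q}$. Then
\[
 k_N^+=\frac{n(N-1)-2}{2(2q-1)}=\frac{N\theta}{\sigma^2(2q-1)}-\frac{n+2}{2(2q-1)},\qquad x_N=\frac{N}{(2q-1)\sigma^2}(S_0^N)^{1-2q}.
\]
It follows that
\[
 x_N-k_N^+=\frac{N}{(2q-1)\sigma^2}\Bigl((S_0^N)^{1-2q}-\theta\Bigr)+O(1),\qquad \sqrt{k_N^+}\sim\sqrt{\frac{Nn}{2(2q-1)}}.
\]
Therefore
\[
 w_N=\frac{\sqrt N\bigl((S_0^N)^{1-2q}-\theta\bigr)}{(2q-1)\sigma^2}\sqrt{\frac{2(2q-1)}{n}}+o(1).
\]

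We now split into cases. Under \eqref{eq:critical-window-assumption} the first term tends to
\[
 \frac{\sqrt2\,\zeta}{\sigma^2\sqrt{n(2q-1)}},
\]
which gives \eqref{eq:critical-window-limit}. When $S_0^N=s_\star$ for every $N$, we have $\zeta=0$ and the limit is $1-\Phi(0)=1/2$. When $S_0^N\to s_0\ne s_\star$, the map $s\mapsto s^{1-2q}$ is decreasing because $q>1/2$. If $s_0<s_\star$, then $(S_0^N)^{1-2q}-\theta\to s_0^{1-2q}-\theta>0$, so $w_N\to+\infty$ and the probability tends to $0$. If $s_0>s_\star$, then $w_N\to-\infty$ and the probability tends to $1$. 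These cases rely on the uniformity of the normal approximation; alternatively, the law of large numbers estimate $\operatorname{Var}(G_k/k)=1/k$ together with Chebyshev's inequality handles them directly, which proves \eqref{eq:explosion-limit}.

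The main point needing care is the uniform CLT for Gamma variables with non-integer shape $k_N^+\to\infty$. One can check it through the characteristic function $(1-it/\sqrt k)^{-k}e^{-it\sqrt k}\to e^{-t^2/2}$ and then apply Pólya's theorem. The second point is the bookkeeping of the $O(1)$ shift in $k_N^+$, which vanishes after dividing by $\sqrt N$.
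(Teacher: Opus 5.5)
Your proposal is correct and follows essentially the same route as the paper: both start from the exact incomplete-Gamma formula \eqref{eq:explosion-prob}, handle $s_0\ne s_\star$ by the Gamma law of large numbers (Chebyshev with $\operatorname{Var}(G_k/k)=1/k$), and obtain the critical window from the Gamma CLT applied to the standardized threshold $(x_N-k_N^+)/\sqrt{k_N^+}$, whose limit you compute correctly. Your use of P\'olya's theorem is a more explicit form of the paper's appeal to continuity of $\Phi$, and your remark that the $O(1)$ shift in $k_N^+$ vanishes after division by $\sqrt{k_N^+}$ is bookkeeping that the paper leaves implicit.
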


\begin{proof}
For all large $N$, \autoref{thm:finiteN-phase} gives $\P(\tau_N^{\rm expl}<\infty)=\P(G_N\ge x_N)$ with $G_N\sim\mathrm{Gamma}(k_N^+,1)$ and $x_N=\lambda_N^+(S_0^N)^{1-2q}$.  Since
\[
 \frac{x_N}{k_N^+}\rightarrow
 \frac{2s_0^{1-2q}}{n\kappa^2(1-\rho^2)},
\]
the law of large numbers from the previous proof gives \eqref{eq:explosion-limit}.  
Under \eqref{eq:critical-window-assumption},
\[
 \frac{x_N-k_N^+}{\sqrt{k_N^+}}
 \longrightarrow
 \frac{\sqrt2\,\zeta}{\kappa^2(1-\rho^2)\sqrt{n(2q-1)}}.
\]
By the CLT from the preceding proof and continuity of the normal distribution function, we have \eqref{eq:critical-window-limit}. $S_0^N=s_{\star}$ corresponds to $\zeta=0$.
\end{proof}

\begin{corollary}\label{cor:explosion-LDP}
Under the assumptions of \autoref{thm:explosion-limit}, suppose $s_0\ne s_{\star}$ and define
\[
 a_0:=\frac{2s_0^{1-2q}}{n\kappa^2(1-\rho^2)},\qquad I(a):=a-1-\log a.
\]
Then
\begin{align}
 -\frac1N\log\P(\tau_N^{\rm expl}<\infty)
 &\rightarrow \frac{n}{2(2q-1)}I(a_0),\qquad s_0<s_{\star},\label{eq:LDP-sub}\\
 -\frac1N\log\P(\tau_N^{\rm expl}=\infty)
 &\rightarrow \frac{n}{2(2q-1)}I(a_0),\qquad s_0>s_{\star}.\label{eq:LDP-super}
\end{align}
Hence the event on the wrong side of the limiting phase transition is exponentially unlikely in $N$.
\end{corollary}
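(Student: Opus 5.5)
We start from the exact representation in the proof of \autoref{thm:explosion-limit}: for all large $N$ (so that $m_N>2$),
\[
\P(\tau_N^{\rm expl}<\infty)=\P(G_N\ge x_N),
\qquad G_N\sim\mathrm{Gamma}(k_N^+,1),
\qquad x_N=\lambda_N^+(S_0^N)^{1-2q}.
\]
The two statements then become Cram\'er-type large deviation estimates for a single Gamma variable whose shape parameter grows linearly in $N$. We record the scalings. Since $k_N^+=\frac{n(N-1)-2}{2(2q-1)}$, we have $k_N^+/N\to \frac{n}{2(2q-1)}$. Write $a_N:=x_N/k_N^+$. From $S_0^N\to s_0$ we get
\[
a_N\to \frac{N/((2q-1)\kappa^2(1-\rho^2))\,s_0^{1-2q}}{nN/(2(2q-1))}=a_0 .
\]
Because $1-2q<0$, we have $a_0>1$ if and only if $s_0<s_\star$. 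Hence the event $\{G_N\ge x_N\}$ is a right-tail deviation when $s_0<s_\star$, and $\{G_N<x_N\}$ is a left-tail deviation when $s_0>s_\star$.

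The key step is the uniform Gamma tail asymptotic: for $k\to\infty$ and $a_k\to a>1$,
\[
-\tfrac1k\log\P(G\ge a_kk)\to I(a),
\]
and similarly for $a<1$ with $\P(G\le a_kk)$. The rate $I(a)=a-1-\log a$ is the Legendre transform of $\log\E e^{\theta(G_1-1)}$ for the Gamma$(1,1)$ law.

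\emph{Upper bound.} Use the Chernoff inequality. For $\theta<1$,
\[
\P(G\ge ak)\le e^{-\theta ak}(1-\theta)^{-k}.
\]
Optimizing at $\theta=1-1/a$ gives $\exp(-kI(a))$ exactly. The same computation with $\theta<0$ handles the left tail.

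\emph{Lower bound.} Integrate the density directly. For the right tail,
\[
\P(G\ge ak)\ge\frac1{\Gamma(k)}\int_{ak}^{ak+1}u^{k-1}e^{-u}\,\dd u
\ge\frac{(ak)^{k-1}e^{-ak-1}}{\Gamma(k)}.
\]
Stirling's formula, $\log\Gamma(k)=k\log k-k+O(\log k)$, then gives $-\frac1k\log$ of the right-hand side equal to $a-1-\log a+o(1)$. For the left tail we integrate over $[ak-1,ak]$ instead.

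Since $a_N\to a_0$ and $I$ is continuous on $(0,\infty)$, replacing $a$ by $a_N$ changes the exponent by $k_N^+|I(a_N)-I(a_0)|+o(k_N^+)$. Because both bounds are explicit in $a$, they hold uniformly for $a$ in a compact neighborhood of $a_0$ avoiding $1$, so this replacement costs only $o(N)$. Multiplying the resulting rate $I(a_0)$ by $\lim k_N^+/N=\frac{n}{2(2q-1)}$ gives \eqref{eq:LDP-sub}. Applying the same argument to the complementary event $\{G_N<x_N\}$ when $a_0<1$ gives \eqref{eq:LDP-super}.

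The main obstacle is only bookkeeping: checking that Stirling's error and the drift $a_N\to a_0$ contribute $o(N)$. Both are immediate from the explicit bounds.
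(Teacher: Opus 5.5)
Your proof is correct and follows the paper's skeleton, but it proves the matching lower bound by a different argument.

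\textbf{Shared steps.} You make the same reduction to $\P(G_N\ge x_N)$ with $G_N\sim\mathrm{Gamma}(k_N^+,1)$. You use the same scalings, $x_N/k_N^+\to a_0$ and $k_N^+/N\to n/[2(2q-1)]$. Your Chernoff upper bound, optimized at $\theta=1-1/a$, is also the paper's.

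\textbf{The paper's lower bound.} The paper uses exponential tilting. Under the tilted law, $G_N$ is Gamma with shape $k_N^+$ and rate $1/a$, so $G_N/k_N^+\to a$ in probability. Restricting the change-of-measure identity to the window $|G_N/k_N^+-a|<\e$ and letting $\e\to0$ gives the rate $I(a)$. The moving threshold is then handled by sandwiching $x_N/k_N^+$ between $a_0\pm\e$ and using continuity of $I$.

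\textbf{Your lower bound.} You integrate the explicit Gamma density over a unit interval next to the threshold and apply Stirling's formula. Both checks go through: $u^{k-1}$ is increasing once $k\ge1$, and $ak-1>0$ holds for large $N$.

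\textbf{Comparison.} Your route is more elementary. It gives explicit non-asymptotic bounds that are uniform for $a$ in compact subsets of $(1,\infty)$ for the right tail and of $(0,1)$ for the left tail. You can therefore insert the moving ratio $x_N/k_N^+$ directly and skip the $\e$-sandwich. The tilting argument is the general Cram\'er mechanism. It would still work if the law of $G_N$ were known only through its moment generating function rather than through an explicit density.

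\textbf{Notation.} In the paper $a_N$ already denotes the martingale coefficient $2\kappa\sqrt{(1-\rho^2)/N}$. Use a different symbol for $x_N/k_N^+$ to avoid the clash.
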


\begin{proof}
Recall that $k_N^+= \frac{m_N-2}{2(2q-1)}$ and $G_N\sim\mathrm{Gamma}(k_N^+,1)$. By \autoref{thm:finiteN-phase}, the explosion probability is $\P(G_N\ge x_N)$, where $x_N/k_N^+\to a_0$ and $k_N^+/N\to n/[2(2q-1)]$. For $\theta<1$,
\[
 \frac1{k_N^+}\log\E\exp\Big(k_N^+\theta\frac{G_N}{k_N^+}\Big)
 =-\log(1-\theta),
 \qquad
 \sup_{\theta<1}\{\theta a+\log(1-\theta)\}=a-1-\log a.
\]
The supremum is achieved at $\theta=1-1/a$. Chernoff's inequality therefore gives the upper tail bound for $a>1$ and the lower tail bound for $0<a<1$. For matching exponential lower bounds, tilt by this parameter: under the tilted law $G_N$ has Gamma shape $k_N^+$ and rate $1/a$, so $G_N/k_N^+\to a$ in probability. Restricting the change-of-measure identity to $|G_N/k_N^+-a|<\e$ and then sending $\e\to 0$ gives exponential rate $I(a)$. Approaching a tail threshold from its interior gives the same rate by continuity of $I$.

Finally, bound the moving threshold $x_N/k_N^+$ between $a_0-\e$ and $a_0+\e$. If $s_0<s_\star$, then $a_0>1$ and explosion is the upper-tail event. If $s_0>s_\star$, then $0<a_0<1$ and nonexplosion is the lower-tail event. The fixed-threshold bounds, followed by $\e\to 0$ and multiplication by $k_N^+/N$, prove \eqref{eq:LDP-sub}--\eqref{eq:LDP-super}.
\end{proof}

\subsection{Quantitative propagation of chaos in the linear case}

In the linear case, the empirical variance $S^N$ satisfies the autonomous one-dimensional diffusion equation \eqref{eq:scalarSN}, while the limiting variance $s$ solves the autonomous ODE \eqref{eq:sODEp0}.  Fix $T$ before the lifetime of $s$ and stop $S^N$ at its first exit from a compact interval that contains the deterministic path $\{s_t:0\le t\le T\}$ in its interior. We show the expected squared supremum of the variance error up to this exit is $O(N^{-1})$.  
This stopped estimate controls the difference of the noise amplitudes and yields the finite-horizon particle coupling.

\begin{theorem}\label{thm:p0-poc}
Assume  $\rho<1$, $q\ge0$, and let the initial particles be i.i.d. with law $\mu_0\in\cP_4(\R^n)$ and variance $s_0>0$.  When $0<q<1/2$, we use the absorbing solution on the exceptional event $S_0^N=0$.  Let $s$ be the unique positive solution of \eqref{eq:sODEp0}, and let $T_{\star}\in(0,\infty]$ be its maximal lifetime.  For every $T<T_{\star}$, choose constants $0<\underline s<\overline s<\infty$ with $\underline s\le s_t\le\overline s$ on $[0,T]$, and define
\begin{equation*}
 \tau_N:=\inf\left\{t\ge0:S_t^N\notin[\underline s/2,2\overline s]\right\}.
\end{equation*}
Then there exists $N_0=N_0(T,\underline s,\overline s,n,q,\kappa,\rho,\mu_0)$ such that, for every $N\ge N_0$,
\begin{equation}\label{eq:SNrate-p0}
 \E\sup_{t\le T}|S_{t\wedge\tau_N}^N-s_{t\wedge\tau_N}|^2\le\frac{C_T}{N},
 \qquad
 \P(\tau_N\le T)\le\frac{C_T}{N}.
\end{equation}
In particular, $\P(\tau_N^{\rm expl}\le T)=O(N^{-1})$.

Moreover, if $\mu_t$ is the explicit conditional law from \autoref{thm:p0-explicit}, then for every $\e>0$,
\begin{equation}\label{eq:p0-pocW}
 \P\left(
 \tau_N^{\rm expl}>T,
 \ \sup_{t\le T}\cW_2(\mu_t^N,\mu_t)>\e
 \right)\rightarrow0.
\end{equation}
For $q\le1/2$ the particle system is global, so \eqref{eq:p0-pocW} is ordinary convergence in probability.  For every $q\ge0$, conditional propagation of chaos therefore holds up to the mean-field blow-up time.
\end{theorem}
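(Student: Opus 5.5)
The plan is to treat the scalar variance comparison first, and only then the particle coupling. By \autoref{prop:scalarSN}, $S^N$ solves the autonomous diffusion \eqref{eq:scalarSN}, and $s$ solves $s'=-2s+cs^{2q}$. Write $f_N(x)=-2x+c_Nx^{2q}$ and $f(x)=-2x+cx^{2q}$. On $[\underline s/2,2\overline s]$ we have
\[
|f_N(x)-f(x)|=\frac cN x^{2q}\le \frac{C}{N},
\]
and $f$ is Lipschitz there, including for $0<q<1/2$, because the window avoids zero. Set $\Delta_t=S^N_{t\wedge\tau_N}-s_{t\wedge\tau_N}$. Then
\[
\Delta_t=\Delta_0+\int_0^{t\wedge\tau_N}\bigl(f_N(S^N_u)-f(s_u)\bigr)\,\dd u+\int_0^{t\wedge\tau_N}a_N(S_u^N)^{q+1/2}\,\dd\beta_u .
\]
The martingale has bracket at most $a_N^2(2\overline s)^{2q+1}T=O(N^{-1})$, so Doob's inequality bounds it by $C/N$. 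For the initial term, $\E|\Delta_0|^2=\E|S_0^N-s_0|^2\le C/N$ because $\mu_0\in\cP_4$: write $S_0^N$ as the empirical second moment minus the squared empirical mean and apply the i.i.d. variance bound. Taking $\E\sup$, using $|f_N(S)-f(s)|\le L|\Delta|+C/N$, and applying Gr\"onwall gives the first estimate in \eqref{eq:SNrate-p0}.

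For the exit bound, observe that on $\{\tau_N\le T\}$ the process $S^N_{\tau_N}$ sits on the boundary of $[\underline s/2,2\overline s]$, while $s_{\tau_N}\in[\underline s,\overline s]$. Hence $\sup_t|\Delta_t|\ge \min(\underline s/2,\overline s)$, and Markov's inequality gives $\P(\tau_N\le T)\le C/N$. The event $S_0^N\notin$ window also falls under this, since $\tau_N=0$ there and the initial estimate controls it. Explosion before $T$ requires $S^N$ to exceed $2\overline s$ first, and the center cannot explode while the radius is bounded, as in \autoref{thm:particle-allq}. So $\P(\tau^{\rm expl}_N\le T)\le\P(\tau_N\le T)$. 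The constant $N_0$ is needed only so that $c_N$ and $a_N$ are uniformly close to their limits.

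For \eqref{eq:p0-pocW}, I would invoke \autoref{thm:poc}, whose hypotheses now hold. The mean-field solution is unique and nonexplosive on $[0,T]$ by \autoref{thm:p0-explicit} together with local Lipschitz continuity of the positive ODE. Alternative (i), (ii) or (iii) applies, with (iii) using $\inf_{t\le T}s_t\ge\underline s>0$, and $\mu_0\in\cP_4\subset\cP_2=\cP_{p+2}$. Alternatively, redo the synchronous coupling directly. For $p=0$ the interaction difference has the monotone sign and the fluctuation $\Xi_i$ has $O(N^{-1})$ variance with only second moments. The amplitude error on $\{t<\tau_N\}$ is bounded by $C|S^N-s|^2$, which is now $O(N^{-1})$ by the first step, and the stopping is removed via $\P(\tau_N\le T)=O(N^{-1})$. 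Finally, for $q\le 1/2$ globality of the particle system follows from \autoref{thm:particle-allq}(ii)–(iv) (critical case $p=0$), with the absorbing convention on $m_N=1$, so the survival event has full probability.

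The step I expect to be delicate is handling $0<q<1/2$ uniformly. Only the stopping window makes the amplitude Lipschitz, and the exceptional event $S_0^N=0$ or small $S_0^N$ must be absorbed into the exit probability rather than the Gr\"onwall estimate. Fourth moments are exactly what makes that initial-sample bound $O(N^{-1})$.
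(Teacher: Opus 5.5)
Your proof of \eqref{eq:SNrate-p0} matches the paper's. On the window $[\underline s/2,2\overline s]$ the drift discrepancy is $(c-c_N)x^{2q}=O(N^{-1})$, and $f$ is Lipschitz because the window avoids zero. The martingale bracket is $O(a_N^2)=O(N^{-1})$, and the fourth moment gives $\E|S_0^N-s_0|^2=O(N^{-1})$. Gr\"onwall, then Markov with the gap $\underline s/2$, then the crossing argument give both estimates and the explosion bound. Your constants are in fact uniform in $N\ge2$, so $N_0$ plays no real role.

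The difference is in \eqref{eq:p0-pocW}. Your primary route cites \autoref{thm:poc}, and you verify its hypotheses correctly:
\begin{itemize}
\item uniqueness and nonexplosion on $[0,T]$ from \autoref{thm:p0-explicit};
\item $\cP_4\subset\cP_{p+2}$ for $p=0$;
\item alternative (iii) from $\underline s>0$.
\end{itemize}
This is a legitimate and shorter argument for the qualitative statement. However, it goes through the truncation and path-space law-of-large-numbers machinery of the general proof, and it gives no rate.

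The paper instead redoes the synchronous coupling in the linear setting, which is essentially your ``alternatively'' sketch. For $p=0$ the field fluctuation is simply $\Xi_i=\overline{\widehat Y}{}^N_t$, which does not depend on $i$ and has second moment $s_t/N$. The amplitude error is then controlled directly by the variance estimate you just proved. As a result, the mean-square coupling error is $O(N^{-1})$ up to $T\wedge\tau_N$, for both relative coordinates and centers (using $\langle\bar B^N\rangle_t=t/N$).

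The paper reuses that quantitative byproduct in \autoref{cor:p0-path-rate}. For the theorem as stated, either route suffices.
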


\begin{proof}
On $[0,T]$ the deterministic variance remains in a compact interval $[\underline s,\overline s]\Subset(0,\infty)$.  Up to $\tau_N$, the coefficients of the scalar diffusion \eqref{eq:scalarSN} are therefore uniformly Lipschitz.  Subtracting \eqref{eq:sODEp0}, applying the Burkholder--Davis--Gundy and Gr\"onwall inequalities, and using
\[
 |c_N-c|=O(N^{-1}),\qquad a_N^2=O(N^{-1}),\qquad
 \E|S_0^N-s_0|^2=O(N^{-1}),
\]
gives the first estimate in \eqref{eq:SNrate-p0}.  
As $s_t\in[\underline s,\overline s]$ for $0\le t\le T$, continuity and the definition of $\tau_N$ imply, as $\underline{s}<\overline{s}$,
\[
 \{\tau_N\le T\}
 \subset
 \left\{\sup_{t\le T}
 |S_{t\wedge\tau_N}^N-s_{t\wedge\tau_N}|\ge \underline{s}/2\right\}.
\]
Squaring the supremum, applying Markov's inequality and using the first estimate in \eqref{eq:SNrate-p0} gives
\[
 \P(\tau_N\le T)\le 4\underline{s}^{-2}
 \E\sup_{t\le T}|S_{t\wedge\tau_N}^N-s_{t\wedge\tau_N}|^2
 \le \frac{C_T}{N},
\]
which is the second estimate. Moreover, any path that explodes before $T$ must first cross the finite upper boundary $2\overline s$, so
$\{\tau_N^{\rm expl}\le T\}\subset\{\tau_N\le T\}$. Hence $\P(\tau_N^{\rm expl}\le T)=O(N^{-1})$.

For the particle positions, couple $X^{i,N}$ synchronously with the explicit conditionally i.i.d. copies from \autoref{thm:p0-explicit}.  The stopped variance estimate gives
\[
 \E\int_0^{T\wedge\tau_N}|(S_t^N)^q-s_t^q|^2\,\dd t=O(N^{-1}).
\]
The linear centered equations, the Burkholder--Davis--Gundy inequality, and $\langle\bar B^N\rangle_t=t/N$ then give an averaged mean-square coupling error $O(N^{-1})$ up to $T\wedge\tau_N$, with the same bound for the centers.  Conditional empirical-measure convergence of the independent copies, together with $\P(\tau_N\le T)=O(N^{-1})$, yields \eqref{eq:p0-pocW}.
\end{proof}

In the linear nonexplosive regime, the same coupling also gives a quantitative path-space rate.

\begin{corollary}\label{cor:p0-path-rate}
Assume $\rho<1$, $0\le q\le1/2$, $s_0>0$, and $\mu_0\in\cP_4(\R^n)$.  Let
\[
 E_T:=C([0,T];\R^n),
 \qquad
 \mathbf P_T^0:=\Law(X_{[0,T]}\mid\cF_T^0)
\]
denote the conditional path law of the McKean--Vlasov solution.  For every fixed $k\in\N$ and $T<\infty$, there is $C_{T,k}<\infty$ such that, for all sufficiently large $N$,
\begin{equation}\label{eq:p0-path-rate}
 \E \cW_{2,E_T^k}^2\left(
 \Law\bigl((X^{1,N}_{[0,T]},\ldots,X^{k,N}_{[0,T]})\mid\cF_T^0\bigr),
 (\mathbf P_T^0)^{\otimes k}
 \right)
 \le\frac{C_{T,k}}{N}.
\end{equation}
Hence the linear nonexplosive regime admits an $O(N^{-1})$ squared-$\cW_2$ rate on path space.
\end{corollary}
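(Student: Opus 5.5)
The proof reuses the synchronous coupling from \autoref{thm:p0-poc} and upgrades the fixed-time estimate to a path-supremum estimate. The explicit conditionally i.i.d. copies $\hX^i$ from \autoref{thm:p0-explicit} are driven by the same $B^0,B^i$ and start from the same $X_0^i$. Conditionally on $\cF_T^0$, the vector $(\hX^1_{[0,T]},\ldots,\hX^k_{[0,T]})$ has law $(\mathbf P_T^0)^{\otimes k}$. This gives a conditional coupling, and exchangeability reduces \eqref{eq:p0-path-rate} to
\[
 \E\,\frac1N\sum_{i=1}^N\sup_{t\le T}|X_t^{i,N}-\hX_t^i|^2\le\frac{C_T}{N},
\]
with the factor $k$ absorbed into $C_{T,k}$.

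Since $0\le q\le 1/2$, the particle system is global. For $0<q<1/2$ I use the absorbing convention on the null-probability event $S_0^N=0$. I split $e_i=X^{i,N}-\hX^i$ into its center part $\barX^N-m$ and its relative part $Y^{i,N}-\widehat Y^i$. With $p=0$ the relative equations are linear, so $\dd(Y^{i,N}-\widehat Y^i)=-(Y^{i,N}-\widehat Y^i)\dd t+\dd\mathcal N^i$. Here $\mathcal N^i$ collects three terms: $\sqrt{1-\rho^2}\kappa\bigl((S^N)^q-s^q\bigr)\dd B^i$, the term $-\sqrt{1-\rho^2}\kappa(S^N)^q\dd\barB^N$, and the sample-mean correction $-\overline{\widehat Y}^N$. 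The last arises because $\hX^i-\frac1N\sum_j\hX^j\ne\widehat Y^i$, so I compare with the empirically centered copies and estimate $\E\sup_t|\overline{\widehat Y}^N_t|^2=O(N^{-1})$ as in the proof of \autoref{prop:poc-exit-rate}. Variation of constants and Burkholder--Davis--Gundy then bound the relative sup-error by $C_T\bigl(\E\int_0^T|(S^N_t)^q-s_t^q|^2\dd t+\E\int_0^T(S^N_t)^{2q}\dd t/N+N^{-1}\bigr)$. The center error $\kappa\rho\int((S^N)^q-s^q)\dd B^0$ and the term $\kappa\sqrt{1-\rho^2}\int(S^N)^q\dd\barB^N$ are handled the same way by Doob's inequality.

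The main obstacle is that \autoref{thm:p0-poc} controls $|(S^N)^q-s^q|$ only up to $\tau_N$, while path-space $\cW_2$ needs an unstopped second-moment bound. First I obtain a global moment bound. For $q\le1/2$ the scalar diffusion \eqref{eq:scalarSN} has drift $\le -2S+c_NS^{2q}$ and quadratic variation coefficient $a_N^2S^{2q+1}=O(S^2/N)$. It\^o's formula for $(S^N)^2$ and Gr\"onwall then give $\sup_N\E\sup_{t\le T}(S_t^N)^2<\infty$, using the fourth moment of $\mu_0$. Next I split
\[
 \E\int_0^T|(S^N)^q-s^q|^2\dd t
 \le\E\int_0^{T\wedge\tau_N}|(S^N)^q-s^q|^2\dd t
 +\E\Bigl[\one_{\{\tau_N\le T\}}\int_0^T\bigl((S^N)^{2q}+s^{2q}\bigr)\dd t\Bigr].
\]
On the stopped window $r\mapsto r^{q}$ is Lipschitz in $S$, so the first term is $O(N^{-1})$ by \eqref{eq:SNrate-p0}. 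For the second term, Cauchy--Schwarz with the moment bound gives only $O(\P(\tau_N\le T)^{1/2})=O(N^{-1/2})$, which is not enough. I therefore sharpen the exit probability. Applying It\^o and BDG to $|S^N-s|^4$ on the stopped window, with $\mu_0\in\cP_4$, I expect a bound of order $N^{-2}$ from the martingale term. The initial term $\E|S_0^N-s_0|^4$ is $O(N^{-2})$ only if the eighth moment of $\mu_0$ is finite. If that fails, I instead bound the unstopped error by a pathwise comparison. Since $q\le 1/2$, $|(S^N)^q-s^q|\le C(1+|S^N-s|)$. Applying It\^o directly to $|S^N-s|^2$ without stopping, the drift difference is one-sided Lipschitz: $-2$ times the difference, plus $c_N(S^{2q}-s^{2q})$, which is Lipschitz for $q=1/2$ and H\"older with $s\ge\underline s$ for $q<1/2$. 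This should give $\E\sup|S^N-s|^2=O(N^{-1})$ globally and avoid stopping altogether. I would try this direct route first.

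Combining the pieces through Gr\"onwall gives $\E\frac1N\sum_i\sup_{t\le T}|e_i(t)|^2\le C_T/N$, and the coupling inequality then yields \eqref{eq:p0-path-rate} for all sufficiently large $N$.
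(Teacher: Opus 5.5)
Your final route, the direct unstopped comparison, is correct. It obtains the key estimate $\int_0^T\E|(S_t^N)^q-s_t^q|^2\,\dd t=O(N^{-1})$ differently from the paper.

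\emph{The paper's route.} The paper does no new variance analysis. It cites the coupling argument of \autoref{thm:poc-quant} at $p=0$. There the amplitude error is split into two pieces: the interacting-versus-copy radius difference, bounded by $\cE_N^{1/2}$ through \eqref{eq:rLip}, and the copy-radius-versus-$r_t$ sampling error. A Gr\"onwall inequality is then closed for $\E\cE_N$. The path step (linear relative and center equations, BDG, $\langle\barB^N\rangle_t=t/N$, conditional coupling) is the same as yours.

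\emph{Your route.} You compare the closed scalar diffusion \eqref{eq:scalarSN} directly with the ODE \eqref{eq:sODEp0}. This gives the stronger unstopped bound $\E\sup_{t\le T}|S_t^N-s_t|^2=O(N^{-1})$ and reduces the particle coupling to linear stochastic convolutions. The cost is that it relies on the variance equation closing, which happens only at $p=0$. The paper's route needs no closure, which is why it can be quoted unchanged.

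\emph{The inequality you need.} As written, your justification of the decisive inequality is off. For $\alpha\in(0,1]$, $y>0$ and every $x\ge0$, concavity gives the global secant bound $|x^\alpha-y^\alpha|\le y^{\alpha-1}|x-y|$. This is exactly the bound used in \autoref{thm:poc-quant}.
\begin{itemize}
\item With $\alpha=2q$ and $y=s_t\ge\underline s$, it makes $c_N\bigl((S^N)^{2q}-s^{2q}\bigr)$ globally Lipschitz in $S^N$. Calling it H\"older is not enough. A H\"older bound would only give a sublinear inequality $y'\le Cy^{(1+2q)/2}+C/N$ for $y=\E|S^N-s|^2$, which does not yield $y=O(N^{-1})$.
\item With $\alpha=q$, it gives $|(S^N)^q-s^q|\le\underline s^{\,q-1}|S^N-s|$. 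This is what converts your variance estimate into the amplitude estimate. The bound $C(1+|S^N-s|)$ you wrote is useless for the rate.
\end{itemize}

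\emph{Two minor points.}
\begin{itemize}
\item The event $\{S_0^N=0\}$ is not null when $\mu_0$ has atoms. It is exponentially small, though, and the semimartingale identity of \autoref{prop:variance-general} holds there too, so nothing changes.
\item The sample-mean correction is unnecessary. If you set $\widehat Y^i:=\hX^i-m$, then $e_i=(\barX^N-m)+(Y^{i,N}-\widehat Y^i)$ exactly. The only extra contribution is the initial error $\barX_0^N-m_0$, which is $O(N^{-1/2})$ in $L^2$.
\end{itemize}
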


\begin{proof}
Use the synchronous conditionally i.i.d. copies from \autoref{thm:p0-poc}.  For $q\le1/2$ the system is global, and the quantitative coupling argument of \autoref{thm:poc-quant}, specialized to $p=0$, yields
\[
 \int_0^T\E|(S_t^N)^q-s_t^q|^2\,\dd t\le C_TN^{-1}.
\]
The linear relative and center equations, the Burkholder--Davis--Gundy inequality, and $\langle\bar B^N\rangle_t=t/N$ therefore imply
\[
 \E\sup_{t\le T}|X_t^{i,N}-\hX_t^i|^2\le C_TN^{-1}.
\]
Conditionally on $\cF_T^0$, the paths $\hX^1,\ldots,\hX^k$ are independent with law $\mathbf P_T^0$.  Their synchronous coupling with the particle paths has quadratic transport cost at most $C_{T,k}/N$, which proves \eqref{eq:p0-path-rate}.
\end{proof}

\end{document}